\documentclass[a4paper,10pt,reqno]{article}

\usepackage[utf8]{inputenc}
\usepackage[T1]{fontenc}
\usepackage{lmodern}
\usepackage[english]{babel}
\usepackage{microtype}

\usepackage{amsmath,amssymb,amsfonts,amsthm}
\usepackage{mathtools,accents}
\usepackage{mathrsfs}
\usepackage{aliascnt}
\usepackage{braket}
\usepackage{bm}
\usepackage{dsfont}

\usepackage[a4paper,margin=3cm]{geometry}
\usepackage[citecolor=blue,colorlinks]{hyperref}

\usepackage{enumerate}
\usepackage{xcolor}
%\usepackage{tikz-cd}

%\usepackage{mdframed}
%\newenvironment{note}[1][1cm]{\begin{adjustwidth}{#1}{#1}\small}{\end{adjustwidth}}
%\newmdenv[
%  topline=false,
%  bottomline=false,
%  skipabove=\topsep,
%  skipbelow=\topsep,
%  leftmargin=2em
%]{note}

% automatically center figures
\makeatletter
\g@addto@macro\@floatboxreset\centering
\makeatother

% theorem-like enironments, with autoref

\makeatletter
\def\newaliasedtheorem#1[#2]#3{
	\newaliascnt{#1@alt}{#2}
	\newtheorem{#1}[#1@alt]{#3}
	\expandafter\newcommand\csname #1@altname\endcsname{#3}
}
\makeatother

\numberwithin{equation}{section}

%\swapnumbers
\newtheoremstyle{slanted}{\topsep}{\topsep}{\slshape}{}{\bfseries}{.}{.5em}{}

\theoremstyle{plain}
\newtheorem{theorem}{Theorem}[section]
\newaliasedtheorem{proposition}[theorem]{Proposition}
\newaliasedtheorem{lemma}[theorem]{Lemma}
\newaliasedtheorem{corollary}[theorem]{Corollary}
\newaliasedtheorem{counterexample}[theorem]{Counterexample}

\theoremstyle{definition}
\newaliasedtheorem{definition}[theorem]{Definition}
\newaliasedtheorem{question}[theorem]{Question}
\newaliasedtheorem{conjecture}[theorem]{Conjecture}

\theoremstyle{remark}
\newaliasedtheorem{remark}[theorem]{Remark}
\newaliasedtheorem{example}[theorem]{Example}

\newcounter{Counter}

\newtheorem{assumption}[Counter]{Assumption}

% sets

\newcommand{\setN}{\mathbb{N}}

\newcommand{\setR}{\mathbb{R}}

% fix those ugly epsilon and phi!
\newcommand{\eps}{\varepsilon}

\let\altphi\phi
\let\phi\varphi
\let\varphi\altphi
\let\altphi\undefined

% norms
\newcommand{\abs}[1]{\left\lvert#1\right\rvert}
\newcommand{\norm}[1]{\left\lVert#1\right\rVert}

% convergences

% some general mathematical commands

\newcommand{\id}{\ensuremath{\mathds{1}}}

\DeclareMathOperator{\tr}{tr}
\let\div\undefined
\DeclareMathOperator{\div}{div}
\DeclareMathOperator{\Hess}{Hess}

\newcommand{\di}{\mathop{}\!\mathrm{d}}

\newcommand{\bs}{{\rm bs}}
\newcommand{\loc}{{\rm loc}}

\newcommand{\res}{\mathop{\hbox{\vrule height 7pt width .5pt depth 0pt
			\vrule height .5pt width 6pt depth 0pt}}\nolimits}

\newcommand{\restr}{\raisebox{-.1618ex}{$\bigr\rvert$}}
\DeclareMathOperator{\supp}{supp}

\newcommand{\Ch}{{\sf Ch}}
\newcommand{\HS}{\rm HS} % %HilbertSchmidtnorm
\newcommand{\sym}{\rm sym}

% Lipschitz and continuous functions
\DeclareMathOperator{\Lip}{Lip}
\DeclareMathOperator{\Lipb}{Lip_b}
\DeclareMathOperator{\Lipbs}{Lip_\bs}

\DeclareMathOperator{\lip}{lip} % %for the slope

% measures
\DeclareMathOperator{\esssup}{ess\,sup}

\newcommand{\leb}{\mathscr{L}}

\newcommand{\Prob}{\mathscr{P}}
\newcommand{\Probtwo}{\mathscr{P}_{2}}
\newcommand{\Borel}{\mathscr{B}}

% field, flow and solutions class

\newcommand{\XX}{{\boldsymbol{X}}}

% reference distance, measure and dimension
\newcommand{\dist}{\mathsf{d}}

\newcommand{\meas}{\mathfrak{m}}

% Spaces of functions and derivations

\newcommand{\Liploc}{{\Lip_{\rm loc}}}

\newcommand{\Geo}{{\rm Geo}}
\newcommand{\Test}{{\rm Test}}

\newcommand{\Tan}{{\rm Tan}}
\DeclareMathOperator{\CD}{CD}
\DeclareMathOperator{\RCD}{RCD}
\DeclareMathOperator{\Ric}{Ric}
\DeclareMathOperator{\BV}{BV}
\newfont{\tmpf}{cmsy10 scaled 2500}

%\def\perone{\mathop{\mbox{\tmpf \symbol{2}}}}

%average integral
\def\Xint#1{\mathchoice
	{\XXint\displaystyle\textstyle{#1}}%
	{\XXint\textstyle\scriptstyle{#1}}%
	{\XXint\scriptstyle\scriptscriptstyle{#1}}%
	{\XXint\scriptscriptstyle\scriptscriptstyle{#1}}%
	\!\int}
\def\XXint#1#2#3{{\setbox0=\hbox{$#1{#2#3}{\int}$ }
		\vcenter{\hbox{$#2#3$ }}\kern-.6\wd0}}

\def\dashint{\Xint-}

\begin{document}
	
	\title{Constancy of the dimension for $\RCD(K,N)$ spaces via regularity of Lagrangian flows}
	\author{Elia Bru\'e 
		\thanks{Scuola Normale Superiore, \url{elia.brue@sns.it}.}\and Daniele Semola
		\thanks{Scuola Normale Superiore,
			\url{daniele.semola@sns.it}.}}  
	\maketitle
	
	\begin{abstract}
		We prove a regularity result for Lagrangian flows of Sobolev vector fields over $\RCD(K,N)$ metric measure spaces, regularity is understood with respect to a newly defined quasi-metric built from the Green function of the Laplacian.
		Its main application is that $\RCD(K,N)$ spaces have constant dimension. In this way we generalize to such abstract framework a result proved by Colding-Naber for Ricci limit spaces, introducing ingredients that are new even in the smooth setting.
	\end{abstract}

	\tableofcontents
	
	\section*{Introduction}

	It is well known that many analytical and geometrical properties of Riemannian manifolds are deeply related to lower bounds on the Ricci curvature. Moreover, the class of $n$-dimensional Riemannian manifolds with Ricci curvature uniformly bounded from below and diameter bounded from above being precompact with respect to Gromov-Hausdorff convergence (see \cite{Gromov81}) was the starting point for the study of the so-called Ricci-limit spaces, initiated by Cheeger-Colding in the series of works \cite{CheegerColding96,CheegerColding97,CheegerColding2000a,CheegerColding2000b}.\newline 
	Their deep analysis motivated the interest on finding a way to talk about Ricci curvature lower bounds without having a smooth structure at disposal, in analogy with the theory of Alexandrov spaces with sectional curvature bounded from below (see \cite{BuragoGromovPerelman92} and \cite[Appendix 2]{CheegerColding97}). Meanwhile, it became soon clear that Ricci curvature lower bounds should be seen as a property coupling the measure and the distance, in contrast with sectional curvature bounds, depending solely on the metric structure.
	
	The investigation around a synthetic treatment of lower Ricci bounds began with the seminal and   independent works by Lott-Villani \cite{LottVillani} and Sturm \cite{Sturm06a,Sturm06b} in which the class of $\CD(K,N)$ metric measure spaces was introduced with the aim to provide a synthetic notion of having Ricci curvature bounded from below by $K\in\setR$ and dimension bounded from above by $1\le N\le+\infty$. 
	The $\CD(K,N)$ condition was therein formulated in terms of convexity-type properties of suitable entropies over the Wasserstein space. Crucial properties of such a notion are the compatibility with the smooth Riemannian case and the stability with respect to measured Gromov-Hausdorff convergence.\newline 
	However the class of $\CD(K,N)$ metric measure spaces is still too large to some extent. For instance, it includes smooth Finsler manifolds (see the last theorem in \cite{Villani09}) which are known not to appear as Ricci limit spaces after the above mentioned works by Cheeger-Colding.\newline
	To single out spaces with a Riemannian-like behaviour from this broader class, Ambrosio-Gigli-Savaré introduced in \cite{AmbrosioGigliSavare14} the notion of metric measure space with Riemannian Ricci curvature bounded from below ($\RCD(K,\infty)$ m.m.s. for short), adding the request of linearity of the heat flow to the $\CD(K,\infty)$ condition. 
	Building upon this, the definition of $\RCD(K,N)$ metric measure spaces, which will be the main object of our study in this paper, was proposed by Gigli in \cite{Gigli15} as a finite-dimensional counterpart to the $\RCD(K,\infty)$ condition, coupling the $\CD(K,N)$ condition with the linearity of the heat flow.
	
	In the last years, this has been a considerably increasing research area, with several contributions by many authors (see for instance \cite{AmbGigliMondRaj12,AmbrosioGigliSavare14calc,AmbrosioMondinoSavare15,BacherSturm10,CavallettiMilman16,ErbarKuwadaSturm15,Gigli14,JangLiZhang,VonRenesse08}) that have often given new insights over more classical questions, both of analytical and geometric nature.
	% %TODO:aggiungere referenze per gli alexandrov, il teorema di Gromov e cose più precise su fatti nuovi per varietà capiti con lo studio non liscio, tipo l'ioperimetrica o stabilità. Arricchire in generale la bibliografia non citata poi nel corpo del lavoro
	\paragraph*{}
	
	With the aim of better introducing the reader to the statement of the main result of this note, let us briefly describe which was the state of the art around the so-called structure theory of $\RCD(K,N)$ metric measure spaces.
	
	Given an arbitrary metric measure space, there is a well defined notion of measured tangent space at a fixed point, as pointed measured Gromov-Hausdorff limit of a sequence of rescalings of the starting space. In particular, in the case of an $\RCD(K,N)$ metric measure space $(X,\dist,\meas)$, we can define, for any $1\le k\le N$, the $k$-dimensional regular set $\mathcal{R}_k$ to be the set of those $x\in X$ such that $x$ belongs to the support of $\meas$ and the tangent space of $(X,\dist,\meas)$ at $x$ is the $k$-dimensional Euclidean space. Better said, $x\in\mathcal{R}_k$ if $(X,r^{-1}\dist,\meas_r^x,x)\to(\setR^k,\dist_{\setR^k}c_k,\leb^k,0_{\setR^k},)$ as $r\downarrow0$, where
	\begin{equation*}
	\meas_r^x:=\left( \int_{B(x,r)} 1-\frac{\dist(x,y)}{r} \di \meas(y)\right)^{-1}\meas,\quad c_k:=\left(\int_{B(0,1)} \left(1-|y|\right) \di \leb^k(y)\right)^{-1}
	\end{equation*}  
	and the convergence is understood with respect to the pointed measured Gromov-Hasdorff topology.
	
	In \cite{MondinoNaber14} Mondino-Naber proved that, for any $\RCD(K,N)$ metric measure space $(X,\dist,\meas)$, the regular sets $\mathcal{R}_k$ are $(\meas,k)$-rectifiable and that
	\begin{equation}\label{eq:MN}
	\meas\left(X\setminus\bigcup_{k=1}^{[N]}\mathcal{R}_k\right)=0.
	\end{equation}
	Later on, their result was sharpened in the independent works \cite{MondinoKell16,GigliPasqualetto16,DePhilippisMarcheseRindler17}, where it was proved that $\RCD(K,N)$ spaces are rectifiable in the stronger sense of metric measures spaces, that is to say, for any $1\le k\le N$, the restriction of the reference measure $\meas$ to $\mathcal{R}_k$ is absolutely continuous with respect to the $k$-dimensional Hausdorff measure $\mathcal{H}^k$.
	
	Let us mention that a structure theory for Ricci limit spaces has been already developed by Cheeger-Colding in the aforementioned series of papers. Moreover, in \cite{CheegerColding97}, they conjectured that there should be exactly one $k$-dimensional regular set $\mathcal{R}_k$ having positive measure. However, it took more than ten years before the work \cite{ColdingNaber12}, where Colding-Naber affirmatively solved this conjecture. \newline
	The analogous problem in the abstract framework of $\RCD(K,N)$ metric measure spaces remained open since the work of Mondino-Naber and, building upon almost all the ingredients developed in this note, we are able to solve it, proving the following:
	
	\begin{theorem}[Constancy of the dimension]\label{thm:constintro}
		Let $(X,\dist,\meas)$ be an $\RCD(K,N)$ for some $K\in\setR$ and $1<N<+\infty$. Then, there is exactly one regular set $\mathcal{R}_k$ having positive $\meas$-measure in the Mondino-Naber decomposition of $(X,\dist,\meas)$.
	\end{theorem}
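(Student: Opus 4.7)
The plan is to argue by contradiction, imitating the strategy of Colding-Naber \cite{ColdingNaber12} for Ricci-limit spaces but with regular Lagrangian flows in place of the almost-geodesics used in the smooth case. Suppose there exist two distinct integers $1 \le h < k \le N$ with $\meas(\mathcal{R}_h) > 0$ and $\meas(\mathcal{R}_k) > 0$. Using the rectifiability of the regular sets together with the absolute continuity $\meas\restr\mathcal{R}_j \ll \haus^j$ from \cite{MondinoKell16,GigliPasqualetto16,DePhilippisMarcheseRindler17}, I would first select density points $x_0 \in \mathcal{R}_h$, $y_0 \in \mathcal{R}_k$ together with positive-measure neighborhoods in which the reference measure has the expected Euclidean asymptotics.

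Next, I would exhibit a Sobolev vector field $\bb$ whose regular Lagrangian flow $\XX_t$ transports a positive $\meas$-measure subset of $\mathcal{R}_h$ into $\mathcal{R}_k$ within some time $T$. The natural candidate is a suitable truncation of the gradient of the Green function with pole near $y_0$, which is exactly the class of fields for which the main regularity result of the paper is tailored. Once such a flow is available, the measure compressibility of regular Lagrangian flows (boundedness of the density of $(\XX_t)_*\meas$ with respect to $\meas$), combined with the positive $\meas$-density of $\mathcal{R}_k$ around $y_0$, guarantees that a positive measure of trajectories starts in $\mathcal{R}_h$ at time $0$ and lies in $\mathcal{R}_k$ at time $T$.

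The core of the argument is then to apply the paper's regularity estimate to such trajectories: for $\meas$-a.e.\ starting point $z$ the curve $t \mapsto \XX_t(z)$ is H\"older continuous with respect to the Green quasi-metric $\rmD$. Coupling this with the comparison between $\rmD$-balls and $\meas$-balls near a regular point, and with the pointed measured Gromov-Hausdorff stability of $\RCD(K,N)$ spaces, I would show that along such a trajectory the rate at which $\meas(B(\XX_t(z), r))$ scales in $r$ is locally constant in $t$. By connectedness of $[0,T]$ this forces the Euclidean tangent at $t = 0$ to have the same dimension as the one at $t = T$, contradicting $h \neq k$.

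The main obstacle is this last step, namely turning the metric regularity of the flow into a rigidity statement on tangent cones. This is precisely where the Green quasi-metric $\rmD$, rather than the distance $\dist$, becomes essential: by construction $\rmD$ scales in the right way under blow-ups, so that the H\"older bound on $\XX_t(z)$ is both non-trivial and compatible with Euclidean tangents of only one fixed Hausdorff dimension. The remaining steps, namely the choice of density points, the construction of the vector field, and the measure preservation of its flow, should follow from by-now standard machinery in the $\RCD(K,N)$ setting.
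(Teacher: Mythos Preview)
Your proposal has two genuine gaps, both at places you flag as ``standard'' or as ``the core'', but which in fact carry the whole weight of the argument.

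\textbf{Transitivity.} You propose to use as vector field a truncation of $\nabla G_{y_0}$ and then assert that its flow moves a positive-measure piece of $\mathcal{R}_h$ into $\mathcal{R}_k$. There is no mechanism making this happen: the flow of $\nabla G_{y_0}$ has no reason to connect two arbitrary Borel sets of positive measure, and bounded compressibility gives you only an upper density bound on the push-forward, not the information that trajectories land in a prescribed target. The paper solves this by a completely different construction: it takes the $W_2$-geodesic $(\mu_t)$ between probabilities concentrated on $\mathcal{R}_n$ and on $\mathcal{R}_k$, and uses the Lewy--Stampacchia regularization of Kantorovich potentials (\autoref{thm:LewyStampacchia}, \autoref{prop:consequencesimproved}) to produce a time-dependent Sobolev field whose RLF satisfies $(\XX_s^t)_\sharp\mu_s=\mu_t$. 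The transport property is thus built in from optimal transport, not extracted a posteriori.

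\textbf{Rigidity.} You read the regularity theorem as saying that the individual trajectory $t\mapsto\XX_t(z)$ is H\"older in the Green quasi-metric, and then try to deduce that the volume-scaling exponent of $\meas(B(\XX_t(z),r))$ is continuous in $t$. This is not what the regularity result provides: it is a \emph{spatial} Lusin--Lipschitz estimate for the map $x\mapsto\XX_t(x)$ at each fixed time, not a temporal modulus along a single curve. Turning time-continuity of a trajectory into continuity of the tangent-cone dimension is exactly the Colding--Naber estimate, which the paper explains is currently out of reach in the $\RCD$ setting. Instead, the paper exploits the spatial regularity: composing the $\dist_G$-Lipschitz map $\XX_s^t$ with bi-Lipschitz charts and using the asymptotics of \autoref{lemma:asymptotics}, one obtains a map from a subset of $\setR^n$ to $\setR^k$ satisfying the hypothesis of the Sard-type \autoref{prop:sard}, which forces the image to be $\mathcal{H}^k$-null and contradicts $(\XX_s^t)_\sharp\mu_s=\mu_t\ll\meas$. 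Your connectedness-of-$[0,T]$ argument does not interface with the available estimate.
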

	
	\paragraph*{}
	
	In order to motivate the development of this work, we find it relevant to spend a few words, first trying to explain why it seems hard to adapt the strategy pursued by Colding-Naber in the case of Ricci limits to the setting of $\RCD(K,N)$ spaces and then to present the heuristic standing behind our new approach.
	
	The technique developed in \cite{ColdingNaber12} is based on fine estimates on the behaviour of balls of small radii centered along the interior of a minimizing geodesic over a smooth Riemannian manifold (with Ricci bounded from below) that are stable enough to pass through the possibly singular Gromov-Hausdorff limits.\newline
	When dealing with an abstract $\RCD(K,N)$ space there is no smooth approximating sequence one can appeal on. Nevertheless, one could try to reproduce their main  estimate (see \cite[Theorem 1.3]{ColdingNaber12}) directly at the level of the given metric measure space but, up to our knowledge, the calculus tools available at this stage, although being quite powerful (see for instance \cite{Gigli14}), are still not sufficient to such an issue.
	
	This being said, the study of the flow of a suitably chosen vector field, which was at the technical core of the proof in the case of Ricci limit spaces, will be the starting point in our approach too. A key idea is the following: one would expect the geometry to change continuously along a flow and that, as a consequence, flow maps might be a useful tool to prove that the space has a certain ``homogeneity'' property.\newline    
	Let us illustrate what we mean with a completely elementary example.
	Consider a smooth and connected differentiable manifold $M$. Given $x,y\in M$, there exists a diffeomorphism $\phi:M\to M$ such that $\phi(x)=y$. Moreover, a rather common way to build such a map is as flow map at a fixed time of a suitably chosen smooth vector field and we could rephrase this statement by saying that ``flows of smooth vector fields act transitively on $M$''. Of course this construction gives nothing more, at the level of the structure of $M$, than a confirmation of the fact that that different points in $M$ have diffeomorphic neighbourhoods. Instead, an adaptation to the non smooth setting of this strategy will have as deep consequence the proof of \autoref{thm:constintro} above.    
	
	Trying to pursue such a plan, we are left with the following questions: given an $\RCD(K,N)$ metric measure space $(X,\dist,\meas)$,
	\begin{itemize}
		\item[i)] can we find a notion of vector field, a notion of flow associated to it and a class of vector fields ``rich enough'' for the sake of the applications and ``regular enough'' to prove existence and uniqueness for such a generalized notion of flow?
		\item[ii)] What do we mean by rich enough in the question above? And, can we gain some transitivity in the spirit of the smooth elementary example?
		\item[iii)] Are the flows considered in i) regular in some sense?
		\item[iv)] Eventually, are the regularity in iii) and the transitivity in ii) strong enough to be incompatible with the possibility of having regular sets of different dimensions with positive measure in \eqref{eq:MN}? 
	\end{itemize}
	
	We might say that this whole note will be dedicated to the treatment of the above raised questions. In the case of i) and ii), this will be a matter of collecting ingredients that were already present in the literature. Instead, our main original contribution stands in \autoref{sec:Gregularity} and \autoref{sec:constancyofdimension} below, where we give positive answers to iii) and iv), respectively. 
	
	The remaining of this introduction is dedicated to a brief overview of this plan.
	\paragraph*{}
	
	It is known that any metric measure space has a first order differential structure (see for instance \cite{Gigli15,Weaver00} and \cite{Cheeger99}, dealing with the particular case of spaces satisfying doubling and Poincaré inequalities) which let it possible to give a meaning to the notion of vector field in such an abstract framework.\newline
	Furthermore in \cite{Gigli14}, building upon some results obtained by Savaré in \cite{Savare14}, Gigli proved that any $\RCD(K,\infty)$ metric measure space has also a second order differential structure (see also \cite{Sturm14}). Roughly speaking, this turns into the possibility of finding many functions with second order derivatives in $L^2$.
	At the level of vector fields, which are by themselves first order differential objects, this has the outcome that one can define a notion of covariant derivative and that the class of Sobolev vector fields with covariant derivative in $L^2$ is a rich one.
	Let us stress that the bounds on second derivatives being only of integral nature is one of the key differences between the theory of lower Ricci bounds and the theory of Alexandrov spaces, where lower bounds on the sectional curvature turn into the existence of many semi-convex functions thus having, at least formally, bounded second order derivatives (see \cite{Petrunin07}). 
	
	Nevertheless, having at disposal a rich family of vector fields, one can start investigating the associated theory of flows, trying to answer to the second part of question i).\newline
	Let us remark, however, that the study of ordinary differential equations and flows associated to non Lipschitz vector fields is a difficult task also in the Euclidean context, where there is a huge literature on this topic that still reserves challenging open problems and questions.
	The first contribution to this theory was due to Di Perna-Lions: motivated by the study of some PDEs in kinetic theory and fluid mechanics, in \cite{lions} they introduced a generalized notion of flow and studied the associated existence and uniqueness problem when the vector field has Sobolev regularity. Later on, their theory was revisited and extended to the case of vector fields with BV spatial regularity by Ambrosio in \cite{Ambrosio04}, where the notion of regular Lagrangian Flow was introduced as a good global selection of integral curves of the vector field.
	
	Keeping in mind what we observed at the beginning of this paragraph, when trying to develop a theory of flows on $\RCD(K,\infty)$ spaces it is more natural to look at a generalized theory of regular Lagrangian Flows than at the Cauchy-Lipschitz one. In \cite{AmbrosioTrevisan14}, Ambrosio-Trevisan introduced the natural extension to this framework of the notion of regular Lagrangian flow and proved well posedness for the associated existence and uniqueness problem for Sobolev vector fields over a large class of metric measure spaces including that one of $\RCD(K,\infty)$ spaces.

	\paragraph*{}
	Let us deal with question ii), that is to say the ``transitivity'' issue. Over an $\RCD(K,N)$ metric measure space $(X,\dist,\meas)$ a pointwise notion of transitivity might be out of reach. Nevertheless, what we learn from the literature about analysis on metric spaces is that some of the known constructions and results of the smooth category can be recovered in this fairly more general framework by looking at measures absolutely continuous with respect to the reference measure and curves of measures instead of points and curves (see \cite{AmbrosioGigliSavare14calc,AmbrosioColomboDiMarino15}, \cite{Gigli13} and \cite{GigliHan15a} for the application of this general idea to the construction of Sobolev spaces, the proof of the splitting theorem in non smooth context and the study of the continuity equation, respectively).\newline
	In this regard, here is a natural question towards a weak form of transitivity: is it true that, for any probability measures $\mu_0,\mu_1$ absolutely continuous and with bounded densities with respect to $\meas$, we can find a Sobolev vector field such that, calling $F$ its regular Lagrangian flow at a fixed time, it holds $F_{\sharp}\mu_0=\mu_1$? In \autoref{subsec:regularityintheinterior} below, we will see how the Lewy-Stampacchia inequality, proved in this abstract framework by Gigli-Mosconi in \cite{GigliMosconi15}, allows to give an (almost) affirmative answer to this question.
	
	\paragraph*{}
	
	Next we pass to the regularity issue. As in the case of the well-posedness problem, the study of regularity for Lagrangian flows associated to Sobolev vector fields is far from being trivial also in the Euclidean setting. The first result in this direction was obtained by Crippa-De Lellis in \cite{CrippaDeLellis08} building upon some ideas that have previously appeared in \cite{AmbrosioLecumberryManiglia}. 
	\newline
	We wish to explain the reasons behind these difficulties and why they required some new ideas to be settled in the case of a Riemannian manifold and a change of perspective in the case of an $\RCD(K,N)$ metric measure space.
	
	To this aim let $b:\setR^d\to \setR^d$ be a vector field and denote by $\XX:[0,T]\times\setR^d\to\setR^d$ its flow map, that we assume to be well-defined for every $t\in [0,T]$ and for every $x\in \setR^d$. A natural way to measure the regularity of $\XX$ is in terms of Lipschitz continuity and it is a rather elementary fact that, whenever $b$ is Lipschitz, the flow map $\XX_t$ is Lipschitz as well. Indeed, willing to control the distance between trajectories starting from different points $x,y\in\setR^d$, it is sufficient to compute
	\begin{equation}\label{computation1}
	\frac{\di}{\di t} |\XX_t(x)-\XX_t(y)|\leq |b(\XX_t(x))-b(\XX_t(y))|\leq \Lip(b)|\XX_t(x)-\XX_t(y)|,
	\end{equation}
	to obtain that
	\begin{equation*}
	|\XX_t(x)-\XX_t(y)|\le e^{t\Lip(b)} |x-y|, \qquad \text{for any $t\in [0,T]$.}
	\end{equation*}
	
	Lowering the regularity assumption on the vector field from Lipschitz to Sobolev, the second inequality in \eqref{computation1} fails and we cannot expect Lipschitz regularity for the regular Lagrangian flow $\XX_t$ that, in general, might even be discontinuous.
	However, in the aforementioned paper, Crippa-De Lellis obtained a Lusin-Lipschitz regularity result for Lagrangian flows associated to vector fields $b\in W^{1,p}(\setR^d;\setR^d)$ for $p>1$. That is to say, they proved that, for every bounded $K\subset\setR^d$ and for every $\eps>0$, there exist $C=C(\eps, \norm{b}_{W^{1,p}},K)>0$ and $E\subset K$ with $\leb^d(K\setminus E)<\eps$ such that $\XX_t$ is $C$-Lipschitz over $E$, for any $t\in [0,T]$.
	
	The key tool exploited by Crippa-De Lellis seeking for an analogue of \eqref{computation1} is the so-called maximal estimate for Sobolev functions: there exists $C_d>0$, such that any $f\in W^{1,p}(\setR^d;\setR)$ admits a representative, still denoted by $f$, such that
	\begin{equation}\label{MaximalEstimateScalar}
	|f(x)-f(y)|\leq C_d(M|\nabla f|(x)+M|\nabla f|(y))\abs{x-y},\qquad \text{for any $x,y\in X$,}
	\end{equation}
	where $M |\nabla f|$ is the maximal operator applied to $\abs{\nabla f}$. Observe that, if $p>1$, then $\norm{M|\nabla f|}_ {L^{p}}\le C_{p,d}\norm{\abs{\nabla f}}_{L^p}$ for some constant $C_{p,d}>0$. Moreover, since on $\setR^d$ a vector field is Sobolev if and only if its components are so, \eqref{MaximalEstimateScalar} holds true also for any $b\in W^{1,p}(\setR^d;\setR^d)$.
	
	This being said, the replacement of \eqref{computation1} in the Sobolev case is
	\begin{equation}\label{computation2}
	\frac{\di}{\di t} |\XX_t(x)-\XX_t(y)|\leq C \left\lbrace M|\nabla b|(\XX_t(x))+M|\nabla b|(\XX_t(y))\right\rbrace |\XX_t(x)-\XX_t(y)|.
	\end{equation}
	The sought regularity for $\XX_t$ does not follow any more applying Gronwall lemma to \eqref{computation2}. However, one might think of \eqref{computation2} as a quantitative infinitesimal version of the regularity result for the Lagrangian flow.
	
	Having such a perspective in mind, the situation changes significantly passing from the Euclidean space to an $\RCD(K,N)$ metric measure space or, more simply, to a smooth Riemannian manifold. 
	Indeed, while the maximal estimate for real valued Sobolev functions \eqref{MaximalEstimateScalar} is a very robust result, which holds true in every doubling metric measure space satisfying the Poincaré inequality and in a relevant class of non-doubling spaces (see \cite{AmbrosioColomboDiMarino15,AmbrosioBrueTrevisan17}), we are not aware of any intrinsic way to lift it to the level of vector fields. 
	
	Therefore we chose for an alternative approach to the problem. Let us introduce the more appealing notation $\dist$ for the distance function but still think, for sake of simplicity to the Euclidean case. Trying to turn the Sobolev regularity of the vector field into some bound for the right hand side in the expression 
	\begin{equation}\label{eq:derdist}
	\frac{\di}{\di t} \dist(\XX_t(x),\XX_t(y))= b\cdot \nabla \dist_{\XX_t(x)}(\XX_t(y))+ b\cdot\nabla \dist_{\XX_t(y)}(\XX_t(x)),
	\end{equation}
	a natural attempt could be to appeal to the interpolation
	\begin{equation}\label{z17}
	b\cdot\nabla \dist_x(y)+b\cdot\nabla \dist_y(x)=\int_0^1 \nabla_{\sym}b(\gamma'(s),\gamma'(s)) \di s,
	\end{equation}
	where $\gamma:[0,1]\to\setR^d$ is the geodesic joining $x$ to $y$ and $\nabla_{\sym}b$ is the symmetric part of the covariant derivative of $b$. However, when the bounds on $\nabla_{\sym}b$ are only of integral type, it is not clear how to obtain useful estimates from \eqref{z17} without deeply involving the Euclidean structure, that is something to be avoided in view of the extensions to the metric setting.
	
	The starting point of the study in our previous paper \cite{BrueSemola18} was, instead, the following observation: suppose that $d\ge 3$, then, calling $G$ the Green function of the Laplacian on $\setR^d$, it holds $G(x,y)=c_d\dist(x,y)^{2-d}$. This implies in turn that controlling the distance between two trajectories of the flow is the same as controlling the Green function along them. Moreover, computing the rate of change of the Green function along the flow, we end up with the necessity to find bounds for the quantity
	\begin{equation*}
	b\cdot\nabla G_x(y)+b \cdot \nabla G_y(x),
	\end{equation*}
	that, assuming $\div b=0$ for sake of simplicity, we can formally rewrite as
	\begin{align}\label{eq:intint}
	\nonumber b\cdot\nabla G_x(y)+b \cdot \nabla G_y(x)
	& = -\int_{\setR^d} b(w)\cdot \nabla G_x(w) \di\Delta G_y(w)-\int_{\setR^d}b(w)\cdot \nabla G_y(w) \di\Delta G_x(w)  \\
	& = 2 \int_{\setR^d} \nabla_{\sym}b(\nabla G_x, \nabla G_y) \di \meas.
	\end{align}
	Observe that, being \eqref{eq:intint} in integral form, we can expect it to fit better than \eqref{z17} with the assumption $\nabla_{\sym}b\in L^2$ and this expectation is confirmed by the validity, for some $C>0$, of the key estimate
	\begin{equation}\label{eq:Gintro}
	\int_{\setR^d} f|\nabla G_x||\nabla G_y|\di \meas\leq C G(x,y)(Mf(x)+Mf(y)),\quad \text{for any $x,y\in X$}
	\end{equation} 
	and for any Borel function $f:\setR^d\to[0,+\infty)$, see \autoref{prop:maximalGestimate} below. 
	
	Starting from this idea, in \cite{BrueSemola18} we obtained the first extension of the Lusin-Lipschitz regularity estimate for Lagrangian flows of Sobolev vector fields outside from the Euclidean setting, covering the case of Ahlfors regular, compact $\RCD(K,N)$ spaces. In that case, which includes for instance Riemannian manifolds, Alexandrov spaces and non-collapsed $\RCD(K,N)$ spaces (see \cite{DePhilippisGigli17}), a uniform control over the volume growth of balls turns into a global comparison between the Green function, that was introduced as an intermediate tool, and a negative power of the distance, by means of which we wanted to measure regularity.
	
	The key novelty in the approach of this paper is a change of perspective. Inspired by \cite{Colding12}, where the deep links between the Green function and the geometry of the space are investigated in the case of Riemannian manifolds with nonnegative Ricci curvature, in \autoref{thm: sobolev vectorfield, G-regularity} we prove that, on any $\RCD(0,N)$ metric measure space $(X,\dist,\meas)$ satisfying suitable volume growth assumptions at infinity, a version of Crippa-De Lellis' result holds true if we understand Lusin-Lipschitz regularity, not anymore with respect to the distance $\dist$, but with respect to a newly defined quasi-metric $\dist_{G}=1/G$, $G$  being the minimal positive Green function of the Laplacian over $(X,\dist,\meas)$. In \autoref{subsec:extnegative} we also adapt our arguments to cover the case of an arbitrary lower Ricci curvature bound.     
	
	Both our main regularity results, in their intrinsic and quantitative form, and some of the tools we develop in order to prove them are new, up to our knowledge, also in the case of smooth Riemannian manifolds.  
	
	\paragraph*{} 
	
	After finding the answers above to i), ii) and iii) we address iv). Let us point out, just at a speculative level, that, having at our disposal a perfect extension of Crippa-De Lellis' result to the metric setting, it would have been rather easy to exclude the possibility of regular sets of different dimensions with positive measure in the Mondino-Naber decomposition of an $\RCD(K,N)$ metric measure space, just building on the transitivity result in ii) and the observation that, given $k<n$, it is impossible to find a Lipschitz map $\Phi:\setR^k\to\setR^n$ such that $\Phi_{\sharp}\leb^k\ll\leb^n$.  
	
	Here we exploit a modification of this idea. As we explained at the very beginning of this introduction, the $\RCD(K,N)$ condition concerns neither the distance nor the reference measure by themselves but a coupling of these objects, as it happens for the Laplacian, the heat flow and the Green function. Moreover, it is possible to catch, in a quantitative way, the asymptotic behaviour of the Green function near to regular points of the metric measure space in terms of distance, measure and dimension (see \autoref{lemma:asymptotics}). This allows to find a counterpart for the ``preservation of the Hausdorff dimension via biLipschitz maps'' formulated just in terms of Green functions (see \autoref{thm:conservationhausdim}) and to complete the proof of \autoref{thm:constintro}, the spirit being that a control over two among distance, reference measure and Green function gives in turn a control over the remaining one.    
	\paragraph*{}
	
	Let us conclude this introduction with two comments. The first concerning the fact that our regularity result for Lagrangian flows, although being quantitative, enters in the proof of \autoref{thm:constintro} only in its qualitative form. The second is about its range of applications, that we expect to be wide in the study of fine structure properties of spaces with Ricci curvature bounded from below, both in the smooth and in the non-smooth category.    
	
	\paragraph*{}
	
	{\bf Acknowledgements.}
	The authors would like to thank Luigi Ambrosio for encouraging them in the study of this problem and for his kind and expert advise. They wish to express their gratitude to Nicola Gigli and Andrea Mondino for their helpful comments and numerous inspiring discussions around the topic of the paper and to the anonymous reviewer for the valuable report.\\ 
	The authors acknowledge the support of the PRIN2015 MIUR Project “Calcolo delle Variazioni”.

	\section{Preliminaries}
	
	We begin this preliminary part of the note introducing the basic terminology and notions about metric measure spaces, that will be the main object of our study. In \autoref{section: preliminaries RCD} and \autoref{sec:introflows} below we will focus on the class of finite dimensional $\RCD$ spaces and on the notion of Regular Lagrangian flow, respectively. 
	
	Throughout this note by metric measure space (m.m.s. for short) we mean a triple $(X,\dist,\meas)$ where $(X,\dist)$ is a complete and separable metric space and $\meas$ is a nonnegative Borel measure satisfying the exponential volume growth condition $\meas(B(x,r))\le M\exp(cr^2)$ for some constants $M,c\ge 0$.
	We shall adopt the standard metric notation. We indicate by $B(x,r)$ the open ball of radius $r>0$ centred at $x\in X$, by $\Lip(X)$, $\Liploc(X)$, $\Lipbs(X)$ the spaces of real valued Lipschitz, locally Lipschitz, and boundedly supported Lipschitz  functions over $(X,\dist)$, respectively. We denote by $\Lip f$ the global Lipschitz constant of $f\in \Lip(X)$. Moreover we introduce the notation
	\begin{equation*}
	\lip f(x):=
	\begin{cases*}
	\limsup_{y\to x}\frac{\abs{u(x)-u(y)}}{d(x,y)}& \text{if $x$ is not isolated,}\\
	0&\text{otherwise,}
	\end{cases*},
	\end{equation*}
	for the so-called slope of a function $f:X\to\setR$.
	
	By $\Geo(X)$ we indicate the space of (constant speed minimizing) geodesics on $(X,\dist)$, parametrized on $[0,1]$, endowed with the $\sup$ distance and by $e_t:\Geo(X)\to X, t\in[0,1]$ the evaluation map defined by $e_t(\gamma):=\gamma(t)$.
	
	We denote by $\Prob(X)$ the class of Borel probability measures on $(X,\dist)$ and by $\Probtwo(X)$ the subclass of those with finite second moment.\newline
	For $\mu_0,\mu_1\in\Probtwo(X)$ the quadratic Wasserstein distance $W_2(\mu_0,\mu_1)$ is defined by
	\begin{equation*}
	W_2^2(\mu_0,\mu_1):=\inf_{\pi}\int_{X\times X}\dist^2(x,y)\di\pi(x,y),
	\end{equation*}
	where the infimum runs over all $\pi\in\Prob(X\times X)$ with $\mu_0$ and $\mu_1$ as the first and the second marginal.
	It is well known that many geometric properties of $(X,\dist)$ are inherited by the Wasserstein space $(\Probtwo(X), W_2)$. For instance, if $(X,\dist)$ is a length space $(\Probtwo(X),W_2)$ is a length space itself. Moreover, it turns out that any geodesic $\mu_t\in\Geo(\Probtwo(X))$ can be lifted to a measure $\Pi\in\Prob(\Geo(X))$, that we shall call optimal geodesic plan, in such a way that $(e_t)_{\sharp}\Pi=\mu_t$ for any $t\in[0,1]$. \newline
	%In \autoref{sec:constancyofdimension} we will indicate by $\mathcal{Q}_t$ the Hopf-Lax semigroup. We refer to \cite{AmbrosioGigliSavare13,AmbrosioColomboDiMarino15} for its introduction and its main properties.
	
	We denote by $L^p(X,\meas)=L^p(X)=L^p$ the spaces of Borel $p$-integrable functions over $(X,\meas)$ for any $1\le p\le+\infty$ and by $L^0(X,\meas)$ the space of $\meas$-measurable functions over $X$.\newline 
	For any $f\in L^1(X,\meas)$ we shall denote by $Mf$ the Hardy-Littlewood maximal function of $f$, which is defined by
	\begin{equation}\label{Maximal function}
	Mf(x):=\sup_{r>0}\dashint_{B(x,r)}\abs{f(z)}\di\meas(z),
	\qquad \forall x\in \supp\meas,
	\end{equation} 
	where
	\begin{equation*}
	\dashint_{B(x,r)}f(z)\di\meas(z):=\frac{1}{\meas(B(x,r))}\int_{B(x,r)}f(z)\di\meas(z).
	\end{equation*} 
	
	We go on with a brief discussion about Sobolev functions and Sobolev spaces over an arbitrary metric measure space, referring to \cite{AmbrosioGigliSavare13,AmbrosioGigliSavare14,Gigli14} and \cite{AmbrosioColomboDiMarino15} for a more detailed discussion about this topic. 
	
	\begin{definition}\label{def:Sobolevspace}
		For any $1<p<+\infty$ we define $W^{1,p}(X,\dist,\meas)(=W^{1,p}(X))$ to be the space of those $f\in L^{p}(X,\meas)$ such that there exist $f_n\to f$ in $L^p(X,\meas)$ with $f_n\in\Lip(X)$ for any $n\in\setN$ and $\sup_n\norm{\lip f_n}_{L^p}<+\infty$.
	\end{definition}
	
	The definition of Sobolev space is strongly related to the introduction of the Cheeger energy $\Ch_p:L^{p}(X,\meas)\to[0,+\infty]$ which is defined by
	\begin{equation}\label{eq:Cheeger}
	\Ch_p(f):=\inf\left\lbrace \liminf_{n\to\infty}\int \left(\lip f_n\right)^p\di\meas: f_n\to f\text{ in }L^p, f_n\in\Lip(X)\right\rbrace 
	\end{equation}
	and turns out to be a convex and lower semicontinuous functional from $L^p(X,\meas)$ to $[0,+\infty]$ whose finiteness domain coincides with $W^{1,p}(X,\dist,\meas)$.\newline
	By looking at the optimal approximating sequence in \eqref{eq:Cheeger} one can identify a distinguished object, called minimal relaxed gradient and denoted by $\abs{\nabla f}_p$, which provides the integral representation
	\begin{equation*}
	\Ch_p(f)=\int_X\abs{\nabla f}_p^p\di\meas,
	\end{equation*}    
	for any $f\in  W^{1,p}(X,\dist,\meas)$.
	As the notation suggests, $\abs{\nabla f}_p$ depends a priori on the integrability exponent $p$.\newline
	The space $W^{1,p}(X,\dist,\meas)$ is a Banach space when endowed with the norm $\norm{f}_{W^{1,p}}^p:=\norm{f}^p_{L^p}+\Ch_p(f)$. Moreover, the inequality $\abs{\nabla f}_p\le\lip f$ holds true $\meas$-a.e. on $X$ for any $f\in\Liploc(X)\cap W^{1,p}(X,\dist,\meas)$. 
	
	We further introduce, following \cite{AmbDiM14}, the space of functions of bounded variation over $(X,\dist,\meas)$.
	\begin{definition}
		For any $f\in L^1(X,\meas)$ and for any open set $A\subset X$ we introduce the relaxed total variation of $f$ over $X$ by
		\begin{equation*}
		\abs{Df}_*(A):= \inf\left\lbrace \liminf_{n\to \infty} \int_A \lip f_h\di\meas\ :\ f_h\in \Liploc(A),\  f_h\to f\ \text{in}\ L^1(A,\meas) \right\rbrace.
		\end{equation*}
		Moreover we let $\BV(X,\dist,\meas):=\set{f\in L^{1}(X,\meas)\ :\abs{Df}_*(X)<+\infty}$ be the space of functions with bounded variation on $(X,\dist,\meas)$.
	\end{definition}

	It is proved in \cite{MirandaJr03} that, for any $f\in \BV(X,\dist,\meas)$, the map $A\mapsto \abs{Df}_*(A)$ is the restriction to open sets of a finite Borel measure, for which we keep the same notation.
	It is then natural to let $W^{1,1}(X,\dist,\meas)$ be the space of those functions $f\in \BV(X,\dist,\meas)$ with the following property: there exists $\abs{\nabla f}_*\in L^1(X,\meas)$ such that $\abs{Df}_*=\abs{\nabla f}_*\meas$. The space $W^{1,1}(X,\dist,\meas)$ endowed with the norm $\norm{f}_{W^{1,1}}:=\norm{f}_{L^1}+\norm{\abs{\nabla f}_{*}}_{L^1}$ is a Banach space.
	
	Next we pass to the introduction of the spaces of locally Sobolev functions. For every $1<p<\infty$, we define $W^{1,p}_{\loc}(X,\dist,\meas)$ as the space of those $f\in L^p_{\loc}(X,\meas)$ such that $f\chi \in W^{1,p}(X,\dist,\meas)$ for every $\chi\in \Lip(X)$ with compact support. Using the strong locality property of the minimal relaxed gradient in $W^{1,p}(X,\dist,\meas)$, $1<p<\infty$, i.e.
	\begin{equation}\label{eq:stronglocality}
	|\nabla g|_p=|\nabla h|_p\quad \text{$\meas$-a.e. in $\set{h=g}$, for any $h,g\in W^{1,p}(X,\dist,\meas)$,}
	\end{equation}
	it is possible to define a minimal relaxed gradient $|\nabla f|_p\in L^p_{\loc}(X,\meas)$, which retains the same strong locality property, for every $f\in W^{1,p}_{\loc}(X,\dist,\meas)$.\newline
	In an analogous way one can define the space $W^{1,1}_{\loc}(X,\dist,\meas)$, exploiting the strong locality of the relaxed gradient $\abs{\nabla f}_*$ for $f\in W^{1,1}(X,\dist,\meas)$.

	\subsection{$\RCD(K,N)$ metric measure spaces}\label{section: preliminaries RCD}
	
	The notion of $\RCD(K,N)$ m.m.s. was proposed in \cite{Gigli15} (see also \cite{AmbrosioMondinoSavare15,ErbarKuwadaSturm15}), as a finite dimensional counterpart to $\RCD(K,\infty)$ m.m. spaces which were introduced and firstly studied in \cite{AmbrosioGigliSavare14} (see also \cite{AmbGigliMondRaj12}, dealing with the case of $\sigma$-finite reference measures). We point out that those spaces can be introduced and studied both from an Eulerian point of view, based on the so-called $\Gamma$-calculus, and from a Lagrangian point of view, based on optimal transportation techniques, which is the one we shall adopt in this brief introduction. 
	
	Let us start recalling the so-called curvature dimension condition $\CD(K,N)$. Its introduction dates back to the seminal and independent works \cite{LottVillani} and \cite{Sturm06a,Sturm06b}, while in this presentation we closely follow \cite{BacherSturm10}.
	
	\begin{definition}[Curvature dimension bounds]\label{def:CD}
		Let $K\in\setR$ and $1\le N<+\infty$. We say that a m.m.s. $(X,\dist,\meas)$ is a $\CD(K,N)$ space if, for any $\mu_0,\mu_1\in\Prob(X)$ absolutely continuous w.r.t. $\meas$ with bounded support, there exists an optimal geodesic plan $\Pi\in\Prob(\Geo(X))$ such that for any $t\in[0,1]$ and for any $N'\ge N$ we have 
		\begin{equation*}
		-\int\rho_t^{1-\frac{1}{N'}}\di\meas\le-\int\left\lbrace\tau_{K,N'}^{(1-t)}(\dist(\gamma(0),\gamma(1)))\rho_0^{-\frac{1}{N'}}(\gamma(0))+\tau_{K,N'}^{(t)}(\dist(\gamma(0),\gamma(1)))\rho_1^{-\frac{1}{N'}}(\gamma(1)) \right\rbrace\di\Pi(\gamma), 
		\end{equation*} 
		where $(e_t)_{\sharp}\Pi=\rho_t\meas$, $\mu_0=\rho_0\meas$, $\mu_1=\rho_1\meas$ and the distortion coefficients $\tau_{K,N}^{t}(\cdot)$ are defined as follows. First we define the coefficients $[0,1]\times[0,+\infty)\ni(t,\theta)\mapsto\sigma_{K,N}^{(t)}(\theta)$ by
		\begin{equation*}
		\sigma_{K,N}^{(t)}(\theta):=
		\begin{cases*}
		+\infty &\text{if $K\theta^2\ge N\pi^2$,}\\
		\frac{\sin(t\theta\sqrt{K/N})}{\sin(\theta\sqrt{K/N})}&\text{if $0<\theta<N\pi^2$,}\\
		t&\text{if $K\theta^2=0$,}\\
		\frac{\sinh(t\theta\sqrt{K/N})}{\sinh(\theta\sqrt{K/N})}&\text{if $K\theta^2<0$,}
		\end{cases*}
		\end{equation*}
		then we set $\tau_{K,N}^{(t)}(\theta):=t^{1/N}\sigma_{K,N-1}^{(t)}(\theta)^{1-1/N}$.
	\end{definition}
	
	The main object of our study in this paper will be $\RCD(K,N)$ spaces, that we introduce below.
	
	\begin{definition}\label{defRCDKN}
		We say that a metric measure space $(X,\dist,\meas)$ satisfies the \textit{Riemannian} $\CD(K,N)$ condition (it is an $\RCD(K,N)$ m.m.s. for short) for some $K\in\setR$ and $1\le N<+\infty$ if it is a $\CD(K,N)$ m.m.s. and the Banach space $W^{1,2}(X,\dist,\meas)$ is Hilbert.
	\end{definition}
	
	Notice that, if $(X,\dist,\meas)$ is an $\RCD(K,N)$ m.m.s., then so is $(\supp\meas,\dist,\meas)$, hence in the following we will always tacitly assume that $\supp\meas=X$.
	
	Let us point out that, in the last few years, many results have been proven for spaces satisfying the so-called \textit{reduced curvature dimension condition} $\CD^*(K,N)$ or \textit{reduced Riemannian curvature-dimension condition} $\RCD^*(K,N)$, which were known to have better localization and tensorization properties since the work \cite{BacherSturm10}. However, one of the main consequences of the recent work \cite{CavallettiMilman16} is that the classes of $\RCD^*(K,N)$ and $\RCD(K,N)$ spaces do actually coincide provided $\meas(X)<\infty$.
	
	\subsubsection{Geometric and analytical properties of $\RCD(K,N)$ spaces}\label{subsub:geoanproperties}
	
	Unless otherwise stated, from now on we let $(X,\dist,\meas)$ be an $\RCD(K,N)$ m.m.s. for some $K\in\setR$ and $1\le N<+\infty$.
	
	In \autoref{prop:finepropertiesgeodesics} below we collect some results concerning the improved regularity of $W_2$-geodesics on $\RCD(K,N)$ metric measure spaces. The results are mainly taken from \cite{Gigli12,Gigli13,Tapio12}.
	
	\begin{proposition}\label{prop:finepropertiesgeodesics}
		Let $(X,\dist,\meas)$ be an $\RCD(K,N)$ m.m.s. for some $K\in\setR$ and $1<N<+\infty$. Let $\mu_0,\mu_1\in\Probtwo(X)$ be absolutely continuous w.r.t. $\meas$, with bounded densities and bounded supports. Then:
		\begin{itemize}
			\item[(i)] there exists a unique $W_2$-geodesic $(\mu_{t})_{t\in[0,1]}$ joining $\mu_0$ and $\mu_1$. Moreover, it holds $\mu_t\le C\meas$ for any $t\in[0,1]$ for some $C>0$;
			\item[(ii)] letting $\rho_t$ be the density of $\mu_t$ w.r.t. $\meas$, it holds that, for any $t\in[0,1]$ and for any sequence $(t_{k})_{k\in\setN}$ converging to $t$, there exists a subsequence $(t_{n_k})_{k\in\setN}$ such that
			\begin{equation*}
			\rho_{t_{n_k}}\to\rho_t\quad \text{$\meas$-a.e. as $k\to\infty$.}
			\end{equation*}  
		\end{itemize}
	\end{proposition}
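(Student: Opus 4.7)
The statement naturally splits into (i), existence, uniqueness, and the uniform $L^\infty$ bound $\mu_t\le C\meas$, and (ii), the $\meas$-a.e.\ convergence of the densities along subsequences. My plan is to use Rajala's interpolation estimate, a direct consequence of the $\CD(K,N)$ distorted displacement convexity of \autoref{def:CD}, for (i), and the continuity of the relative entropy along $W_2$-geodesics combined with the uniform $L^\infty$ bound for (ii).

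For (i), existence of at least one $W_2$-geodesic $(\mu_t)$, lifted to a plan $\Pi\in\Prob(\Geo(X))$ with $(e_t)_{\sharp}\Pi=\mu_t$, is standard in geodesic metric measure spaces. To obtain the uniform bound I would plug $\mu_0,\mu_1$ into the $\CD(K,N)$ inequality of \autoref{def:CD} and disintegrate along $\Pi$: since $\dist(\gamma(0),\gamma(1))\le D:=\mathrm{diam}(\supp\mu_0\cup\supp\mu_1)<+\infty$ for $\Pi$-a.e.\ $\gamma$, one obtains
\begin{equation*}
\rho_t(\gamma_t)^{-1/N} \ge \tau_{K,N}^{(1-t)}(\dist(\gamma_0,\gamma_1))\,\rho_0(\gamma_0)^{-1/N} + \tau_{K,N}^{(t)}(\dist(\gamma_0,\gamma_1))\,\rho_1(\gamma_1)^{-1/N}
\end{equation*}
for $\Pi$-a.e.\ $\gamma$, and the assumption $\|\rho_0\|_\infty,\|\rho_1\|_\infty<+\infty$, combined with two-sided control on $\tau_{K,N}^{(t)}(\theta)$ for $\theta\in[0,D]$, yields $\rho_t\le C$ with $C=C(K,N,D,\|\rho_0\|_\infty,\|\rho_1\|_\infty)$. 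Uniqueness of $\Pi$ and of $(\mu_t)$ then follows from the essentially non-branching property of $\RCD(K,N)$ spaces (Rajala--Sturm) applied to the absolutely continuous marginal $\mu_0$: on an essentially non-branching space any optimal dynamical plan with an absolutely continuous marginal is uniquely determined.

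For (ii), the bound of (i) together with the fact that all $\supp\mu_s$ sit inside a common compact set (stable along the geodesic) yields sequential weak-$*$ compactness of $(\rho_{t_k})$ in $L^\infty$. Since $W_2$-convergence of $\mu_{t_k}$ to $\mu_t$ implies narrow convergence, any weak-$*$ limit point is necessarily $\rho_t$, so $\rho_{t_k}\weakstarto\rho_t$ in $L^\infty$, and hence weakly in every $L^p$, along the full sequence. To upgrade to $\meas$-a.e.\ convergence along a subsequence I would prove strong $L^1$ convergence by exploiting continuity of the relative entropy $t\mapsto\mathrm{Ent}(\mu_t|\meas)=\int\rho_t\log\rho_t\,\di\meas$ along the geodesic: $K$-convexity of $\mathrm{Ent}$ built into the $\CD(K,N)$ condition provides the $\limsup$ bound, while narrow lower semicontinuity of $\mathrm{Ent}$ provides the matching $\liminf$ bound. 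Weak $L^1$ convergence together with convergence of the strictly convex functional $\int\rho\log\rho\,\di\meas$ then forces strong $L^1$ convergence of $\rho_{t_k}$ to $\rho_t$ (via a Visintin/Young-type argument that uses the uniform $L^\infty$ bound to dominate the entropy integrand), from which a further subsequence converges $\meas$-a.e.

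The most delicate step is the entropy-continuity input at arbitrary $t\in[0,1]$, and in particular at the endpoints: $K$-convexity gives continuity on the open interval $(0,1)$ immediately, whereas treating $t=0$ or $t=1$ requires pairing the convexity inequality (for the $\limsup$ side) with narrow lower semicontinuity of $\mathrm{Ent}$ (for the $\liminf$ side). Once this careful bookkeeping is done, the proof becomes a clean assembly of the cited ingredients.
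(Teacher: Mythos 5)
The paper does not actually prove this proposition: it is quoted from \cite{Gigli12,Gigli13,Tapio12}, and your reconstruction follows the same route as those references (Rajala's interpolation estimate for the density bound, essential non-branching for uniqueness, an entropy/compactness argument for (ii)). Part (ii) as you present it is sound: the uniform $L^\infty$ bound plus common compact support gives weak-$*$ subsequential compactness, narrow convergence identifies the limit, and the combination of $K$-convexity of the Boltzmann entropy along the (unique) geodesic with its narrow lower semicontinuity yields convergence of $\int\rho_{t_k}\log\rho_{t_k}\di\meas$, whence Visintin's theorem upgrades weak to strong $L^1$ convergence and a further subsequence converges $\meas$-a.e.

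The one step that does not work as written is in (i): you cannot pass from the integrated $\CD(K,N)$ inequality of \autoref{def:CD} to the pointwise inequality $\rho_t(\gamma_t)^{-1/N}\ge\tau^{(1-t)}_{K,N}(\dist(\gamma_0,\gamma_1))\rho_0(\gamma_0)^{-1/N}+\tau^{(t)}_{K,N}(\dist(\gamma_0,\gamma_1))\rho_1(\gamma_1)^{-1/N}$ for $\Pi$-a.e.\ $\gamma$ by ``disintegrating along $\Pi$''; the $\CD$ condition is only an inequality between integrals, and in a branching $\CD(K,N)$ space the pointwise version can fail even though the integrated one holds. To localize it one must restrict $\Pi$ to arbitrary Borel sets of geodesics, know that the renormalized restriction is again \emph{the} optimal geodesic plan between its (absolutely continuous) marginals, apply the $\CD$ inequality to each restriction, and conclude by a Lebesgue differentiation argument. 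This requires the uniqueness/essential non-branching input \emph{before} the density bound, whereas your proposal invokes it only afterwards; alternatively one uses Rajala's theorem \cite{Tapio12} directly, whose whole content is precisely to produce a geodesic with the bound $C=C(K,N,D,\norm{\rho_0}_\infty,\norm{\rho_1}_\infty)$ from the integrated condition via a minimization over geodesic plans. Once that step is either replaced by the citation or carried out via the localization argument in the correct logical order, the rest of your proof is a faithful assembly of the ingredients the paper relies on.
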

	
	As a non trivial geometric property of $\RCD(K,N)$ metric measure spaces, we recall that they satisfy the Bishop-Gromov inequality (which holds true more generally for any $\CD(K,N)$ m.m.s., see \cite{Sturm06b,Villani09}). That is to say
	\begin{equation}\label{eq:BG}
	\frac{\meas(B(x,R))}{\meas(B(x,r))}\le\frac{V_K(R)}{V_K(r)},
	\end{equation}
	for any $0<r<R$ and for any $x\in X$, where $V_K(s)$ stands for the volume of the ball of radius $s$ in the model space for the curvature-dimension condition $\CD(K,N)$. In particular, when $K\ge 0$, \eqref{eq:BG} implies that $(X,\dist,\meas)$ is doubling with doubling constant $2^N$, i.e.
	\begin{equation}\label{eq:BishopGromov}
	\meas(B(x,2r))\le 2^N\meas(B(x,r))\quad\text{for any $x\in X$ and for any $r>0$.}
	\end{equation} 
	In the case of a possibly negative lower Ricci curvature bound we can achieve the weaker conclusion that $(X,\dist,\meas)$ is locally uniformly doubling, that is to say, for any $R>0$ there exists $C_R>0$ such that
	\begin{equation}\label{eq:locdoubling}
	\meas(B(x,2r))\le C_R\meas(B(x,r))\quad \text{for any $x\in X$ and for any $0<r<R$.}
	\end{equation}
	As a consequence of \eqref{eq:locdoubling}, any $\RCD(K,N)$ m.m.s. $(X,\dist,\meas)$ is proper. 
	
	We recall that, when the space $(X,\dist,\meas)$ is doubling, the maximal operator $M$ is bounded from $L^p(X,\meas)$ into itself for any $1<p\le+\infty$, however if $X$ satisfies only the local doubling condition \eqref{eq:locdoubling} one can prove a following local version of this fact. 
	Let us fix $1<p\le \infty$ and a compact set $P\subset X$. Then, there exists a constant $C>0$, depending only on the diameter of $P$ and the local doubling constant of the space, such that for every $f\in L^{p}(X, \meas)$ with $\supp f\subset P$, it holds
	\begin{equation}\label{eq:LocalBoundnessMaximaloperator}
	\norm{Mf}_{L^p(P)}\leq C\norm{f}_{L^p(X)}.
	\end{equation}

	Below we pass to an overview about the consequences of the $\RCD(K,N)$ condition at the level of Sobolev calculus.
	
	One of the main contributions of \cite{GigliHan16} has been to prove, roughly speaking, that, under the $\RCD(K,\infty)$ assumption, the minimal relaxed slope $\abs{\nabla f}_p$ is independent of $p$, for any $1<p<\infty$.\newline
	Moreover (see \cite[Remark 3.5]{GigliHan16}), since $\RCD(K,N)$ spaces are proper as we already remarked, if $f\in W^{1,1}(X,\dist,\meas)\cap L^p(X,\meas)$ and $|\nabla f|_*\in L^p(X,\meas)$ for some $p>1$, then $f\in W^{1,p}(X,\dist,\meas)$ with $|\nabla f|_p=|\nabla f|_*$ $\meas$-a.e.. Vice versa, if $f\in W^{1,p}(X,\dist,\meas)\cap L^1(X,\meas)$ and $|\nabla f|_p\in L^1(X,\meas)$, then $f\in W^{1,1}(X,\dist,\meas)$ and $|\nabla f|_*=|\nabla f|_p$ $\meas$-a.e..\newline
	For these reasons we will omit the dependence on the integrability exponent in the notation for the minimal relaxed gradient.
	
	The following deep identification result was first proven by Cheeger in the seminal paper \cite{Cheeger99} in the context of metric measure spaces satisfying doubling and Poincaré inequalities. We refer to \cite[Theorem 8.4]{AmbrosioColomboDiMarino15} for the present formulation and for a different proof. We remark that \autoref{thm:lipugualegrad} applies in particular to $\RCD(K,N)$ spaces since, they are locally doubling (see \eqref{eq:locdoubling} above) and satisfy a local Poincaré inequality (see \cite{VonRenesse08}).
	
	\begin{theorem}\label{thm:lipugualegrad}
		Let $(X,\dist,\meas)$ be a m.m.s. with $\meas$ doubling, $\supp\meas=X$ and supporting a weak $(1,p)$-Poincaré inequality for some $1<p<+\infty$.
		Then, for any $f\in  W^{1,p}(X,\dist,\meas)\cap \Liploc(X)$, it holds $\lip f=\abs{\nabla f}_p$ $\meas$-a.e. on $X$.
	\end{theorem}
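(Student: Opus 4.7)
The plan is to establish the two inequalities $\abs{\nabla f}_p\le \lip f$ and $\abs{\nabla f}_p\ge \lip f$ separately, the first being routine and the second being the substantial part.

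The inequality $\abs{\nabla f}_p\le \lip f$ $\meas$-a.e.\ is essentially tautological given the definition of the Cheeger energy. Since $f\in \Liploc(X)\cap W^{1,p}(X)$, I would produce an approximating sequence of globally Lipschitz functions by multiplying $f$ by a sequence of bounded Lipschitz cut-offs built from the distance to a fixed point (exploiting that $(X,\dist,\meas)$ is proper, which follows from doubling). Passing to the limit and invoking the strong locality of the minimal relaxed gradient stated in \eqref{eq:stronglocality}, one concludes $\abs{\nabla f}_p\le \lip f$ $\meas$-a.e.\ on $X$.

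For the converse $\lip f\le \abs{\nabla f}_p$ $\meas$-a.e., the strategy proceeds in two conceptual steps. First, one identifies $\abs{\nabla f}_p$ with the minimal $p$-weak upper gradient of $f$. In the doubling + Poincaré setting this can be carried out either \`a la Shanmugalingam, via modulus of curve families and the density of Lipschitz functions, or \`a la Ambrosio--Gigli--Savar\'e via the analysis of test plans; the content of this step is that $\abs{\nabla f}_p$ satisfies the defining inequality of an upper gradient along $\Prob$-a.e.\ curve of a suitable class of plans. Second, one runs the telescoping argument: given a Lebesgue point $x$ of $\abs{\nabla f}_p^p$ and $y\in B(x,r)$ with $r$ small, one chains together balls $B(x_i,r_i)$ with $r_i\sim 2^{-i}\dist(x,y)$ accumulating at $x$ and $y$, applying the weak $(1,p)$-Poincar\'e inequality on each of them. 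This yields an estimate of the form
\begin{equation*}
\abs{f(y)-f(x)}\le C\,\dist(x,y)\left(M_{\Lambda r}(\abs{\nabla f}_p^p)(x)\right)^{1/p},
\end{equation*}
for a universal constant $C$ depending only on the doubling and Poincar\'e constants, where $M_{\Lambda r}$ is the restricted maximal operator at scale $\Lambda r$. Dividing by $\dist(x,y)$, letting $y\to x$ and then $r\downarrow 0$, and using Lebesgue's differentiation theorem at Lebesgue points of $\abs{\nabla f}_p^p$ (available since $\meas$ is doubling), one obtains $\lip f(x)\le \abs{\nabla f}_p(x)$ $\meas$-a.e.

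The main obstacle is the first of these two substeps, namely the identification of $\abs{\nabla f}_p$ as a $p$-weak upper gradient: this is the deep part of the equivalence and requires either Shanmugalingam's modulus machinery or the test plan framework of Ambrosio--Gigli--Savar\'e. Once this identification is granted, the remaining argument is a routine application of the weak Poincar\'e inequality combined with the Vitali-type covering/Lebesgue differentiation available in doubling spaces.
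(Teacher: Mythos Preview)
The paper does not prove this theorem: it is quoted from the literature, with references to Cheeger's original work and to \cite[Theorem 8.4]{AmbrosioColomboDiMarino15} for the stated formulation. So there is nothing to compare against at the level of arguments actually carried out in the paper.

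That said, your sketch contains a genuine gap in the hard direction. The telescoping/chaining argument you describe (iterated Poincar\'e on dyadic balls, \`a la Haj\l asz--Koskela) produces an estimate of the form
\[
\abs{f(x)-f(y)}\le C\,\dist(x,y)\bigl(M_{\Lambda r}(\abs{\nabla f}_p^p)(x)\bigr)^{1/p},
\]
where the constant $C$ depends on the doubling and Poincar\'e data and is strictly larger than $1$ in general. Dividing by $\dist(x,y)$, letting $y\to x$ and then $r\downarrow 0$ yields only $\lip f(x)\le C\,\abs{\nabla f}_p(x)$, i.e.\ comparability, not the sharp equality $\lip f=\abs{\nabla f}_p$. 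The constant does not disappear in the limit, because it is structural (it comes from the Poincar\'e constant and the geometric series in the chaining) and is present at every scale.

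Getting the sharp inequality $\lip f\le\abs{\nabla f}_p$ requires a finer mechanism. After identifying $\abs{\nabla f}_p$ with the minimal $p$-weak upper gradient $g$ (which you correctly flag as the substantial step), one must use the upper-gradient inequality along \emph{individual curves}, not averaged Poincar\'e estimates: for $p$-a.e.\ curve $\gamma$ one has $\abs{f(\gamma_1)-f(\gamma_0)}\le\int_\gamma g$, and under doubling plus Poincar\'e there are enough admissible near-geodesics joining $x$ to nearby points $y$ so that $\int_\gamma g\approx g(x)\,\dist(x,y)$ at Lebesgue points of $g$. This is how the constant $1$ is recovered, and it is precisely the content of Cheeger's argument (or of the test-plan/modulus approach in \cite{AmbrosioColomboDiMarino15}). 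Your outline replaces this curve-wise step by the coarser Poincar\'e chaining, which is why it falls short of the identity.
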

	
	Next we recall that the global assumption about $W^{1,2}(X,\dist,\meas)$ being a Hilbert space (a condition known in the literature as \textit{infinitesimal Hilbertianity}, which can be rephrased by saying that $\Ch$ is a quadratic form) has the following outcome.
	We can introduce a symmetric bilinear operator $\Gamma: W^{1,2}(X,\dist,\meas)\times W^{1,2}(X,\dist,\meas)\to L^{1}(X,\meas)$ by
	\begin{equation*}
	\Gamma(f,g)(x):=\frac{1}{4}\abs{\nabla (f+g)}^2(x)-\frac{1}{4}\abs{\nabla (f-g)}^2(x) \quad\text{for $\meas$-a.e. $x\in X$,}
	\end{equation*} 
	for any $f,g\in W^{1,2}(X,\dist,\meas)$. In the rest of the note we shall adopt the notation $\nabla f\cdot\nabla g$ to indicate $\Gamma(f,g)$.
	
	In order to introduce the heat flow and its main properties we begin by recalling the notion of Laplacian.
	
	\begin{definition}\label{def:laplacian}
		The Laplacian $\Delta:D(\Delta)\to L^2(X,\meas)$ is a densely defined linear operator whose domain consists of all functions $f\in W^{1,2}(X,\dist,\meas)$ satisfying 
		\begin{equation*}
		\int hg\di\meas=-\int \nabla h\cdot\nabla f\di\meas \quad\text{for any $h\in W^{1,2}(X,\dist,\meas)$}
		\end{equation*}
		for some $g\in L^2(X,\meas)$. The unique $g$ with this property is denoted by $\Delta f$.\footnote{The linearity of $\Delta$ follows from the quadraticity of $\Ch$.}
	\end{definition}
	
	More generally, we say that $f\in W^{1,2}_{\loc}(X,\dist,\meas)$ is in the domain of the measure valued Laplacian, and we write $f\in D(\boldsymbol{\Delta})$, if there exists a Radon measure $\mu$ on $X$ such that, for every $\psi\in\Lip(X)$ with compact support, it holds
	\begin{equation*}
	\int\psi\di\mu=-\int\nabla f\cdot\nabla \psi\di\meas.
	\end{equation*} 
	In this case we write $\boldsymbol{\Delta}f:=\mu$. If moreover $\boldsymbol{\Delta}f\ll\meas$ with $L^{2}_{\loc}$ density we denote by $\Delta f$ the unique function in $L^{2}_{\loc}(X,\meas)$ such that $\boldsymbol{\Delta}f=\Delta f\meas$ and we write $f\in D_{\loc}(\Delta)$.
	
	The heat flow $P_t$ is defined as the $L^2(X,\meas)$-gradient flow of $\frac{1}{2}\Ch$, whose existence and uniqueness follow from the Komura-Brezis theory. It can equivalently be characterized by saying that for any $u\in L^2(X,\meas)$ the curve $t\mapsto P_tu\in L^2(X,\meas)$ is locally absolutely continuous in $(0,+\infty)$ and satisfies
	\begin{equation*}
	\frac{\di}{\di t}P_tu=\Delta P_tu \quad\text{for $\leb^1$-a.e. $t\in(0,+\infty)$}.
	\end{equation*}  
	
	Under our assumptions the heat flow provides a linear, continuous and self-adjoint contraction semigroup in $L^2(X,\meas)$. Moreover $P_t$ extends to a linear, continuous and mass preserving operator, still denoted by $P_t$, in all the $L^p$ spaces for $1\le p<+\infty$.  
	
	In \cite{AmbrosioGigliSavare14,AmbGigliMondRaj12} it is proved that for $\RCD(K,\infty)$ metric measure spaces the dual semigroup $\bar{P}_t:\Probtwo(X)\to\Probtwo(X)$ of $P_t$, defined by
	
	\begin{equation*}
	\int_X f \di \bar{P}_t \mu := \int_X P_t f \di \mu\qquad\quad \forall \mu\in \Probtwo(X),\quad \forall f\in \Lipb(X),
	\end{equation*}
	is $K$-contractive (w.r.t. the $W_2$-distance) and, for $t>0$, maps probability measures into probability measures absolutely continuous w.r.t. $\meas$. Then, for any $t>0$, we can introduce the so called \textit{heat kernel} $p_t:X\times X\to[0,+\infty)$ by
	\begin{equation*}
	p_t(x,\cdot)\meas:=\bar{P}_t\delta_x.
	\end{equation*}  
	From now on, for any $f\in L^{\infty}(X,\meas)$ we will denote by $P_tf$ the representative pointwise everywhere defined by
	\begin{equation*}
	P_tf(x)=\int_X f(y)p_t(x,y)\di\meas(y).
	\end{equation*}
	
	Since $\RCD(K,N)$ spaces are locally doubling, as we already remarked, and they satisfy a local Poincaré inequality (see \cite{VonRenesse08}) the general theory of Dirichlet forms as developed in \cite{Sturm96} grants that we can find a locally H\"older continuous representative of $p$ on $X\times X\times(0,+\infty)$. 
	
	Moreover in \cite{JangLiZhang} the following finer properties of the heat kernel over $\RCD(K,N)$ spaces, have been proved: there exist constants $C_1>1$ and $c\ge0$ such that
	\begin{equation}\label{eq:kernelestimate}
	\frac{1}{C_1\meas(B(x,\sqrt{t}))}\exp\left\lbrace -\frac{\dist^2(x,y)}{3t}-ct\right\rbrace\le p_t(x,y)\le \frac{C_1}{\meas(B(x,\sqrt{t}))}\exp\left\lbrace-\frac{\dist^2(x,y)}{5t}+ct \right\rbrace  
	\end{equation}
	for any $x,y\in X$ and for any $t>0$. Moreover it holds
	\begin{equation}\label{eq:gradientestimatekernel}
	\abs{\nabla p_t(x,\cdot)}(y)\le \frac{C_1}{\sqrt{t}\meas(B(x,\sqrt{t}))}\exp\left\lbrace -\frac{\dist^2(x,y)}{5t}+ct\right\rbrace \quad\text{for $\meas$-a.e. $y\in X$},
	\end{equation}
	for any $t>0$ and for any $x\in X$.
	We remark that in \eqref{eq:kernelestimate} and \eqref{eq:gradientestimatekernel} above one can take $c=0$ whenever $(X,\dist,\meas)$ is an $\RCD(0,N)$ m.m.s..

	We go on by stating a few regularizing properties of the heat flow on $\RCD(K,N)$ spaces (which hold true more generally for any $\RCD(K,\infty)$ m.m.s.) referring again to \cite{AmbrosioGigliSavare14,AmbGigliMondRaj12} for a more detailed discussion and the proofs of these results.   
	
	First we have the \textit{Bakry-\'Emery contraction} estimate:
	\begin{equation}\label{eq:BakryEmery}
	\abs{\nabla P_t f}^2\le e^{-2Kt}P_t\abs{\nabla f}^2\quad \text{$\meas$-a.e.,}
	\end{equation}
	for any $t>0$ and for any $f\in W^{1,2}(X,\dist,\meas)$.
	
	Another non trivial regularity property is the so-called \textit{$L^{\infty}-\Lip$ regularization} of the heat flow, that is to say, for any $f\in L^{\infty}(X,\meas)$, we have $P_tf\in\Lip(X)$ with
	\begin{equation}\label{eq:linftylipregularization}
	\sqrt{2I_{2K}(t)}\Lip(P_tf)\le\norm{f}_{L^\infty},\quad\text{for any $t>0$},
	\end{equation}
	where $I_L(t):=\int_0^te^{Lr}\di r$ and $\Lip(P_tf)$ denotes the Lipschitz constant of $P_tf$.\newline
	Then we have the so-called \textit{Sobolev to Lipschitz property}: any $f\in W^{1,2}(X,\dist,\meas)$ with $\abs{\nabla f}\in L^{\infty}(X,\meas)$ admits a Lipschitz representative $\bar{f}$ such that $\Lip \bar{f}\le \norm{\nabla f}_{\infty}$.
	We also have a local version of the \textit{Sobolev to Lipschitz property}: any $f\in W^{1,2}_{\loc}(X,\dist,\meas)$ with $\abs{\nabla f}\in L^{\infty}(B(x,2r),\meas)$ for some $x\in X$ and $r>0$, admits a Lipschitz representative $\bar{f}$ in $B(x,r)$ such that $\Lip \bar{f}_{|_{B(x,r)}}\le \norm{\nabla f}_{L^\infty(B(x,2r),\meas)}$.

	Following \cite{Gigli14} we  introduce the space of ``test'' functions $\Test(X,\dist,\meas)$ by 
	\begin{equation}\label{eq:test}
	\Test(X,\dist,\meas):=\{f\in D(\Delta)\cap L^{\infty}(X,\meas): \abs{\nabla f}\in L^{\infty}(X)\quad\text{and}\quad\Delta f\in W^{1,2}(X,\dist,\meas) \}.
	\end{equation}
	We remark that, for any $g\in L^{2}\cap L^{\infty}(X,\meas)$, it holds that $P_tg\in \Test(X,\dist,\meas)$ for any $t>0$, thanks to \eqref{eq:BakryEmery}, \eqref{eq:linftylipregularization}, the fact that $P_t$ maps $L^2(X,\meas)$ into $D(\Delta)$ and the commutation $\Delta P_t f= P_t\Delta f$, which holds true for any $f\in D(\Delta)$.
	
	Below we state a useful result about the existence of smooth cut-off functions on $\RCD(K,N)$ spaces. Its proof can be obtained arguing as in \cite[Lemma 3.1]{MondinoNaber14}. 
	
	\begin{lemma}[Cut-off functions]\label{lemma:cutofffunctions}
		Let $(X,\dist,\meas)$ be an $\RCD(K,N)$ m.m.s. for some $K\in\setR$ and $1<N<+\infty)$. Then, for any $x\in X$ and for any $r>0$, there exists $\psi^r\in\Test(X,\dist,\meas)$ such that $0\le\psi^r\le 1$ on $X$, $\psi^r\equiv 1$ on $B(x,r)$ and $\supp\psi^r\subset B(x,2r)$.
	\end{lemma}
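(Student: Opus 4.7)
The plan is to construct $\psi^r$ in three stages: first a Lipschitz cutoff with a controlled measure-valued Laplacian, then regularize via the heat flow to promote it to a test function, and finally compose with a smooth real truncation to restore the precise value-one and support conditions.

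For the first stage, I would start from $\eta := \phi \circ \dist^2(x,\cdot)$, where $\phi \in C^\infty(\setR)$ satisfies $\phi \equiv 1$ on $(-\infty, r^2]$ and $\phi \equiv 0$ on $[(7r/4)^2, +\infty)$, with $0 \le \phi \le 1$. By the Laplacian comparison for the squared distance on $\RCD(K,N)$ spaces, combined with the chain rule for the measure-valued Laplacian developed in \cite{Gigli14}, one gets $\eta \in D(\boldsymbol{\Delta})$ with $\boldsymbol{\Delta}\eta = g\,\meas$ for some $g \in L^\infty(X,\meas)$ whose norm depends only on $K$, $N$ and $r$. In particular $\eta$ is bounded, Lipschitz and boundedly supported.

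For the second stage, set $\tilde\psi := P_t\eta$ for a small $t > 0$. Since $\eta \in L^2 \cap L^\infty(X,\meas)$, the regularizing properties of the heat flow recalled just after \eqref{eq:test}, together with \eqref{eq:BakryEmery} and \eqref{eq:linftylipregularization}, yield $\tilde\psi \in \Test(X,\dist,\meas)$. However, the heat flow spreads the support and moves $\tilde\psi$ off the value $1$ inside $B(x,r)$, so this function by itself does not fulfill the conclusion.

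To correct this, in the third stage I would pick $\chi \in C^\infty(\setR;[0,1])$ with $\chi \equiv 1$ on $[1-\delta,+\infty)$ and $\chi \equiv 0$ on $(-\infty,\delta]$ for some small $\delta > 0$, and set $\psi^r := \chi \circ \tilde\psi$. Using the quantitative Gaussian heat kernel bounds \eqref{eq:kernelestimate}, one shows that for $t = t(K,N,r,\delta)$ small enough the values of $\tilde\psi$ on $B(x,r)$ differ from $1$ by less than $\delta$, while $\tilde\psi \le \delta$ outside $B(x,2r)$; consequently $\psi^r \equiv 1$ on $B(x,r)$ and $\supp \psi^r \subset B(x,2r)$ by construction. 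Membership in $\Test$ is preserved by the chain rule
\begin{equation*}
\Delta \psi^r = \chi'(\tilde\psi)\, \Delta \tilde\psi + \chi''(\tilde\psi)\, |\nabla \tilde\psi|^2,
\end{equation*}
and by boundedness of $\chi$, $\chi'$, $\chi''$ together with the $W^{1,2}$-regularity of $\Delta \tilde\psi$ and $|\nabla \tilde\psi|^2$ available from Gigli's second-order calculus on $\RCD$ spaces.

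The main obstacle is the justification of this last chain rule at the level of $W^{1,2}$-regularity of the Laplacian: guaranteeing that $\Delta(\chi\circ\tilde\psi)$ itself lies in $W^{1,2}(X,\dist,\meas)$, and not only in $L^2$, is the point where the full strength of the $\RCD$ assumption enters, beyond the Bakry--\'Emery and $L^\infty$--$\Lip$ regularizations already used in the second stage. Once this is in place, the combination of the three steps yields the required cutoff, as in \cite[Lemma 3.1]{MondinoNaber14}.
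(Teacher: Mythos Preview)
Your approach is the one the paper itself defers to: the paper gives no in-text proof and simply cites \cite[Lemma~3.1]{MondinoNaber14}, which is exactly the heat-flow mollification plus smooth truncation scheme you describe, so the strategies coincide.

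One small correction to your Stage~1: the Laplacian comparison on $\RCD(K,N)$ spaces yields only a one-sided (upper) bound for $\boldsymbol{\Delta}\dist_x^2$ as a measure; its negative part can be singular (supported on the cut locus), so after composing with a decreasing $\phi$ you cannot conclude that $\boldsymbol{\Delta}\eta$ is absolutely continuous with $L^\infty$ density. Fortunately this is irrelevant: as the paper remarks just after \eqref{eq:test}, $P_t$ maps all of $L^2\cap L^\infty$ into $\Test(X,\dist,\meas)$, so Stage~2 needs only $\eta$ bounded with compact support. You can therefore drop the Laplacian-comparison argument entirely and take any Lipschitz $\eta$ with the stated plateau and support. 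Your identification of Stage~3 (closure of $\Test$ under smooth composition, relying on $|\nabla\tilde\psi|^2\in W^{1,2}$ from Savar\'e's self-improvement) as the substantive point is correct.
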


	\subsubsection{Structure theory of $\RCD(K,N)$ spaces}
	This subsection is dedicated to a brief overview about the so-called structure theory of $\RCD(K,N)$ metric measure spaces. The presentation is aimed at introducing the notation and the terminology that we shall adopt in the rest of the note. Moreover, the results below will play a deep role in the proof of \autoref{thm:constancyofdimension}.
	
	We assume the reader to be familiar with the notion of pointed measured Gromov Hausdorff convergence (pmGH-convergence for short), referring to \cite[Chapter 27]{Villani09} for an overview on the subject. Given a m.m.s. $(X,\dist,\meas)$, $x\in X$ and $r\in(0,1)$, we consider the rescaled and normalized pointed m.m.s. $(X,r^{-1}\dist,\meas_r^{x},x)$, where 
	\begin{equation*}
	\meas_r^x:=\left( \int_{B(x,r)} 1-\frac{\dist(x,y)}{r} \di \meas(y)\right)^{-1}\meas.
	\end{equation*}
	Then we introduce the following.
	
	\begin{definition}
		Let $(X,\dist,\meas)$ be an $\RCD(K,N)$ m.m.s. for some $1<N<+\infty$ and $K\in\setR$ and let $x\in X$. We say that a p.m.m.s. $(Y,\dist_Y,\eta,y)$ is tangent to $(X,\dist,\meas)$ at $x$ if there exists a sequence $r_i\downarrow 0$ such that $(X,r_i^{-1}\dist,\meas_{r_i}^x,x)\rightarrow(Y,\dist_Y,\eta,y)$ in the pmGH topology. The collection of all the tangent spaces of $(X,\dist,\meas)$ at $x$ is denoted by $\Tan(X,\dist,\meas,x)$.
	\end{definition}
	
	A compactness argument, which is due to Gromov, together with the rescaling and stability properties of the $\RCD(K,N)$ condition yields that $\Tan(X,\dist,\meas,x)$ is non empty for every $x\in X$ and its elements are all $\RCD(0,N)$ p.m.m. spaces.
	
	In \cite{GigliMondinoRajala15} it was proved that at $\meas$-a.e. $x\in X$ there exists at least an Euclidean space of dimension $1\le n\le N$ in $\Tan(X,\dist,\meas,x)$. This conclusion was greatly improved in \cite{MondinoNaber14}, where it was proved that $\RCD(K,N)$ spaces are rectifiable as metric spaces and that, up to an $\meas$-negligible set, the tangent space is always unique. To better state the main result proved therein we introduce the following.
	
	\begin{definition}[Regular $k$-dimensional set]\label{def:kregularset}
		Let $(X,\dist,\meas)$ be an $\RCD(K,N)$ m.m.s. for some $K\in\setR$ and $1<N<+\infty$. For any $k\in\setN$ the $k$-dimensional regular set $\mathcal{R}_k$ is the set of those $x\in X$ such that $\Tan(X,\dist,\meas,x)=\set{(\setR^k,\dist_{\setR^k},c_k\mathcal{H}^k,0_k)}$,
		where
		\begin{equation*}
		c_k:=\left(\int_{B(0,1)} \left(1-|y|\right) \di \leb^k(y)\right)^{-1}.
		\end{equation*}
	\end{definition}
	
	With the terminology above introduced we can rephrase \cite[Theorem 1.3]{MondinoNaber14} as follows.
	
	\begin{theorem}\label{thm:rectifiability}
		Let $(X,\dist,\meas)$ be an $\RCD(K,N)$ metric measure space for $K\in\setR$ and $1<N<+\infty$. Then
		\begin{equation*}
		\meas\left(X\setminus\bigcup_{k=1}^{[N]}\mathcal{R}_k\right)=0.
		\end{equation*}
		Moreover, there exists $\bar{\epsilon}=\bar{\epsilon}(K,N)>0$ such that, for every $0<\epsilon<\bar{\epsilon}(K,N)$, $\mathcal{R}_k$ is $(\meas,k)$-rectifiable via $(1+\epsilon)$-biLipschitz maps. That is to say, for any $\epsilon>0$ we can cover $\mathcal{R}_k$, up to an $\meas$-negligible set, with a countable family of subsets $\set{U^{k,i}_{\epsilon}}_{i\in\setN}$ with the property that each of them is $(1+\epsilon)$-biLipschitz to a subset of $\setR^k$.
	\end{theorem}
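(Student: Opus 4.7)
The plan is to establish rectifiability by constructing, at $\meas$-a.e. point $x \in \mathcal{R}_k$, an almost-isometric chart from a positive-measure subset of a small ball around $x$ into $\setR^k$. The fact that $\bigcup_{k=1}^{[N]} \mathcal{R}_k$ has full $\meas$-measure follows from combining the existence of at least one Euclidean tangent at $\meas$-a.e. point (the result of \cite{GigliMondinoRajala15} cited above) with $\meas$-a.e. uniqueness of the tangent, the latter via a Preiss-type measure-theoretic argument that iterates the rescaling procedure and exploits stability of the $\RCD(K,N)$ condition under pmGH-convergence. Bishop-Gromov \eqref{eq:BG} applied on the tangent forces the Euclidean dimension to be an integer in $\set{1, \ldots, [N]}$.

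For the quantitative rectifiability, fix $x \in \mathcal{R}_k$ and a small $\epsilon > 0$. I would approximate the Euclidean coordinate functions on the tangent cone, via the pmGH convergence $(X, r^{-1}\dist, \meas_r^x, x) \to (\setR^k, \dist_{\setR^k}, c_k \mathcal{H}^k, 0)$, by functions on $X$ and then regularize them through a short-time heat flow to land in $\Test(X,\dist,\meas)$. Using the Bakry-\'Emery contraction \eqref{eq:BakryEmery} together with the Bochner inequality available on $\RCD(K,N)$ spaces, one controls $\norm{\nabla u_i \cdot \nabla u_j - \delta_{ij}}_{L^2(B(x,r))}$ and $\norm{\abs{\Hess u_i}}_{L^2(B(x,r))}$, producing ``$\epsilon$-splitting maps'' $u = (u_1, \ldots, u_k)$ in the sense of Cheeger-Colding on a ball of definite radius around $x$. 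The upper Lipschitz bound $\abs{u(y) - u(z)} \leq (1+\epsilon) \dist(y,z)$ on this ball follows from the $L^\infty$ gradient bound via the Sobolev to Lipschitz property.

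The lower bound is the delicate step, and is where the $\RCD$ structure enters crucially: integrating the Hessian estimate along $W_2$-geodesics connecting regularizations of Dirac masses (whose good behaviour is provided by \autoref{prop:finepropertiesgeodesics}) one deduces $\abs{u(y) - u(z)} \geq (1-\epsilon) \dist(y,z)$ for pairs $(y,z)$ outside an exceptional set of small relative measure. A Fubini/maximal-function argument converts this into a Lusin-type statement, yielding a set $E \subset B(x,r)$ of near-full measure on which $u$ is $(1+\epsilon)$-biLipschitz into $\setR^k$; a Vitali covering of $\mathcal{R}_k$ by such balls then produces the countable family $\set{U^{k,i}_{\epsilon}}_{i \in \setN}$. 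The main obstacle is precisely this lower biLipschitz estimate: infinitesimal Euclidean information at the tangent must be propagated to definite scales despite the potentially singular ambient geometry, requiring the full strength of the $\RCD$ second-order calculus, whereas the upper bound, the construction of splitting functions, and the covering step are comparatively routine given the analytic toolkit recalled in \autoref{subsub:geoanproperties}.
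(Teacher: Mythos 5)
You should first be aware that the paper does not prove this theorem at all: it is stated as a rephrasing of \cite[Theorem 1.3]{MondinoNaber14} and used as a black box, so there is no internal argument to compare yours against. Judged on its own terms, your text is an outline of the Cheeger--Colding/Mondino--Naber strategy rather than a proof: every genuinely hard step is deferred to a phrase of the form ``one controls'' or ``one deduces''. Two points deserve to be flagged concretely. First, the logical order in your opening paragraph is not the one that works: in \cite{MondinoNaber14} the full-measure decomposition into the sets $\mathcal{R}_k$ (equivalently, the $\meas$-a.e.\ uniqueness of the tangent) is a \emph{consequence} of the rectifiability of the a priori coarser strata of points that are $\epsilon$-GH-close to $\setR^k$ at all sufficiently small scales, not an input obtained beforehand by ``a Preiss-type argument''; the iterated-tangent argument of \cite{GigliMondinoRajala15} only yields existence of \emph{some} Euclidean tangent a.e., and upgrading existence to uniqueness is precisely the content of the theorem, so your first paragraph assumes what is to be proved. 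Second, the two analytic cores --- the construction of $\epsilon$-splitting maps with integral Hessian bounds starting only from GH-closeness, and the propagation of those infinitesimal bounds to a lower biLipschitz estimate at a definite scale via a weak segment inequality along $W_2$-geodesics combined with an induction on scales and maximal-function arguments --- each occupy long technical sections of \cite{MondinoNaber14}; your sketch names the correct tools but carries out neither step, and in particular gives no argument for why the exceptional set in the lower bound has small measure after the Fubini step. As a summary of the known external proof your outline is essentially faithful; as a proof it has no content beyond the statement of the strategy.
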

	
	The investigation was pushed further in the independent works \cite{MondinoKell16,GigliPasqualetto16,DePhilippisMarcheseRindler17}, where it was proved that $\RCD(K,N)$ spaces are rectifiable as metric measure spaces. We refer to \cite[Theorem 4.1]{AmbrosioHondaTewodrose17} for the present formulation.
	
	\begin{theorem}[Weak Ahlfors regularity]\label{thm:weakAhlforsregularity}
		Let $(X,\dist,\meas)$ be an $\RCD(K,N)$ metric measure space for some $K\in\setR$ and $1<N<+\infty$. Denote by
		\begin{equation}\label{eq:kregsub}
		\mathcal{R}_k^{*}:=\set{x\in\mathcal{R}_k:\quad\exists\lim_{r\to 0^+}\frac{\meas(B(x,r))}{\omega_kr^k}\in(0,+\infty)}.
		\end{equation}
		Then $\meas(\mathcal{R}_k\setminus\mathcal{R}_k^{*})=0$. Moreover $\meas\llcorner\mathcal{R}_k^{*}$ and $\mathcal{H}^{k}\llcorner\mathcal{R}_{k}^*$ are mutually absolutely continuous and 
		\begin{equation*}
		\lim_{r\to 0^+}\frac{\meas(B(x,r))}{\omega_kr^k}=\frac{\di\meas\llcorner\mathcal{R}_k^*}{\di\mathcal{H}^k\llcorner\mathcal{R}_k^*}(x),
		\end{equation*}
		for $\meas$-a.e. $x\in\mathcal{R}_k^{*}$.
	\end{theorem}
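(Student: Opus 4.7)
The plan is to reduce the statement to the classical differentiation theory of Radon measures on $\setR^k$ via the metric rectifiability of Theorem 2.12. The main technical hurdle is the absolute continuity $\meas\llcorner\mathcal{R}_k\ll\mathcal{H}^k\llcorner\mathcal{R}_k$: once available, both the existence and the identification of the $k$-density will follow from the Lebesgue differentiation theorem pulled back along the biLipschitz charts, and positivity of the limit will come from the non-degenerate Euclidean tangent at regular points.

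First I would fix $\epsilon\in(0,\bar{\epsilon}(K,N))$ small and, by Theorem 2.12, decompose $\mathcal{R}_k$, up to an $\meas$-null set, into a countable Borel family $\{U_\epsilon^{k,i}\}_{i\in\setN}$, each $(1+\epsilon)$-biLipschitz to a Borel subset $E_\epsilon^{k,i}\subset\setR^k$ through a map $\varphi_\epsilon^{k,i}$. Then push $\meas\llcorner U_\epsilon^{k,i}$ forward to obtain a Radon measure $\mu_\epsilon^{k,i}:=(\varphi_\epsilon^{k,i})_\sharp(\meas\llcorner U_\epsilon^{k,i})$ on $\setR^k$. The biLipschitz property yields the sandwich
\begin{equation*}
\mu_\epsilon^{k,i}\bigl(B(y,(1+\epsilon)^{-1}r)\bigr)\le \meas\bigl(B(x,r)\cap U_\epsilon^{k,i}\bigr)\le\mu_\epsilon^{k,i}\bigl(B(y,(1+\epsilon)r)\bigr),\quad y=\varphi_\epsilon^{k,i}(x),
\end{equation*}
so upper and lower $k$-densities at corresponding points agree up to factors $(1+\epsilon)^{\pm k}$.

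The hardest step is proving $\mu_\epsilon^{k,i}\ll\leb^k$ or, equivalently (by the classical characterization of Radon measures on $\setR^k$ via Vitali/Besicovitch covering), that $\mu_\epsilon^{k,i}$ has $\mu_\epsilon^{k,i}$-a.e.\ finite upper $k$-density. This is where the $\RCD$ structure enters crucially: at every $x\in\mathcal{R}_k$ the rescalings $(X,r^{-1}\dist,\meas_r^x,x)$ pmGH-converge to $(\setR^k,\dist_{\setR^k},c_k\mathcal{H}^k,0)$, and the uniqueness of this Euclidean tangent must be leveraged, together with the Bishop--Gromov inequality and, in a more refined step, the existence of sufficiently many Alberti representations coming from the $\RCD$-calculus, to bound the upper $k$-density of $\mu_\epsilon^{k,i}$. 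This is the content of the independent proofs in \cite{MondinoKell16,GigliPasqualetto16,DePhilippisMarcheseRindler17}; in particular \cite{DePhilippisMarcheseRindler17} realizes it as a special instance of a general metric principle for rectifiable Radon measures with rich enough Alberti representations.

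Once $\mu_\epsilon^{k,i}\ll\leb^k$ is in hand, the Lebesgue differentiation theorem gives the existence of $\lim_{s\to 0^+}\mu_\epsilon^{k,i}(B(y,s))/(\omega_k s^k)=\di\mu_\epsilon^{k,i}/\di\leb^k(y)\in[0,+\infty)$ at $\mu_\epsilon^{k,i}$-a.e.\ $y$. Transferring back via the sandwich above and letting $\epsilon\downarrow 0$ along a countable sequence, the distortion factors $(1+\epsilon)^{\pm k}\to 1$ and one deduces the existence of the limit $\lim_{r\to 0^+}\meas(B(x,r))/(\omega_k r^k)$ at $\meas$-a.e.\ $x\in\mathcal{R}_k$. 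Finiteness is built into the differentiation step; strict positivity $\meas$-a.e.\ on $\mathcal{R}_k$ is equivalent to the Radon--Nikodym derivative $\di\mu_\epsilon^{k,i}/\di\leb^k$ being positive $\mu_\epsilon^{k,i}$-a.e., which is automatic for any positive Radon measure. Together these give $\meas(\mathcal{R}_k\setminus\mathcal{R}_k^*)=0$, and combining with the Radon--Nikodym theorem applied to $\meas\llcorner\mathcal{R}_k^*\ll\mathcal{H}^k\llcorner\mathcal{R}_k^*$ one identifies the pointwise limit with $\di(\meas\llcorner\mathcal{R}_k^*)/\di(\mathcal{H}^k\llcorner\mathcal{R}_k^*)$; the strict positivity and finiteness of this density yield the claimed mutual absolute continuity.
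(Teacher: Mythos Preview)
The paper does not actually prove this theorem: it is quoted from the literature, with the formulation taken from \cite[Theorem~4.1]{AmbrosioHondaTewodrose17} and the underlying absolute continuity $\meas\llcorner\mathcal{R}_k\ll\mathcal{H}^k\llcorner\mathcal{R}_k$ attributed to the independent works \cite{MondinoKell16,GigliPasqualetto16,DePhilippisMarcheseRindler17}. So there is no in-paper argument to compare against; your sketch is, in outline, the route those references take.

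One genuine gap in your write-up: the sandwich you display controls $\meas\bigl(B(x,r)\cap U_\epsilon^{k,i}\bigr)$, not $\meas(B(x,r))$, yet in the next paragraph you conclude the existence of $\lim_{r\to 0}\meas(B(x,r))/(\omega_k r^k)$. To bridge this you must use that $(X,\dist,\meas)$ is locally doubling (Bishop--Gromov), so that $\meas$-a.e.\ $x\in U_\epsilon^{k,i}$ is an $\meas$-density point of $U_\epsilon^{k,i}$, i.e.\ $\meas(B(x,r)\cap U_\epsilon^{k,i})/\meas(B(x,r))\to 1$. With this in hand, for each fixed $\epsilon$ your sandwich yields, at $\meas$-a.e.\ $x\in\mathcal{R}_k$,
\[
\limsup_{r\to 0}\frac{\meas(B(x,r))}{\omega_k r^k}\le (1+\epsilon)^{2k}\,\liminf_{r\to 0}\frac{\meas(B(x,r))}{\omega_k r^k},
\]
and only then does intersecting over a countable sequence $\epsilon_n\downarrow 0$ give existence of the limit. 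Without the density-point step the argument does not close. The remaining points (finiteness from Lebesgue differentiation, positivity $\mu_\epsilon^{k,i}$-a.e.\ of the Radon--Nikodym derivative, and the identification of the limit with $\di(\meas\llcorner\mathcal{R}_k^*)/\di(\mathcal{H}^k\llcorner\mathcal{R}_k^*)$) are correct as stated.
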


	\subsection{Regular Lagrangian flows and Sobolev vector fields}\label{sec:introflows}
	
	In this subsection we recall the notion of regular Lagrangian flow (RLF for short), firstly introduced in the Euclidean setting by Ambrosio in \cite{Ambrosio04}, inspired by the earlier work by Di Perna and Lions \cite{lions}. 
	It was defined as a generalized notion of flow in order to study ordinary differential equations associated to weakly differentiable vector fields.
	It is indeed well-known that, in general, it is not possible to define in a unique way a flow associated to a non Lipschitz vector field, since the trajectories starting from a given point might be non unique.
	
	We begin by introducing vector fields over $(X,\dist,\meas)$ as derivations over an algebra of test functions, following the approach adopted in \cite{AmbrosioTrevisan14,AmbrosioTrevisan15}.
	
	% %magari aggiungere limitazione L^0
	\begin{definition}\label{def:derivation}
		We say that a linear functional $b:\Lip(X)\to L^0(X,\meas)$ is a derivation if it satisfies the Leibniz rule, that is
		\begin{equation}\label{eq:Leibnizrule}
		b(fg)=b(f)g+fb(g),
		\end{equation}
		for any $f,g\in\Lip(X)$.
		
		Given a derivation $b$, we write $b \in L^p(TX)$ if there exists $g\in L^p(X,\meas)$ such that
		\begin{equation}\label{eq:continuityofderivation}
		b(f)\le g\abs{\nabla f} \quad\text{$\meas$-a.e. on $X$,}
		\end{equation}
		for any $f\in \Lip(X)$ and we denote by $\abs{b}$ the minimal (in the $\meas$-a.e. sense) $g$ with such property. We also say that $b$ has compact support if $|b|$ has compact support.
	\end{definition}
	
	We will use the notation $b\cdot\nabla f$ in place of $b(f)$ in the rest of the paper.
	We remark that, if a derivation $b$ is in $L^p(TX)$ for $1<p\le \infty$, then it can be extended in a unique way to a linear functional 
	\begin{equation*}
	b:W^{1,q}_{\loc}(X,\dist,\meas)	\to L^1_{\loc}(X,\meas)
	\end{equation*}
	still satisfying \eqref{eq:continuityofderivation}, where $q$ is the dual exponent of $p$. If $b\in L^p(TX)$ has compact support, then the associated derivation maps $W^{1,q}_{\loc}(X,\dist,\meas)$ into $L^1(X,\meas)$.\newline
	Let us remark that any $f\in W^{1,2}(X,\dist,\meas)$ defines in a canonical way a derivation $b_f\in L^2(TX)$ through the formula $b_f(g)=\nabla f\cdot\nabla g$, usually called the \textit{gradient derivation} associated to $f$.
	
	A notion of divergence can be introduced via integration by parts.
	
	\begin{definition}\label{def:divergence}
		Let $b$ be a derivation in $L^1(TX)$. We say that $\div b\in L^p(X,\meas)$ if there exists $g\in L^p(X,\meas)$ such that
		\begin{equation}\label{eq:divergence}
		\int_X b(f)\di\meas=-\int_Xgf\di\meas
		\end{equation}
		for any $f\in \Lipbs(X)$. By a density argument it is easy to check that such a $g$ is unique (when it exists) and we will denote it by $\div b$.
	\end{definition}
	We refer to \cite{Gigli14} for the introduction of the so-called tangent and cotangent moduli over an arbitrary metric measure space and for the identification results between derivations and elements of the tangent modulus $L^2(TX)$ which justifies the use of this notation, at least in the case $p=2$. Given a derivation $b\in L^{p}(TX)$ we shall denote by $\norm{b}_{L^p}:=\norm{\abs{b}}_{L^{p}(X,\meas)}$ its $L^p$-norm.
	
	A notion of time dependent vector field over $(X,\dist,\meas)$ can be introduced in the natural way.
	
	\begin{definition}\label{time dependent vector field}
		Let us fix $T>0$ and $p\in[1,+\infty]$. We say that $b:[0,T]\to L^p(TX)$ is a time dependent vector field if, for every $f\in W^{1,q}(X,\dist,\meas )$ (where $q$ is the dual exponent of $p$), the map
		$$
		(t,x)\mapsto b_t\cdot \nabla f(x)
		$$
		is measurable with respect to the product sigma-algebra $\leb^1 \otimes \Borel(X)$.
		We say that $b$ is bounded if
		$$
		\norm{b}_{L^{\infty}} :=\norm{|b|}_{L^{\infty}([0,T]\times X)}<\infty,
		$$
		and that $b\in L^1((0,T);L^p(TX))$ if
		$$
		\int_0^T \norm{b_s}_{L^p} \di s<\infty.
		$$
	\end{definition}
	
	In the context of $\RCD(K,\infty)$ spaces the definition of Regular Lagrangian flow reads as follows (see \cite{AmbrosioTrevisan14,AmbrosioTrevisan15}).\\ 	
		In the sequel we shall stress the dependence of a vector field $b$ on the time variable only in case it is relevant for the sake of clarity.
	\begin{definition}\label{def:Regularlagrangianflow}
		Let us fix a time dependent vector field $b$ (see \autoref{time dependent vector field}). We say that $\XX:[0,T]\times X\rightarrow X$ is a Regular Lagrangian flow associated to $b$ if the following conditions hold true:
		\begin{itemize}
			\item [1)] $\XX(0,x)=x$ and $X(\cdot,x)\in C([0,T];X)$ for every $x\in X$;
			\item [2)] there exists $L\ge0$, called compressibility constant, such that
			\begin{equation*}
			\XX(t,\cdot)_{\sharp} \meas\leq L\meas,\qquad\text{for every $t\in [0,T]$};
			\end{equation*}
			\item [3)] for every $f\in \Test(X,\dist,\meas)$ the map $t\mapsto f(\XX(t,x))$ belongs to $AC([0,T])$ for $\meas$-a.e. $x\in X$ and
			\begin{equation}\label{eq: RLF condition 3}
			\frac{\di}{\di t} f(\XX(t,x))= b_t\cdot \nabla f(\XX(t,x)) \quad \quad \text{for a.e.}\ t\in (0,T).	
			\end{equation}
		\end{itemize}
	\end{definition}
	The selection of ``good'' trajectories is encoded in condition 2), which is added to ensure that the RLF does not concentrate too much the reference measure $\meas$.
	
	We remark that the notion of RLF is stable under modification in a negligible set of initial conditions, but we prefer to work with a pointwise defined map in order to avoid technical issues.
	
	\begin{remark}\label{remark: dense class definition RLF}
		Under the additional assumption $b\in L^1((0,T); L^2(TX))$, equality \eqref{eq: RLF condition 3} holds true for every $g\in W^{1,2}(X,\dist,\meas)$ (where it is understood that in this case the map $t\mapsto g(\XX_t(x))$ belongs to $W^{1,1}((0,T))$ for $\meas$-a.e. $x\in X$) if and only if it holds for every $h\in D$ with $D\subset W^{1,2}(X,\dist,\meas)$ dense with respect to the strong topology. Indeed, if this is the case, for any $g\in W^{1,2}(X,\dist,\meas)$ and every $\eps>0$ we can find $h\in D$ such that $\norm{g-h}_{W^{1,2}(X,\dist,\meas)}< \eps$. Hence, since \eqref{eq: RLF condition 3} holds true for $h$, we can estimate
		\begin{align*}
		\int_X \Big|g(\XX(t,x))-g(x)-&\int_0^t b_s\cdot \nabla g(\XX(s,x))\di s\Big|^2 \di \meas(x)\\
		\leq &
		2\int_X \abs{g(\XX(t,x))-h(\XX(t,x))}^2 \di \meas(x)
		+ 2\int_X \abs{g(x)-h(x))}^2 \di \meas(x)\\
		& + 2\int_X \abs{\int_0^t b_s\cdot \nabla (g-h)(\XX(s,x))\di s}^2\di\meas(x)\\
		\le & 2(L+1)\norm{g-h}_{L^2(X,\meas)}^2+2L\norm{g-h}_{W^{1,2}(X,\dist,\meas)}^2 \sqrt{t}\int_0^t \norm{b_s}^2_{L^2} \di s\\
		\le & \eps^2 C(L,t,\norm{b}_{L^1((0,T);L^2(TX))}),
		\end{align*}
		that, together with an application of Fubini's theorem, implies the validity of \eqref{eq: RLF condition 3} for $g$.
		
		Moreover, one can easily prove, via a localization procedure, that also functions in the class $W^{1,2}_{\loc}(X,\dist,\meas)$ are admissible tests in \eqref{eq: RLF condition 3}.
	\end{remark}

	The theory of existence and uniqueness for regular Lagrangian flows in the context of $\RCD(K,\infty)$ metric measure spaces was developed by Ambrosio and Trevisan in \cite{AmbrosioTrevisan14}. The authors work with a very weak notion of symmetric covariant derivative for a vector field (see \cite[Definition 5.4]{AmbrosioTrevisan15}) and prove existence and uniqueness of the RLF associated to any bounded vector field $b$ with symmetric derivative in $L^2$ and bounded divergence, over an $\RCD(K,\infty)$ space (actually the results in \cite{AmbrosioTrevisan14} cover also more general settings).\newline
	When trying to develop a regularity theory for RLFs, as we shall do in the forthcoming \autoref{sec:Gregularity}, we will consider a class of vector fields smaller than the one for which existence and uniqueness hold. Nevertheless, this class will be still rich enough for the sake of the applications.\newline
	Below we introduce our working definition of Sobolev vector field with symmetric covariant derivative in $L^2$. We refer the reader to \cite[Chapter 1]{Gigli14} for the construction of the modulus $L^2(T^{\otimes2}X)$.

	\begin{definition}\label{def:Sobolevvectorfield}
		The Sobolev space $W^{1,2}_{C,s}(TX)\subset L^2(TX)$ is the space of all $b\in L^2(TX)$ with $\div b\in L^{2}(X,\meas)$ for which there exists a tensor $S\in L^{2}(T^{\otimes 2}X)$ such that, for any choice of $h,g_1,g_2\in\Test(X,\dist,\meas)$, it holds
		\begin{equation}\label{eq:Sobvectfield}
		\int h S(\nabla g_1,\nabla g_2)\di\meas=\frac{1}{2}\int\left\lbrace -b(g_2)\div(h\nabla g_1)-b(g_1)\div(h\nabla g_2)+\div(hb)\nabla g_1\cdot\nabla g_2\right\rbrace \di\meas.
		\end{equation}
		In this case we shall call $S$ the symmetric covariant derivative of $b$ and we will denote it by $\nabla_{\sym} b$. 
		We endow the space $W^{1,2}_{C,s}(TX)$ with the norm $\norm{\cdot}_{W^{1,2}_{C,s}(TX)}$ defined by
		\begin{equation*}
		\norm{b}^2_{W^{1,2}_{C,s}(TX)}:=\norm{b}^2_{L^2(TX)}+\norm{\nabla_{\sym} b}^2_{L^2(T^{\otimes 2}TX)}.
		\end{equation*}
	\end{definition}

	\begin{remark}\label{rm:sobolevcovimpliessobolevsym}
		It easily follows from the definition that the symmetric covariant derivative of any vector field in $W^{1,2}_{C,s}(TX)$ is a symmetric tensor.\newline
		Moreover, any $b\in W^{1,2}_C(TX)$ such that $\div b\in L^{2}(X,\meas)$ belongs to $W^{1,2}_{C,s}(TX)$ and $\nabla_{\sym}b$ is the symmetric part of $\nabla b$ (we refer to \cite[Section 3.4]{Gigli14} for the definition of space $W^{1,2}_C(TX)$ and of the associated notion of covariant derivative). 
		In particular, the assertion we made above about $W^{1,2}_{C,s}(TX)$ being a rich class is justified, since we know from \cite{Gigli14} that $W^{1,2}_C(TX)$ is dense in $L^2(TX)$ for any $\RCD(K,\infty)$ m.m.s. $(X,\dist,\meas)$.\newline
		Finally, let us remark that \autoref{def:Sobolevvectorfield} above is stronger than \cite[Definition 5.4]{AmbrosioTrevisan15}, as one can check passing to the moduli in \eqref{eq:Sobvectfield} and applying H\"older's inequality. Therefore, the theory developed by Ambrosio and Trevisan grants in particular existence and uniqueness of the RLF associated to any bounded vector field $b\in W^{1,2}_{C,s}(TX)$ with bounded divergence. 
	\end{remark}

	\section{$G$-regularity of Lagrangian flows}\label{sec:Gregularity}
	
	This section is dedicated to establish a regularity results for Lagrangian flows of Sobolev vector fields. As we anticipated in the introduction, the main novelty with respect to our previous work \cite{BrueSemola18} is that regularity is understood with respect to a newly defined quasi-metric $\dist_G=1/G$, where $G$ is the Green function of the Laplacian. 
	
	A natural setting to have existence of a positive Green function is that one of $\RCD(0,N)$ metric measure spaces satisfying suitable volume growth conditions (see assumptions \autoref{assumption: good definition of G} and \autoref{ass:doublingG} below). Under these assumptions, in \autoref{subsec:Gquasimetric} we prove that $(X,\dist_G,\meas)$ is a doubling quasi-metric space and in \autoref{subsec:Lusinreg} we exploit this structural result, together with the maximal estimate \eqref{eq:maxvectorvalued}, to implement the Crippa-De Lellis' scheme (see \cite{CrippaDeLellis08}).  
	
	In \autoref{subsec:extnegative} we show how this strategy can be adapted to cover the case of a possibly negative lower Ricci curvature bound.

	\subsection{Key properties of the Green function}\label{subsec:Green}
	In this section we study the properties of the Green function, that is a central object in our work. From now on we assume that $(X,\dist, \meas)$ is an $\RCD(0,N)$ metric measure space. Further assumptions on the space will be added in the sequel.
	
	We set
	\begin{equation*}
	G(x,y):=\int_{0}^{\infty}p_t(x,y)\di t
	\end{equation*}
	and, for every $\eps >0$, 
	\begin{equation}\label{eq:Geps}
	G^{\eps}(x,y):=\int_{\eps}^{\infty} p_t(x,y) \di t.
	\end{equation}
	We shall adopt in the sequel also the notation $G_x(\cdot):=G(x,\cdot)$ (and analogously for $G^{\epsilon}$).
	
	Before going on let us observe that, at least at a formal level, the Green function is the fundamental solution of the Laplace operator. Indeed
	\begin{equation*}
	\Delta_yG_x(\cdot)=\Delta_y\left(\int_{0}^{\infty}p_t(x,\cdot)\di t\right)=\int_0^{\infty}\Delta_yp_t(x,\cdot)\di t=\int_0^{\infty}\frac{\di}{\di t}p_t(x,\cdot)\di t=\left[ p_t(x,\cdot)\right]_{0}^{\infty}=-\delta_x. 
	\end{equation*}
	
	In order to get the good definition of both $G$ and $G^{\eps}$, up to the end of this section, unless otherwise stated, we will work under the following assumption.
	
	\begin{assumption}\label{assumption: good definition of G}
		There exists $x\in X$ such that 
		\begin{equation}\label{eq:nonparabolicity}
		\int_1^{\infty}\frac{s}{\meas(B(x,s))}\di s<\infty.
		\end{equation}
	\end{assumption}
	
	Recall that, for a non compact Riemannian manifold with nonnegative Ricci curvature, it was proved by Varopoulos that \eqref{eq:nonparabolicity} is a necessary and sufficient condition for the existence of a positive Green function of the Laplacian (and this condition is known as \textit{non-parabolicity} in the literature).

	\begin{remark}\label{remark:when assumption is satisfied}
		Let us observe that all the metric measure spaces obtained as tensor products between an arbitrary $\RCD(0,N)$ m.m.s. $(X,\dist,\meas)$ and an Euclidean factor $(\setR^k,\dist_{\setR^k},\leb^k)$ for $k\ge 3$ do satisfy assumption \autoref{assumption: good definition of G}. 
	\end{remark}
	
	We now introduce functions $F,\ H:X\times(0,+\infty)\to(0,+\infty)$ by 
	\begin{equation}\label{eq:introF}
	F(x,r):=\int_r^{\infty}\frac{s}{\meas(B(x,s))}\di s
	\end{equation}
	and 
	\begin{equation}\label{eq:introH}
	H(x,r):=\int_r^{\infty}\frac{1}{\meas(B(x,s))}\di s.
	\end{equation}
	They are the objects we will use to estimate the Green function and its gradient (see \cite{Grygorian06} for analogous results in the smooth setting). As for the Green function, we will often write $F_x(r)$ or $H_x(r)$ in place of $F(x,r)$ and $H(x,r)$.
	
	\begin{remark}\label{rm:continuityFH}
		Let us remark that both $F$ and $H$ are continuous w.r.t. the first variable. It can be seen recalling that spheres are negligible on doubling m.m.s and using the continuity of the function $x\mapsto \meas(B(x,r))$ (with $r>0$ fixed).
	\end{remark}
	
	The next proposition has the aim to provide estimates for the Green function and its gradient, in terms of $F_x(\dist(x,y))$ and $H_x(\dist(x,y))$ that are simpler objects to work with.
	
	\begin{proposition}[Main estimates for $G$]\label{prop:estimateG}
		Let $(X,\dist,\meas)$ be an $\RCD(0,N)$ m.m.s. satisfying assumption \autoref{assumption: good definition of G}.
		Then there exists a constant $C_2\ge 1$, depending only on $N$, such that, for any $x\in X$,
		\begin{equation}\label{eq:estimateG}
		\frac{1}{C_2}F_x(\dist(x,y))\le G_x(y)\le C_2 F_x(\dist(x,y))\quad\text{for any $y\in X$.}
		\end{equation}
		Moreover for any $x\in X$ it holds that $G_x\in W^{1,1}_{\loc}(X,\dist,\meas)$ and 
		\begin{equation}\label{eq:estimate nablaGeps}
		|\nabla G_x|(y)\le \int_0^{\infty} |\nabla p_t(x,\cdot)|(y) \di t
		\le C_2 H_x(\dist(x,y)),
		\quad \text{for $\meas$-a.e. $y\in X$.}
		\end{equation}
	\end{proposition}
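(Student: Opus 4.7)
The proof weaves together the Gaussian bounds \eqref{eq:kernelestimate}--\eqref{eq:gradientestimatekernel} with $c=0$ (valid on $\RCD(0,N)$) and the sharp Bishop--Gromov inequality \eqref{eq:BG}, which for $K=0$ reads $\meas(B(x,R))/\meas(B(x,r))\le (R/r)^N$. I organize it in three steps.

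\textbf{Step 1 (pointwise bounds on $G_x$).} Inserting \eqref{eq:kernelestimate} into $G_x(y)=\int_0^\infty p_t(x,y)\di t$ and changing variables $s=\sqrt{t}$, both inequalities in \eqref{eq:estimateG} reduce to a two-sided comparison of
\begin{equation*}
I_\alpha(r):=\int_0^\infty \frac{s}{\meas(B(x,s))}\exp\!\Bigl(-\frac{r^2}{\alpha s^2}\Bigr)\di s,\qquad r:=\dist(x,y),
\end{equation*}
with $F_x(r)$, for $\alpha\in\{3,5\}$. The lower bound is immediate upon restriction to $s\ge r$, where the exponential is bounded from below by $e^{-1/\alpha}$. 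For the upper bound I split at $s=r$: the tail is dominated by $F_x(r)$ outright, while on $s\in(0,r)$ the sharp Bishop--Gromov bound $\meas(B(x,s))\ge (s/r)^N\meas(B(x,r))$, combined with the substitution $u=r/s$, gives a contribution of order $r^2/\meas(B(x,r))$. A second application of Bishop--Gromov in the complementary regime yields $F_x(r)\ge C(N)^{-1}r^2/\meas(B(x,r))$, closing the estimate; \autoref{assumption: good definition of G} guarantees convergence of all integrals.

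\textbf{Step 2 (gradient estimate for the truncation).} Set $g^\eps(y):=\int_\eps^\infty|\nabla p_t(x,\cdot)|(y)\di t$. The same change of variables and split-at-$s=r$ argument applied to \eqref{eq:gradientestimatekernel}, now with $1/\meas(B(x,s))$ in place of $s/\meas(B(x,s))$, gives $g^\eps(y)\le C_2 H_x(r)$ uniformly in $\eps$. To identify $g^\eps$ as a pointwise majorant of the weak slope of $G^\eps_x=\int_\eps^\infty p_t(x,\cdot)\di t$ I test against gradient derivations $b_\phi$, with $\phi\in\Test(X,\dist,\meas)$ compactly supported, and apply Fubini in
\begin{equation*}
\int G^\eps_x\,b_\phi(\psi)\di\meas=\int_\eps^\infty\!\!\int p_t(x,\cdot)\,b_\phi(\psi)\di\meas\,\di t,
\end{equation*}
using that $p_t(x,\cdot)\in W^{1,2}(X,\dist,\meas)$ for each $t>\eps$ by \eqref{eq:kernelestimate}--\eqref{eq:gradientestimatekernel}. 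This produces $G^\eps_x\in W^{1,1}_{\loc}(X,\dist,\meas)$ with $|\nabla G^\eps_x|\le g^\eps$ $\meas$-almost everywhere.

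\textbf{Step 3 and main obstacle.} As $\eps\downarrow 0$, $G^\eps_x\uparrow G_x$ in $L^1_{\loc}$ by monotone convergence, and by Step 2 the gradients are uniformly dominated in $L^1_{\loc}$ by $C_2 H_x(\dist(x,\cdot))$; lower semicontinuity of the relaxed gradient on $W^{1,1}_{\loc}$ then yields $G_x\in W^{1,1}_{\loc}$ together with the inequalities in \eqref{eq:estimate nablaGeps}. The genuinely delicate point is the interior of Step 2: transferring the time integral through the weak gradient in a setting devoid of smooth structure. The derivation-plus-Fubini argument is what promotes the formal identity $\nabla G^\eps_x=\int_\eps^\infty\nabla p_t(x,\cdot)\di t$ to a rigorous pointwise inequality. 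The two-sidedness in Step 1 is also subtle: it hinges on the sharpness of Bishop--Gromov, which is precisely what singles out $F_x$ as the correct quantity to appear on both sides.
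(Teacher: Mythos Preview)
Your Step 1 is correct and is exactly the content of the paper's \autoref{lemma:extimateintegral}, which the paper applies with $\phi(r)=\meas(B(x,r))$ for \eqref{eq:estimateG} and with $\phi(r)=r\meas(B(x,r))$ for the second inequality in \eqref{eq:estimate nablaGeps}; the paper simply packages your split-at-$s=r$ computation into that lemma.

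Where you genuinely diverge from the paper is in Steps 2--3. The paper does not use a weak/derivation formulation: instead (see \autoref{lemma:G eps regularity}) it first proves $G^\eps_x\in\Lip_b(X)$ via the semigroup identity $P_tG^\alpha_x=G^{\alpha+t}_x$ combined with the $L^\infty$--Lipschitz regularization \eqref{eq:linftylipregularization}, so that $|\nabla G^\eps_x|=\lip G^\eps_x$ by \autoref{thm:lipugualegrad}. The bound $\lip G^\eps_x\le g^\eps$ is then obtained pointwise by a reverse-Fatou argument on difference quotients, using the local Lipschitz bound on $p_t(x,\cdot)$ away from the pole as a dominating function. Finally the paper shows that $(G^\eps_x)_\eps$ is \emph{Cauchy} in $W^{1,1}_{\loc}$, which yields both $G_x\in W^{1,1}_{\loc}$ and the pointwise inequality in the limit. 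Your route via lower semicontinuity of the relaxed total variation also works for the passage to the limit, and is arguably slicker there; the paper's route has the advantage of producing $G^\eps_x\in\Lip_b$ as a byproduct, which is needed later (e.g.\ \autoref{rm::testlocregularity}).

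One caveat on your Step 2: the displayed Fubini identity $\int G^\eps_x\,b_\phi(\psi)\di\meas=\int_\eps^\infty\int p_t(x,\cdot)\,b_\phi(\psi)\di\meas\,\di t$ is not by itself a statement about $\nabla G^\eps_x$. To get from it to the pointwise bound $|\nabla G^\eps_x|\le g^\eps$ you still need either an integration by parts on each time slice together with a duality argument in the tangent module $L^2(TX)$, or a Bochner-integral argument for $t\mapsto\nabla p_t(x,\cdot)$ in the module (noting that the $L^2(TX)$-norm is not globally integrable in $t$, so one must work locally). As written, that step is underspecified, though the intended argument can be made to work.
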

	
	Before giving the proof of \autoref{prop:estimateG} let us state and prove some technical lemmas. The first one deals with the integrability properties of the maps $y\mapsto F_x(\dist(x,y))$ and $y\mapsto H_x(\dist(x,y))$. Since its formulation and its proof do not require any regularity assumption for the metric measure space, apart from the validity of assumption \autoref{assumption: good definition of G}, we state it in this great generality.
	
	\begin{lemma}\label{lemma: integrabilityFxHx} 
		Let $(X,\dist,\meas)$ be a m.m.s. satisfying assumption \autoref{assumption: good definition of G}. Then for every $x\in X$, the functions $y\mapsto F_x(\dist(x,y))$ and $y\mapsto H_x(\dist(x,y))$ belong to $L^1_{\text{\loc}}(X,\meas)$. 
		Moreover the map $(w,z)\mapsto H(w,\dist(w,z))$ belongs to $L^1_{\text{\loc}}(X\times X,\meas\times\meas)$.
	\end{lemma}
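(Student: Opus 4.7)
The approach is a Fubini/layer-cake computation that reduces each of the three integrability assertions to the pointwise finiteness of $F_x(R)$ and $H_x(R)$ at arbitrary radii $R > 0$, which in turn follows from the standing assumption.

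Since assumption \autoref{assumption: good definition of G} concerns only a single base point, the plan is to begin by extending it to every $x' \in X$. Using the inclusion $B(x, t-\dist(x,x')) \subseteq B(x', t)$ for $t \ge \dist(x,x')$, a change of variable, and the bound $s + \dist(x,x') \le (1+\dist(x,x')) s$ for $s \ge 1$ give
\begin{equation*}
\int_1^{\infty} \frac{t}{\meas(B(x',t))} \di t < \infty
\end{equation*}
for every $x' \in X$. As a byproduct, since $1/\meas(B(x',s)) \le s/\meas(B(x',s))$ for $s \ge 1$, one also obtains $H_{x'}(r), F_{x'}(r) < \infty$ for every $x' \in \supp \meas$ and every $r > 0$ (the integrand is locally bounded near $r$ by monotonicity of $s \mapsto \meas(B(x',s))$).

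Next, to prove the local integrability of $y \mapsto F_x(\dist(x,y))$, I would fix $x \in X$ and $R > 0$ and use the layer-cake identity together with Fubini:
\begin{equation*}
\int_{B(x,R)} F_x(\dist(x,y)) \di\meas(y) = \int_0^{\infty} \frac{s \, \meas(B(x, R \wedge s))}{\meas(B(x,s))} \di s = \frac{R^2}{2} + \meas(B(x,R)) F_x(R),
\end{equation*}
which is finite by the preceding step and the exponential volume growth in the definition of m.m.s. The identical computation for $H_x(\dist(x,\cdot))$ yields $R + \meas(B(x,R)) H_x(R)$. Since any compact set is contained in a ball centred at $x$ with $R$ large enough, this gives the desired $L^1_{\loc}$ integrability.

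The main obstacle is the joint statement $(w,z) \mapsto H(w,\dist(w,z)) \in L^1_{\loc}(X \times X)$, since the weight $1/\meas(B(w,\cdot))$ inside $H$ depends on $w$ itself. Fixing a compact product $K \times K'$ and choosing $R > 0$ and $x_0 \in X$ with $K \cup K' \subseteq B(x_0, R)$, the Fubini argument above, applied with $K'$ in place of a concentric ball, yields
\begin{equation*}
\int_{K'} H(w,\dist(w,z)) \di\meas(z) \le 2R + \meas(K') H_w(2R),\qquad \text{for every } w \in K,
\end{equation*}
by splitting the radial integral into $s \in [0,2R]$ (where $\meas(K' \cap B(w,s)) \le \meas(B(w,s))$) and $s \ge 2R$ (where $K' \subseteq B(w,2R) \subseteq B(w,s)$). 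Uniform control in $w \in K$ of the potentially large factor $H_w(2R)$ is then obtained from the inclusion $B(x_0, s-R) \subseteq B(w,s)$ for $s \ge R$, which gives $H_w(2R) \le H_{x_0}(R) < \infty$. Integrating in $w$ over $K$ and using that $\meas(K), \meas(K') < \infty$ concludes the proof.
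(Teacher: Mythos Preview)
Your proof is correct and follows essentially the same Fubini/layer-cake route as the paper: the identity $\int_{B(x,R)} F_x(\dist_x)\di\meas = R^2/2 + \meas(B(x,R))F_x(R)$ (and its analogue for $H$) is exactly the paper's formula \eqref{eq: abstract integrability} specialised to the relevant densities. Two small differences are worth noting. First, you explicitly propagate assumption~\ref{assumption: good definition of G} from the single base point to every $x'\in X$ via the inclusion $B(x,t-\dist(x,x'))\subset B(x',t)$; the paper's proof tacitly uses $F_x(R),H_x(R)<\infty$ for all $x$, so your extra paragraph actually fills a gap in the write-up. Second, for the joint $L^1_{\loc}$ statement the paper bounds $\int_{B(\bar x,R)} H_w(2R)\di\meas(w)$ by integrating in $w$ and using $B(\bar x,s/2)\subset B(w,s)$, whereas you obtain a uniform pointwise bound $H_w(2R)\le H_{x_0}(R)$ from $B(x_0,s-R)\subset B(w,s)$; your version is marginally cleaner and avoids one Fubini, but the two arguments are equivalent in spirit.
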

	\begin{proof}
		Let $g:\setR\to [0,+\infty)$ be a Borel function, define $f(r):=\int_r^{\infty}g(s) \di s$. Observe that
		\begin{equation}\label{eq: abstract integrability}
		\int_{B(x,R)} f(\dist_x(w)) \di \meas(w)=\int_0^R g(s)\meas(B(x,s))\di s
		+f(R)\meas(B(x,R)), \quad \text{for any $R>0$,}
		\end{equation}
		as an application of Fubini's theorem shows.
		Fix now any $x\in X$.
		Applying \eqref{eq: abstract integrability}, first with $g(s)=\frac{s}{\meas(B(x,s))}$ and then with $g(s)=\frac{1}{\meas(B(x,s))}$, we get
		\begin{equation}%\label{z3}
		\int_{B(x,R)} F_x(\dist_x(w)) \di \meas(w)=\frac{R^2}{2}+F_x(R)\meas(B(x,R)),
		\end{equation}
		and
		\begin{equation}\label{z18}
		\int_{B(x,R)} H_x(\dist_x(w)) \di \meas(w)=R+H_x(R)\meas(B(x,R)),
		\end{equation}
		that imply in turn that $y\mapsto F_x(\dist(x,y))$ and $y\mapsto H_x(\dist(x,y))$ belong to  $L^1_{\loc}(X,\meas)$.
		
		We now prove the local integrability of $(w,z)\mapsto H(w,\dist(w,z))$. It suffices to show that
		\begin{equation}
		\int_{B(\bar{x},R)}\int_{B(\bar{x},R)} H(w,\dist(w,z)) \di \meas(z)\di \meas(w)<\infty,\qquad \forall R>0,\ \forall \bar{x}\in X.
		\end{equation}
		Observe that for every $w\in B(\bar{x},R)$ it holds $B(\bar{x},R)\subset B(w,2R)$. Hence
		\begin{align*}
		\int_{B(\bar{x},R)}\int_{B(\bar{x},R)} & H(w,\dist(w,z)) \di \meas(z)\di \meas(w)\\
		\leq &	\int_{B(\bar{x},R)}\int_{B(w,2R)} H(w,\dist(w,z)) \di \meas(z)\di \meas(w)\\
		=&\int_{B(\bar{x},R)}\left[  2R+\meas(B(w,2R))H_w(2R)\right]  \di \meas(w)\\
		\leq& 2R\meas(B(\bar{x},R))+\meas(B(\bar{x},3R))\int_{B(\bar{x},R)} H_w(2R) \di \meas(w),
		\end{align*}
		where we used \eqref{z18} passing from the second to the third line above.
		Since $B(\bar{x},s/2)\subset B(w,s)$ for every $w\in B(\bar{x},R)$ and $s>2R$, we obtain
		\begin{align*}
		\int_{B(\bar{x},R)} H_w(2R) \di \meas(w)=& \int_{2R}^{\infty} \int_{B(\bar{x},R)} \frac{1}{\meas(B(w,s))}\di \meas(w)\di s\\
		\leq & \int_{2R}^{\infty} \int_{B(\bar{x},R)} \frac{1}{\meas(B(\bar{x},s/2))}\di \meas(w)\di s\\
		=& \meas(B(\bar{x},R))\int_{2R}^{\infty}\frac{1}{\meas(B(\bar{x},s/2))}\di s<\infty.
		\end{align*}
	\end{proof}
	
	The following lemma deals with the regularity properties of $G_x^{\eps}$, that is a regular approximation of $G_x$.
	
	\begin{lemma}\label{lemma:G eps regularity}
		Let $(X,\dist,\meas)$ be an $\RCD(0,N)$ space satisfying assumption \autoref{assumption: good definition of G} and fix $x\in X$.
		For every $0<\eps<1$ the function $G^{\eps}_x$ belongs to $\Lip_b(X)\cap D_{\loc}(\Delta)$ and it holds $\Delta G^{\eps}_x=-p_{\eps}(x,\cdot)$.
		Moreover $G_x\in W_{\loc}^{1,1}(X,\dist,\meas)$ and 
		\begin{equation}\label{z10}
		\lim_{\eps\to 0} G^{\eps}_x= G_x\qquad \text{in}\ W^{1,1}_{\loc}(X,\dist,\meas).
		\end{equation}	
	\end{lemma}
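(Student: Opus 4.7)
The statement bundles three claims: $G^\eps_x\in\Lipb(X)\cap D_\loc(\Delta)$ with the stated Laplacian; $G_x\in W^{1,1}_\loc(X,\dist,\meas)$; and $G^\eps_x\to G_x$ in $W^{1,1}_\loc$. The plan is to address them in this order, with the heat-kernel estimates \eqref{eq:kernelestimate}--\eqref{eq:gradientestimatekernel} (with $c=0$ since we are in $\RCD(0,N)$) and the non-parabolicity assumption \eqref{eq:nonparabolicity} doing almost all the work; the quantities $F_x$ and $H_x$ from \eqref{eq:introF}--\eqref{eq:introH} arise naturally after the substitution $s=\sqrt t$ in the $t$-integrals.

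For the first claim, boundedness is immediate from \eqref{eq:kernelestimate}: $G^\eps_x(y)\le 2C_1 F_x(\sqrt\eps)$, finite under \autoref{assumption: good definition of G}. For Lipschitzness I write $p_t(x,\cdot)=P_{t/2}(p_{t/2}(x,\cdot))$ and combine the $L^\infty$-$\Lip$ regularization \eqref{eq:linftylipregularization} (note $I_0(s)=s$) with the kernel upper bound to get $\Lip(p_t(x,\cdot))\le C/(\sqrt t\,\meas(B(x,\sqrt{t/2})))$; integrating over $[\eps,\infty)$ and using doubling yields the finite Lipschitz bound $\Lip(G^\eps_x)\le C' H_x(\sqrt\eps)$ via the triangle inequality applied to $|G^\eps_x(y)-G^\eps_x(z)|\le\int_\eps^\infty|p_t(x,y)-p_t(x,z)|\di t$. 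For the Laplacian, I approximate $G^\eps_x$ by the truncation $G^{\eps,R}_x:=\int_\eps^R p_t(x,\cdot)\di t$. Each $p_t(x,\cdot)\in D(\Delta)$ satisfies $\Delta p_t(x,\cdot)=\partial_t p_t(x,\cdot)$, so the closedness of $\Delta$ applied to this Bochner integral in $L^2$ gives $G^{\eps,R}_x\in D(\Delta)$ with $\Delta G^{\eps,R}_x=p_R(x,\cdot)-p_\eps(x,\cdot)$. Sending $R\to\infty$: $p_R(x,\cdot)\to 0$ uniformly on compact sets because $\meas(B(x,\sqrt R))\to\infty$ (a consequence of \eqref{eq:nonparabolicity}), while $G^{\eps,R}_x\to G^\eps_x$ in $W^{1,2}_\loc$ by the same type of integral bound; this identifies the measure-valued Laplacian of $G^\eps_x$ as the absolutely continuous measure $-p_\eps(x,\cdot)\meas$, with $L^\infty_\loc$ density by the heat-kernel upper bound, so $G^\eps_x\in D_\loc(\Delta)$.

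The $W^{1,1}_\loc$ claims hinge on the pointwise estimate
\[
|\nabla G^\eps_x|(y)\le\int_\eps^\infty|\nabla p_t(x,\cdot)|(y)\di t,
\]
obtained by a reverse-Fatou argument on the slopes $\lip$ of the truncations $G^{\eps,R}_x$ and then invoking \autoref{thm:lipugualegrad} to identify $\lip$ with the minimal relaxed gradient (the hypotheses hold since $\RCD(0,N)$ is locally doubling and supports a local Poincaré inequality). Plugging in \eqref{eq:gradientestimatekernel}, substituting $s=\sqrt t$, and splitting at $s=\dist(x,y)=:d$, the contribution from $s\ge d$ is dominated by $2C_1 H_x(d)$; for $s<d$, Bishop-Gromov \eqref{eq:BishopGromov} gives $\meas(B(x,s))\ge c_N(s/d)^N\meas(B(x,d))$ and the Gaussian factor $e^{-d^2/(5s^2)}$ absorbs the resulting polynomial blow-up after the change of variable $u=d/s$, producing a contribution of the form $C_N d/\meas(B(x,d))$, which doubling converts once more into a multiple of $H_x(d)$. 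Altogether $|\nabla G^\eps_x|(y)\le C_N H_x(\dist(x,y))$, uniformly in $\eps$, and this upper bound lies in $L^1_\loc$ by \autoref{lemma: integrabilityFxHx}. Applying the same bound to $G^{\eps_1}_x-G^{\eps_2}_x=\int_{\eps_1}^{\eps_2}p_t(x,\cdot)\di t$ and using dominated convergence with $C_N H_x(\dist(x,\cdot))$ as dominant on compact sets shows that $\{G^\eps_x\}$ is Cauchy in $W^{1,1}_\loc$; monotone convergence gives $G^\eps_x\nearrow G_x$ pointwise, so the Cauchy limit is $G_x$, proving both $G_x\in W^{1,1}_\loc$ and \eqref{z10}.

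The main obstacle is the non-smooth ``interchange of gradient and integral'' needed twice above: unlike in the Euclidean setting one cannot differentiate under the integral sign directly, and must instead work with slopes of truncated integrals and exploit \autoref{thm:lipugualegrad}. A secondary, quantitative subtlety is that $1/(\sqrt t\,\meas(B(x,\sqrt t)))$ is not integrable near $t=0$ once the local dimension is at least one; this is why the Gaussian factor $e^{-\dist^2/(5t)}$ in \eqref{eq:gradientestimatekernel} is essential for pointwise finiteness of the $t$-integral away from $x$, and why Bishop-Gromov is needed to convert the resulting estimate into the locally integrable object $H_x(\dist(x,\cdot))$.
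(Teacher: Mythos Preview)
Your proof is correct and follows essentially the same strategy as the paper's, with a few tactical differences worth noting. For Lipschitzness, the paper uses the cleaner semigroup identity $P_tG^\alpha_x=G^{\alpha+t}_x$ (obtained from Chapman--Kolmogorov) and then a single application of the $L^\infty$--$\Lip$ regularization, rather than bounding $\Lip(p_t(x,\cdot))$ for each $t$ and integrating as you do; both work, but the paper's route avoids the integration step. For the Laplacian, the paper argues directly by duality: for $f\in\Test$ one has $\int G^\eps_x\,\Delta f\,\di\meas=\int_\eps^\infty P_t\Delta f(x)\,\di t=-P_\eps f(x)$ since $P_rf\to0$ pointwise; this is shorter than your truncation-plus-closedness argument, though yours is equally valid. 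Finally, for the $W^{1,1}_\loc$ Cauchy property you establish the full pointwise bound $|\nabla G^\eps_x|(y)\le C_N H_x(\dist(x,y))$ via Bishop--Gromov and the Gaussian tail, which the paper postpones to \autoref{prop:estimateG} (proved there through the abstract \autoref{lemma:extimateintegral}); in the lemma itself the paper only checks the weaker integrated bound $\int_0^1\int_X|\nabla p_t(x,\cdot)|\,\di\meas\,\di t<\infty$, which already suffices for Cauchyness. Your version thus merges part of \autoref{prop:estimateG} into the lemma, which is fine and arguably more self-contained.
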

	\begin{proof}
		First of all let us prove that $G^{\eps}_x\in L^{\infty}(X,\meas)$. Using \eqref{eq:kernelestimate} and assumption \autoref{assumption: good definition of G} we have
		\begin{align*}
		G^{\eps}_x(y) =  \int_{\eps}^{\infty}p_t(x,y) \di y
		\leq  \int_{\eps}^{\infty}\frac{C_1}{\meas(B(x,\sqrt{t}))} \di t
		=2C_1 \int_{\sqrt{\eps}}^{\infty} \frac{t}{\meas(B(x,t))}\di t<\infty.
		\end{align*}
		The proof of the regularity statement $G^{\eps}_x\in \Lipb(X)$ will follow after proving that the identity $G^{\alpha+t}_x=P_{t} G^{\alpha}_x$ holds true for any $\alpha,t\in(0,+\infty)$ by the regularization properties of the heat semigroup (since we proved that $G^{\alpha}\in L^{\infty}$).
		To this aim, for any $x,y\in X$ and for any $t,\alpha>0$, we compute
		\begin{align*}
		P_tG^{\alpha}_x(y)=&\int_X p_t(y,z)G^{\alpha}_x(z)\di\meas(z)
		=\int_{\alpha}^{\infty}\int_Xp_t(y,z)p_s(x,z)\di\meas(z)\di s\\
		=&\int_{\alpha}^{\infty}p_{t+s}(x,y)\di s
		=\int_{\alpha+t}^{\infty}p_s(x,y)\di s=G^{\alpha+t}_x(y).
		\end{align*}
		% %TODO:questo sotto non è il massimo della pulizia, andrebbe reso coerente con la definizione data di laplaciano loc
		In order to prove that $G_x^{\eps}\in D_{\loc}(\Delta)$ and $\Delta G^{\eps}_x=p_{\eps}(x,\cdot)$ we consider a function $f\in \Test(X,\dist, \meas)$ and we compute
		\begin{equation*}
		\int_X G^{\epsilon}_x(w)\Delta f(w)\di \meas (w)=\int_{\eps}^{\infty} P_t \Delta f(x) \di t
		=-P_{\eps} f(x),
		\end{equation*}
		where the last equality follows from the observation that $P_rf\to 0$ pointwise as $r\to\infty$ for any $f\in L^1\cap L^2(X,\meas)$, that is a consequence of the estimates for the heat kernel \eqref{eq:kernelestimate} and the fact that $\meas(X)=\infty$.
		
		Let us prove \eqref{z10}. We preliminary observe that $G_x^{\eps}\to G_x$ in $L^1_{\loc}(X,\meas)$, since $G_x-G_x^{\eps}\geq 0$ and
		\begin{equation*}
		\int_X \left\lbrace G_x(y)-G^{\eps}_x(y)\right\rbrace  \di \meas(y)=
		\int_X\int_{0}^{\epsilon}p_t(x,y)\di t\di \meas(y)=\int_0^{\eps} \int_X p_t(x,y) \di \meas(y) \di t=\eps.
		\end{equation*}
		To conclude the proof it suffices to show that $G_x^{\eps}$ is a Cauchy sequence in $W_{\loc}^{1,1}(X,\dist,\meas)$. We claim that, for every $0<\eps_1<\eps_2<1$,
		\begin{equation}\label{eq: cauchynablaGeps}
		|\nabla (G_x^{\eps_1}-G_x^{\eps_2})|(y)=\lip(G_x^{\eps_1}-G_x^{\eps_2})(y)\leq \int_{\eps_2}^{\eps_1} \lip p_t(x,\cdot)(y) \di t,
		\quad \text{for $\meas$-a.e. $y\in X$.}
		\end{equation} 
			As a consequence of the Bishop-Gromov inequality \eqref{eq:BishopGromov} we get 
			\begin{align*}
			\sup_{t>0} \int_X\frac{e^{- \frac{\dist^2(x,y)}{5t}}}{\meas(B(x,\sqrt{t}))}\di \meas(y)
			=& \sup_{t>0} \frac{1}{\meas(B(x,\sqrt{t}))}\int_X \int_{\dist^2(x,y)/t}^{\infty} \frac{e^{-s/5}}{5} \di s \di \meas(y)\\
			=& \sup_{t>0} \int_0^{\infty}\frac{ e^{-s/5}}{5}\frac{\meas(B(x,\sqrt{st}))}{\meas(B(x,\sqrt{t}))} \di s\\
			\leq & \int_0^{\infty} \frac{ e^{-s/5}}{5}\max\set{s;1}^{N/2} \di s<\infty,
			\end{align*}
			that, together with the estimates for the gradient of the heat kernel \eqref{eq:gradientestimatekernel}, implies
		\begin{equation*}
		\int_0^1\int_X |\nabla p_t(x,\cdot)|(y) \di \meas(y) \di t\leq\int_0^1 \frac{C_2}{\sqrt{t}}\int_X \frac{e^{- \frac{\dist^2(x,y)}{5t}}}{\meas(B(x,\sqrt{t}))}\di \meas(y)\di t<\infty,
		\end{equation*} 
		therefore \eqref{eq: cauchynablaGeps} will yield the desired conclusion.
		This being said let us pass to the verification of \eqref{eq: cauchynablaGeps}. Observe that the $\meas$-a.e. identifications between slopes and minimal weak upper gradients above follow from the local Lipschitz regularity of the heat kernel and $G_{x}^{\epsilon}$ for $\epsilon>0$ thanks to \autoref{thm:lipugualegrad}. Observe that the very definition of $G^{\epsilon}$ grants that
		\begin{equation}\label{eq:firststepFatou}
		\lip(G_x^{\eps_1}-G_x^{\eps_2})(y)\leq \limsup_{z\to y} \int_{\eps_2}^{\eps_1}  \frac{ |p_t(x,y)-p_t(x,z)|}{\dist(y,z)} \di t,
		\quad \text{for every}\ y\in X.
		\end{equation}
		Moreover, for any $r<\frac{1}{2}d(x,y)$, the gradient estimate for the heat kernel \eqref{eq:gradientestimatekernel} yields
		\begin{equation}\label{z20}
		|\nabla p_t(x,\cdot)|(w)\leq \frac{C_1e^{ -\frac{r^2}{5t}}}{\sqrt{t}\meas(B(x,\sqrt{t}))}\quad \text{for $\meas$-a.e $w\in B(y,r)$.}
		\end{equation}
		Hence $p_t(x,\cdot)$ is Lipschitz in $B(y,r/2)$ with Lipschitz constant bounded from above by the right hand side of \eqref{z20}, thanks to a local version of the \textit{Sobolev to Lipschitz property} (see \autoref{section: preliminaries RCD}).
		Summarizing we obtain the bound
		\begin{equation}
		\frac{ |p_t(x,y)-p_t(x,z)|}{\dist(y,z)}\le \frac{C_1e^{ -\frac{r^2}{5t}}}{\sqrt{t}\meas(B(x,\sqrt{t}))},
		\end{equation}
		for every $z\in B(y,r/2)$ and every $t\in (0,\infty)$. Hence we can apply Fatou's lemma and pass from \eqref{eq:firststepFatou} to \eqref{eq: cauchynablaGeps}. 
	\end{proof}
	
	\begin{remark}\label{rm::testlocregularity}
		Proceeding as in the proof of \autoref{lemma:G eps regularity} above, one can prove that, for any $\eta\in \Test(X,\dist,\meas)$ with compact support, it holds that $\eta G^{\epsilon}_x\in\Test(X,\dist,\meas)$ for any $x\in X$ and for any $\epsilon>0$.
	\end{remark}

	We state another technical lemma, its elementary proof can be obtained with minor modifications to the proof of \cite[Lemma 5.50]{Grygorian06}.
	
	\begin{lemma}\label{lemma:extimateintegral}
		Let $\phi:(0,+\infty)\to(0,+\infty)$ be monotone increasing and set
		\begin{equation*}
		\psi(r):=\int_0^{+\infty}\frac{1}{\phi(\sqrt{t})}\exp\left(-\frac{r^2}{t}\right)\di t.
		\end{equation*}
		If $\phi$ satisfies the local doubling property
		\begin{equation*}
		\phi(2r)\le C(R)\phi(r)\quad\text{for any $0<r<R$,}
		\end{equation*}
		for some non decreasing function $C:(0,+\infty)\to(0,+\infty)$,
		then there exists a non decreasing function $\Lambda:(0,+\infty)\to(0,+\infty)$, whose values depend only on the function $C$, such that
		\begin{equation}\label{eq:soughtbound}
		\frac{1}{\Lambda(R)}\int_r^{\infty}\frac{s}{\phi(s)}\di s\le\psi(r)\le \Lambda(R)\int_r^{\infty}\frac{s}{\phi(s)}\di s,
		\end{equation}
		for any $0<r<R$ and for any $R\in (0,+\infty)$.
		Moreover, when $C$ is constant, we can choose $\Lambda$ to be constant.
	\end{lemma}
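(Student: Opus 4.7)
The key move is to rewrite $\psi$ in a form that can be compared directly with $\int_r^\infty s/\phi(s)\,\di s$. Substituting $s=\sqrt{t}$ in the defining integral gives
\begin{equation*}
\psi(r)=2\int_0^{\infty}\frac{s}{\phi(s)}\exp\left(-\frac{r^2}{s^2}\right)\di s,
\end{equation*}
so everything reduces to estimating the effect of the Gaussian factor $e^{-r^2/s^2}$ on each side of $s=r$.

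For the lower bound, I would simply note that for $s\ge r$ one has $r^2/s^2\le 1$, hence $e^{-r^2/s^2}\ge e^{-1}$, and therefore
\begin{equation*}
\psi(r)\ge 2\int_r^\infty \frac{s}{\phi(s)}e^{-r^2/s^2}\di s\ge \frac{2}{e}\int_r^\infty \frac{s}{\phi(s)}\di s.
\end{equation*}
This part needs no doubling and no $R$-dependence.

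For the upper bound, I would split $\psi(r)=2(I_1+I_2)$ with $I_1=\int_0^r$ and $I_2=\int_r^\infty$. On $I_2$ the factor $e^{-r^2/s^2}\le 1$ gives $I_2\le\int_r^\infty s/\phi(s)\,\di s$ directly. The main work is on $I_1$: decompose $[0,r]$ into dyadic rings $[r/2^{k+1},r/2^k]$ for $k\ge 0$. On such a ring, iterating the doubling hypothesis $\phi(2\rho)\le C(R)\phi(\rho)$ (valid since $r/2^{k}<R$) yields $\phi(r/2^{k+1})\ge C(R)^{-(k+1)}\phi(r)$, while $e^{-r^2/s^2}\le e^{-4^k}$. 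Combining these with the length $r/2^{k+1}$ of the ring and summing the resulting geometrically-damped series (the Gaussian $e^{-4^k}$ beats any polynomial-exponential $C(R)^{k}/4^k$) gives
\begin{equation*}
I_1\le C'(R)\frac{r^2}{\phi(r)}.
\end{equation*}
Finally, applying the doubling condition in the opposite direction,
\begin{equation*}
\int_r^\infty \frac{s}{\phi(s)}\di s\ge \int_r^{2r}\frac{s}{\phi(s)}\di s\ge \frac{3r^2}{2\phi(2r)}\ge \frac{3r^2}{2C(R)\phi(r)},
\end{equation*}
converts this into $I_1\le C''(R)\int_r^\infty s/\phi(s)\,\di s$, and adding $I_2$ closes the upper bound with a constant $\Lambda(R)$ depending only on $C$ through its values on $(0,R)$.

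The main technical point is the summability of the dyadic estimate on $I_1$, which relies on the fact that $e^{-4^k}$ decays much faster than any geometric factor arising from the doubling constant; this is routine but must be tracked to see that the resulting $\Lambda$ depends on $C$ only through $C(R)$, so that when $C$ is constant one may take $\Lambda$ constant as well.
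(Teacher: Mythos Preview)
Your argument is correct. The paper does not actually prove this lemma; it only states that ``its elementary proof can be obtained with minor modifications to the proof of \cite[Lemma 5.50]{Grygorian06}'', and your substitution $s=\sqrt{t}$ followed by the split at $s=r$, with a dyadic decomposition of $(0,r)$ to exploit the super-exponential decay of $e^{-4^k}$ against the at-most-geometric growth $C(R)^{k+1}$, is precisely the standard route one takes for such Gaussian-weighted comparison estimates. The only small cosmetic point is that in the iteration you should note explicitly that each application of the doubling hypothesis uses $r'=r/2^{j+1}<r<R$, so the same constant $C(R)$ is valid at every step; you essentially say this, and it is what makes $\Lambda$ depend on $C$ only through $C(R)$ (and hence constant when $C$ is).
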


	\begin{proof}[Proof of \autoref{prop:estimateG}]
		The proof of \eqref{eq:estimateG} follows from the estimates for the heat kernel \eqref{eq:kernelestimate} applying \autoref{lemma:extimateintegral} with $\phi(r):=\meas(B(x,r))$.
		
		In order to prove \eqref{eq:estimate nablaGeps} we observe that, arguing exactly as in the proof of \eqref{z10}, one can prove that, for any $\epsilon>0$ and any $x\in X$,
		\begin{equation}\label{eq:slope estimate G}
		\abs{\nabla G^{\eps}_x}(y)\leq \int_{\eps}^{\infty} |\nabla p_t(x,\cdot)|(y)\quad\text{for $\meas$-a.e. $y\in X$.}
		\end{equation}
		The sought conclusion follows from \eqref{z10}. 
		The proof of the inequality 
		\begin{equation*}
		\int_0^{\infty} |\nabla p_t(x,\cdot)|(y) \di t \le C_2 H_x(\dist(x,y)),
		\qquad\ \text{for}\ \meas\text{-}\text{a.e.}\ y\in X
		\end{equation*}
		follows from the gradient estimate for the heat kernel \eqref{eq:gradientestimatekernel}, applying \autoref{lemma:extimateintegral} with choice $\phi(r):=r\meas(B(x,r))$.
		%TODO: ho commentato l'ultima frase perché mi pare non servisse\ripetesse cose già dette
		%The validity of \eqref{eq:estimate nablaGeps} follows from the $\meas$-a.e. identity $\lip G_x^{\eps}=|\nabla G_x^{\eps}|$, which holds true in view of \ and \eqref{z10}.
	\end{proof}
	
	\begin{remark}\label{remark:Geps estimate}
		It is clear from the proof of \autoref{prop:estimateG} that the regularized functions $G^{\epsilon}$ satisfy
		\begin{equation}
		|\nabla G^{\eps}_x|(y)\le \int_{\eps}^{\infty} |\nabla p_t(x,\cdot)|(y) \di t
		\le C_2 H_x(\dist(x,y)),
		\qquad\ \text{for}\ \meas\text{-}\text{a.e.}\ y\in X.
		\end{equation}
	\end{remark}

	\begin{remark}\label{rm:continuityfarfrompole}
		As a consequence of \eqref{eq:estimate nablaGeps} and of the continuity of the map $x\mapsto H_x(r)$, exploiting the monotonicity w.r.t. $r$ of $H$ and a local version of the \textit{Sobolev to Lipschitz} property, one can prove that $G_{x}$ is continuous in $X\setminus\set{x}$. 
	\end{remark}
	
	Let us state and prove a maximal estimate that, as we anticipated in the introduction, is a key tool to bound the rate of change of the Green function along trajectories of a Lagrangian flow. It will be crucial in the proof of the vector-valued maximal estimate \autoref{prop: G-maximal estimate}.
	
	\begin{proposition}[Maximal estimate, scalar version]\label{prop:maximalGestimate}
		Let $(X,\dist,\meas)$ be an $\RCD(0,N)$ m.m.s. satisfying assumption \autoref{assumption: good definition of G}.
		Then there exists $C_M>0$, depending only on $N$, such that, for any Borel function $f:X\to[0,+\infty)$, it holds
		\begin{equation}\label{eq:auxiliaryestimate}
		\int f(w)\abs{\nabla G_x(w)}\abs{\nabla G_y(w)}\di\meas(w)\le C_M G(x,y)\left(Mf(x)+Mf(y)\right),
		\end{equation}
		for every $x,y\in X$.
	\end{proposition}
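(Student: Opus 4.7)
The natural starting point is the pointwise estimate $|\nabla G_x|(w) \le C_2 H_x(\dist(x,w))$ from \eqref{eq:estimate nablaGeps}, combined with the tautological layer-cake identity
\[
H_x(\dist(x,w)) = \int_0^{\infty} \frac{\mathbf{1}_{B(x,s)}(w)}{\meas(B(x,s))} \di s,
\]
and its analogue in the $y$ variable. Setting $d:=\dist(x,y)$ and swapping orders of integration via Fubini, the left hand side of \eqref{eq:auxiliaryestimate} is controlled by
\[
C_2^2 \int_0^{\infty}\!\!\int_0^{\infty} \frac{I(s,t)}{\meas(B(x,s))\,\meas(B(y,t))} \di s\, \di t, \qquad I(s,t):=\int_{B(x,s)\cap B(y,t)} f \di \meas.
\]
Two elementary observations drive the rest of the proof: first, the triangle inequality forces $I(s,t)=0$ whenever $s+t<d$, effectively truncating the region of integration from below; second, the very definition of the Hardy--Littlewood maximal function yields the twin bounds
\[
I(s,t) \le \min\bigl(\meas(B(x,s))\,Mf(x),\,\meas(B(y,t))\,Mf(y)\bigr).
\]

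I then split the $(s,t)$-domain along the diagonal $\{s=t\}$. On the half $\{s \ge t\}$ I use the bound with $Mf(y)$, so that only the factor $\meas(B(x,s))^{-1}$ survives. Distinguishing $t \le d/2$ (where $s+t \ge d$ becomes $s \ge d-t$) from $t \ge d/2$ (where $s\ge t$ already implies $s+t \ge d$), a Fubini calculation with the change of variables $u=d-t$ leads to
\[
\iint_{\{s \ge t,\,s+t \ge d\}} \frac{\di s\, \di t}{\meas(B(x,s))} \le 2\int_{d/2}^{\infty} H_x(u) \di u = 2\int_{d/2}^{\infty} \frac{s-d/2}{\meas(B(x,s))} \di s \le 2\,F_x(d/2).
\]
The symmetric computation on $\{s \le t\}$, with the bound $Mf(x)$, produces a contribution bounded by $2 F_y(d/2)\,Mf(x)$.

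To close the argument I invoke the Bishop--Gromov doubling \eqref{eq:BishopGromov} twice. First, estimating $\int_{d/2}^d s/\meas(B(x,s)) \di s$ by $d^2/\meas(B(x,d/2))$ and using doubling to relate $\meas(B(x,d/2))$ to $\meas(B(x,2d))$ yields $F_x(d/2)\le C(N)\,F_x(d)$. Second, splitting the defining integral of $F_x(d)$ at $2d$ and using the inclusion $B(x,s/2)\subset B(y,s)$ valid for $s \ge 2d$ (together with doubling for the near part) gives $F_x(d) \le C(N)\,F_y(d)$. Combining these with the lower bound $G(x,y) \ge C_2^{-1}\,F_x(d)$ from \eqref{eq:estimateG} yields \eqref{eq:auxiliaryestimate} with a constant $C_M$ depending only on $N$. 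The main technical point lies in the careful bookkeeping of the region-splitting and the doubling comparisons; the guiding principle is that $H$ and $F$ play in this abstract setting the same role as the powers $r^{1-N}$ and $r^{2-N}$ in the Euclidean model, and the overall scheme mimics the classical pointwise bound for Riesz-type operators via the Hardy--Littlewood maximal function.
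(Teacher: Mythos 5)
Your argument is correct and follows essentially the same route as the paper's proof: the layer-cake representation of $H_x$, Fubini, the maximal-function bound on the inner integral, a splitting of the $(s,t)$-plane according to which ball dominates, and Bishop--Gromov to compare $F_x(d/2)$, $F_x(d)$, $F_y(d)$ and $G(x,y)$. Your two half-planes $\{s\ge t\}$, $\{s\le t\}$ intersected with $\{s+t\ge d\}$ coincide with the paper's regions $A_1\cup B_1$ and $A_2\cup B_2$, so the difference is purely organizational.
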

	
	\begin{proof}
		Fix two different points in $x,y\in X$.
		Thanks to \eqref{eq:estimate nablaGeps} we can estimate the left hand side of \eqref{eq:auxiliaryestimate} with
		\begin{equation*}
		C_2^2\int_0^{\infty}\int_0^{\infty}\int_X f(w) \frac{\id_{B(x,r)}(w)}{\meas (B(x,r))}\frac{\id_{B(y,s)}(w)}{\meas (B(y,s))} \di \meas(w) \di s\di r.
		\end{equation*}
		By splitting the domain $(0,+\infty)\times(0,+\infty)$ into $A_1,A_2$ and $A_3$, with $A_1:=\set{(s,r)|\ \dist(x,y)+s\le r}$, $A_2:=\set{(s,r)|\ \dist(x,y)+r\le s}$ and $A_3:=\set{(s,r)|\ \dist(x,y)>|r-s|}$ we are left with the estimates of the following quantities:
		\begin{equation*}
		I_1:=\int_{A_1}\int_X f(w) \frac{\id_{B(x,r)}(w)}{\meas (B(x,r))}\frac{\id_{B(y,s)}(w)}{\meas (B(y,s))} \di \meas(w) \di s\di r,
		\end{equation*}	
		\begin{equation*}
		I_2:=\int_{A_2}\int_X f(w) \frac{\id_{B(x,r)}(w)}{\meas  (B(x,r))}\frac{\id_{B(y,s)}(w)}{\meas (B(y,s))} \di \meas(w) \di s\di r
		\end{equation*}
		and
		\begin{equation*}
		I_3:=\int_{A_3}\int_X f(w) \frac{\id_{B(x,r)}(w)}{\meas (B(x,r))}\frac{\id_{B(y,s)}(w)}{\meas (B(y,s))} \di \meas(w) \di s\di r.
		\end{equation*}
		In order to estimate $I_1$, we observe that $B(y,s)\subset B(x,r)$ for every $(s,r)\in A_1$, thus
		\begin{align*}
		I_1= & \int_{A_1}\frac{1}{\meas(B(x,r))}\dashint_{B(y,s)} f(w) \di \meas(w) \di s\di r\\
		\leq & Mf(y) \int_{\dist(x,y)}^{\infty} \int_0^{r-\dist(x,y)} \frac{1}{\meas(B(x,r))} \di s \di r\\
		\leq& Mf(y)\int_{\dist(x,y)}^{\infty}\frac{r}{\meas(B(x,r))}\di r\\
		\leq & C_2G(x,y)Mf(y).
		\end{align*}
		By symmetry we get
		\begin{equation*}
		I_2\le C_2G(x,y)Mf(x).
		\end{equation*}
		To estimate $I_3$ let us observe that, if $r+s<\dist(x,y)$, then $B(x,r)\cap B(y,s)=\emptyset$. Thus the integration can be restricted to the smaller domain $B:=\set{(s,r)|\ \dist(x,y)> |r-s|,\ r+s\ge \dist(x,y)}$ that we split once more into $B_1:=\set{(s,r)|\ \dist(x,y)> r-s,\ r+s\ge \dist(x,y),\ r\geq s}$ and $B_2:=\set{(s,r)|\ \dist(x,y)> s-r,\ r+s\ge \dist(x,y),\ r<s}$. Therefore we have
		\begin{align*}
		I_3 = &\int_B\int_X f(w) \frac{\id_{B(x,r)}(w)}{\meas (B(x,r))}\frac{\id_{B(y,s)}(w)}{\meas (B(y,s))} \di \meas(w) \di s\di r\\
		= &\int_{B_1}\int_X f(w) \frac{\id_{B(x,r)}(w)}{\meas (B(x,r))}\frac{\id_{B(y,s)}(w)}{\meas (B(y,s))} \di \meas(w) \di s\di r\\
		&+  \int_{B_2}\int_X f(w) \frac{\id_{B(x,r)}(w)}{\meas (B(x,r))}\frac{\id_{B(y,s)}(w)}{\meas (B(y,s))} \di \meas(w) \di s\di r\\
		=: & I_3^1+I_3^2.
		\end{align*}
		We now deal with $I_3^1$. Using the rough estimate $\id_{B(x,r)}\le 1$ we obtain
		\begin{align*}
		I_3^1\leq & \int_{B_1}\frac{1}{\meas(B(x,r))}\dashint_{B(y,s)} f(w)\di \meas(w) \di s\di r\\
		\leq & Mf(y)\int_{\dist(x,y)/2}^{\infty} \int_{\abs{\dist(x,y)-r}}^r\frac{1}{\meas(B(x,r))} \di s \di r\\
		\leq & Mf(y)\int_{\dist(x,y)/2}^{\infty} \frac{r}{\meas(B(x,r))} \di r\\
		= & Mf(y) \frac{1}{4}\int_{\dist(x,y)}^{\infty} \frac{r}{\meas(B(x,r/2))} \di r.
		\end{align*}
		With a simple application of \eqref{eq:BishopGromov} and \eqref{eq:estimateG} we conclude that
		$I_3^1\leq C(C_2,N) Mf(y) G(x,y)$. By symmetry we also have $I_3^2\leq C(C_2,N) Mf(x) G(x,y)$. Putting all these estimates together we obtain the desired result.
	\end{proof}

	\begin{remark}\label{remark:variants of maximal estimate}
		It is clear from the proof of \autoref{prop:maximalGestimate} and from \autoref{remark:Geps estimate} that the same estimate holds true if one puts $\nabla G_x^{\eps}$ and $\nabla G_y^{\epsilon}$ in place of $\nabla G_x$ and $\nabla G_y$ at the left hand side of \eqref{eq:auxiliaryestimate}. More precisely it holds that
		\begin{equation}\label{eq:scalarregularizedemaximal}
		\int f(z)\abs{\nabla G^{\eps}_x(z)}\abs{\nabla G^{\eps}_y(z)}\di\meas(z)\le C_M G(x,y)\left(Mf(x)+Mf(y)\right),
		\end{equation}
		for every $x,y\in X$.
	\end{remark}
	
	\subsubsection{The Green quasi-metric}\label{subsec:Gquasimetric}
	This section is devoted to the study of the following function
	\begin{equation}\label{eq:Green distance}
	\dist_G(x,y):=
	\begin{cases*}
	\frac{1}{G(x,y)}& if $x\neq y$,\\
	0  & otherwise.
	\end{cases*}	
	\end{equation}
	Our aim is to prove that $\dist_G$ is a \textit{quasi-metric} on $X$ (i.e. it satisfies an approximated triangle inequality, see \autoref{prop: almost triangle inequality} below) and that $\meas$ is still a doubling measure over $(X,\dist_G)$ (see \autoref{prop: doubling property} below for a precise statement). The terminology, quite common in the literature about analysis on metric spaces, is borrowed from \cite[Chapter 14]{Heinonen01}. In order to do so we will need to impose an assumption slightly stronger than assumption \autoref{assumption: good definition of G} to the m.m.s. $(X,\dist,\meas)$.
	
	\begin{assumption}\label{ass:doublingG}
		There exists an $\RCD(0,N-3)$ metric measure space $(\bar{X},\bar{\dist},\bar{\meas})$ such that $(X,\dist,\meas)$ is the tensor product between $(\bar{X},\bar{\dist},\bar{\meas})$ and $(\setR^3,\dist_{\setR^3},\leb^3)$.
	\end{assumption}
	
	First of all observe that $\dist_G$ is symmetric and positive whenever $x\neq y$. Moreover, for every $x\in X$, the map $y\mapsto \dist_G(x,y)$ is continuous. Indeed, thanks to the continuity of $G_x$ in $X\setminus \set{x}$ (see \autoref{rm:continuityfarfrompole} above), we need only to show that $\dist_{G}(x, \cdot)$ is continuous at $x$, and this is the content of the following lemma.
	
	\begin{lemma}\label{lemma:continuitycontinuity}
		Let $(X,\dist,\meas)$ be an $\RCD(0,N)$ m.m.s. satisfying assumption \autoref{ass:doublingG}.
		Then for any $x\in X$ it holds that $\dist_G(x,y)\to 0$ if and only if $\dist(x,y)\to 0$.
	\end{lemma}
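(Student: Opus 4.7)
The plan is to recast the claim as a statement about the Green function via $\dist_G = 1/G$: namely, for a sequence $y_n$, $\dist_G(x,y_n)\to 0$ iff $G(x,y_n)\to+\infty$. Using the two-sided estimate $\frac{1}{C_2}F_x(\dist(x,y))\le G(x,y)\le C_2 F_x(\dist(x,y))$ from \autoref{prop:estimateG} and the monotonicity of $r\mapsto F_x(r)$, everything reduces to verifying (a) $F_x(r)\to+\infty$ as $r\to 0^+$, and (b) $F_x(r)<+\infty$ for every $r>0$ and every $x\in X$ (so that $F_x$ is bounded on any interval bounded away from $0$).

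For (a) I would crucially invoke the product hypothesis \autoref{ass:doublingG}. Writing $x=(\bar x,\tilde x)\in\bar X\times\setR^3$, the inclusion $B(x,s)\subseteq \bar B(\bar x,s)\times B_{\setR^3}(\tilde x,s)$ yields
\begin{equation*}
\meas(B(x,s))\le \frac{4\pi}{3}\,s^3\,\bar\meas(\bar B(\bar x,s)).
\end{equation*}
Since $s\mapsto \bar\meas(\bar B(\bar x,s))$ is bounded on $(0,1]$ by $\bar\meas(\bar B(\bar x,1))$, the integrand satisfies $\frac{s}{\meas(B(x,s))}\ge \frac{c}{s^2}$ for $s\in(0,1]$, and $\int_0^1 s^{-2}\,ds=+\infty$. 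Hence $F_x(r)\to+\infty$ as $r\to 0^+$. This is the main obstacle, since in a generic $\RCD(0,N)$ space one cannot exclude the possibility that the infinitesimal dimension at $x$ equals $1$, which would make $F_x$ stay bounded near $0$; the $\setR^3$-factor is what forces a polynomial decay of $\meas(B(x,s))$ strong enough to make the integrand non-integrable at $0$.

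For (b), I would first upgrade \autoref{assumption: good definition of G} from ``some'' $x_0$ to every $x$: by Bishop-Gromov in the $K=0$ case, for $s\ge 2\dist(x,x_0)$ one has $B(x_0,s+\dist(x,x_0))\subseteq B(x_0,\tfrac{3s}{2})$ and $B(x,s)\supseteq B(x_0,s-\dist(x,x_0))\supseteq B(x_0,\tfrac{s}{2})$, hence $\meas(B(x,s))\ge 3^{-N}\meas(B(x_0,s))$, so $\int_R^\infty \frac{s}{\meas(B(x,s))}\,ds\le 3^N\int_R^\infty \frac{s}{\meas(B(x_0,s))}\,ds<\infty$ for $R$ large. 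For any fixed $r>0$ the remaining integral $\int_r^R \frac{s}{\meas(B(x,s))}\,ds$ is finite because $s\mapsto \meas(B(x,s))$ is continuous and strictly positive on $[r,R]$. Thus $F_x(r)<+\infty$ for all $r>0$ and all $x\in X$.

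Combining the two facts, the proof finishes easily. If $\dist(x,y_n)\to 0$ then $F_x(\dist(x,y_n))\to+\infty$ by (a), so $G(x,y_n)\ge C_2^{-1}F_x(\dist(x,y_n))\to+\infty$ and $\dist_G(x,y_n)\to 0$. Conversely, if $\dist(x,y_n)\not\to 0$ we extract a subsequence with $\dist(x,y_{n_k})\ge\delta>0$; by monotonicity of $F_x$ and (b), $G(x,y_{n_k})\le C_2 F_x(\delta)<+\infty$, so $\dist_G(x,y_{n_k})\ge 1/(C_2 F_x(\delta))>0$, contradicting $\dist_G(x,y_n)\to 0$.
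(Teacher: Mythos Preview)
Your proof is correct and follows essentially the same route as the paper: both reduce the claim, via the two-sided comparison $G\asymp F_x(\dist(x,\cdot))$ of \autoref{prop:estimateG}, to the two facts that $F_x(r)\to+\infty$ as $r\to 0^+$ and that $F_x(r)<+\infty$ for every $r>0$. The paper's proof simply asserts these (``under our assumptions, $s\mapsto s/\meas(B(x,s))$ is not integrable at $0$'' and ``$F(x,\cdot)$ is bounded away from $0$''), whereas you spell them out explicitly---using the $\setR^3$ factor from \autoref{ass:doublingG} to force $\meas(B(x,s))\lesssim s^3$ near $0$, and upgrading \autoref{assumption: good definition of G} from one point to all points via Bishop--Gromov---so your write-up is more self-contained but not a different argument.
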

	
	\begin{proof}
		Suppose that $\dist_G(x,y)\to 0$. Then, by the very definition of $\dist_G$, it must be $G(x,y)\to +\infty$. Hence, since we have the uniform control $G(x,y)\le C_2 F(x,\dist(x,y))$ and $F(x,\cdot)$ is bounded away from $0$, we conclude $\dist(x,y)\to 0$.
		
		In order to prove the converse we observe that, if $\dist(x,y)\to 0$, then $F(x,\dist(x,y))\to \infty$. Indeed, under our assumptions, $s\mapsto s/\meas(B(x,s))$ is not integrable at $0$ and to conclude we just need to exploit the bound $G(x,y)\ge 1/C_2F(x,\dist(x,y))$ (see \autoref{prop:estimateG} above).
	\end{proof} 
	
	Let us now state and prove the almost triangle inequality property of $\dist_G$.
	
	\begin{proposition}[Almost triangle inequality]\label{prop: almost triangle inequality} 
		Let $(X, \dist,\meas)$ be an $\RCD(0,N)$ space satisfying assumption \autoref{assumption: good definition of G}. Then there exists a constant $C_T\ge 1$, depending only on $N$, such that
		\begin{equation}\label{eq:almost triangle inequality}
		\dist_G(x,y)\leq C_T (\dist_G(x,z)+\dist_G(z,y)) \quad \text{for any $x,y$ and $z\in X$.}	
		\end{equation}	
	\end{proposition}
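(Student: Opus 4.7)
The plan is to reduce the desired quasi-metric inequality to a one-sided estimate of the form
\[
G(x,y) \ge c\, G(x,z),
\]
whenever $\dist(x,z) \ge \dist(z,y)$, for some constant $c = c(N) > 0$. Indeed, once such a bound is available, swapping the roles of $x$ and $y$ handles the other case, and in either case one has
\[
\frac{1}{G(x,y)} \le \frac{1}{c}\,\frac{1}{\max(G(x,z),G(z,y))} \le \frac{1}{c}\left(\frac{1}{G(x,z)} + \frac{1}{G(z,y)}\right),
\]
so one may take $C_T := 1/c$. Thus the whole task collapses to a quantitative ``near-monotonicity'' statement for $G$ along the shorter side.

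The key ingredient is a doubling-type lower bound for the auxiliary function $F_x$ introduced in \eqref{eq:introF}. Using Bishop-Gromov in the form \eqref{eq:BishopGromov}, namely $\meas(B(x,2t)) \le 2^{N}\meas(B(x,t))$, together with the change of variable $s = 2t$ in the defining integral, one computes
\[
F_x(2r) = \int_{r}^{\infty}\frac{4t}{\meas(B(x,2t))}\,\di t \;\ge\; 2^{2-N}\int_r^\infty \frac{t}{\meas(B(x,t))}\,\di t \;=\; 2^{2-N}\,F_x(r).
\]
This is really the only computation in the argument; everything else follows by comparison via \autoref{prop:estimateG}.

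Now, assuming $\dist(x,z) \ge \dist(z,y)$, the ordinary triangle inequality gives $\dist(x,y) \le 2\,\dist(x,z)$. Since $F_x$ is monotone decreasing, the previous step yields
\[
F_x(\dist(x,y)) \;\ge\; F_x(2\,\dist(x,z)) \;\ge\; 2^{2-N}F_x(\dist(x,z)).
\]
Finally, combining this with the two-sided bound \eqref{eq:estimateG} applied once at $(x,y)$ and once at $(x,z)$, one obtains
\[
G(x,y) \;\ge\; \frac{1}{C_2}\,F_x(\dist(x,y)) \;\ge\; \frac{2^{2-N}}{C_2^{\,2}}\,G(x,z),
\]
which is exactly the reduction advertised at the start of the argument, with $c = 2^{2-N}/C_2^{\,2}$. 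The resulting constant $C_T = 2^{N-2}\,C_2^{\,2}$ depends only on $N$, as required.

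There is essentially no obstacle here beyond bookkeeping: assumption \autoref{assumption: good definition of G} is exactly what guarantees $F_x < \infty$, the Bishop-Gromov estimate is automatic on $\RCD(0,N)$, and both the upper and lower bound of $G$ in terms of $F_x$ are available from \autoref{prop:estimateG}. Note that the stronger assumption \autoref{ass:doublingG} is \emph{not} needed for this step, which is consistent with the statement of the proposition; it will only enter when one also wants to verify that $\meas$ is doubling with respect to $\dist_G$ (where one needs sharper comparisons between balls in $\dist$ and level sets of $G$).
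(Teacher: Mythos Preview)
Your proof is correct. Both your argument and the paper's rest on the same two ingredients---the two-sided comparison $G \sim F$ from \autoref{prop:estimateG} and the doubling estimate $F_x(2r) \ge c(N)\,F_x(r)$ coming from Bishop--Gromov---but the packaging differs. The paper first isolates a lemma (\autoref{lemma:firststeptriangle}) proving the product inequality
\[
F_x(\dist(x,z))\,F_y(\dist(y,z)) \le C\bigl(F_x(\dist(x,y))\,F_y(\dist(y,z)) + F_y(\dist(x,y))\,F_x(\dist(x,z))\bigr),
\]
obtained by splitting the double integral over the region $\{t>\dist(x,z),\ s>\dist(y,z)\}$ according to whether $t\ge s$ or $s\ge t$; translating back to $G$ this is exactly $G(x,z)G(y,z)\le C_T\,G(x,y)(G(x,z)+G(y,z))$, i.e.\ the quasi-triangle inequality. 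Your route is more direct: you split on which of $\dist(x,z),\dist(z,y)$ is larger and prove the stronger pointwise bound $G(x,y)\ge c\max\{G(x,z),G(z,y)\}$. This is a cleaner statement and a shorter proof; the paper's product-form lemma, on the other hand, is stated at the level of $F$ for an arbitrary doubling space, which makes it immediately reusable in \autoref{subsec:extnegative} for the local version with $\bar G$ (\autoref{prop: almost triangle inequality barG}). Your argument would transfer there just as well, so the difference is purely organizational.
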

	
	The core of the proof of \autoref{prop: almost triangle inequality} is contained in the following elementary lemma.
	
	\begin{lemma}\label{lemma:firststeptriangle} Let $(X,\dist, \meas)$ be a doubling 
		metric measure space satisfying assumption \autoref{assumption: good definition of G}. Then there exists a constant $C$, depending only on the doubling constant of $(X,\dist,\meas)$, such that
		\begin{equation*}
		F_x(\dist(x,z))F_y(\dist(y,z))\le C(F_x(\dist(x,y))F_y(\dist(y,z))+F_y(\dist(x,y))F_x(\dist(x,z))),	
		\end{equation*}
		for any $x,y,z$ in $X$.
	\end{lemma}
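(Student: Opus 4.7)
The plan is to reduce everything to an elementary ``reverse doubling'' property of the function $F$ and a case analysis driven by the triangle inequality. Write $a := \dist(x,z)$, $b := \dist(y,z)$, $c := \dist(x,y)$, so that $c \le a + b$ and in particular $\max(a,b) \ge c/2$. Since $r \mapsto F_x(r)$ is monotone non-increasing, if $a \ge c$ then $F_x(a) \le F_x(c)$ and multiplication by $F_y(b)$ already yields the first summand on the right-hand side with constant $1$; symmetrically, $b \ge c$ handles the second summand. So the interesting regime is $a,b < c$, in which case $\max(a,b) \ge c/2$ is forced.

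The main lemma I would prove is an intrinsic doubling-type estimate
\begin{equation*}
F_x(r/2) \le C\, F_x(r) \qquad \text{for every } x \in X,\ r > 0,
\end{equation*}
with $C$ depending only on the doubling constant of $(X,\dist,\meas)$. To obtain it, bound
\begin{equation*}
F_x(r/2) - F_x(r) = \int_{r/2}^{r} \frac{s}{\meas(B(x,s))}\,\di s \le \frac{3 r^{2}/8}{\meas(B(x,r/2))},
\end{equation*}
and compare with
\begin{equation*}
F_x(r) \ge \int_{r}^{2r} \frac{s}{\meas(B(x,s))}\,\di s \ge \frac{3 r^{2}/2}{\meas(B(x,2r))}.
\end{equation*}
Doubling gives $\meas(B(x,2r)) \le C_D^{2}\meas(B(x,r/2))$, hence $F_x(r/2) - F_x(r) \le (C_D^{2}/4)\, F_x(r)$, which yields the claim with $C = 1 + C_D^{2}/4$.

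With this in hand, conclude the proof by case analysis in the remaining regime $a,b < c$. If $a \ge c/2$, iterate the lemma a bounded number of times (or just apply it once since $F$ is monotone) to get $F_x(a) \le F_x(c/2) \le C F_x(c)$, whence
\begin{equation*}
F_x(a)\,F_y(b) \le C\, F_x(c)\, F_y(b),
\end{equation*}
which is the first term on the right of the claimed inequality. If instead $b \ge c/2$, by symmetry $F_y(b) \le C F_y(c)$, so
\begin{equation*}
F_x(a)\,F_y(b) \le C\, F_y(c)\, F_x(a),
\end{equation*}
which is the second term. Either way we are done, noting that both summands on the right are genuinely needed because the case split is not symmetric in $x$ and $y$.

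The only non-trivial step is the reverse-doubling lemma for $F$, and its difficulty is mild: it boils down to controlling the tail integral $\int_{r/2}^{r}$ by the tail integral $\int_{r}^{2r}$ via a single application of Bishop--Gromov-type doubling. The rest is bookkeeping using monotonicity of $F$ and the triangle inequality $c \le a + b$.
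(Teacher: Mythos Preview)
Your proof is correct and hinges on the same key ingredient as the paper's: the doubling estimate $F_w(r/2)\le C\,F_w(r)$, which both of you derive from the doubling property of $\meas$. The only difference is packaging. The paper writes $F_x(a)F_y(b)$ as a double integral over $\{t>a\}\times\{s>b\}$ and uses the set inclusion $\{a<t,\ b<s\}\subset\{c<2t,\ b<s\}\cup\{c<2s,\ a<t\}$ (which is just the triangle inequality in disguise) to obtain $F_x(a)F_y(b)\le F_x(c/2)F_y(b)+F_y(c/2)F_x(a)$ in one stroke, and then invokes $F_w(r/2)\le C F_w(r)$. You instead do an explicit case split on which of $a,b$ dominates $c/2$ and apply monotonicity plus the same $F$-doubling lemma to produce one of the two summands. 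Your route is a touch more elementary and makes the role of the triangle inequality explicit; the paper's double-integral trick is more compact and delivers both summands simultaneously without case analysis. Neither approach buys anything the other does not.
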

	\begin{proof}
		Let us fix $x,y,z$ in $X$. We can assume without loss of generality that they are all different.
		Starting from the identity
		\begin{equation*}
		F_x(\dist(x,z))F_y(\dist(y,z))=\int_{\dist(x,z)}^\infty\int_{\dist(y,z)}^\infty \frac{t}{\meas(B(x,t))}\frac{s}{\meas(B(y,s))} \di t\di s,
		\end{equation*}
		and exploiting the inclusion 
		\begin{equation*}
		\set{\dist(x,z)<t,\ \dist(y,z)<s}\subset \set{\dist(x,y)<2t,\ \dist(y,z)<s}\cup \set{\dist(x,y)<2s,\ \dist(x,z)<t},	
		\end{equation*}
		we get
		\begin{align*}
		F_x(\dist(x,z))F_y(\dist(y,z))\leq & \int_{\dist(x,y)/2}^\infty\int_{\dist(y,z)}^\infty \frac{t}{\meas(B(x,t))}\frac{s}{\meas(B(y,s))} \di t\di s\\
		&+\int_{\dist(x,z)}^\infty\int_{\dist(x,y)/2}^\infty \frac{t}{\meas(B(x,t))}\frac{s}{\meas(B(y,s))} \di t\di s\\
		=& F_x(\dist(x,y)/2)F_y(\dist(y,z))+F_y(\dist(x,y)/2)F_x(\dist(x,z)).
		\end{align*}
		To conclude, observe that $F_w(r/2)\le C F_w(r)$ for any $r>0$ and $w\in X$, where $C$ depends only on the doubling constant of $\meas$.
	\end{proof}

	\begin{proof}[Proof of \autoref{prop: almost triangle inequality}]
		The desired conclusion
		\eqref{eq:almost triangle inequality} is equivalent to
		\begin{equation*}
		G_x(z)G_y(z)\leq C_TG(x,y)(G_x(z)+G_y(z)),
		\end{equation*}
		that follows from \autoref{lemma:firststeptriangle} taking into account \autoref{prop:estimateG}.
	\end{proof}
	
	We introduce the notation 
	\begin{equation}\label{eq: Gballs}
	B^G(x,r):=\set{y\in X | \dist_G(x,y)<r}
	\end{equation}
	to denote the balls with respect to the quasi-metric $\dist_{G}$.
	The next result of this short section is about the doubling property of the measure $\meas$ in the quasi-metric space $(X, \dist_G)$.
	
	%Let us introduce another structural assumption on our space $(X,\dist, \meas)$:
	%\begin{assumption}\label{assumption: G-distance doubling}
	%   There exists a doubling m.m.s $(\bar{X}, \dist_{\bar{X}}, \mu)$ such that 
	%   $X=\bar{X}\times \setR^k$ (endowed with the product metric measure structure) and $k\geq 2(\bar{N}+3)$.
	%\end{assumption}
	%Note that this new assumption is stronger than \autoref{assumption: good definition of G} (see \autoref{remark:when assumption is satisfied}). It is necessary to guarantee a quantitative reverse Bishop-Gromov inequality in $(X, \dist, \meas)$.

	\begin{lemma}[Reverse Bishop-Gromov inequality]\label{lemma: reverse doubling}	
		Let $(\bar{X}, \bar{\dist},\mu)$ be a doubling m.m.s. with doubling constant $C_{\mu}$ and denote by $(X,\dist,\meas)$ the tensor product between $(\bar{X},\bar{\dist},\mu)$ and $(\setR^k,\dist_{\setR^k},\leb^k)$ for some $k\ge 1$. Then
		\begin{equation}\label{eq:reverse BishopGromov}
		\frac{\meas(B(x,R))}{\meas(B(x,r))}\geq \frac{1}{C_{\mu}\sqrt{2}^k}\left(\frac{R}{r}\right)^k,
		\end{equation}
		for every $0<r<R$ and for any $x\in X$.
	\end{lemma}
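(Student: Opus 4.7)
The plan is to exploit the product structure of a tensor product of metric measure spaces, whose distance satisfies $\dist((\bar x,v),(\bar y,w))^2 = \bar{\dist}(\bar x,\bar y)^2 + |v-w|^2$ and whose reference measure is $\mu\otimes\leb^k$. Writing $x=(\bar x,v)\in \bar X\times\setR^k$, I would sandwich the metric ball between two ``cylinders'': on the one hand, if $\bar{\dist}(\bar x,\bar y)<R/\sqrt{2}$ and $|v-w|<R/\sqrt{2}$ then the squared distance is at most $R^2$, so
\begin{equation*}
B_{\bar X}(\bar x, R/\sqrt{2})\times B_{\setR^k}(v, R/\sqrt{2})\subset B(x,R);
\end{equation*}
on the other hand, if $\dist((\bar x,v),(\bar y,w))<r$ then clearly both coordinate distances are less than $r$, yielding
\begin{equation*}
B(x,r)\subset B_{\bar X}(\bar x, r)\times B_{\setR^k}(v,r).
\end{equation*}

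Using Fubini, these two inclusions give
\begin{equation*}
\frac{\meas(B(x,R))}{\meas(B(x,r))}\ge \frac{\mu(B_{\bar X}(\bar x, R/\sqrt{2}))}{\mu(B_{\bar X}(\bar x, r))}\cdot\frac{\leb^k(B_{\setR^k}(v, R/\sqrt{2}))}{\leb^k(B_{\setR^k}(v, r))}.
\end{equation*}
The Euclidean factor is computed exactly and equals $(R/(\sqrt{2}\,r))^k$. To handle the $\bar X$-factor I would split two cases. If $R/\sqrt{2}\ge r$, monotonicity gives $\mu(B_{\bar X}(\bar x, R/\sqrt{2}))/\mu(B_{\bar X}(\bar x, r))\ge 1$. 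If $R/\sqrt{2}<r$ (so that $r<2\cdot R/\sqrt{2}$), the doubling property of $\mu$ with constant $C_\mu$ gives $\mu(B_{\bar X}(\bar x, r))\le C_\mu\,\mu(B_{\bar X}(\bar x, R/\sqrt{2}))$, hence the ratio is at least $1/C_\mu$. In either case the $\bar X$-factor contributes at least $1/C_\mu$.

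Multiplying the two lower bounds yields
\begin{equation*}
\frac{\meas(B(x,R))}{\meas(B(x,r))}\ge \frac{1}{C_\mu\sqrt{2}^{\,k}}\left(\frac{R}{r}\right)^{\!k},
\end{equation*}
which is exactly \eqref{eq:reverse BishopGromov}. There is no real obstacle here: once one commits to the $\ell^2$-product convention for the tensor product, the argument is essentially one-line Fubini plus a one-step application of doubling on the base. The only small care is handling the regime $r<R<\sqrt{2}\,r$, which is why the factor $C_\mu$ (rather than $1$) appears in the final constant.
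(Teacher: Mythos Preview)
Your proof is correct and follows essentially the same route as the paper: the same product-ball inclusions, Fubini, and a single application of doubling on the $\bar X$-factor. The paper avoids your case split by observing directly that $R/\sqrt{2}>r/2$ whenever $R>r$, so $\mu(B_{\bar X}(\bar x,R/\sqrt{2}))\ge \mu(B_{\bar X}(\bar x,r/2))\ge C_\mu^{-1}\mu(B_{\bar X}(\bar x,r))$ in one stroke, but this is a cosmetic difference only.
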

	\begin{proof}
		The following chain of inclusions holds true for any $x\in X$, any $v\in\setR^k$ and any $r>0$:
		\begin{equation*}
		B(x,r/\sqrt{2})\times B(v,r/\sqrt{2})\subset	 B((x,v),r)\subset B(x,r)\times B(v,r).
		\end{equation*}
		It follows that
		\begin{align*}
		\frac{\meas(B(x,v),R)}{\meas(B(x,v),r)} \ge & \frac{\meas( B(x,R/\sqrt{2})\times B(v,R/\sqrt{2}))}{\meas(B(x,r)\times B(v,r))}
		=\frac{1}{\sqrt{2}^k} \frac{\mu(B(x,R/\sqrt{2}))}{\mu(B(x,r))}\left(\frac{R}{r} \right)^k\\
		\geq &  \frac{1}{\sqrt{2}^k} \frac{\mu(B(x,r/2))}{\mu(B(x,r))}\left(\frac{R}{r} \right)^k
		\geq\frac{1}{C_{\mu}\sqrt{2}^k}\left(\frac{R}{r} \right)^k.
		\end{align*}
	\end{proof}
	
	We finally state and prove the doubling property of $\meas$ with respect to the new quasi-metric $\dist_G$.
	
	\begin{proposition}[Doubling property]\label{prop: doubling property}
		Assume that $(X,\dist, \meas)$ is an $\RCD(0,N)$ m.m.s. satisfying assumption \autoref{ass:doublingG}.
		Then there exists a constant $C^G>0$, depending only on $N$, such that
		\begin{equation}\label{eq:Gdoubling}
		\meas (B^G(x,2r))\leq C^G \meas (B^G(x,r)),
		\end{equation}
		for every $x\in X$ and every $r>0$.
	\end{proposition}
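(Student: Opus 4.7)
The plan is to sandwich the Green balls between two genuine metric balls whose radii have a bounded ratio, and then conclude via the usual doubling of $\meas$ in $(X,\dist)$. The key auxiliary fact I would establish first is the two-sided comparison
\begin{equation*}
c_1\,\frac{r^2}{\meas(B(x,r))}\le F_x(r)\le c_2\,\frac{r^2}{\meas(B(x,r))},\qquad r>0,
\end{equation*}
for constants $c_1,c_2$ depending only on $N$. The lower bound follows from the Bishop--Gromov inequality $\meas(B(x,s))\le(s/r)^N\meas(B(x,r))$ for $s\ge r$, just by cutting the defining integral of $F_x$ to the annulus $(r,2r)$. The upper bound uses the reverse Bishop--Gromov inequality \autoref{lemma: reverse doubling} applied with $k=3$ (granted by assumption \autoref{ass:doublingG}), which gives $\meas(B(x,s))\gtrsim(s/r)^3\meas(B(x,r))$ for $s\ge r$; the exponent being larger than $2$ is exactly what makes $\int_r^\infty s/\meas(B(x,s))\di s$ converge to something of order $r^2/\meas(B(x,r))$.

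With this equivalence, combined with the estimate $G_x(y)\sim F_x(\dist(x,y))$ from \autoref{prop:estimateG}, I would next encode the Green balls as metric balls up to controlled constants. Precisely, let $\rho_x(r)$ denote the (unique) value such that $F_x(\rho_x(r))=1/r$; since $F_x$ is strictly decreasing from $+\infty$ (at $0$) to $0$ (at $\infty$), the map $r\mapsto\rho_x(r)$ is well defined and increasing. Then there exist constants $\alpha<1<\beta$, depending only on $N$, with
\begin{equation*}
B\bigl(x,\rho_x(\alpha r)\bigr)\subset B^G(x,r)\subset B\bigl(x,\rho_x(\beta r)\bigr),
\end{equation*}
obtained directly from the inequalities $G_x(y)\ge F_x(\dist(x,y))/C_2$ and $G_x(y)\le C_2 F_x(\dist(x,y))$.

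The heart of the argument is now to show that $\rho_x(2\beta r)/\rho_x(\alpha r)$ is bounded by a constant depending only on $N$. Writing $R:=\rho_x(2\beta r)$ and $\rho:=\rho_x(\alpha r)$, by construction $F_x(\rho)=A\,F_x(R)$ for a fixed $A=2\beta/\alpha>1$. Inserting the two-sided comparison for $F_x$ yields
\begin{equation*}
\frac{\rho^2}{\meas(B(x,\rho))}\lesssim A\,\frac{R^2}{\meas(B(x,R))},
\end{equation*}
and rearranging, $\meas(B(x,R))/\meas(B(x,\rho))\lesssim A\,(R/\rho)^2$. On the other hand, assumption \autoref{ass:doublingG} with \autoref{lemma: reverse doubling} gives the lower bound $\meas(B(x,R))/\meas(B(x,\rho))\gtrsim (R/\rho)^3$. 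Comparing these two estimates forces $R/\rho\le C(N)$, which is precisely the bound needed.

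Once $R_1:=\rho_x(2\beta r)$ and $R_2:=\rho_x(\alpha r)$ are known to satisfy $R_1/R_2\le C(N)$, the conclusion is immediate from the Bishop--Gromov doubling \eqref{eq:BishopGromov} of $\meas$ in $(X,\dist)$:
\begin{equation*}
\meas(B^G(x,2r))\le \meas(B(x,R_1))\le \Bigl(\tfrac{R_1}{R_2}\Bigr)^N\meas(B(x,R_2))\le C(N)\,\meas(B^G(x,r)).
\end{equation*}
The only genuinely delicate step is the two-sided comparison for $F_x$, and that is where assumption \autoref{ass:doublingG} (via the $\setR^3$ factor) is crucially used: the pure $\RCD(0,N)$ condition alone provides only the upper measure-growth bound and cannot ensure that $F_x$ is controlled from above by $r^2/\meas(B(x,r))$, which in turn is what prevents pathological slow growth of $F_x^{-1}$ and hence guarantees the doubling of Green balls.
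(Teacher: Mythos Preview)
Your proof is correct and shares the same skeleton as the paper's: sandwich $B^G(x,r)$ between genuine metric balls of radii $F_x^{-1}$ of comparable levels, bound the ratio of those radii uniformly in $N$, and invoke Bishop--Gromov doubling. The difference is in how you control the growth of $F_x^{-1}$. You prove the full two-sided comparison
\[
c_1\,\frac{r^2}{\meas(B(x,r))}\le F_x(r)\le c_2\,\frac{r^2}{\meas(B(x,r))}
\]
and then combine it with the reverse Bishop--Gromov inequality to force $R/\rho\le C(N)$ in one algebraic step. The paper instead derives only the upper half of this comparison (their \eqref{eq:boundabove}), uses it to show that $r\mapsto r^{\alpha}F_x(r)$ is nonincreasing for some $\alpha>0$, obtains the dyadic decay $F_x(2R)\le\gamma F_x(R)$ (their \eqref{eq:intermboundabove}), inverts it to $F_x^{-1}(\gamma s)\le 2F_x^{-1}(s)$, and iterates this $M$ times to absorb the constants $C_2^2$ coming from the $G\sim F$ comparison. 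Your route is more transparent and avoids the iteration; the paper's version has the minor advantage that the intermediate inequality \eqref{eq:intermboundabove} is reused later (e.g.\ in \autoref{lemma: maximal function associated to dG} and in the local version \autoref{prop: doubling property barG}), so it is convenient for them to isolate it.
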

	
	\begin{proof}
		We begin by observing that, under assumption \autoref{ass:doublingG}, an application of \autoref{lemma: reverse doubling} yields the existence of constants $a>1$ and $b<1$ such that $F_x(aR)\le bF_x(R)$ for any $x\in X$ and for any $R>0$.\
		It follows that
			\begin{equation*}
			F_x(R)=F_x(aR)+\int_R^{aR}\frac{s}{\meas(B(x,s))}\di s\leq bF_x(R)+a\frac{R^2}{\meas(B(x,R))},
			\end{equation*}
			for any $x\in X$ and $R>0$, thus we get
		\begin{equation}\label{eq:boundabove}
		F_x(R)\le \frac{a}{1-b}\frac{R^2}{\meas(B(x,R))}\quad\text{for any $x\in X$ and for any $R>0$.}
		\end{equation}
		The inequality in \eqref{eq:boundabove} yields the existence of $\alpha>0$ such that $r\mapsto r^{\alpha}F_x(r)$ is nonincreasing on $(0,+\infty)$ (just take $\alpha:=(1-b)/a$ and differentiate w.r.t. $r$). Hence we can find $\gamma<1$ such that
		\begin{equation}\label{eq:intermboundabove}
		F_x(2R)\le \gamma F_x(R)\quad\text{for any $x\in X$ and for any $R>0$.}
		\end{equation}
		Inequality \eqref{eq:intermboundabove} implies in turn that 
		\begin{equation}\label{a3}
		F^{-1}_x(\gamma R)\leq 2 F^{-1}_x(R)\quad \text{for any $x\in X$ and for any $R>0$.}
		\end{equation}
		
		The last ingredient we need are the estimates, valid for any $\lambda>0$ and for any $x\in X$: 
		\begin{equation}\label{a4}
		\meas(\set{G_x>\lambda})\leq \meas(\set{ F_x(\dist_x)>\lambda/C_2}),\quad
		\meas(\set{F_x(\dist_x)>\lambda})\leq \meas(\set{G_x>\lambda/C_2}),
		\end{equation}
		where $C_2$ is the constant appearing in \eqref{eq:estimateG}.\newline
		In order to conclude let us show that
		\begin{equation}\label{a5}
		\meas\left(B^G\left(x, \frac{1}{\gamma^MC_2}r\right)\right)\leq 2^{M\cdot N} \meas(B^G(x,C_2r)),
		\end{equation}
		for every $r>0$, $x\in X$ and $M\in \setN$.\newline
		Using \eqref{a4}, the definition of $B^G$ and the fact that $F^{-1}_x$ is non-increasing we find
		\begin{equation*}
		\meas\left(B^G\left(x,\frac{1}{\gamma^MC_2}r \right)\right) =\meas \left(\left\lbrace G_x>\frac{C_2\gamma^M}{r}\right\rbrace\right)
		\leq  \meas\left(\left\lbrace F_x(\dist_x)>\frac{\gamma^M}{r}\right\rbrace\right)
		= \meas \left(B\left(x, F^{-1}_x\left(\gamma^M/r\right)\right)\right).
		\end{equation*}
		Applying first $M$-times \eqref{a3} and then the doubling inequality \eqref{eq:BishopGromov} in the $\RCD(0,N)$ m.m.s. $(X,\dist,\meas)$, we get
		\begin{align*}
		\meas\left(B^G\left(x,\frac{1}{C_2\gamma^M}r \right)\right) &\leq \meas (B(x, 2^M F^{-1}_x(1/r)))
		\leq 2^{M\cdot N} \meas (B(x, F^{-1}_x(1/r)))\\
		&= 2^{M\cdot N} \meas(\set{F_x(\dist_x)>1/r}).
		\end{align*}
		Using again \eqref{a4}, we obtain:
		\begin{equation}\label{eq:ineqb3}
		\meas\left(B^G\left(x,\frac{1}{C_2\gamma^M}r \right)\right)\leq 2^{M\cdot N}\meas(\set{G_x>\frac{1}{C_2 r}})=2^{M\cdot N}\meas(B^G(x,C_2r)).
		\end{equation}
		Setting $s:=C_2r$ in \eqref{eq:ineqb3}, we have that
		\begin{equation*}
		\meas\left(B^G\left(x, \frac{1}{C_2^2\gamma^M}s\right)\right)\leq 2^{(M+1)N} \meas(B^G(x,s)),
		\end{equation*}
		for any $s>0$, any $x\in X$ and every real number $M>1$. Choosing $M=\log_{\gamma^{-1}}(2C_2^2)$ we eventually obtain \eqref{eq:Gdoubling}.
	\end{proof}
		
		The last lemma of this subsection deals with the integrability properties of the maximal function associated to the quasi metric $\dist_G$.
		
		\begin{lemma}\label{lemma: maximal function associated to dG}
			Assume that $(X,\dist,\meas)$ is an $\RCD(0,N)$ m.m.s. satisfying assumption \autoref{ass:doublingG}. Then there exists a constant $C>0$, depending only on $N$, such that for any $f\in L^1_{\loc}(X,\meas)$ it holds
			\begin{equation}\label{z}
			M^Gf(x)\le C Mf(x),\qquad \forall x\in X,
			\end{equation}
			where $M^Gf(x):=\sup_{r>0}\dashint_{B^G(x,r)} |f| \di \meas$ and $Mf$ is the Hardy Littlewood maximal function associated to $f$.
			
			In particular $M^G$ is a bounded operator from $L^2(X,\meas)$ into itself.	
		\end{lemma}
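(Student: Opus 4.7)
The plan is to prove the pointwise bound \eqref{z} by showing that every $\dist_G$-ball is sandwiched between two ordinary $\dist$-balls whose measures are comparable, with a comparison constant depending only on $N$. The $L^2$-boundedness of $M^G$ will then follow immediately from the standard $L^2$-boundedness of the Hardy--Littlewood maximal operator on the doubling space $(X,\dist,\meas)$.

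The first step is to translate the two-sided estimate $G_x(y) \asymp F_x(\dist(x,y))$ from \autoref{prop:estimateG} into the chain of inclusions
\begin{equation*}
B\bigl(x, F_x^{-1}(C_2/r)\bigr) \subset B^G(x,r) \subset B\bigl(x, F_x^{-1}(1/(C_2 r))\bigr),
\end{equation*}
valid for every $x\in X$ and every $r>0$. Here I use that $F_x$ is strictly decreasing and continuous, hence invertible on $(0,+\infty)$.

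The second (and technical) step is to compare the two radii $F_x^{-1}(1/(C_2 r))$ and $F_x^{-1}(C_2/r)$. I would recycle the estimate $F_x^{-1}(\gamma R) \le 2 F_x^{-1}(R)$ proved in \eqref{a3} during the proof of \autoref{prop: doubling property}, where $\gamma\in(0,1)$ depends only on $N$ through assumption \autoref{ass:doublingG}. Iterating this inequality $M$ times with $M := \lceil \log_{\gamma^{-1}}(C_2^2) \rceil$ gives
\begin{equation*}
F_x^{-1}(1/(C_2 r)) \le 2^M F_x^{-1}(C_2/r),
\end{equation*}
and then the ordinary Bishop--Gromov doubling inequality \eqref{eq:BishopGromov} on $(X,\dist,\meas)$ upgrades this radius comparison to a measure comparison
\begin{equation*}
\meas\bigl(B(x, F_x^{-1}(1/(C_2 r)))\bigr) \le 2^{MN}\meas\bigl(B(x, F_x^{-1}(C_2/r))\bigr) \le 2^{MN}\meas\bigl(B^G(x,r)\bigr),
\end{equation*}
with $2^{MN}$ depending only on $N$.

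In the final step I combine the two previous steps: for any $f\in L^1_{\loc}(X,\meas)$,
\begin{equation*}
\dashint_{B^G(x,r)} |f|\di\meas
\le \frac{1}{\meas(B^G(x,r))}\int_{B(x, F_x^{-1}(1/(C_2 r)))} |f|\di\meas
\le 2^{MN}\dashint_{B(x, F_x^{-1}(1/(C_2 r)))} |f|\di\meas
\le 2^{MN}Mf(x),
\end{equation*}
so taking the supremum over $r>0$ yields \eqref{z} with $C := 2^{MN}$. The $L^2$-statement is then just the composition of this pointwise bound with the classical maximal inequality on the doubling space $(X,\dist,\meas)$, cf.\ \eqref{eq:LocalBoundnessMaximaloperator} (indeed globally here, since an $\RCD(0,N)$ space has global doubling constant $2^N$). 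The only genuine input is the quantitative sandwich of $\dist_G$-balls by $\dist$-balls; the rest is bookkeeping of constants that depend only on $N$.
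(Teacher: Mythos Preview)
Your proof is correct and follows essentially the same route as the paper: sandwich $B^G(x,r)$ between two $\dist$-balls via the two-sided estimate $G\asymp F$ from \autoref{prop:estimateG}, then compare the radii $F_x^{-1}(1/(C_2 r))$ and $F_x^{-1}(C_2/r)$ using the scaling inequality $F_x^{-1}(\gamma R)\le 2F_x^{-1}(R)$ (equivalently \eqref{eq:intermboundabove}/\eqref{a3}) together with Bishop--Gromov. The only cosmetic difference is the order in which you apply the radius comparison and the doubling inequality; the paper first passes to the ratio of radii via Bishop--Gromov and then bounds that ratio, while you iterate \eqref{a3} first and then apply doubling, but the two computations are interchangeable.
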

		\begin{proof}
			Exploiting the inclusions (see \eqref{eq:estimateG})
			\begin{equation*}
			\set{F_x(\dist_x)>C_2r}\subset B^G(x,r)\subset \set{F_x(\dist_x)>r/C_2},
			\end{equation*}
			we get
			\begin{equation*}
			\dashint_{B^{G}(x,r)} |f| \di \meas \leq
			\frac{\meas(B(x,F_x^{-1}(r/C_2)))}{\meas(B(x,F_x^{-1}(C_2r)))} Mf(x).
			\end{equation*}
			Using the Bishop-Gromov inequality \eqref{eq:BG} and \eqref{eq:intermboundabove} we get
			\begin{equation*}
			\frac{\meas(B(x,F_x^{-1}(r/C_2)))}{\meas(B(x,F_x^{-1}(C_2r)))}
			\le \left( \frac{F_x^{-1}(r/C_2)}{F_x^{-1}(C_2r)}\right)^N
			\le C(\gamma,C_2,N).
			\end{equation*}
			Recalling that $C_2$ and $\gamma$ are constants depending only on $N$, \eqref{z} follows.
			
			Finally observe that the maximal operator $M$ maps $L^{2}(X,\meas)$ into itself since $(X,\dist,\meas)$ is a doubling m.m.s., this property is inherited by $M^G$ thanks to \eqref{z}.
		\end{proof}
		
	\subsection{A Lusin-type regularity result}\label{subsec:Lusinreg}
	This section is dedicated to the study of the regularity of a flow $\XX_t$ associated to a Sobolev time dependent vector field $b$. The regularity will be understood with respect to the newly introduced quasi-metric $\dist_G$.

	Let us begin with a crucial maximal estimate for vector fields. As we pointed out in the introduction of the paper it can be seen as an infinitesimal version of \autoref{thm: sobolev vectorfield, G-regularity}.
	
	\begin{proposition}[Maximal estimate, vector-valued version]\label{prop: G-maximal estimate}
		Let $(X,\dist,\meas)$ be an $\RCD(0,N)$ m.m.s. satisfying assumption \autoref{assumption: good definition of G}.
		Assume that $b\in W^{1,2}_{C,s}(TX)$ is a compactly supported and bounded vector field. Then, setting $g:=|\nabla_{\sym} b|+|\div b|$, it holds
		\begin{equation}\label{eq:maxvectorvalued}
		\abs{ b\cdot\nabla G_x(y)+ b\cdot\nabla G_y(x)}\le 2C_MG(x,y)(Mg(x)+Mg(y)),
		\end{equation}
		for $\meas\times \meas$-a.e. $(x,y)\in X\times X$, where $M$ stands for the maximal operator.
	\end{proposition}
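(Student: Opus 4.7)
The strategy is to make rigorous the formal computation displayed in \eqref{eq:intint} and then to combine the resulting integral identity with the scalar maximal estimate of \autoref{prop:maximalGestimate}. The plan proceeds through three main steps, the first being the most delicate.

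First, I would regularize by replacing $G$ by $G^\epsilon$ from \eqref{eq:Geps}. By \autoref{lemma:G eps regularity}, $G_x^\epsilon\in\Lipb(X)\cap D_{\loc}(\Delta)$ with $\Delta G_x^\epsilon=-p_\epsilon(x,\cdot)\in L^2(X,\meas)$, and by \autoref{rm::testlocregularity} any product $\eta G^\epsilon_x$ with compactly supported $\eta\in\Test(X,\dist,\meas)$ lies in $\Test(X,\dist,\meas)$. Choosing a cut-off $\eta$ from \autoref{lemma:cutofffunctions} that equals $1$ on a large ball containing $\supp b\cup\{x,y\}$ and plugging $g_1=\eta G_x^\epsilon$, $g_2=\eta G_y^\epsilon$, $h=\eta$ into \eqref{eq:Sobvectfield}, a limiting procedure as the cut-off radius tends to $+\infty$ should produce
\[
2\int\nabla_{\sym}b(\nabla G_x^\epsilon,\nabla G_y^\epsilon)\di\meas=\int b\cdot\nabla G_x^\epsilon\,p_\epsilon(y,\cdot)\di\meas+\int b\cdot\nabla G_y^\epsilon\,p_\epsilon(x,\cdot)\di\meas+\int\div(b)\,\nabla G_x^\epsilon\cdot\nabla G_y^\epsilon\di\meas,
\]
after using the identity $\Delta G_x^\epsilon=-p_\epsilon(x,\cdot)$ to convert the terms with $\div(h\nabla g_i)$ in \eqref{eq:Sobvectfield}. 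The vanishing of the cut-off contributions relies on the decay estimates of \autoref{prop:estimateG} and the integrability provided by \autoref{lemma: integrabilityFxHx}.

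Next, I would take absolute values in the identity above, use the pointwise bound $|\nabla_{\sym}b(v,w)|\le|\nabla_{\sym}b|\,|v|\,|w|$, and apply the regularized scalar maximal estimate \eqref{eq:scalarregularizedemaximal} twice (with $f=|\nabla_{\sym}b|$ and with $f=\tfrac12|\div b|$). Exploiting the subadditivity of $M$ together with the definition $g=|\nabla_{\sym}b|+|\div b|$, this should yield
\[
\left|\int b\cdot\nabla G_x^\epsilon\,p_\epsilon(y,\cdot)\di\meas+\int b\cdot\nabla G_y^\epsilon\,p_\epsilon(x,\cdot)\di\meas\right|\le 2C_M G(x,y)\bigl(Mg(x)+Mg(y)\bigr),
\]
uniformly in $\epsilon>0$ and for every $x,y\in X$. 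Then I would pass to the limit $\epsilon\downarrow 0$: \autoref{lemma:G eps regularity} yields $G_x^\epsilon\to G_x$ in $W^{1,1}_{\loc}(X,\dist,\meas)$, and since $b$ is bounded and compactly supported $b\cdot\nabla G_x^\epsilon\to b\cdot\nabla G_x$ in $L^1_{\loc}(X,\meas)$. The integral $\int b\cdot\nabla G_x^\epsilon\,p_\epsilon(y,\cdot)\di\meas$ equals $P_\epsilon(b\cdot\nabla G_x^\epsilon)(y)$, and standard mollification properties of the heat semigroup together with the above $L^1_{\loc}$ convergence produce $\meas$-a.e. pointwise convergence to $b\cdot\nabla G_x(y)$. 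A symmetric argument handles the other term, and Fubini promotes the conclusion to $\meas\times\meas$-a.e. $(x,y)\in X\times X$, giving \eqref{eq:maxvectorvalued}.

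The hardest part is the first step: the defining identity \eqref{eq:Sobvectfield} admits only triples of test functions, while $G_x^\epsilon$ is neither compactly supported nor in $\Test(X,\dist,\meas)$. Justifying that each of the four terms in \eqref{eq:Sobvectfield} has a well-defined limit as the cut-off $\eta$ is removed requires combining the decay $|\nabla G_x^\epsilon|(z)\le C_2 H_x(\dist(x,z))$ and the pointwise bound \eqref{eq:estimateG} on $G$ with the $L^2$-integrability and compact support of $b$ and $\div b$, through a careful Cauchy--Schwarz-type argument. Once this is in place, the remaining two steps are essentially a bookkeeping exercise.
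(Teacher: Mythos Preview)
Your strategy is essentially the same as the paper's, and it is correct. However, you are substantially overcomplicating Step~1. No limiting procedure in the cut-off is required: since $b$ has compact support, a \emph{single} cut-off $\eta\in\Test(X,\dist,\meas)$ with $\eta\equiv 1$ on $\supp b$ suffices. The point is that every term appearing in the defining identity \eqref{eq:Sobvectfield} is supported in $\supp b$ (the right-hand side manifestly, and $\nabla_{\sym}b$ as well, as one checks by testing against $h$ supported away from $\supp b$). Hence on the support of the integrands one has $\eta=1$, $\nabla\eta=0$, so that $b\cdot\nabla(\eta G_x^\epsilon)=b\cdot\nabla G_x^\epsilon$, $\div(\eta b)=\div b$, $\div(\eta\nabla(\eta G_x^\epsilon))=\Delta G_x^\epsilon$ on $\supp b$, and $\eta\,\nabla_{\sym}b(\nabla(\eta G_x^\epsilon),\nabla(\eta G_y^\epsilon))=\nabla_{\sym}b(\nabla G_x^\epsilon,\nabla G_y^\epsilon)$. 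The identity you wrote then follows directly from \eqref{eq:Sobvectfield} with no passage to the limit and no decay analysis at infinity; what you flagged as ``the hardest part'' is in fact immediate.

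Your Step~2 is also correct in outline but slightly imprecise. The paper argues convergence of $P_\epsilon(b\cdot\nabla G_x^\epsilon)(y)$ to $b\cdot\nabla G_x(y)$ in $L^1_{\loc}(X\times X,\meas\times\meas)$ (not pointwise first): one adds and subtracts $P_\epsilon(b\cdot\nabla G_x)(y)$, uses the $L^1$-contractivity of $P_\epsilon$ together with $G_x^\epsilon\to G_x$ in $W^{1,1}_{\loc}$, and then applies dominated convergence in the $x$-variable with the integrable majorant $x\mapsto\norm{b}_{L^\infty}\norm{H_x(\dist_x(\cdot))}_{L^1(\supp b)}$ supplied by \autoref{lemma: integrabilityFxHx}. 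Pointwise $\meas\times\meas$-a.e.\ convergence along a subsequence then yields \eqref{eq:maxvectorvalued}.
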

	
	\begin{proof}
		The heuristic standing behind the proof of this result, which was already present in \cite{BrueSemola18}, is the following one: assuming that $b$ is divergence free we can formally compute
		\begin{align*}
		b\cdot\nabla G_x(y)+ b\cdot\nabla G_y(x)=&-\int_{X}b\cdot\nabla G_x(w)\di\Delta G_y(w)-\int_X b\cdot\nabla G_y(w)\di\Delta G_x(w)\\
		=& 2\int_X\nabla_{\sym}b(w)(\nabla G_x(w),\nabla G_y(w))\di\meas(w),
		\end{align*}
		so that, taking the moduli and applying \autoref{prop:maximalGestimate}, we would reach the desired conclusion.
		
		This being said the proof of this result will be divided into two steps: in the first one we are going to prove an estimate for the regularized functions $G^{\epsilon}$; in the second one the sought conclusion will be recovered by an approximation procedure.
		
		\textbf{Step 1}
		We start proving that, for every $\eps\in (0,1)$ and for every $x,y\in X$, it holds
		\begin{equation}\label{eq: intermediateMaximalVectorfileds}
		\abs{\int_X \left\lbrace b\cdot\nabla G^{\eps}_x \Delta G^{\eps}_y+b\cdot\nabla G^{\eps}_y \Delta G^{\eps}_x\right\rbrace  \di \meas}
		\le 2C_M G(x,y)\left(M g(x)+Mg(y)\right).
		\end{equation}  
		To this aim, we choose a cut-off function with compact support $\eta\in \Test(X,\dist,\meas)$ such that $\eta\equiv 1$ on $\supp b$ (the existence of such function follows from \autoref{lemma:cutofffunctions}). Applying \eqref{eq:Sobvectfield} with $h=\eta$, $f=\eta G^{\epsilon}_x$ and $g=\eta G^{\epsilon}_y$ (observe that they are admissible test function in the definition of symmetric covariant derivative thanks to \autoref{rm::testlocregularity}) we obtain:  
		\begin{align*}
		\Big|\int_X   \left\lbrace b\cdot\nabla G^{\eps}_x \Delta G^{\eps}_y+b\cdot\right. & \left.\nabla G^{\eps}_y \Delta G^{\eps}_x\right\rbrace   \di \meas\Big|\\
		\leq &
		\abs{\int_X \left\lbrace  b\cdot\nabla G^{\eps}_x \Delta G^{\eps}_y+b\cdot\nabla G^{\eps}_y \Delta G^{\eps}_x-\div b\ \nabla G^{\eps}_x\cdot \nabla G^{\eps}_y \right\rbrace \di \meas}\\
		&+\abs{\int_X \div b\ \nabla G^{\eps}_x\cdot \nabla G^{\eps}_y \di \meas}\\
		=& 2\abs{\int_X  \nabla_{\sym}b\ (\nabla G^{\eps}_x, \nabla G^{\eps}_y) \di \meas}+\abs{\int_X \div b\ \nabla G^{\eps}_x\cdot \nabla G^{\eps}_y \di \meas}\\
		\leq & 2\int_X g(w)|\nabla G^{\eps}_x(w)||\nabla G^{\eps}_y(w)| \di \meas(w).
		\end{align*}	
		The estimate in \eqref{eq: intermediateMaximalVectorfileds} follows from the inequality we just obtained applying \eqref{eq:scalarregularizedemaximal}.
		
		\textbf{Step 2}
		The second step of the proof aims into proving that, as $\eps \to 0$, it holds
		\begin{equation}\label{z4}
		\abs{\int_X \left\lbrace b\cdot\nabla G^{\eps}_x \Delta G^{\eps}_y+b\cdot \nabla G^{\eps}_y \Delta G^{\eps}_x\right\rbrace  \di \meas}\to \abs{b\cdot \nabla G_x(y)+b\cdot \nabla G_y(x)}
		\end{equation}
		in $L^1_{\loc}(X\times X,\meas\times\meas)$. 
		This will allow us to get \eqref{eq:maxvectorvalued} by choosing a sequence $\epsilon_i\downarrow 0$ such that the convergence in \eqref{z4} holds true $\meas\times\meas$-a.e. on $X\times X$ and exploiting what we proved in the first step. 
		
		In order to prove \eqref{z4}, we start recalling that $\Delta G^{\eps}_y(w)=-p_{\eps}(y,w)$ for any $\epsilon >0$ (see \autoref{lemma:G eps regularity}). Thus
		\begin{equation}\label{z15}
		\int_X b\cdot\nabla G^{\eps}_x\Delta G^{\eps}_y \di \meas = -P_{\eps}(b\cdot \nabla G^{\eps}_x)(y)\quad \text{for any $x,y\in X$.}
		\end{equation}
		Moreover for our purposes it suffices to check that $\int_K\int_K |P_{\eps}(b\cdot \nabla G^{\eps}_x)(y)-b\cdot\nabla G_x(y)| \di\meas(x)\di \meas(y)\to 0$ as $\epsilon\to 0$, for every compact $K\subset X$. Adding and subtracting $P_{\eps}(b\cdot \nabla G_x)(y)$ (that is well defined since $b\cdot \nabla G_x\in L^1(X,\meas)$), we obtain
		\begin{align*}
		\int_K\int_K |P_{\eps}&(b\cdot \nabla G^{\eps}_x)(y)-b\cdot\nabla  G_x(y)| \di\meas(x)\di \meas(y)\\ 
		\leq & \int_K \norm{P_{\eps}(b\cdot \nabla (G^{\eps}_x-G_x))}_{L^1(X,\meas)}\di \meas(x)
		+ \int_K \norm{P_{\eps}(b\cdot \nabla G_x)-b\cdot\nabla G_x}_{L^1(X,\meas)}\di\meas(x).
		\end{align*}
		Using the $L^1$-norm contractivity property of the semigroup $P_{\eps}$, we deduce that
		\begin{equation*}
		\norm{P_{\eps}(b\cdot \nabla (G^{\eps}_x-G_x))}_{L^1(X,\meas)}\le \norm{b\cdot \nabla (G^{\eps}_x-G_x)}_{L^1(X,\meas)}\quad \text{for any $x\in X$.}
		\end{equation*}
		Hence, for any $x\in X$,
		\begin{equation*}
		\norm{P_{\eps}(b\cdot \nabla (G^{\eps}_x-G_x))}_{L^1(X,\meas)}\to 0\quad\text{as $\epsilon\downarrow 0$,}
		\end{equation*}
		since $G^{\eps}_x\to G^x$ in $W^{1,1}_{\loc}(X,\dist,\meas)$ by \autoref{prop:estimateG} and $b$ has compact support by assumption. Also the term $\norm{P_{\eps}(b\cdot \nabla G_x)-b\cdot\nabla G_x}_{L^1(X,\meas)}$ goes to zero for every $x\in X$ since, as just remarked, $b\cdot \nabla G_x\in L^1(X,\meas)$. Moreover both these terms are uniformly bounded by the function $x\mapsto C\norm{b}_{L^{\infty}} \norm{H_x(\dist_x(\cdot))}_{L^1(\supp(b),\meas)}$ that is locally integrable, since the map $(x,y)\mapsto H(x,\dist(x,y))$ belongs to $L^1_{\loc}(X\times X,\meas\times\meas)$ in view of \autoref{lemma: integrabilityFxHx}. The conclusion of \eqref{z4} can now be recovered applying the dominated convergence theorem.
	\end{proof}

	Up to the end of this section we let $(X,\dist,\meas)$ be an $\RCD(0,N)$ m.m.s. satisfying assumption \autoref{ass:doublingG}.\newline
	Let us fix $T>0$ and let $b_t\in L^{\infty}((0,T)\times X)$ be a time dependent vector field with compact support, uniformly w.r.t. time. We further assume that $b_t\in W^{1,2}_{C,s}(TX)$ for a.e. $t\in(0,T)$ and that $|\nabla_{\sym} b_t|\in L^{1}((0,T);L^2(X,\meas))$ and $\div b_t\in L^1((0,T); L^{\infty}(X,\meas))$.\newline
	Under these assumptions, the theory developed in \cite{AmbrosioTrevisan14} grants existence and uniqueness of the Regular Lagrangian Flow $(\XX_t)_{t\in[0,T]}$ of $b$. We shall denote by $L\ge0$ its compressibility constant.
	
	Our aim is to implement a strategy very similar to the one adopted in \cite{CrippaDeLellis08} (in the Euclidean setting) and in our previous work \cite{BrueSemola18} (in the context of Ahlfors regular, compact $\RCD(K,N)$ m.m. spaces), in order to prove a Lusin-type regularity result for RLFs in terms of the newly defined quasi-metric $\dist_G$ (see \autoref{thm: sobolev vectorfield, G-regularity} below for a precise statement).
	
	Let us spend some words to explain the very simple idea behind the just mentioned strategy. Having in mind the standard Gronwall argument explained in the introduction (see \eqref{computation1}), it is natural to try to estimate the time derivative of $\dist_{G}(\XX_t(x), \XX_t(y))$. In order to do so, we use \autoref{cor: coupled derivative with G} and \autoref{prop: G-maximal estimate} obtaining
	\begin{equation}\label{z21}
	\abs{ \frac{\di}{\di t}  G(\XX_t(x),\XX_t(y))} \leq 2C_MG(\XX_t(x),\XX_t(y))\left\lbrace Mg(
	\XX_t(x))+Mg(\XX_t(y))\right\rbrace ,
	\end{equation}
	for $\leb^1$-a.e. $t\in (0,T)$ and
	for $\meas\times \meas$-a.e. $(x,y)\in X\times X$.
	
	Integrating with respect to the time variable and recalling that $\dist_G:=1/G$, we get, for any $t\in [0,T]$,
	\begin{equation}\label{z16}
	\dist_{G}(\XX_t(x),\XX_t(y))\leq \dist_{G}(x,y) \exp\left\lbrace \int_0^T Mg(\XX_s(x))\di s+\int_0^T Mg(\XX_s(y))\di s\right\rbrace
	\end{equation}
	for $\meas\times \meas$-a.e. $(x,y)\in X\times X$.
	Note that the function $g^*(x):=\int_0^T Mg(\XX_s(x))\di s$ belongs to $L^2$ with
	\begin{equation}\label{z19}
	\norm{g^*}_{L^2}\leq CL\int_0^T \norm{|\nabla b_s|+|\div b_s|}_{L^2} \di s,
	\end{equation}
	where $C$ is a universal constant and $L$ is as in \autoref{def:Regularlagrangianflow}.
	Putting \eqref{z16} and \eqref{z19} together, we get a \textit{weak} version of the sought Lusin-Lipschitz estimate, since the inequality in \eqref{z16} is not point-wise but it holds only $\meas\times \meas$-almost everywhere. In order to fix this issue we adopt a slightly different approach (borrowed from \cite{CrippaDeLellis08}). We consider the family of functionals
	\begin{equation}\label{CDL functional}
	\Phi_{t,r}(x):=\dashint_{B^G(x,r)}\log \left( 1+\frac{\dist_G(\XX_t(x),\XX_t(y))}{r} \right) \di \meas(y),
	\end{equation}
	for $r\in (0,\infty)$ and $t\in [0,T]$ and we bound its time derivative performing the same estimate in \eqref{z21}.
	
	This gives an $L^2$ bound on the function
	\begin{equation}\label{def: Phi star}
	\Phi^{*}(x):=\sup_{0\leq t\leq T}\sup_{0<r< \infty}\Phi_{t,r}(x),
	\end{equation}
	that will play the role of $g^*$ in \eqref{z16}. 
	Let us remark that in order to perform such a plan we need to use the doubling and quasi-metric property of $\dist_{G}$ (see \autoref{subsec:Gquasimetric}).
	
	\begin{proposition}\label{th 1}
		Let $(X,\dist,\meas)$ be an $\RCD(0,N)$ m.m.s. satisfying assumption \autoref{ass:doublingG}.
		Let moreover $b$, $(\XX_t)_{t\in [0,T]}$ and $L$ be as in the discussion above.
		Then for any compact $P\subset X$ there exists a constant $C=C(T,\meas(P),N)$ such that
		\begin{equation}\label{eq:boundnromreg}
		\norm{\Phi^*}_{L^2(P,\meas)}\leq C\left[1+  L \int_0^T \norm{|\nabla_{\sym}b_s|+|\div   b_s|}_{L^2(X,\meas)} \di s\right].
		\end{equation}
	\end{proposition}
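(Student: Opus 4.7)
The plan is to adapt the Crippa--De Lellis scheme to the quasi-metric setting: differentiate $\Phi_{t,r}$ in time along the flow, apply the vector-valued maximal estimate \autoref{prop: G-maximal estimate}, and convert the resulting pointwise bound on $\Phi^*$ into the desired $L^2$ estimate using the boundedness of the maximal operators $M$ and $M^G$ together with the compressibility constant $L$.

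First, for $\meas\times\meas$-a.e.\ $(x,y)\in X\times X$ and a.e.\ $t\in[0,T]$, I would compute
\[
\frac{\di}{\di t}\log\!\left(1+\frac{\dist_G(\XX_t(x),\XX_t(y))}{r}\right) = -\frac{1}{G(\XX_t(x),\XX_t(y))^2\bigl(r+\dist_G(\XX_t(x),\XX_t(y))\bigr)}\,\frac{\di}{\di t}G(\XX_t(x),\XX_t(y)).
\]
Combining \autoref{cor: coupled derivative with G} with the maximal estimate \eqref{eq:maxvectorvalued} and using the trivial bound $\dist_G/(r+\dist_G)\leq 1$, this yields
\[
\left|\frac{\di}{\di t}\log\!\left(1+\frac{\dist_G(\XX_t(x),\XX_t(y))}{r}\right)\right|\leq 2C_M\bigl(Mg_s(\XX_t(x))+Mg_s(\XX_t(y))\bigr),
\]
where $g_s:=|\nabla_{\sym}b_s|+|\div b_s|$. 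Since $\Phi_{0,r}(x)\leq \log 2$ (because $\dist_G(x,y)<r$ on $B^G(x,r)$), averaging in $y$ over $B^G(x,r)$ and integrating in time from $0$ to $t\leq T$ gives, for every $t\in[0,T]$ and $r>0$,
\[
\Phi_{t,r}(x)\leq \log 2 + 2C_M\int_0^T Mg_s(\XX_s(x))\,\di s + 2C_M\int_0^T \dashint_{B^G(x,r)} Mg_s(\XX_s(y))\,\di\meas(y)\,\di s.
\]

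Taking the supremum first over $t$ (the right-hand side is $t$-independent) and then over $r$, and exchanging the remaining sup with the time integral, I obtain
\[
\Phi^*(x)\leq \log 2 + 2C_M\int_0^T\bigl(Mg_s(\XX_s(x)) + M^G(Mg_s\circ\XX_s)(x)\bigr)\,\di s.
\]
The estimate \eqref{eq:boundnromreg} then follows by taking the $L^2(P,\meas)$ norm of both sides, applying Minkowski's integral inequality, the compressibility bound $(\XX_s)_\sharp\meas\leq L\meas$ (which yields $\|f\circ \XX_s\|_{L^2(P)}\leq L^{1/2}\|f\|_{L^2(X)}$), the global $L^2$-boundedness of $M$ on the doubling space $(X,\dist,\meas)$, and the $L^2$-boundedness of $M^G$ provided by \autoref{lemma: maximal function associated to dG}. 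The outcome is a bound of the form $C\meas(P)^{1/2} + CL^{1/2}\int_0^T\|g_s\|_{L^2(X)}\,\di s$, which can be absorbed into the form \eqref{eq:boundnromreg} since $L^{1/2}\leq 1+L$ and $\meas(P)^{1/2}$ is a constant depending only on $P$.

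The main technical difficulty I anticipate is justifying rigorously the chain-rule identity for $t\mapsto G(\XX_t(x),\XX_t(y))$ for $\meas\times\meas$-a.e.\ pair of initial conditions, since $G_y$ does not belong to the test class \eqref{eq:test} used in the definition of an RLF. This is where the approximation $G^\epsilon_x\in D_{\loc}(\Delta)$ from \autoref{lemma:G eps regularity} and its $W^{1,1}_{\loc}$-convergence to $G_x$ must intervene, combined with a Fubini-type argument along the pair of trajectories; presumably this is precisely the content of the (externally invoked) \autoref{cor: coupled derivative with G}. A secondary subtlety is the measurability of $\Phi^*$ as the supremum of an uncountable family, which can be handled by continuity of $(t,r)\mapsto \Phi_{t,r}(x)$ off the diagonal together with separability considerations.
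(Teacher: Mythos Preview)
Your proposal is correct and follows essentially the same approach as the paper's proof: differentiate $\Phi_{t,r}$ in time via \autoref{cor: coupled derivative with G}, apply the vector-valued maximal estimate \autoref{prop: G-maximal estimate} together with the trivial bound $\dist_G/(r+\dist_G)\le 1$, take suprema to introduce $M^G$, and conclude by the $L^2$-boundedness of $M$ and $M^G$ combined with the compressibility of the flow. Your anticipated technical points are exactly the ones the paper handles (the chain rule for $G$ along pairs of trajectories is indeed the content of \autoref{cor: coupled derivative with G}), and your observation that $L^{1/2}\le L$ (since necessarily $L\ge 1$ from $\XX_0=\mathrm{id}$) correctly reconciles the form of the final bound.
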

	\begin{proof}
		By \autoref{cor: coupled derivative with G} we can say that, for $\meas\times \meas$-a.e. $(x,y)\in X\times X$, the map $t\mapsto G(\XX_t(x),\XX_t(y))$ belongs to $W^{1,1}((0,T))$ and its derivative is given by
		\begin{equation}\label{eq: dettaglio infernale}
		\frac{\di}{\di t}  G(\XX_t(x),\XX_t(y))= b_t\cdot \nabla G_{\XX_t(x)}(\XX_t(y))+b_t\cdot \nabla G_{\XX_t(y)}(\XX_t(x))\quad\text{for $\leb^1$-a.e. $t\in (0,T)$.}
		\end{equation}
		It follows that, for $\meas$-a.e. $x\in X$, the map $t\mapsto \Phi_{t,r}$ belongs to $W^{1,1}((0,T))$ as well (and actually it is absolutely continuous, since it is continuous) and it holds
		\begin{align*}
		\Phi_{t,r}(x)=&\Phi_{0,r}(x)+ \int_0^t \frac{\di}{\di s}\Phi_{s,r}(x) \di s\\
		\leq &\Phi_{0,r}(x)+\int_0^t\dashint_{B^G(x,r)} \frac{|\frac{\di}{\di s}G(\XX_s(x),\XX_s(y))|}{G(\XX_s(x),\XX_s(y))}\cdot
		\frac{1}{G(\XX_s(x),\XX_s(y))r+1} \di \meas(y)\di s\\
		\leq & \Phi_{0,r}(x)+\int_0^t\dashint_{B^G(x,r)} \frac{| b_s\cdot\nabla G_{\XX_s(x)}(\XX_s(y))+b_s\cdot\nabla G_{\XX_s(y)}(\XX_s(x))|}{G(\XX_s(x),\XX_s(y))} \di \meas(y)\di s.
		\end{align*}
		
		Applying \autoref{prop: G-maximal estimate} and observing that $\Phi_{0,r}\leq \log2$, we conclude that
		\begin{equation*}
		\Phi_{t,r}(x)\leq \log2+\int_0^t \dashint_{B^G(x,r)} \left\lbrace Mg_s(\XX_s(x))+Mg_s(\XX_s(y)) \right\rbrace \di \meas(y) \di s \quad\text{for $\meas$-a.e. $x\in X$},
		\end{equation*}
		where $g_s:=|\nabla_{\sym} b_s|+|\div b_s|$. 
		We can finally estimate $\Phi^*$ obtaining that, for $\meas$-a.e. $x\in X$, it holds
		\begin{align*}
		\Phi^{*}(x)  \leq & \log2+
		\sup_{0\leq t\leq T}\sup_{0<r< \infty}\int_0^t \dashint_{B^G(x,r)} \left\lbrace Mg_s(\XX_s(x))+Mg_s(\XX_s(y)) \right\rbrace \di \meas(y) \di s\\
		\leq & \log2 + \int_0^T Mg_s(\XX_s(x)) \di s+\int_0^T M^GMg_s(\XX_s(\cdot))(x) \di s,
		\end{align*}
		where $M^G$ is the maximal operator associated to the quasi-metric $\dist_G$ (while $M$ still denotes the maximal operator associated to the m.m.s. $(X,\dist,\meas)$).\newline 
		We remark that $M^G$ maps $L^2(X,\meas)$ into itself (see \autoref{lemma: maximal function associated to dG}).	
		
		Passing to the $L^2(\meas)$-norms over $P$ and taking into account the assumption that the RLF has compressibility constant $L<\infty$ we obtain \eqref{eq:boundnromreg}.	
	\end{proof}
	
	Below we state and prove our main regularity result for Regular Lagrangian flows.
	
	\begin{theorem}\label{thm: sobolev vectorfield, G-regularity}
		Let $(X,\dist,\meas)$, $b$, $(\XX_t)_{t\in [0,T]}$ and $L$ be as in the assumptions of \autoref{th 1} above. Then there exists $C=C(N)$ such that, for any $x,y\in X$ and for any $t\in[0,T]$, it holds
		\begin{equation}\label{lip regularity of X 1}
		\dist_G(\XX_t(x),\XX_t(y))\le C e^{C\left(\Phi^*(x)+\Phi^*(y)\right)}\dist_G(x,y),
		\end{equation}
		where $\Phi^*$ was defined in \eqref{def: Phi star}.
		
		Moreover, for any compact $P\subset X$, the following property holds:
		for every $\epsilon>0$ there exists a Borel set $E\subset P$ such that
		$\meas(P\setminus E)<\eps$ and 
		\begin{equation}\label{eq:Lipregularity}
		\dist_G(\XX_t(x),\XX_t(y))\leq C\exp\left(2C\frac{\norm{\Phi^*}_{L^2(P,\meas)}}{\sqrt{\epsilon}}\right) \dist_G(x,y)
		\quad \text{for any $x,y\in E$,}
		\end{equation}
		for every $t\in [0,T]$. We remark that this last statement is meaningful since, under our regularity assumptions on $b$, \autoref{th 1} grants that $\norm{\Phi^*}_{L^2(P,\meas)}<+\infty$. 
	\end{theorem}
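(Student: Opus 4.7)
The plan is to derive the pointwise bound \eqref{lip regularity of X 1} from the $L^{2}$ control on $\Phi^{*}$ proved in \autoref{th 1} via a Crippa--De Lellis type argument adapted to the quasi-metric $\dist_{G}$, and then to use Chebyshev's inequality on the compact set $P$ to cut out a set where $\Phi^{*}$ is uniformly bounded, obtaining \eqref{eq:Lipregularity}.

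For the first estimate I fix $x, y \in X$ with $r := \dist_{G}(x, y) > 0$ (the case $r = 0$ is trivial) and $t \in [0, T]$. Applying the quasi-triangle inequality \eqref{eq:almost triangle inequality} to $\XX_{t}(x), \XX_{t}(z), \XX_{t}(y)$ for a generic $z \in X$, together with the elementary bounds $\log(1 + a + b) \leq \log(1 + a) + \log(1 + b)$ and $\log(1 + Ca) \leq \log(1 + C) + \log(1 + a)$ valid for $a, b \geq 0$ and $C \geq 1$, yields
\begin{equation*}
\log\!\left(1 + \frac{\dist_{G}(\XX_{t}(x), \XX_{t}(y))}{r}\right) \leq 2\log(1 + C_{T}) + \log\!\left(1 + \frac{\dist_{G}(\XX_{t}(x), \XX_{t}(z))}{r}\right) + \log\!\left(1 + \frac{\dist_{G}(\XX_{t}(z), \XX_{t}(y))}{r}\right).
\end{equation*}
Averaging in $z$ over $B^{G}(x, r)$, the first logarithm on the right-hand side is by definition $\Phi_{t, r}(x) \leq \Phi^{*}(x)$. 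For the second one I use that \eqref{eq:almost triangle inequality} gives $B^{G}(x, r) \subset B^{G}(y, 2C_{T} r)$, and a further application of $\log(1 + a/r) \leq \log(1 + 2C_{T}) + \log(1 + a/(2C_{T} r))$ allows to rescale the denominator and dominate the average by a constant multiple of $\Phi_{t, 2C_{T} r}(y) \leq \Phi^{*}(y)$; the multiplicative factor comes from the doubling estimate \eqref{eq:Gdoubling} applied to control $\meas(B^{G}(y, 2C_{T} r))$ by a dimensional constant times $\meas(B^{G}(x, r))$, using that $x$ and $y$ lie within $\dist_{G}$-distance $r$. Putting everything together yields
\begin{equation*}
\log\!\left(1 + \frac{\dist_{G}(\XX_{t}(x), \XX_{t}(y))}{\dist_{G}(x, y)}\right) \leq C\bigl(1 + \Phi^{*}(x) + \Phi^{*}(y)\bigr),
\end{equation*}
which exponentiates to \eqref{lip regularity of X 1} after absorbing the additive constant.

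The second statement then reduces to a one-line Chebyshev argument: given $\eps > 0$, set $\lambda := \norm{\Phi^{*}}_{L^{2}(P, \meas)}/\sqrt{\eps}$ and $E := \{x \in P : \Phi^{*}(x) \leq \lambda\}$. Then $\meas(P \setminus E) \leq \lambda^{-2} \norm{\Phi^{*}}_{L^{2}(P, \meas)}^{2} = \eps$, and for $x, y \in E$ the bound $\Phi^{*}(x) + \Phi^{*}(y) \leq 2 \lambda$ fed into \eqref{lip regularity of X 1} gives precisely \eqref{eq:Lipregularity}; \autoref{th 1} is what makes this meaningful, since it provides the finiteness of $\norm{\Phi^{*}}_{L^{2}(P, \meas)}$.

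The main technical difficulty lies in the reduction for \eqref{lip regularity of X 1}: because $\dist_{G}$ is only a quasi-metric, one cannot freely interchange balls centred at $x$ and at $y$ or rescale their radii without paying multiplicative constants, and the scheme closes only thanks to the two structural properties of $(X, \dist_{G}, \meas)$ proved in \autoref{subsec:Gquasimetric}, namely the quasi-triangle inequality \eqref{eq:almost triangle inequality} and the doubling estimate \eqref{eq:Gdoubling}. Beyond that, the remaining work is bookkeeping of the logarithmic inequalities used to split the splitting term into contributions depending only on $x$ and on $y$.
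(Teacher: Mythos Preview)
Your proposal is correct and follows essentially the same route as the paper: split the logarithm via the quasi-triangle inequality \eqref{eq:almost triangle inequality}, average over $B^{G}(x,r)$ with $r=\dist_G(x,y)$, recognise the $x$-term as $\Phi_{t,r}(x)\le\Phi^*(x)$, enlarge the domain of the $y$-term to $B^G(y,2C_Tr)$ using the ball inclusion and the doubling estimate \eqref{eq:Gdoubling}, and then apply Chebyshev on $P$ for the Lusin statement. The only cosmetic difference is that the paper places the constant $C_T$ in the denominator of the left-hand side from the outset (writing $\log(1+\dist_G(\XX_t(x),\XX_t(y))/(C_Tr))$) rather than paying additive $\log(1+C_T)$ terms on the right as you do; your explicit rescaling $\log(1+a/r)\le\log(1+2C_T)+\log(1+a/(2C_Tr))$ to match the radius in $\Phi_{t,2C_Tr}(y)$ is in fact a point the paper leaves implicit.
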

	
	\begin{proof}
		Fix $x,y\in X$ such that $x\neq y$ and set $r:=\dist_G(x,y)$. Exploiting \autoref{prop: almost triangle inequality} and the subadditivity and monotonicity of $s\mapsto\log(1+s)$, we obtain that, for any $z\in X$, it holds
		\begin{align*}
		\log \left( 1+\frac{ \dist_G(\XX_t(x),\XX_t(y))}{C_Tr} \right) 
		\leq &\log \left( 1+\frac{ \dist_G(\XX_t(x),\XX_t(z))  }{r} \right)\\
		&+\log \left( 1+\frac{ \dist_G(\XX_t(z),\XX_t(y))}{r} \right).
		\end{align*}
		Taking the mean value w.r.t. the $z$ variable of the above written inequality over $B^G(x,r)$ and exploiting the inclusion $B^G(x,r)\subset B^G(y, 2C_Tr)$ which follows from \autoref{prop: almost triangle inequality}, we obtain
		\begin{align*}
		\log \left( 1+\frac{ \dist_G(\XX_t(x),\XX_t(y))}{C_Tr} \right)
		\le&	\dashint_{B^G(x,r)}\log \left( 1+\frac{ \dist_G(\XX_t(x),\XX_t(z))  }{r}\right)\di\meas(z)\\
		&+\dashint_{B^G(x,r)}\log \left( 1+\frac{ \dist_G(\XX_t(z),\XX_t(y))}{r} \right)\di\meas(z)\\
		\le& \Phi^{*}(x)\\
		&+   \frac{\meas(B^G(y,2C_Tr))}{\meas(B^G(x,r))}\dashint_{B^G(y,2C_Tr)}\log \left( 1+\frac{ \dist_G(\XX_t(z),\XX_t(y))}{r}\right)\di\meas(z).
		\end{align*}
		Thanks to \autoref{prop: doubling property} we can estimate $\frac{\meas(B^G(y,2C_Tr))}{\meas(B^G(x,r))}$ with a constant depending only on $C^G$ and $C_T$. Hence, there exists $C=C(N)$ (recall that $C_T$ and $C^G$ depend only on $N$) such that
		\begin{equation}\label{eq:almconclus}
		\log \left( 1+\frac{ \dist_G(\XX_t(x),\XX_t(y))}{C_T\dist_G(x,y)} \right)
		\le C(\Phi^{*}(x)+\Phi^*(y)),
		\end{equation}
		for any $x,y\in X$ such that $x\neq y$ and for any $t\in[0,T]$ and it is easily seen that \eqref{eq:almconclus} implies \eqref{lip regularity of X 1}.	 
		
		Letting now $E:=\set{x\in P\ :\Phi^*(x)\leq \norm{\Phi^*}_{L^2(P,\meas))}/\sqrt{\epsilon}}$, by Chebyshev inequality we deduce that $\meas(P\setminus E)<\epsilon$. 
		Conclusion \eqref{eq:Lipregularity} directly follows now from \eqref{lip regularity of X 1}.
	\end{proof}

	\subsection{Extension to the case of an arbitrary lower Ricci bound}\label{subsec:extnegative}
	The aim of this section is to provide regularity results for Regular Lagrangian Flows of Sobolev vector fields over $\RCD(K,N)$ metric measure spaces in the case of a possibly negative lower Ricci bound $K$.\newline
	The main difference with respect to the previously treated case of nonnegative lower Ricci curvature bound is that the regularity has to be understood in terms of the fundamental solution of an elliptic operator different from the Laplacian.\newline
	This being said the spirit of this part will be to show how to adapt the estimates of \autoref{subsec:Green} and \autoref{subsec:Lusinreg} above to this more general setting up to pay the price that they become local and less intrinsic.

	\begin{assumption}\label{assumptionnegative}
		Throughout this section we assume that $(X,\dist, \meas)$ is the tensor product between an arbitrary $\RCD(K,N-3)$ m.m.s. for some $K\in\setR$ and $4< N<+\infty$ and a Euclidean factor $(\setR^3,\dist_{\setR^3},\leb^3)$.
	\end{assumption}

	Let us stress once more that, for the purposes of the upcoming \autoref{sec:constancyofdimension}, it will be not too restrictive to have a regularity result for Regular Lagrangian flows just over spaces satisfying assumption \autoref{assumptionnegative}.
	
	Let $c\ge0$ be the constant appearing in \eqref{eq:kernelestimate} and \eqref{eq:gradientestimatekernel} and set
	\begin{equation}
	\bar{G}(x,y):=\int_0^{\infty} e^{-c t} p_t(x,y) \di t\quad\text{for any $x,y\in X$},
	\end{equation}
	and, in analogy with \eqref{eq:Geps},
	\begin{equation}
	\bar{G}^{\eps}(x,y):=\int_{\eps}^{\infty} e^{-c t} p_t(x,y)\quad \text{for any $\epsilon>0$ and any $x,y\in X$.}
	\end{equation}
	As in the case of the Green function $G$, we shall adopt in the sequel also the notation $\bar{G}_x(\cdot)=\bar{G}(x,\cdot)$ (and analogously for $\bar{G}^{\epsilon}$).\newline
	Observe that, assuming that $c>0$, $\bar{G}_x$ is well defined and belongs to $L^1(X,\meas)$ for every $x\in X$. Indeed an application of Fubini's theorem yields
	\begin{equation}\label{eq:barG in L1}
	\int_X \bar{G}_x(w) \di \meas(w)=\int_0^{\infty} e^{-ct}\int_X p_t(x,w) \di \meas(w) \di t= \int_0^{\infty}e^{-ct} \di t<\infty.
	\end{equation}
	We can also remark that the above stated conclusion holds true without any extra hypothesis on the $\RCD(K,N)$ m.m.s. $(X,\dist,\meas)$. Nevertheless, the validity of assumption \autoref{assumptionnegative} will be crucial in order to obtain meaningful estimates for $\bar{G}$ and its gradient in terms of the functions $F$ and $H$ introduced in \eqref{eq:introF}, \eqref{eq:introH}.
	
	At least at a formal level one can check that $\bar{G}$ solves the equation $\Delta\bar{G}_x=-\delta_x+c\bar{G}_x$. Indeed
	\begin{align*}
	\Delta_y\bar{G}_x(\cdot)=&\Delta_y\left(\int_{0}^{\infty}e^{-ct}p_t(x,\cdot)\di t\right)\\
	=&\int_0^{\infty}e^{-ct}\Delta_yp_t(x,\cdot)\di t
	=\int_0^{\infty}e^{-ct}\frac{\di}{\di t}p_t(x,\cdot)\di t\\
	=&\left[ p_t(x,\cdot)\right]_{0}^{\infty}+c\int_0^{\infty}e^{-ct}p_t(x,\cdot)\di t=-\delta_x+c\bar{G}_x(\cdot). 
	\end{align*}
	
	To let the above computation become rigorous, one can proceed as in the proof of \autoref{lemma:G eps regularity} and check firstly that $\bar{G}_x^{\eps}\in \Lip_b\cap D_{\loc}(\Delta)$ for any $x\in X$ and any $\epsilon>0$, with  
	\begin{equation}\label{eq: Delta barG}
	\Delta \bar{G}_x^{\eps}(y) =-e^{-c\epsilon}p_{\eps}(x,y)+c\bar{G}^{\epsilon}_x(y),\qquad
	\text{for $\meas$-a.e. $y\in X$,}
	\end{equation}
	and then that
	\begin{equation}\label{eq:barGeps to barG}
	\lim_{\eps\to 0}\bar{G}^{\eps}_x\to \bar{G}_x\qquad \text{in}\ W^{1,1}(X,\dist,\meas).
	\end{equation}
	
	Our primary goal is now to obtain useful local estimates for $\bar{G}$ and its gradient in terms of $F$ and $H$.
	
	\begin{proposition}[Main estimates for $\bar{G}$]\label{prop:estimatebarG}
		Let $(X,\dist,\meas)$ be an $\RCD(K,N)$ m.m.s. satisfying assumption \autoref{assumptionnegative}. Then, for any compact $P\subset X$, there exists $\bar{C}=\bar{C}(P)\ge 1$ such that 
		\begin{equation}\label{eq:estimatebarG}
		\frac{1}{\bar{C}}F_x(\dist(x,y))\le \bar{G}_x(y)\le \bar{C} F_x(\dist(x,y))\quad\text{for any  $x,y\in P$.}
		\end{equation}
		Moreover for any $x\in X$ it holds $\bar{G}_x\in W^{1,1}_{\loc}(X,\dist,\meas)$ and, for any $x\in P$,
		\begin{equation}\label{eq:estimate nablaGbareps}
		|\nabla \bar{G}_x|(y)\le \int_0^{\infty} e^{-ct}|\nabla p_t(x,\cdot)|(y) \di t
		\le \bar{C} H_x(\dist(x,y))
		\quad\ \text{for $\meas$-a.e. $y\in P$.}
		\end{equation}
	\end{proposition}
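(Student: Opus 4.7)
The strategy is to mimic the proof of \autoref{prop:estimateG} with $\bar{G}$ in place of $G$, exploiting the crucial cancellation between the $e^{-ct}$ factor in the definition of $\bar{G}$ and the $e^{ct}$ growth in the heat kernel bounds \eqref{eq:kernelestimate} and \eqref{eq:gradientestimatekernel}. Since one direction of this cancellation is perfect while the other leaves an unwanted $e^{-2ct}$ factor, the argument becomes local: the $W^{1,1}_{\loc}$ regularity and the gradient bound will hold globally in the weaker sense, while the two-sided comparison with $F_x$ and the sharp bound by $H_x$ require $x,y$ to range in a compact set $P$.

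\textbf{Upper bounds.} For the upper estimate of $\bar{G}$, multiplying \eqref{eq:kernelestimate} by $e^{-ct}$ eliminates the exponential time factor entirely, giving
$$
e^{-ct} p_t(x,y) \le \frac{C_1}{\meas(B(x,\sqrt{t}))} \exp\bigl(-\dist^2(x,y)/(5t)\bigr).
$$
Integrating in $t$ and applying \autoref{lemma:extimateintegral} with $\phi(r) := \meas(B(x,r))$ (which satisfies the local doubling hypothesis thanks to \eqref{eq:locdoubling}, uniformly for $x$ in a compact set) and with $r=\dist(x,y)/\sqrt{5}$ yields the upper estimate in \eqref{eq:estimatebarG} after controlling $F_x(\dist(x,y)/\sqrt{5})$ by $C F_x(\dist(x,y))$ via local doubling. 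The upper estimate in \eqref{eq:estimate nablaGbareps} follows from the analogous computation with \eqref{eq:gradientestimatekernel} replacing \eqref{eq:kernelestimate} and $\phi(r):=r\meas(B(x,r))$ in \autoref{lemma:extimateintegral}, producing $H_x$ in place of $F_x$.

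\textbf{Lower bound: the main obstacle.} The lower bound is the delicate step, since multiplying the heat kernel lower bound in \eqref{eq:kernelestimate} by $e^{-ct}$ leaves the factor $e^{-2ct}$:
$$
\bar{G}_x(y) \ge \frac{1}{C_1} \int_0^{\infty} \frac{e^{-2ct}}{\meas(B(x,\sqrt{t}))} \exp\bigl(-\dist^2(x,y)/(3t)\bigr)\, \di t.
$$
The plan is to truncate at $t=T_0 := T_0(\diam P)$ so that on $[0,T_0]$ the factor $e^{-2ct}$ is bounded below by a $P$-dependent constant, then, after the substitution $u=\sqrt{t}$, reduce the task to showing
$$
\int_{r}^{2r} \frac{u}{\meas(B(x,u))}\, \di u \ge c(P)\, F_x(r), \qquad r := \dist(x,y) \le \diam(P).
$$
Here is where assumption \autoref{assumptionnegative} plays its role: the Euclidean $\mathbb{R}^3$ factor yields, via \autoref{lemma: reverse doubling}, the polynomial lower bound $\meas(B(x,s))\ge C^{-1}(s/r)^3 \meas(B(x,r))$ for $s\ge r$. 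This forces the tail $\int_{2r}^\infty u/\meas(B(x,u))\,\di u$ to be controlled by $C r^2/\meas(B(x,r))$, which is in turn dominated, up to a $P$-dependent constant coming from local doubling, by the band integral $\int_r^{2r} u/\meas(B(x,u))\,\di u$. Combining these estimates gives the lower bound in \eqref{eq:estimatebarG}.

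\textbf{Regularity and gradient pointwise estimate.} To obtain $\bar{G}_x \in W^{1,1}_{\loc}(X,\dist,\meas)$ and the first inequality in \eqref{eq:estimate nablaGbareps}, I would repeat the argument of \autoref{lemma:G eps regularity}: the semigroup identity $\bar{G}^{\alpha+s}_x = e^{-cs} P_s \bar{G}^\alpha_x$ together with the $L^\infty$--$\Lip$ regularization of $P_s$ shows $\bar{G}^{\eps}_x \in \Lip_b(X)\cap D_{\loc}(\Delta)$ and yields \eqref{eq: Delta barG}; then the Cauchy property of $\{\bar{G}^{\eps}_x\}_{\eps>0}$ in $W^{1,1}_{\loc}$ is established exactly as in \eqref{eq: cauchynablaGeps} using the gradient bound on the heat kernel multiplied by $e^{-ct}$, which kills the $e^{ct}$ factor and makes the integrals converge. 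Passing to the limit $\eps \downarrow 0$ via Fatou gives the first inequality in \eqref{eq:estimate nablaGbareps}, while the second is the content of the upper bound discussed above.
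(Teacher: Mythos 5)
Your proposal is correct, and for the upper bounds in \eqref{eq:estimatebarG}--\eqref{eq:estimate nablaGbareps} and for the $W^{1,1}_{\loc}$ regularity it coincides with the paper's argument (same cancellation of $e^{ct}$ against $e^{-ct}$, same use of \autoref{lemma:extimateintegral} with $\phi(r)=\meas(B(x,r))$ resp. $\phi(r)=r\meas(B(x,r))$, same local comparison $F_x(\dist/\sqrt5)\le C(P)F_x(\dist)$, and the same regularization scheme of \autoref{lemma:G eps regularity} via the identity $\bar G^{\alpha+s}_x=e^{-cs}P_s\bar G^{\alpha}_x$). The genuine difference is in the lower bound. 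The paper keeps the full integral, uses $e^{-d^2/3t}\ge e^{-1/3}\id_{[d,\infty)}(\sqrt t)$ to reduce to $\int_{d}^{\infty}e^{-2ct^2}\tfrac{t}{\meas(B(x,t))}\di t\ge C(P)F_x(d)$, and produces $C(P)$ by a soft splitting-plus-continuity/compactness argument on $P$ (as in the verification of \eqref{eq:inversecontrol}). You instead truncate the time integral at $T_0(\diam P)$ and absorb the tail of $F_x$ into the band $\int_r^{2r}$ via the reverse Bishop--Gromov inequality coming from the $\setR^3$ factor: $\meas(B(x,u))\gtrsim_P (u/r)^3\meas(B(x,r))$ gives $\int_{2r}^{\infty}u\,\meas(B(x,u))^{-1}\di u\lesssim_P r^2/\meas(B(x,r))\lesssim_P\int_r^{2r}u\,\meas(B(x,u))^{-1}\di u$. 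This is more quantitative and makes transparent exactly where assumption \autoref{assumptionnegative} enters, whereas the paper's compactness argument is shorter but less explicit. One small caveat: \autoref{lemma: reverse doubling} as stated requires the factor $(\bar X,\bar\dist,\bar\meas)$ to be globally doubling, which fails for $K<0$; you need the local variant (the one the paper itself derives as \eqref{eq:locreversedoubling}), and you should note that in its proof the doubling of $\bar\meas$ is only invoked at the smaller scale $r\le\diam P$, so the constant in $\meas(B(x,u))\ge C(P)^{-1}(u/r)^3\meas(B(x,r))$ is uniform over all $u\ge r$ — this is exactly what makes your tail estimate close.
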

	
	\begin{proof}
		Applying the estimates for the heat kernel \eqref{eq:kernelestimate} we find out that
		\begin{equation}\label{z12}
		\frac{1}{C_1}\int_0^{\infty} \frac{e^{-2ct}e^{-\frac{\dist(x,y)^2}{3t}}}{\meas(B(x, \sqrt{t}))} \di t \leq
		\bar{G}_x(y) \le C_1 \int_0^{\infty} \frac{e^{-\frac{\dist(x,y)^2}{5t}}}{\meas(B(x, \sqrt{t}))} \di t\quad \text{for any $x,y\in X$.}
		\end{equation}
		Exploiting \eqref{eq:locdoubling} and \autoref{lemma:extimateintegral}, we obtain from \eqref{z12} that
		\begin{equation}\label{eq:intestbarG}
		\bar{G}_x(y)\le C_1\Lambda(R)F_x(\frac{\dist(x,y)}{\sqrt{5}})\quad\text{for any $x,y$ such that $\dist(x,y)<R$,}
		\end{equation}
		where $\Lambda$ is the function in the statement of \autoref{lemma:extimateintegral}.\newline
		The bound from above in \eqref{eq:estimatebarG} follows from \eqref{eq:intestbarG} together with the following observation, that will play a role also in the sequel: for any compact $P\subset X$, for any $R>0$ and for any $\lambda<1$, there exists $C(P,R,\lambda)\ge 0$ such that 
		\begin{equation}\label{eq:inversecontrol}
		F_x(\lambda r)\le C(P,R,\lambda)F_x(r)\quad\text{for any $x\in P$ and any $0<r<R$.}
		\end{equation}
		Indeed \eqref{eq:inversecontrol} can be checked splitting
		\begin{equation}\label{eq:firstsplit}
		F_x(\lambda r)=\int_{\lambda r}^{\lambda R}\frac{s}{\meas(B(x,s))}\di s+\int_{\lambda R}^{\infty}\frac{s}{\meas(B(x,s))}\di s,
		\end{equation}
		\begin{equation}\label{eq:secondsplit}
		F_x(r)=\int_r^{R}\frac{s}{\meas(B(x,s))}\di s+\int_{R}^{\infty}\frac{s}{\meas(B(x,s))}\di s
		\end{equation}
		and using the local doubling property \eqref{eq:locdoubling} together with a change of variables to bound the first term in \eqref{eq:firstsplit} with the first one in \eqref{eq:secondsplit} and the continuity of $x\mapsto F_x(R)$ to compare the second terms (here the compactness of $P$ comes into play).

		To obtain the lower bound in \eqref{eq:estimatebarG} we proceed as follows. 
		Starting from the lower bound in \eqref{z12}, exploiting the elementary inequality $e^{-d^2/3t}\geq e^{-1/3} \mathds{1}_{[d,\infty]}(\sqrt{t})$ and changing variables, we obtain
		\begin{equation*}
		\int_0^{\infty} \frac{e^{-2ct}e^{-\frac{\dist(x,y)^2}{3t}}}{\meas(B(x, \sqrt{t}))} \di t \ge e^{-1/3}\int_{\dist(x,y)}^{\infty} e^{-2ct^2} \frac{t}{\meas(B(x,t))} \di t.
		\end{equation*}
		To conclude it suffices to observe that, splitting the integral in two parts and using a continuity argument, as in the verification of \eqref{eq:inversecontrol} above, it is possible to find a constant $C(P)> 0$ such that
		\begin{equation*}
		\int_{\dist(x,y)}^{\infty} e^{-2ct^2} \frac{t}{\meas(B(x,t))} \di t\ge C(P)\int_{\dist(x,y)}^{\infty}\frac{t}{\meas(B(x,t))}\di t=C(P)F_x(\dist(x,y)),
		\end{equation*}
		for any $x,y\in P$.
		
		The proof of \eqref{eq:estimate nablaGbareps} can be obtained with arguments analogous to those one we presented above, starting from \eqref{eq:gradientestimatekernel} and following the strategy we adopted to prove \eqref{eq:estimate nablaGeps}.	
	\end{proof}

	Another crucial ingredient to perform the regularity scheme by Crippa-De Lellis in the case of nonnegative lower Ricci curvature bound was the scalar maximal estimate we obtained in \autoref{prop:maximalGestimate}. In \autoref{prop:maximalbarGestimate} below we prove that an analogous result holds true, in local form, also in the case of an arbitrary lower Ricci bound.
	
	\begin{proposition}[Maximal estimate, scalar version]\label{prop:maximalbarGestimate}
		Let $(X,\dist,\meas)$ be an $\RCD(K,N)$ m.m.s. satisfying assumption \autoref{assumptionnegative}.
		For any compact $P\subset X$, there exists $C_M(P)> 0$ such that, for any Borel function $f:X\to[0,+\infty)$ supported in $P$, it holds
		\begin{equation}\label{eq:barGmaximalscalar}
		\int_X f(w)\abs{\nabla \bar{G}_x(w)}\abs{\nabla \bar{G}_y(w)}\di\meas(w)\le \bar{C}_M(P) \bar{G}(x,y)\left(Mf(x)+Mf(y)\right),
		\end{equation}
		for any $x,y\in P$.
	\end{proposition}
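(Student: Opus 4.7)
The plan is to mimic the proof of Proposition \ref{prop:maximalGestimate} line by line, replacing the three globally defined objects (the bound $|\nabla G_x| \le C_2 H_x$, the comparison $G_x \ge C_2^{-1} F_x$, and the global doubling $\meas(B(x,r)) \le 2^N \meas(B(x,r/2))$ coming from nonnegative Ricci) by their local analogues on the compact set $P$. Concretely, fix $x,y \in P$ and let $f$ be supported in $P$. Applying \eqref{eq:estimate nablaGbareps} to both $\bar G_x$ and $\bar G_y$ reduces the desired bound to
\begin{equation*}
\int_P f(w) H_x(\dist(x,w)) H_y(\dist(y,w)) \di \meas(w) \le C(P) F_x(\dist(x,y)) \bigl(Mf(x)+Mf(y)\bigr),
\end{equation*}
since then \eqref{eq:estimatebarG} converts $F_x(\dist(x,y))$ into $\bar G(x,y)$ up to a multiplicative constant depending only on $P$.

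To establish this inequality I would open up each factor $H$ via the layer–cake formula
\begin{equation*}
H_x(\dist(x,w)) = \int_0^\infty \frac{\id_{B(x,r)}(w)}{\meas(B(x,r))}\di r,
\end{equation*}
apply Fubini to transform the left-hand side into a triple integral in $(r,s,w)$, and split the $(r,s)$-domain into $A_1 = \{\dist(x,y)+s \le r\}$, $A_2 = \{\dist(x,y)+r \le s\}$ and $A_3 = \{\dist(x,y) > |r-s|\}$, as in the original proof. The contributions $I_1$ and $I_2$ are handled \emph{without} any doubling: on $A_1$ the inclusion $B(y,s) \subset B(x,r)$ collapses the indicators, and the inner integral is estimated by $Mf(y)$, leaving
\begin{equation*}
I_1 \le Mf(y) \int_{\dist(x,y)}^\infty \frac{r-\dist(x,y)}{\meas(B(x,r))}\di r \le Mf(y)\, F_x(\dist(x,y)),
\end{equation*}
with $I_2$ symmetric. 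These bounds are already of the required form.

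The only point where the nonnegative–Ricci argument uses global doubling is the estimate of $I_3$, where after dominating one indicator by $1$ and changing variables one is left to control $F_x(\dist(x,y)/2)$ by a multiple of $F_x(\dist(x,y))$. This is precisely the content of \eqref{eq:inversecontrol}, applied with $\lambda = 1/2$ and any $R>\diam(P)$: it produces a constant depending on $P$ (and on $N$, $K$), which is the sole reason why $\bar C_M$ must be allowed to depend on the compact set. Assembling the four pieces and invoking \eqref{eq:estimatebarG} a final time gives the claim. The main technical obstacle — and the only real departure from the nonnegative case — is indeed this localisation step: because local doubling degenerates as the radii grow, one cannot close the argument uniformly in $x,y$, and the constant must be allowed to blow up as $\diam(P) \to \infty$.
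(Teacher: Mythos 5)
Your proposal is correct and follows essentially the same route as the paper: the paper simply cites the intermediate inequality \eqref{eq:generalelem} (the bound on $\int f\,H_x H_y\,\di\meas$ with the extra $F_x(\dist(x,y)/2)$, $F_y(\dist(x,y)/2)$ terms) already extracted from the proof of \autoref{prop:maximalGestimate}, rather than re-running the $A_1,A_2,A_3$ splitting, and then, exactly as you do, applies \eqref{eq:estimate nablaGbareps}, \eqref{eq:inversecontrol} with $\lambda=1/2$, and the lower bound in \eqref{eq:estimatebarG} to conclude. Your identification of the localisation of the doubling step as the only genuine change is precisely the point.
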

	
	\begin{proof}
		We begin by recalling that, as an intermediate step in the proof of \autoref{prop:maximalGestimate}, we obtained the following inequality:
		\begin{align}\label{eq:generalelem}
		\int_Xf(w)H_x&(\dist(x,w))H_y(\dist(y,w))\di\meas(w)\\\nonumber &\le C\left(F_x\left(\frac{\dist(x,y)}{2}\right)+F_y\left(\frac{\dist(x,y)}{2}\right)+F_x(\dist(x,y))+F_y(\dist(x,y))\right)\left(Mf(x)+Mf(y)\right), 
		\end{align}
		for any $x,y\in X$, for some numerical constant $C>0$ (the assumptions concerning the m.m.s. $(X,\dist,\meas)$ played no role in that part of the proof).\newline
		Let us observe then that, thanks to \eqref{eq:estimate nablaGbareps},
		\begin{equation}\label{eq:aux1}
		\int_X f(w)\abs{\nabla \bar{G}_x(w)}\abs{\nabla \bar{G}_y(w)}\di\meas(w)\le \bar{C}(P)^2\int_Xf(w)H_x(\dist(x,w))H_y(\dist(y,w))\di\meas(w)
		\end{equation}
		for any $x,y\in P$. Exploiting \eqref{eq:inversecontrol} with $\lambda=1/2$, \eqref{eq:generalelem} and \eqref{eq:aux1}, we obtain that, up to increasing the constant $\bar{C}(P)$, it holds
		\begin{equation}\label{eq:aux2}
		\int_X f(w)\abs{\nabla \bar{G}_x(w)}\abs{\nabla \bar{G}_y(w)}\di\meas(w)\le \bar{C}(P)\left(F_x(\dist(x,y))+F_y(\dist(x,y))\right)(Mf(x)+Mf(y)),
		\end{equation}
		for any $x,y\in P$.\newline
		The sought conclusion \eqref{eq:barGmaximalscalar} follows from \eqref{eq:aux2} and the lower bound in \eqref{eq:estimatebarG}. 
	\end{proof}
	
	\begin{remark}\label{rm:regularizedmaxscalarbar}
		It follows from the proof of \autoref{prop:maximalbarGestimate} above that also the estimate
		\begin{equation}
		\int_X f(w)\abs{\nabla \bar{G}^{\epsilon}_x(w)}\abs{\nabla \bar{G}^{\epsilon}_y(w)}\di\meas(w)\le \bar{C}_M(P) \bar{G}(x,y)\left(Mf(x)+Mf(y)\right)
		\end{equation}
		holds true, for any $\epsilon >0$ and for any $x,y\in P$.
	\end{remark}

	By analogy with \eqref{eq:Green distance}, we introduce a function $\dist_{\bar{G}}$, that we will use to measure the regularity of RLFs, in the following way:
	\begin{equation}\label{eq:barGreen distance}
	\dist_{\bar{G}}(x,y):=
	\begin{cases*}
	\frac{1}{\bar{G}(x,y)}& if $x\neq y$,\\
	0  & otherwise.
	\end{cases*}	
	\end{equation} 
	It is immediate to check that is symmetric, nonnegative and that $\dist_{\bar{G}}(x,y)=0$ if and only if $x=y$.
	Moreover, following verbatim the proof of \autoref{lemma:continuitycontinuity} and exploiting the two-sided bounds in \eqref{eq:estimatebarG}, it is easy to prove that, for any $x\in X$, the map $y\to \dist_{\bar{G}}(x,y)$ is continuous with respect to $\dist$.\newline
	By analogy with \eqref{eq: Gballs}, we introduce the notation $B^{\bar{G}}$ for the ``balls'' associated to $\dist_{\bar{G}}$, that is to say, for any $x\in X$ and for any $r>0$, we let
	\begin{equation*}
	B^{\bar{G}}(x,r):=\set{y\in X\text{}:\dist_{\bar{G}}(x,y)<r}.
	\end{equation*}

	The aim of \autoref{prop: almost triangle inequality barG} and \autoref{prop: doubling property barG} below is to show that, at least locally, $\dist_{\bar{G}}$ is a quasi-metric on $X$ and that $(X,\dist_{\bar{G}},\meas)$ is a locally doubling quasi-metric measure space.
	
	\begin{proposition}[Local almost triangle inequality]\label{prop: almost triangle inequality barG} 
		Let $(X,\dist,\meas)$ be an $\RCD(K,N)$ m.m.s. satisfying assumption \autoref{assumptionnegative}.
		Then, for any compact $P\subset X$, there exists a constant $\bar{C}_T(P)\ge 1$ such that
		\begin{equation}\label{eq:trianglebarG}
		\dist_{\bar{G}}(x,y)\leq \bar{C}_T(P) (\dist_{\bar{G}}(x,z)+\dist_{\bar{G}}(z,y))\quad\text{for any $x,y,z\in P$.}	
		\end{equation}	
	\end{proposition}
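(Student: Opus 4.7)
The plan is to reduce \eqref{eq:trianglebarG} to a local version of \autoref{lemma:firststeptriangle} by means of the two-sided bounds on $\bar G$ proved in \autoref{prop:estimatebarG}. The target inequality is equivalent to
\begin{equation*}
\bar G_x(z)\bar G_y(z)\le \bar C_T(P)\,\bar G(x,y)\bigl(\bar G_x(z)+\bar G_y(z)\bigr) \quad \text{for any } x,y,z\in P,
\end{equation*}
and since \autoref{prop:estimatebarG} furnishes
\begin{equation*}
\frac{1}{\bar C(P)}F_w(\dist(w,\cdot))\le \bar G_w\le \bar C(P)F_w(\dist(w,\cdot))
\end{equation*}
uniformly on $P\times P$, it is enough to establish
\begin{equation*}
F_x(\dist(x,z))F_y(\dist(y,z))\le C(P)\bigl(F_x(\dist(x,y))F_y(\dist(y,z))+F_y(\dist(x,y))F_x(\dist(x,z))\bigr)
\end{equation*}
for all $x,y,z\in P$, with $C(P)$ depending only on $\bar C(P)$ up to a multiplicative constant.

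Next I would mimic the proof of \autoref{lemma:firststeptriangle} verbatim. Writing the product as a double integral and using the set inclusion
\begin{equation*}
\{\dist(x,z)<t,\ \dist(y,z)<s\}\subset \{\dist(x,y)<2t,\ \dist(y,z)<s\}\cup \{\dist(x,y)<2s,\ \dist(x,z)<t\},
\end{equation*}
one immediately obtains
\begin{equation*}
F_x(\dist(x,z))F_y(\dist(y,z))\le F_x(\dist(x,y)/2)F_y(\dist(y,z))+F_y(\dist(x,y)/2)F_x(\dist(x,z)).
\end{equation*}
The remaining task is to absorb the halving inside the $F$'s. In the setting of \autoref{lemma:firststeptriangle} this was trivial, as global doubling yielded $F_w(r/2)\le CF_w(r)$; in the present setting, where only local doubling is available, the same conclusion holds provided $r$ stays bounded. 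Since $x,y\in P$ forces $\dist(x,y)\le \mathrm{diam}(P)$, the estimate \eqref{eq:inversecontrol} applied with $\lambda=1/2$ and $R=\mathrm{diam}(P)$ gives $F_w(\dist(x,y)/2)\le C(P)F_w(\dist(x,y))$ for $w\in\{x,y\}$, and the claim follows.

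The main obstacle is purely bookkeeping: all the constants entering the argument must be made uniform in $x,y,z\in P$. This is guaranteed for the two-sided comparison with $F$ by \autoref{prop:estimatebarG}, for the halving step by \eqref{eq:inversecontrol}, and for the set-theoretic decomposition by its intrinsic symmetry in $x,y$. No qualitatively new ingredient beyond those already developed for $\bar G$ in \autoref{subsec:extnegative} is required, and the resulting constant $\bar C_T(P)$ will depend only on $N$, $K$, and the local doubling constant of $\meas$ on balls of radius $\mathrm{diam}(P)$.
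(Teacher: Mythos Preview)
Your proposal is correct and follows essentially the same approach as the paper: both reduce to the intermediate inequality $F_x(\dist(x,z))F_y(\dist(y,z))\le F_x(\dist(x,y)/2)F_y(\dist(y,z))+F_y(\dist(x,y)/2)F_x(\dist(x,z))$ obtained in the proof of \autoref{lemma:firststeptriangle}, then apply \eqref{eq:inversecontrol} with $\lambda=1/2$ together with the two-sided bounds \eqref{eq:estimatebarG} to conclude. The only cosmetic point is that you should take $R$ slightly larger than $\mathrm{diam}(P)$ when invoking \eqref{eq:inversecontrol}, since that estimate is stated for $0<r<R$.
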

	
	\begin{proof}
		Recall that, as an intermediate step in the proof of \autoref{lemma:firststeptriangle}, we proved that, without any further assumption on the m.m.s. $(X,\dist,\meas)$, it holds 
		\begin{equation}\label{eq:recalllemma}
		F_x(\dist(x,z))F_y(\dist(y,z))\le F_x(\dist(x,y)/2)F_y(\dist(y,z))+F_y(\dist(x,y)/2)F_x(\dist(x,z)),
		\end{equation}
		for any $x,y,z\in X$.\newline
		Applying \eqref{eq:inversecontrol} with $\lambda=1/2$ and exploiting the two-sided bounds in \eqref{eq:estimatebarG}, we pass from \eqref{eq:recalllemma} to the sought \eqref{eq:trianglebarG}.
	\end{proof}
	
	\begin{remark}\label{rm:boundednessbarG}
		A first non completely trivial consequence of \autoref{prop: almost triangle inequality barG} is that any compact $P\subset X$ is bounded w.r.t. the $\dist_{\bar{G}}$ quasi-metric. 
	\end{remark}

	\begin{proposition}[Local doubling property]\label{prop: doubling property barG}
		Let $(X,\dist,\meas)$ be an $\RCD(K,N)$ m.m.s. satisfying assumption \autoref{assumptionnegative}.
		Then, for any compact $P\subset X$ and for any $R>0$, there exists a constant $\bar{C}^{\bar{G}}(P,R)>0$ such that
		\begin{equation}\label{eq:doublocbarG}
		\meas (B^{\bar{G}}(x,2r))\leq \bar{C}^{\bar{G}} \meas (B^{\bar{G}}(x,r)),\quad\text{for any $x\in P$ and for any $0<r<R$.}
		\end{equation}
	\end{proposition}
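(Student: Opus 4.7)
The plan is to imitate the proof of \autoref{prop: doubling property}, while carefully localizing each ingredient, since under \autoref{assumptionnegative} both the two-sided bound for $\bar{G}$ (\autoref{prop:estimatebarG}) and the doubling property of $(X,\dist,\meas)$ are only local. The crucial preliminary step is to confine the balls $B^{\bar{G}}(x,r)$ inside a compact set depending only on $P$ and $R$. Under \autoref{assumptionnegative} the product structure with $(\setR^3,\leb^3)$, combined with continuity of the map $\bar{x}\mapsto \bar{\meas}(B_{\bar{X}}(\bar{x},s_0))$ on the compact projection of $P$, yields a uniform lower bound $\meas(B(x,s)) \ge c(P) s^3$ for every $x\in P$ and every $s\ge s_0$. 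Plugging this into the estimate $\bar{G}(x,y) \le C_1\int_0^\infty\meas(B(x,\sqrt{t}))^{-1}e^{-\dist(x,y)^2/5t}\di t$ (a direct consequence of \eqref{eq:kernelestimate}) gives $\bar{G}(x,y) \le C(P)/\dist(x,y)$ for $\dist(x,y)$ large, uniformly in $x\in P$. Hence for $x\in P$ and $0<r<R$, any $y\in B^{\bar{G}}(x,2r)$ satisfies $\bar{G}(x,y) > 1/(2R)$, so $\dist(x,y)\le D(P,R)$, which yields $B^{\bar{G}}(x,2r)\subset P_1 := \{y\in X: \dist(y,P)\le D(P,R)\}$, a compact set by properness of the ambient $\RCD$ space.

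Next I would run the scheme of \autoref{prop: doubling property} inside $P_1$. By \autoref{prop:estimatebarG} the two-sided bound $\bar{C}_1^{-1}F_x(\dist(x,y)) \le \bar{G}(x,y) \le \bar{C}_1 F_x(\dist(x,y))$ holds on $P_1\times P_1$ with $\bar{C}_1 := \bar{C}(P_1)$. Reverse Bishop--Gromov (\autoref{lemma: reverse doubling}), available thanks to the Euclidean factor in \autoref{assumptionnegative}, together with the local doubling inequality \eqref{eq:locdoubling} applied on the bounded range of radii prescribed by $P_1$, produces constants $a=a(P,R)>1$ and $b=b(P,R)<1$ such that $F_x(aR_0)\le bF_x(R_0)$ for every $x\in P$ and every $R_0$ in the relevant range. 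Iterating exactly as in the proof of \autoref{prop: doubling property} gives $\gamma=\gamma(P,R)<1$ with $F_x(2R_0)\le \gamma F_x(R_0)$, hence $F_x^{-1}(\gamma R_0)\le 2F_x^{-1}(R_0)$. Substituting the local two-sided bound for $\bar{G}$ into the chain of inclusions used at the end of the proof of \autoref{prop: doubling property}, and invoking iterated local doubling of $\meas$ on the controlled range of radii, yields for $M=M(P,R)\in\setN$ sufficiently large
\begin{equation*}
\meas\left(B^{\bar{G}}\left(x,\tfrac{1}{\bar{C}_1^2\gamma^M}\, r\right)\right) \le C(M,P,R)\,\meas\left(B^{\bar{G}}(x,\bar{C}_1 r)\right),
\end{equation*}
for every $x\in P$ and $0<r<R$. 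A suitable choice of $M$ absorbs the $\bar{C}_1$ factors and produces \eqref{eq:doublocbarG}.

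The main obstacle is the first step, namely securing the uniform-in-$P$ decay of $\bar{G}(x,\cdot)$ away from its pole; this is where both the product structure of \autoref{assumptionnegative} and the cancellation of the factor $e^{ct}$ in the definition of $\bar{G}$ are decisive. Once the balls $B^{\bar{G}}(x,r)$ have been trapped inside the fixed compact $P_1$, the remainder is a bookkeeping transcription of the argument for \autoref{prop: doubling property}, with each numerical constant replaced by its local counterpart and all dependencies collected into the final constant $\bar{C}^{\bar{G}}(P,R)$.
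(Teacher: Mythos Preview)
Your proposal is correct and follows the same localization scheme as the paper: reduce to the argument of \autoref{prop: doubling property} via a local reverse Bishop--Gromov inequality (the paper's \eqref{eq:locreversedoubling}) and the local two-sided comparison \eqref{eq:estimatebarG}, obtaining a local version of \eqref{eq:intermboundabove} and hence of \eqref{a3}. Your explicit confinement of the balls $B^{\bar{G}}(x,r)$ inside a fixed compact $P_1$---via the uniform decay $\bar G(x,y)\lesssim 1/\dist(x,y)$ for $x\in P$---is in fact a useful clarification, since the paper handles this point only through a footnote invoking \autoref{rm:boundednessbarG}, which strictly speaking concerns the converse implication; one small refinement is that in your decay estimate you should also treat the range $t<s_0^2$ of the time integral (where the cubic lower bound on volumes is unavailable), which is routine via Bishop--Gromov, giving $\meas(B(x,\sqrt{t}))\ge c(P)\,t^{N/2}$ uniformly for $x\in P$ and small $t$.
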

	
	\begin{proof}
		The conclusion can be obtained arguing as in the proof of \autoref{prop: doubling property}, exploiting the fact that $(X,\dist,\meas)$ is locally doubling (see \eqref{eq:locdoubling}) and the local comparison between $\bar{G}$ and $F$ obtained in \eqref{eq:estimatebarG}. We just indicate here the adjustments one has to do.
		
		First of all, we observe that a local version of \autoref{lemma: reverse doubling} holds true, namely if $(\bar{X},\bar{\dist},\mu)$ is a locally doubling m.m.s. with function $C_{\mu}:(0,+\infty)\to(0,+\infty)$ and $(X,\dist,\meas)$ is the tensor product between $(\bar{X},\bar{\dist},\mu)$ and $(\setR^k,\dist_{\setR^k},\leb^k)$, it holds
		\begin{equation}\label{eq:locreversedoubling}
		\frac{\meas(B(x,R))}{\meas(B(x,r))}\geq \frac{1}{C_{\mu}(R)\sqrt{2}^k}\left(\frac{R}{r}\right)^k,
		\end{equation}
		for any $x\in X$ and for any $0<r<R$.\newline
		We wish to obtain a local version of \eqref{eq:intermboundabove}, that is to say the existence of $\gamma=\gamma(P,R)$ such that
		\begin{equation}
		F_x(\frac{r}{2})\le\gamma F_x(r),\quad\text{for any $x\in P$ and for any $0<r<R$.}
		\end{equation}
		This can be obtained arguing as in the proof of \eqref{eq:intermboundabove}, exploiting the splitting of the integration intervals we introduced in \eqref{eq:firstsplit}, \eqref{eq:secondsplit} and assumption \autoref{assumptionnegative} together with \eqref{eq:locreversedoubling} in place of \autoref{lemma: reverse doubling}.\newline
		The validity of \eqref{eq:locreversedoubling} implies in turn that, for any $S>0$, we can find $\gamma<1$ such that
		\begin{equation}\label{eq:locinversiondoub}
		F_x^{-1}(\gamma s)\le F_x^{-1}(s),\quad\text{for any $x\in P$ and for any $s>S$.}
		\end{equation} 
		Having \eqref{eq:locinversiondoub} at our disposal, we can achieve \eqref{eq:doublocbarG} exploiting the local version of \eqref{a4}, which is a consequence of \eqref{eq:estimatebarG}\footnote{In the whole proof we tacitly exploited the fact that any compact subset of $X$ is both $\dist$-bounded and $\dist_{\bar{G}}$-bounded, see \autoref{rm:boundednessbarG} above.}.  
	\end{proof}
	
	We end this introductory discussion about the properties of the modified Green function $\bar{G}$ with a vector valued maximal estimate. In the proof of \autoref{thm: sobolev vectorfield, barG-regularity} below it will play the same role that \autoref{prop: G-maximal estimate} played in the proof of \autoref{th 1}.
	
	\begin{proposition}[Maximal estimate, vector-valued version]\label{prop: barG-maximal estimate}	
		Let $(X,\dist,\meas)$ be an $\RCD(K,N)$ m.m.s. satisfying assumption \autoref{assumptionnegative} and let $P\subset X$ be a compact set.
			Then, for any $b\in W^{1,2}_{C,s}(TX)$ bounded and with compact support in $P$, there exists a positive function $F\in L^2(P,\meas)$ such that
			\begin{equation}\label{z14}
			\abs{ b\cdot\nabla \bar{G}_x(y)+ b\cdot\nabla \bar{G}_y(x)}\le \bar{G}(x,y)(F(x)+F(y))\quad\text{for $\meas\times\meas$-a.e. $(x,y)\in P\times P$,}
			\end{equation}
			and
			\begin{equation}\label{eq: z}
			\norm{F}_{L^2(P,\meas)}\le C_V \norm{|\nabla_{\sym} b|+|\div b|}_{L^2(X,\meas)},
			\end{equation}
			where $C_V=C_V(P)>0$.
		
	\end{proposition}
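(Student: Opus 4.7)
The plan is to mimic the two-step strategy of the proof of \autoref{prop: G-maximal estimate}, keeping track of the extra zero-order term produced by \eqref{eq: Delta barG}. The main obstacle is that $\Delta\bar{G}^\eps_y = -e^{-c\eps}p_\eps(y,\cdot) + c\bar{G}^\eps_y$, so in addition to the leading maximal-type contribution one inherits an integral of the form $c\int_P (\div b)\,\bar{G}_x\bar{G}_y\,d\meas$, which must be absorbed into the desired form $\bar{G}(x,y)(F(x)+F(y))$. To this end I will combine the almost-triangle inequality for $\dist_{\bar{G}}$ (\autoref{prop: almost triangle inequality barG}) with a Schur-type $L^2$ bound for the integral operator having kernel $\bar{G}$, made available by \eqref{eq:barG in L1}.

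Fix $\eps>0$ and $x,y\in P$, pick $\eta\in\Test(X,\dist,\meas)$ equal to $1$ on $\supp b$ and compactly supported, and plug $h=\eta$, $g_1=\eta\bar{G}^\eps_x$, $g_2=\eta\bar{G}^\eps_y$ into \eqref{eq:Sobvectfield} (admissible by the argument of \autoref{rm::testlocregularity}). Exactly as in Step 1 of \autoref{prop: G-maximal estimate}, together with the local scalar estimate of \autoref{rm:regularizedmaxscalarbar}, one obtains, with $g:=|\nabla_\sym b|+|\div b|$,
\begin{equation*}
\left|\int_X\bigl\{b\cdot\nabla\bar{G}^\eps_x\,\Delta\bar{G}^\eps_y+b\cdot\nabla\bar{G}^\eps_y\,\Delta\bar{G}^\eps_x\bigr\}\,d\meas\right|\le 2\bar{C}_M(P)\bar{G}(x,y)\bigl(Mg(x)+Mg(y)\bigr).
\end{equation*}
Using \eqref{eq: Delta barG} together with the Leibniz rule and integration by parts for $\int b\cdot\nabla(\bar{G}^\eps_x\bar{G}^\eps_y)\,d\meas=-\int(\div b)\bar{G}^\eps_x\bar{G}^\eps_y\,d\meas$ (valid after cutting off $\bar{G}^\eps_x\bar{G}^\eps_y$ by $\eta$, since $b$ is compactly supported), the left-hand side equals
\begin{equation*}
-e^{-c\eps}\bigl[P_\eps(b\cdot\nabla\bar{G}^\eps_x)(y)+P_\eps(b\cdot\nabla\bar{G}^\eps_y)(x)\bigr]-c\int_X(\div b)\bar{G}^\eps_x\bar{G}^\eps_y\,d\meas.
\end{equation*}
Letting $\eps\downarrow 0$, the bracket converges in $L^1_\loc(P\times P,\meas\times\meas)$ to $b\cdot\nabla\bar{G}_x(y)+b\cdot\nabla\bar{G}_y(x)$ by the argument of Step 2 of \autoref{prop: G-maximal estimate} (relying on $\bar{G}^\eps\to\bar{G}$ in $W^{1,1}_\loc$ via \eqref{eq:barGeps to barG}, the $L^1$-contractivity of $P_\eps$, and \autoref{lemma: integrabilityFxHx}), while the last integral converges by monotone convergence applied separately to $(\div b)^{\pm}$. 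Extracting a suitable subsequence one gets, for $\meas\times\meas$-a.e. $(x,y)\in P\times P$,
\begin{equation*}
|b\cdot\nabla\bar{G}_x(y)+b\cdot\nabla\bar{G}_y(x)|\le 2\bar{C}_M(P)\bar{G}(x,y)(Mg(x)+Mg(y))+c\int_P |\div b|\,\bar{G}_x\bar{G}_y\,d\meas.
\end{equation*}

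Finally I absorb the extra term. For $w\in P$, the almost-triangle inequality \eqref{eq:trianglebarG} yields $\bar{G}_x(w)\bar{G}_y(w)\le\bar{C}_T(P)\bar{G}(x,y)(\bar{G}_x(w)+\bar{G}_y(w))$, so defining $Th(x):=\int_P h(w)\bar{G}_x(w)\,d\meas(w)$,
\begin{equation*}
c\int_P|\div b|\,\bar{G}_x\bar{G}_y\,d\meas\le c\bar{C}_T(P)\bar{G}(x,y)\bigl(T|\div b|(x)+T|\div b|(y)\bigr).
\end{equation*}
Thus \eqref{z14} holds with $F(x):=2\bar{C}_M(P)Mg(x)+c\bar{C}_T(P)\,T|\div b|(x)$. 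For \eqref{eq: z} I combine \eqref{eq:LocalBoundnessMaximaloperator}, which gives $\|Mg\|_{L^2(P)}\le C(P)\|g\|_{L^2}$ since $\supp g\subset P$, with Schur's test applied to the symmetric kernel $\bar{G}(x,w)$: by \eqref{eq:barG in L1}, $\int_X\bar{G}(x,w)\,d\meas(w)=1/c$ uniformly in $x$, hence $\|T\|_{L^2(X)\to L^2(X)}\le 1/c$ and $\|cT|\div b|\|_{L^2(P)}\le\||\div b|\|_{L^2}$, yielding \eqref{eq: z} with some $C_V(P)$. When $c=0$ (i.e.\ $K\ge 0$) the extra term is absent and the scheme degenerates into the one of \autoref{prop: G-maximal estimate}, so the argument is uniform in both cases.
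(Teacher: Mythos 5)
Your proof is correct and follows essentially the same route as the paper's: the same $\eps$-regularization, the same splitting into the $\Delta\bar{G}^{\eps}$-part (handled through \eqref{eq:Sobvectfield} and the scalar maximal estimate of \autoref{rm:regularizedmaxscalarbar}) and the zero-order part $c\int (\div b)\,\bar{G}^{\eps}_x\bar{G}^{\eps}_y\di\meas$ (absorbed via the almost-triangle inequality \autoref{prop: almost triangle inequality barG}). Your Schur-test bound for the operator with kernel $\bar{G}$ is just an explicit rephrasing of the paper's identity $\int_X g\,\bar{G}_x\di\meas=\int_0^{\infty}e^{-ct}P_tg(x)\di t$ combined with the $L^2$-contractivity of the heat semigroup, so the two $L^2$ estimates coincide.
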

	
	\begin{proof}
		The strategy we follow is the same proposed in the proof of \autoref{prop: G-maximal estimate}.\newline
		First we are going to prove that there exists $F$ as above such that
		\begin{equation}\label{eq: intermediateMaximalVectorfileds bar}
		\abs{\int_X \left\lbrace b\cdot\nabla \bar{G}^{\eps}_x(w) p_{\eps}(y,w)+b\cdot\nabla \bar{G}^{\eps}_y(w) p_{\eps}(x,w)\right\rbrace  \di \meas(w)}
		\le \bar{G}(x,y)\left(F(x)+F(y)\right),
		\end{equation} 
		for any $x,y\in P$ and for any $0<\epsilon<1$. The stated conclusion will then follow from \eqref{eq: intermediateMaximalVectorfileds bar}, taking into account \eqref{eq:barGeps to barG} and following verbatim the second step of the proof of \autoref{prop: G-maximal estimate}.   
		
		Recall from \eqref{eq: Delta barG} that $p_{\eps}(x,w)=e^{c\epsilon}[-\Delta \bar{G}_x^{\eps}(w)+c\bar{G}^{\epsilon}_x(w)]$ for $\meas$-a.e. $w\in X$. Hence we can estimate
		\begin{align*}
		\Big|\int_X  &b\cdot\nabla \bar{G}^{\eps}_x  (w) p_{\eps}(y,w)+b\cdot\nabla \bar{G}^{\eps}_y(w) p_{\eps}(x,w)  \di \meas(w)\Big|\\
		=&
		e^{c\epsilon}\abs{\int_X \left\lbrace b\cdot\nabla \bar{G}^{\eps}_x(w) (-\Delta \bar{G}_y^{\eps}(w)+c\bar{G}^{\epsilon}_y(w))+b\cdot\nabla \bar{G}^{\eps}_y(w)(-\Delta \bar{G}_x^{\eps}(w)+c\bar{G}^{\epsilon}_x(w))\right\rbrace  \di \meas(w)}\\
		\le &e^{c\epsilon} 	
		\abs{\int_X \left\lbrace b\cdot\nabla \bar{G}^{\eps}_x \Delta\bar{G}_y^{\eps}+b\cdot\nabla \bar{G}^{\eps}_y\Delta \bar{G}_x^{\eps} \right\rbrace  \di \meas}+ce^{c\epsilon}\abs{\int_X \left\lbrace b\cdot\nabla \bar{G}^{\eps}_x \ \bar{G}_y^{\eps}+b\cdot\nabla \bar{G}^{\eps}_y\ \bar{G}_x^{\eps} \right\rbrace  \di \meas}\\
		=:& I_1^{\epsilon}(x,y)+I^{\epsilon}_2(x,y).
		\end{align*}
		Arguing as in the first step of the proof of \autoref{prop: G-maximal estimate} and applying \autoref{rm:regularizedmaxscalarbar}, we obtain that
		\begin{equation}\label{eq:estimatefirstterm}
		I^{\epsilon}_1(x,y)\le e^{c\epsilon} \bar{C}_M(P)\bar{G}(x,y)(Mg(x)+Mg(y)),\quad\text{for any $x,y\in P$ and for any $0<\epsilon<1$,}
		\end{equation}
		where $g:=|\nabla_{\sym} b|+|\div b|$.
		Dealing with $I^{\epsilon}_2$, let us apply \eqref{eq:divergence}, Leibniz rule and eventually \autoref{prop: almost triangle inequality barG}, to obtain that
		\begin{equation}\label{eq:maxa}
		I^{\epsilon}_2(x,y)=ce^{c\epsilon}\abs{\int_X \div b\ \bar{G}_x^{\eps}\bar{G}_y^{\eps} \di \meas}
		\le ce^{c\epsilon}\bar{C}_T(P)\bar{G}(x,y) \left( \int_X g\bar{G}_x\di\meas +\int_X g\bar{G}_y\di\meas \right),
		\end{equation}
		for any $x,y\in P$.\newline
		Let us set
		\begin{equation*}
		F(x):=e^cC_M(P)Mg(x)+ce^c\int_X g\bar{G_x}\di \meas,
		\qquad
		\forall x\in P.	
		\end{equation*} 
		It remains only to show \eqref{eq: z}. To this aim we recall \eqref{eq:LocalBoundnessMaximaloperator} and we
		observe that,
		\begin{equation}\label{eq:maxb}
		\int_X \left(\int_X g\bar{G_x}\di \meas\right)^2\di \meas(x)
		=\int_X \left( \int_{0}^{\infty}e^{-ct} P_tg(x) \di t\right)^2\di \meas(x)
		\le c^{-1}\norm{g}^2_{L^2(X,\meas)}
		\end{equation}
		The proof is complete.
	\end{proof}
	
	With \autoref{prop: almost triangle inequality barG}, \autoref{prop: doubling property barG} and \autoref{prop: barG-maximal estimate} at our disposal we can develop a regularity theory for Regular Lagrangian flows of Sobolev vector fields in terms of the quasi-metric $\dist_{\bar{G}}$.\newline
	To this aim let us fix $T>0$ and let $b_t\in L^{\infty}((0,T)\times X)$ be a time dependent vector field with compact support, uniformly w.r.t. time. We further assume that $b_t\in W^{1,2}_{C,s}(TX)$ for a.e. $t\in(0,T)$, that $|\nabla_{\sym} b_t|\in L^1((0,T); L^2(X,\meas))$ and that $\div b_t\in L^1((0,T);L^{\infty}(X,\meas))$.
	
	Let $(\XX_t)_{t\in[0,T]}$ be the Regular Lagrangian flow of $b$, whose existence and uniqueness follow by the theory developed in \cite{AmbrosioTrevisan14}. In analogy with the case of nonnegative lower Ricci curvature bound, we set 
	\begin{equation}
	\bar{\Phi}_{t,r}(x):=\dashint_{B^{\bar{G}}(x,r)}\log \left( 1+\frac{\dist_{\bar{G}}(\XX_t(x),\XX_t(y))}{r} \right) \di \meas(y),
	\end{equation}
	for $r\in (0,\infty)$ and $t\in [0,T]$ and, for any $R>0$,
	\begin{equation}\label{def: barPhi star}
	\bar{\Phi}_R^{*}(x):=\sup_{0\leq t\leq T}\sup_{0<r<R}\bar{\Phi}_{t,r}(x).
	\end{equation}

	Below we state the main regularity result of this part. Its proof can be obtained from the result of this subsection, using \autoref{remark: corollaryA3}, recalling \eqref{eq:LocalBoundnessMaximaloperator} and arguing as in the proofs of \autoref{th 1} and \autoref{thm: sobolev vectorfield, G-regularity}.

	\begin{theorem}\label{thm: sobolev vectorfield, barG-regularity}
		Let $(X,\dist,\meas)$ be an $\RCD(K,N)$ m.m.s. satisfying assumption \autoref{assumptionnegative}.
		Let $b$ and $(\XX_t)_{t\in[0,T]}$ be as in the discussion above.
		Then, for any compact $P\subset X$ such that $P$ contains the $\left(T\norm{b}_{L^{\infty}}\right)$-enlargement of $\supp b$, there exist $\bar{C}>0$ and $R>0$, depending on $P$, such that for any $x,y\in P $ and for any $t\in[0,T]$, it holds
		\begin{equation*}
		\dist_{\bar{G}}(\XX_t(x),\XX_t(y))\le \bar{C} e^{\bar{C}\left(\bar{\Phi}_R^*(x)+\bar{\Phi}_R^*(y)\right)}\dist_{\bar{G}}(x,y).
		\end{equation*}
		Moreover, $\bar{\Phi}^*_R$ belongs to $L^{2}(P,\meas)$ and the following Lusin-approximation property holds:
		for every $\epsilon>0$ there exists a Borel set $E\subset P$ such that
		$\meas(P\setminus E)<\eps$ and 
		\begin{equation*}
		\dist_{\bar{G}}(\XX_t(x),\XX_t(y))\leq \bar{C}\exp\left(2\bar{C}\frac{\norm{\bar{\Phi}^*_R}_{L^2(P,\meas)}}{\sqrt{\epsilon}}\right) \dist_{\bar{G}}(x,y)
		\quad \text{for any $x,y\in P$ and for any $t\in[0,T]$.}
		\end{equation*} 
	\end{theorem}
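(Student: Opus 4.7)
The plan is to mirror the two-step argument of \autoref{th 1} and \autoref{thm: sobolev vectorfield, G-regularity}, systematically replacing $G$ by $\bar G$ and $\dist_G$ by $\dist_{\bar G}$, and substituting the global quasi-triangle/doubling/maximal estimates by their local-in-$P$ analogues furnished by \autoref{prop: almost triangle inequality barG}, \autoref{prop: doubling property barG} and \autoref{prop: barG-maximal estimate}. The trajectory-confinement statement of Remark~A.3, valid under the hypothesis that $P$ contains the $(T\norm{b}_{L^{\infty}})$-enlargement of $\supp b$, ensures that $\XX_t(x)\in P$ for every $t\in[0,T]$ and $\meas$-a.e.\ $x\in P$; consequently every quantity evaluated along the flow stays in the compact set $P$ and all constants may be taken to depend only on $P$.

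First I would derive the chain-rule formula, along the flow,
\begin{equation*}
\frac{\di}{\di t}\bar G(\XX_t(x),\XX_t(y))=b_t\cdot\nabla\bar G_{\XX_t(x)}(\XX_t(y))+b_t\cdot\nabla\bar G_{\XX_t(y)}(\XX_t(x)),
\end{equation*}
for $\meas\times\meas$-a.e.\ $(x,y)\in P\times P$ and for $\leb^1$-a.e.\ $t\in(0,T)$, by the same argument that produced \eqref{eq: dettaglio infernale}, relying on $\bar G_z\in W^{1,1}_{\loc}(X,\dist,\meas)$ from \autoref{prop:estimatebarG}. Then, in perfect analogy with the nonnegative-Ricci case, I would introduce
\begin{equation*}
\bar\Phi_{t,r}(x):=\dashint_{B^{\bar G}(x,r)}\log\Bigl(1+\frac{\dist_{\bar G}(\XX_t(x),\XX_t(y))}{r}\Bigr)\di\meas(y)
\end{equation*}
together with $\bar\Phi^*_R(x):=\sup_{0\le t\le T}\sup_{0<r<R}\bar\Phi_{t,r}(x)$ for a threshold $R=R(P)>0$ to be fixed later.

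Differentiating $\bar\Phi_{t,r}$ in $t$ kills one factor of $\bar G$ thanks to the $\log$-regularization, and \autoref{prop: barG-maximal estimate} applied to $b_s$ bounds the integrand by $F_s(\XX_s(x))+F_s(\XX_s(y))$ with $\norm{F_s}_{L^2(P,\meas)}\le C_V\norm{|\nabla_{\sym}b_s|+|\div b_s|}_{L^2(X,\meas)}$. Integrating in time, taking the double supremum over $t\in[0,T]$ and $r\in(0,R)$, and invoking the compressibility $(\XX_t)_\sharp\meas\le L\meas$ together with the local boundedness \eqref{eq:LocalBoundnessMaximaloperator} of the Hardy--Littlewood maximal operator $M$ and of its $\dist_{\bar G}$-counterpart $M^{\bar G}$ (the local comparison $M^{\bar G}\lesssim M$ being a direct consequence of \eqref{eq:estimatebarG} and \eqref{eq:doublocbarG}, in the spirit of \autoref{lemma: maximal function associated to dG}) yields the $L^2$-bound
\begin{equation*}
\norm{\bar\Phi^*_R}_{L^2(P,\meas)}\le C(P,T)\Bigl(1+L\int_0^T\norm{|\nabla_{\sym}b_s|+|\div b_s|}_{L^2(X,\meas)}\di s\Bigr)<\infty.
\end{equation*}

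To convert the functional estimate into the pointwise one I would fix $x,y\in P$ with $x\ne y$, set $r:=\dist_{\bar G}(x,y)$, and choose $R$ larger than $2\bar C_T(P)$ times the $\dist_{\bar G}$-diameter of $P$, which is finite by \autoref{rm:boundednessbarG}. Subadditivity of $s\mapsto\log(1+s)$ combined with \eqref{eq:trianglebarG} and a decomposition via an auxiliary point $z$, followed by averaging in $z\in B^{\bar G}(x,r)$, by the inclusion $B^{\bar G}(x,r)\subset B^{\bar G}(y,2\bar C_T(P)\,r)$, and by the absorption of the change of center via the local doubling \eqref{eq:doublocbarG}, produces
\begin{equation*}
\log\Bigl(1+\frac{\dist_{\bar G}(\XX_t(x),\XX_t(y))}{\bar C_T(P)\,\dist_{\bar G}(x,y)}\Bigr)\le\bar C\bigl(\bar\Phi^*_R(x)+\bar\Phi^*_R(y)\bigr).
\end{equation*}
Exponentiating gives the first assertion, and Chebyshev's inequality applied to $E:=\{x\in P:\bar\Phi^*_R(x)\le\norm{\bar\Phi^*_R}_{L^2(P,\meas)}/\sqrt{\epsilon}\}$ delivers the Lusin approximation statement. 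The main technical obstacle I anticipate is the careful calibration of $R$ and of the locality constants so that every auxiliary ball entering the quasi-triangle/doubling chain remains within the range on which \autoref{prop: almost triangle inequality barG} and \autoref{prop: doubling property barG} apply; this is ultimately controlled by the $\dist_{\bar G}$-boundedness of $P$ and by the trajectory-confinement of Remark~A.3, without which the local constants would deteriorate along the flow.
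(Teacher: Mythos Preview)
Your proposal is correct and follows precisely the route the paper itself indicates: the paper's proof consists of a single sentence directing the reader to argue as in \autoref{th 1} and \autoref{thm: sobolev vectorfield, G-regularity}, replacing the global tools by their local counterparts (\autoref{prop: almost triangle inequality barG}, \autoref{prop: doubling property barG}, \autoref{prop: barG-maximal estimate}), invoking \autoref{remark: corollaryA3} for the chain rule along the flow, and \eqref{eq:LocalBoundnessMaximaloperator} for the local $L^2$-boundedness of the maximal operator. Your write-up is in fact more detailed than the paper's sketch, and the technical point you flag---calibrating $R$ against the $\dist_{\bar G}$-diameter of $P$ so that all auxiliary $\dist_{\bar G}$-balls remain in the range of validity of the local doubling and quasi-triangle constants---is exactly the care the paper leaves implicit.
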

	
	In analogy with the case of real valued functions (where the Lipschitz regularity is understood w.r.t. the distance $\dist$) we introduce the following.
	
	\begin{definition}\label{def:dGLusinLip}
		Let $(X,\dist,\meas)$ be an $\RCD(0,N)$ m.m.s. satisfying assumption \autoref{assumption: good definition of G}. We say that a map $\Phi:X\to X$ is $\dist_G$-Lusin Lipschitz if there exists a family $\set{E_n: n\in\setN}$ of Borel subsets of $X$ such that $\meas(X\setminus\cup_{n\in\setN}E_n)=0$ and 
		\begin{equation*}
		\dist_G(\Phi(x),\Phi(y))\le n\dist_G(x,y),
		\end{equation*}
		for any $x,y\in E_n$ and for any $n\in\setN$.\newline
		By analogy, if $(X,\dist,\meas)$ is an $\RCD(K,N)$ m.m.s. satisfying assumption \autoref{assumptionnegative}, we say that $\Psi:X\to X$ is $\dist_{\bar{G}}$-Lusin Lipschitz if it satisfies the above conditions with $\dist_{\bar{G}}$ in place of $\dist_{G}$.
	\end{definition}
	
	Let us remark that, with the terminology we just introduced, we can combine \autoref{th 1} and \autoref{thm: sobolev vectorfield, G-regularity} above to say that the Regular Lagrangian flow of a sufficiently regular vector field over an $\RCD(0,N)$ m.m.s. satisfying assumption \autoref{ass:doublingG} is a $\dist_G$-Lusin Lipschitz map (the RLF of a sufficiently regular vector field over an $\RCD(K,N)$ m.m.s. satisfying assumption \autoref{assumptionnegative} is a $\dist_{\bar{G}}$-Lusin Lipschitz map, respectively). 
	
	\section{Constancy of the dimension}\label{sec:constancyofdimension}
	
	The aim of this section is to prove \autoref{thm:constancyofdimension} below, that could be restated by saying that, if $(X,\dist,\meas)$ is an $\RCD(K,N)$ m.m.s. for some $K\in\setR$ and $1\le N<+\infty$, then there exists a natural number $1\le n\le N$ such that the tangent cone of $(X,\dist,\meas)$ is the $n$-dimensional Euclidean space at $\meas$-almost every point in $X$. \footnote{We point out that, in the very recent \cite{Honda18}, Honda constructs a family of spaces satisfying the Bakry-\'Emery condition (see \cite{AmbrosioMondinoSavare15,ErbarKuwadaSturm15}), but not the \textit{Sobolev to Lipschitz} property, having regular sets of different dimensions with positive measure}
	In this way we extend to this abstract framework a relatively recent result obtained by Colding-Naber in \cite{ColdingNaber12} for Ricci-limit spaces. 
	
	Let us spend a few words about the strategy we are going to implement, which is of different nature with respect to the one adopted in \cite{ColdingNaber12}, since we cannot rely on the existence of a smooth approximating sequence for $(X,\dist,\meas)$.\newline
	We begin remarking that the statement of \autoref{thm:constancyofdimension} is not affected by taking the tensor product with Euclidean factors and, by means of this simple observation, we will put ourselves in position to apply the results of \autoref{sec:Gregularity} and \autoref{sec:RLFpreservedimension}.\newline
	This being said, we will argue as follows.
	In \autoref{sec:RLFpreservedimension} below we start proving that $\dist_{G}/\dist_{\bar{G}}$-Lusin Lipschitz maps with bounded compressiblity from an $\RCD(K,N)$ m.m.s. into itself are regular enough to carry an information about the dimension from their domain to their image. This rigidity result has to be compared with the standard fact that biLipschitz maps preserve the Hausdorff dimension.\newline 
	Then we are going to prove that the class of RLFs of Sobolev vector fields, that we know to be $\dist_{G}/\dist_{\bar{G}}$-Lusin Lipschitz from \autoref{sec:Gregularity}, is rich enough to gain ``transitivity'' at the level of probability measures with bounded support and bounded density w.r.t. $\meas$. Better said, the primary goal of \autoref{subsec:regularityintheinterior} below will be to show that any two probability measures which are intermediate points of a $W_2$-geodesic joining probabilities with bounded support and bounded density w.r.t. $\meas$ can be obtained one from the other via push-forward through the RLF of a vector field satisfying the assumptions of \autoref{thm: sobolev vectorfield, G-regularity} (or \autoref{thm: sobolev vectorfield, barG-regularity}).\newline 
	Eventually in \autoref{subsec:conclusion} we will combine all the previously developed ingredients to prove that the above mentioned ``transitivity'' is not compatible with the ``rigidity'' we obtain in \autoref{sec:RLFpreservedimension} and the possibility of having non negligible regular sets of different dimensions in the Mondino-Naber decomposition of $(X,\dist,\meas)$.    	
	
	\subsection{A rigidity result for $\dist_{G}$-Lusin Lipschitz maps}\label{sec:RLFpreservedimension}
	
	The aim of this subsection is to prove a rigidity result for $\dist_G$ and $\dist_{\bar{G}}$-Lusin Lipschitz maps (see \autoref{def:dGLusinLip}) that we are going to apply later on to Regular Lagrangian Flows of Sobolev vector fields.\newline	
	Roughly speaking, given an $\RCD(K,N)$ m.m.s. satisfying assumption \autoref{assumptionnegative}, we are going to prove that a $\dist_{G}/\dist_{\bar{G}}$-Lusin Lipschitz map with bounded compressibility cannot move a part of dimension $n$ of $(X,\dist,\meas)$ into a part of dimension $k<n$ (see \autoref{thm:conservationhausdim} below for a precise statement).
	Just at a speculative level, let us point out that, in the case of $\dist$-Lusin Lipschitz maps, this conclusion would have been a direct consequence of standard geometric measure theory arguments. However, a priori, it is not clear how to build non trivial maps from the space into itself with $\dist$-Lusin Lipschitz regularity, while in \autoref{sec:Gregularity} above we were able to obtain $\dist_{G}$-Lusin Lipschitz regularity for a very rich family of maps\footnote{A posteriori, one of the consequences of \autoref{thm:constancyofdimension} will be that flow maps are also $\dist$-Lusin Lipschitz, see \autoref{thm:LusinLip}}.   
	
	We begin with a Euclidean result. It can be considered in some sense as a much simplified version of Sard's lemma.
	
	\begin{proposition}\label{prop:sard}
		Fix $k,n\in\setN$ such that $1\le k<n$. Let $A\subset\setR^n$, $\Phi:A\to\setR^k$ be such that
		\begin{equation}\label{eq:improvedregularity}
		\lim_{r\to 0^+}\sup_{y\in A\cap B(x,r)}\frac{\abs{\Phi(y)-\Phi(x)}}{\abs{y-x}^{\frac{n}{k}}}=0,\quad\text{for any $x\in A$.}
		\end{equation}
		Then $\mathcal{H}^{k}(\Phi(A))=0$.
	\end{proposition}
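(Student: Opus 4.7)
The plan is to exploit the condition \eqref{eq:improvedregularity} as a pointwise little-$o$ H\"older bound with exponent $n/k > 1$, and then run a standard Hausdorff-measure decomposition-and-covering scheme. A first reduction is to assume $A$ is bounded: writing $A = \bigcup_{R \in \setN}(A \cap B(0, R))$, it is enough to prove $\mathcal{H}^k(\Phi(A \cap B(0, R))) = 0$ for every $R > 0$.

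To turn the pointwise smallness into something uniform, I would set, for each $\epsilon > 0$ and $m \in \setN$,
\begin{equation*}
A_{\epsilon, m} := \{x \in A \,:\, |\Phi(y) - \Phi(x)| \le \epsilon |y-x|^{n/k} \text{ for every } y \in A \cap B(x, 1/m)\}.
\end{equation*}
The hypothesis \eqref{eq:improvedregularity} guarantees that $\{A_{\epsilon, m}\}_m$ is increasing in $m$ with $\bigcup_m A_{\epsilon, m} = A$. Since $\mathcal{H}^k$ is a Borel-regular outer measure and hence continuous under monotone increasing unions of arbitrary sets, it suffices to produce a bound on $\mathcal{H}^k(\Phi(A_{\epsilon, m} \cap B(0, R)))$ that is uniform in $m$ and vanishes as $\epsilon \to 0$.

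The core estimate is a covering argument. Fixing $\delta \in (0, 1/m)$, I would tile $\setR^n$ by cubes of side $\delta/\sqrt{n}$: each such cube has diameter $\delta$, and the number of them meeting $B(0, R)$ is at most $c_n(R/\delta)^n$. For each cube $Q$ that meets $A_{\epsilon, m}$ pick any $x_Q \in Q \cap A_{\epsilon, m}$; then $Q \cap A_{\epsilon, m} \subset B(x_Q, \delta)$, and the defining property of $A_{\epsilon, m}$ gives
\begin{equation*}
\Phi(Q \cap A_{\epsilon, m}) \subset B(\Phi(x_Q),\, \epsilon \delta^{n/k}).
\end{equation*}
Summing the $k$-dimensional pre-measures of these image balls yields
\begin{equation*}
\mathcal{H}^k_{2\epsilon \delta^{n/k}}\bigl(\Phi(A_{\epsilon, m} \cap B(0, R))\bigr) \le c_n (R/\delta)^n \cdot \omega_k (\epsilon \delta^{n/k})^k = c_n \omega_k \epsilon^k R^n,
\end{equation*}
the factors $\delta^{-n}$ and $\delta^n$ cancelling exactly because the exponent is $n/k$.

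Letting $\delta \downarrow 0$ removes the scale cap and passes the bound to $\mathcal{H}^k(\Phi(A_{\epsilon, m} \cap B(0, R)))$; then $m \to \infty$ transfers it to $\Phi(A \cap B(0, R))$ by the increasing-union property of $\mathcal{H}^k$, and finally $\epsilon \downarrow 0$ gives $\mathcal{H}^k(\Phi(A \cap B(0, R))) = 0$. The only real obstacle, and the heart of the matter, is the exponent balancing in the covering estimate: with a pointwise H\"older exponent strictly greater than $n/k$ the bound would be trivial, and with one strictly smaller it would diverge; the little-$o$ version at the critical exponent $n/k$ is precisely what makes the cancellation quantitative.
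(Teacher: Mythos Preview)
Your proof is correct. The appeal to continuity from below of $\mathcal{H}^k$ under increasing unions of arbitrary sets is justified by Borel regularity, as you note, so the passage $m\to\infty$ is legitimate; the scaling computation is the same critical-exponent balance as in the paper.

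The paper's argument differs only in the covering device: rather than stratifying into the sets $A_{\epsilon,m}$ and tiling each by a uniform grid of cubes, the paper fixes $\epsilon$ and $\delta$, picks for each $x\in A$ a radius $r_x<\delta/10$ at which the $\epsilon$-H\"older bound holds on $B(x,5r_x)$, and applies the Vitali $5r$-covering lemma to extract disjoint balls $B(x_i,r_i)$ whose $5$-enlargements cover $A$. This handles the variable scales in one shot, avoiding both the stratification and the monotone-union limit; in exchange it costs the Vitali lemma and the observation that $\sum r_i^n$ is controlled by the Lebesgue measure of a fixed enlargement of the compact hull of $A$. Your grid approach is more elementary (no Vitali) at the price of the extra decomposition layer and the Borel-regularity fact. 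Either way the substance is the same: the exponent $n/k$ is exactly what makes $(\text{count})\times(\text{image diameter})^k$ scale-invariant.
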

	
	\begin{proof}
		We wish to prove that $\mathcal{H}^{k}_{\delta}(\Phi(A))= 0$ for any $\delta>0$. Let us assume without loss of generality that $A\subset P$ for some compact $P\subset\setR^n$. 
		Fix now $\epsilon>0$. It follows from \eqref{eq:improvedregularity} that, for any $x\in A$, we can find $r_x<\delta/10$ such that, for any $y\in B(x, 5r_x)\cap A$, it holds
		\begin{equation}\label{eq:selection}
		\abs{\Phi(y)-\Phi(x)}\le \epsilon\abs{x-y}^{\frac{n}{k}}.
		\end{equation}
		Moreover, if $\epsilon,\delta<1$, then \eqref{eq:selection} grants that $\Phi(B(x,5r_x)\cap A)$ has diameter less than $\delta$, for any $x\in A$. 
		
		Applying Vitali's covering theorem we can find a subfamily $\mathcal{F}:=\set{B(x_i,r_i):i\in\setN}$ such that the balls $B(x_i,r_i)$ are disjoint and $A\subset\cup_{i\in\setN} B(x_i,5r_i)$. Hence $\set{\Phi(A\cap B(x_i,5r_i)):i\in\setN}$ is an admissible covering of $\Phi(A)$ in the definition of $\mathcal{H}^k_{\delta}(\Phi(A))$. Therefore
		\begin{equation*}
		\mathcal{H}^{k}_{\delta}(\Phi(A))\le \omega_k5^{n}\epsilon^{k}\sum_{i=0}^{\infty}r_i^n,
		\end{equation*}
		since it follows from \eqref{eq:selection} that $\Phi(B(x_i,5r_i)\cap A)\subset B(\Phi(x_i),\epsilon 5^{\frac{n}{k}}r_i^{\frac{n}{k}})$ for any $i\in\setN$. Observing now that $\sum_{i=0}^{\infty}\omega_nr_i^n\le \mathcal{H}^n(P^1)<+\infty$, where $P^1$ is the $1$-enlargement of the compact $P$, we conclude that $\mathcal{H}^k_{\delta}(\Phi(A))=0$ for any $\delta>0$, as we claimed.
	\end{proof}

	\begin{remark}
		The proof of \autoref{prop:sard} above resembles the part of the proof of Sard's lemma where it is shown that the image of the set of points where all the derivatives vanish up to a certain order is negligible (see for instance \cite{Figalli08}).
		Recall that the classical Sard's lemma requires some regularity of the map and that the highest is the difference between the dimension of the domain an the dimension of the codomain the highest is the regularity to be required. Actually, even if we do not explicitly require any sort of regularity for $\Phi$, \eqref{eq:improvedregularity} is essentially telling us that the map is differentiable with vanishing derivatives up to the order $n/k$.  
	\end{remark}

	It is a rather classical fact in Riemannian geometry that on an $n$-dimensional compact Riemannian manifold with $n>2$ the Green function behaves locally like the distance raised to the power $2-n$ (see \cite[Chapter 4]{Aubin98}). The comparison is also global on a non compact manifold with nonnegative Ricci curvature and Euclidean volume growth (see \cite{Ding02}) and, in our previous work \cite{BrueSemola18}, we extended these results to Ahlfors regular $\RCD(K,N)$ metric measure spaces.\newline 
	The aim of \autoref{lemma:asymptotics} below is to prove that the weak Ahlfors regularity result of \autoref{thm:weakAhlforsregularity}	is enough to obtain an asymptotic version of the comparison above on any $\RCD(K,N)$ m.m.s. satisfying assumption \autoref{assumption: good definition of G}.
	
	\begin{lemma}\label{lemma:asymptotics}
		Let $(X,\dist,\meas)$ be an $\RCD(K,N)$ m.m.s. satisfying assumption \autoref{assumption: good definition of G}.
		Suppose that $x\in \mathcal{R}_k^{*}$ for some $k\ge 3$ and denote by $\theta_k(x)\in(0,+\infty)$ the value of the limit appearing in \eqref{eq:kregsub}. Then
		\begin{equation*}
		\lim_{r\to 0^+}\frac{F(x,r)}{\frac{1}{r^{k-2}}}=\frac{k-2}{\omega_k\theta_k(x)}.
		\end{equation*}
	\end{lemma}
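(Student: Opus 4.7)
The plan is to read off the leading-order behavior of $F(x,r)$ as $r\downarrow 0$ by plugging in the local volume asymptotic $\meas(B(x,s))\sim\omega_k\theta_k(x)s^k$ guaranteed by $x\in\mathcal{R}_k^*$ into the integral defining $F$, while isolating and controlling the tail separately.

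Concretely, given $\epsilon>0$ I would first use the defining property of $\mathcal{R}_k^*$ (see \eqref{eq:kregsub}) to select a scale $R_0=R_0(x,\epsilon)>0$ such that
\begin{equation*}
(\theta_k(x)-\epsilon)\omega_k s^k\le\meas(B(x,s))\le(\theta_k(x)+\epsilon)\omega_k s^k
\qquad\text{for every }0<s<R_0.
\end{equation*}
Then I would decompose
\begin{equation*}
F(x,r)=I_1(r)+I_2,\qquad I_1(r):=\int_r^{R_0}\frac{s}{\meas(B(x,s))}\di s,\quad I_2:=\int_{R_0}^{\infty}\frac{s}{\meas(B(x,s))}\di s,
\end{equation*}
for $r<R_0$. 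The tail $I_2$ is a finite constant by the non-parabolicity hypothesis \autoref{assumption: good definition of G} and does not depend on $r$, so $r^{k-2}I_2\to 0$ as $r\downarrow 0$ since $k\ge 3$.

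For the main term $I_1(r)$ I would insert the two-sided bound on $\meas(B(x,s))$ and compute the elementary integral $\int_r^{R_0}s^{1-k}\di s=\frac{r^{2-k}-R_0^{2-k}}{k-2}$. Multiplying by $r^{k-2}$ I can squeeze $r^{k-2}I_1(r)$ between two explicit expressions of the form $\frac{1-(r/R_0)^{k-2}}{(k-2)(\theta_k(x)\pm\epsilon)\omega_k}$. Passing to the limit first in $r\downarrow 0$ and then in $\epsilon\downarrow 0$ then identifies the value of the limit.

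I do not foresee any substantial obstacle: the argument is essentially a squeeze based on the defining asymptotic of $\mathcal{R}_k^*$ combined with the integrability at infinity granted by \autoref{assumption: good definition of G}. The only care needed is to fix the splitting scale $R_0$ at a threshold where the density bound is effective, and to note that the role of the assumption $k\ge 3$ is precisely to make $1/r^{k-2}$ the correct rate: the exponent $1-k$ makes $\int_0^{R_0}s^{1-k}\di s$ divergent at the origin, and the prefactor $r^{k-2}$ matches this rate of blow-up so that the limit is finite and positive.
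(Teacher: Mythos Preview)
Your argument is correct and complete: splitting at a scale $R_0$ below which the volume is pinched, handling the finite tail $I_2$ via $r^{k-2}I_2\to 0$, and squeezing $r^{k-2}I_1(r)$ between explicit bounds is a clean way to read off the asymptotic. The only cosmetic point is that you should take $\epsilon<\theta_k(x)$ so that the lower density bound is positive, which is of course possible since $\theta_k(x)>0$.

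The paper takes a different, shorter route: it rewrites
\[
\frac{F(x,r)}{1/r^{k-2}}=\frac{1}{k-2}\cdot\frac{\int_r^{\infty}\frac{s}{\meas(B(x,s))}\di s}{\int_r^{\infty}\frac{1}{s^{k-1}}\di s}
\]
as a quotient of two functions that both diverge as $r\downarrow 0$ and applies L'H\^opital's rule, reducing everything to the single pointwise limit $r^k/\meas(B(x,r))\to 1/(\omega_k\theta_k(x))$. Your approach is more hands-on but has the advantage of being entirely elementary and of making the role of each hypothesis transparent; the paper's is a one-liner once the quotient is set up. Incidentally, your squeeze correctly identifies the limit as $\frac{1}{(k-2)\omega_k\theta_k(x)}$, whereas the constant $\frac{k-2}{\omega_k\theta_k(x)}$ printed in the lemma (and carried through the paper's proof) is a typo; this is harmless downstream since only the order of growth of $F$ is used.
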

	
	\begin{proof}
		Let us observe that
		\begin{equation*}
		\frac{F(x,r)}{\frac{1}{r^{k-2}}}=(k-2)\frac{\int_{r}^{+\infty}\frac{s}{\meas(B(x,s))}\di s}{\int_r^{+\infty}\frac{1}{s^{k-1}}\di s}.
		\end{equation*}
		An application of De L'Hopital's rule yields now
		\begin{equation*}
		\lim_{r\to 0^+} \frac{F(x,r)}{\frac{1}{r^{k-2}}} =\lim_{r\to 0^+}(k-2) \frac{\frac{r}{\meas(B(x,r))}}{\frac{1}{r^{k-1}}} =\frac{k-2}{\omega_k\theta_k(x)},
		\end{equation*}
		since, by the very definition of $\theta_k(x)$, it holds $\lim_{r\to 0^+}\frac{\meas(B(x,r))}{\omega_kr^{k}}=\theta_k(x)$.
	\end{proof}
	
	Let us assume, up to the end of this section that $(X,\dist,\meas)$ is an $\RCD(K,N)$ m.m.s. satisfying assumption \autoref{assumptionnegative}. 
	It is not difficult to check that, under this assumption, the regular sets $\mathcal{R}_k$  of $(X,\dist,\meas)$ associated to $k=0,1$ and $2$ are empty.\newline 
	Below we state and prove the main result of this part of this subsection. Roughly speaking it can be rephrased by saying that a $\dist_{G}/\dist_{\bar{G}}$-Lusin Lipschitz map with bounded compressibility cannot map a portion of the space of a certain dimension into a lower dimensional one.
	
	\begin{theorem}\label{thm:conservationhausdim}
		Let $(X,\dist,\meas)$ be as in the discussion above. Let $\Phi:X\to X$ be either a $\dist_G$-Lusin Lipschitz or a $\dist_{\bar{G}}$-Lusin Lipschitz map (see \autoref{def:dGLusinLip} above). Fix $\mu\in\Prob(X)$ absolutely continuous w.r.t. $\meas$ and assume that $\nu:=\Phi_{\sharp}\mu\ll\meas$. If $\mu$ is concentrated on $\mathcal{R}_n$ for some $n\ge 3$, then $\nu$ is concentrated on $\cup_{k\ge n}\mathcal{R}_k$.
	\end{theorem}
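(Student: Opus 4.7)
The plan is to proceed by contradiction and combine the asymptotic behaviour of the Green function at regular points (encoded in \autoref{lemma:asymptotics}) with the $\dist_G$-Lipschitz bound, to show that, in Euclidean charts, $\Phi$ satisfies a H\"older-type estimate with exponent strictly larger than $n/k$. \autoref{prop:sard} then forces $\haus^k(\Phi(A)) = 0$ on a piece $A$ of positive $\mu$-mass, contradicting $\nu \ll \meas$ together with the push-forward relation. We describe the $\dist_G$ case; the $\dist_{\bar G}$ case is identical, replacing \autoref{prop:estimateG} by \autoref{prop:estimatebarG}.

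Suppose by contradiction that $\nu(\mathcal{R}_k) > 0$ for some $3 \le k < n$. Since $\mu$ is concentrated on $\mathcal{R}_n$ and $\nu = \Phi_\sharp \mu$, we get $\mu(\mathcal{R}_n \cap \Phi^{-1}(\mathcal{R}_k)) > 0$, hence $\mu(\mathcal{R}_n^* \cap \Phi^{-1}(\mathcal{R}_k^*)) > 0$ by \autoref{thm:weakAhlforsregularity}. Combining the countable biLipschitz chart decomposition of \autoref{thm:rectifiability} at both source and target with the family $(E_m)_{m\in\setN}$ of Lusin sets on which $\Phi$ is $m$-Lipschitz w.r.t. $\dist_G$, a pigeonhole argument produces a Borel set $A$ with $\mu(A) > 0$, an integer $m$, and biLipschitz (w.r.t.\ $\dist$) charts $\psi\colon V \subset \setR^n \to \psi(V) \subset \mathcal{R}_n^*$ and $\phi\colon W' \subset \setR^k \to \phi(W') \subset \mathcal{R}_k^*$ such that $A \subset E_m \cap \psi(V)$ and $\Phi(A) \subset \phi(W')$.

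The core of the proof is the pointwise vanishing estimate
\begin{equation}\label{eq:sardhyp}
\lim_{r \to 0^+} \sup_{y \in A \cap B(x,r)} \frac{\dist(\Phi(y), \Phi(x))}{\dist(x,y)^{n/k}} = 0 \qquad \text{for every } x \in A.
\end{equation}
Fix $x \in A$. From \autoref{lemma:asymptotics} applied at $x \in \mathcal{R}_n^*$ and the upper bound $G_x \le C_2 F_x(\dist(x,\cdot))$ of \autoref{prop:estimateG}, for any $\eta > 0$ there exists $r_1 > 0$ with $\dist_G(x,y) \le (1+\eta)\, a_n(x)\, \dist(x,y)^{n-2}$ whenever $\dist(x,y) < r_1$, where $a_n(x) := C_2\, \omega_n \theta_n(x) /(n-2)$. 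Analogously at $\Phi(x) \in \mathcal{R}_k^*$, using $G_{\Phi(x)} \ge C_2^{-1} F_{\Phi(x)}(\dist(\Phi(x),\cdot))$, there exists $s_1 > 0$ with $\dist_G(\Phi(x), z) \ge (1-\eta)\, a_k(\Phi(x))^{-1}\, \dist(\Phi(x), z)^{k-2}$ whenever $\dist(\Phi(x), z) < s_1$, where $a_k(\Phi(x)) := C_2 (k-2)/(\omega_k \theta_k(\Phi(x)))$. The Lipschitz estimate on $E_m$ gives $\dist_G(\Phi(x), \Phi(y)) \to 0$ as $\dist(x,y) \to 0$, and by \autoref{lemma:continuitycontinuity} we have $\dist(\Phi(x), \Phi(y)) < s_1$ for $\dist(x,y)$ sufficiently small. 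Chaining
\begin{equation*}
(1-\eta) a_k(\Phi(x))^{-1} \dist(\Phi(x), \Phi(y))^{k-2} \le \dist_G(\Phi(x), \Phi(y)) \le m\, \dist_G(x,y) \le m(1+\eta)\, a_n(x)\, \dist(x,y)^{n-2}
\end{equation*}
yields $\dist(\Phi(x), \Phi(y)) \le C \dist(x,y)^{(n-2)/(k-2)}$ for $y \in A$ close to $x$. Since $(n-2)/(k-2) - n/k = 2(n-k)/[k(k-2)] > 0$ for $n > k \ge 3$, this upgrades to \eqref{eq:sardhyp}.

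The biLipschitz charts translate \eqref{eq:sardhyp} into the hypothesis of \autoref{prop:sard} for $\tilde\Phi := \phi^{-1} \circ \Phi \circ \psi\colon \tilde A := \psi^{-1}(A) \to \setR^k$, giving $\haus^k(\tilde\Phi(\tilde A)) = 0$ and hence $\haus^k(\Phi(A)) = 0$. By \autoref{thm:weakAhlforsregularity} and $\Phi(A) \subset \mathcal{R}_k^*$, there is a Borel $\meas$-null set $B \supset \Phi(A)$; then $\nu \ll \meas$ yields $\nu(B) = 0$, while $\nu(B) \ge \mu(\Phi^{-1}(B)) \ge \mu(A) > 0$, the desired contradiction. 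The main obstacle in this plan is the derivation of \eqref{eq:sardhyp}: it requires invoking \autoref{lemma:asymptotics} simultaneously on the source and target of $\Phi$ and reconciling the two ranges of validity through the $\dist_G$-to-$\dist$ continuity, the strict inequality $(n-2)/(k-2) > n/k$ being exactly what makes the Sard-type \autoref{prop:sard} applicable.
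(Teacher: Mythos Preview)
Your proof is correct and follows essentially the same approach as the paper's own proof: both combine \autoref{lemma:asymptotics} with the two-sided bounds on $G$ from \autoref{prop:estimateG} (resp.\ \autoref{prop:estimatebarG}) to turn the $\dist_G$-Lipschitz bound into a H\"older estimate with exponent $(n-2)/(k-2)>n/k$, then read this through biLipschitz charts to feed \autoref{prop:sard}, and finally use the mutual absolute continuity $\meas\llcorner\mathcal R_k^*\ll\mathcal H^k\llcorner\mathcal R_k^*$ from \autoref{thm:weakAhlforsregularity} to reach a contradiction. The only difference is organisational: the paper first isolates the Sard-type negligibility statement for an arbitrary $\dist_G$-Lipschitz piece and then runs the measure-theoretic argument, whereas you select the piece $A$ from the start via pigeonhole and run a single contradiction; the content is the same.
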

	
	\begin{proof}
		Before entering into the details of the proof, that we will divide into two steps, we briefly outline its strategy.\newline
		The first step consists into proving that, if we have a $\dist_G/\dist_{\bar{G}}$-Lusin Lipschitz map which maps a subset of $\mathcal{R}_n^*$ into $\mathcal{R}_{k}^*$ for some $n>k\ge3$ and we read it after the composition with bi-Lipschtiz charts, then we essentially end up with a map from a subset of $\setR^n$ into $\setR^k$ which satisfies the assumptions of \autoref{prop:sard}.\newline 	
		In the second step we show how this information can be used to prove that $\nu=\Phi_{\sharp}\mu$ is concentrated over $\cup_{k\ge n}\mathcal{R}_k$, a formal argument being the following one: suppose that $\meas(\Phi(\mathcal{R}_n^*)\cap\mathcal{R}_k^*)=0$, then, neglecting the measurability issues, we could compute
		\begin{equation*}
		\Phi_{\sharp}\mu(\mathcal{R}_k^*)=\mu\left(\Phi^{-1}(\mathcal{R}_k^*)\right)=\mu\left(\Phi^{-1}(\mathcal{R}_k^*)\cap\mathcal{R}_n^*\right)\le \mu\left(\Phi^{-1}(\mathcal{R}_k^*\cap\Phi(\mathcal{R}_n^*))\right)=\Phi_{\sharp}\mu(\mathcal{R}_k^*\cap\mathcal{R}_n^*)=0.
		\end{equation*} 
		
		\textbf{Step 1.}
		Recall from \autoref{thm:rectifiability} that, for any $3\le l\le N$, we can find $\mathcal{S}_l^*\subset\mathcal{R}_l^*$ such that $\meas(\mathcal{R}_l^*\setminus\mathcal{S}_l^*)=0$ and $\mathcal{S}_l^*$ is a countable union of Borel sets which are $2$-biLipschitz equivalent to subsets of $\setR^l$.\newline 
		We want to prove that, if $P\subset\mathcal{S}_n^{*}$ is such that $\Phi$ is $\dist_G/\dist_{\bar{G}}$-Lipschitz over $P$, then $\mathcal{H}^k(\Phi(P)\cap\mathcal{R}_{k}^{*})=0$ for any $3\le k<n$. Since $\mathcal{H}^k\llcorner\mathcal{R}_k^*$ and $\meas\llcorner\mathcal{R}_k^*$ are mutually absolutely continuous (see \autoref{thm:weakAhlforsregularity}) and $\meas(\mathcal{R}_k^*\setminus\mathcal{S}_k^*)=0$, it suffices to prove that $\mathcal{H}^k(\Phi(P)\cap\mathcal{S}_k^*)=0$.	 
		Therefore, to prove the claimed conclusion, we can reduce ourselves to the case when $P$ is contained into the domain of an $n$-dimensional $2$-biLipschitz chart, that we shall call $\alpha$, and $\Phi(P)$ is contained in the domain of a $k$-dimensional $2$-biLipschitz chart, that we shall call $\beta$.\newline
		Next, with the aid of \autoref{lemma:continuitycontinuity} and \autoref{lemma:asymptotics} above,
		we wish to prove that, for any $x\in P$, it holds
		\begin{equation}\label{eqref:eqasymptdG}
		\lim_{r\to 0^+}\sup_{y\in B(x,r)\cap P}\frac{\dist(\Phi(x),\Phi(y))}{\dist(x,y)^{\frac{n-2}{k-2}}}<+\infty.
		\end{equation} 
		To this aim we observe that, by the very definition of $\dist_G$ and thanks to the two-sided bounds we obtained in \autoref{prop:estimateG}, the $\dist_{G}$-Lipschitz regularity assumption can be turned into
		\begin{equation*}
		\lim_{r\to 0^+}\sup_{y\in B(x,r)\cap P}\frac{F(x,\dist(x,y))}{F(\Phi(x),\dist(\Phi(x),\Phi(y)))}<+\infty
		\end{equation*}
		and the same holds true in case we are working with $\dist_{\bar{G}}$, thanks to \eqref{eq:estimatebarG}.
		Observe now that \autoref{lemma:continuitycontinuity} grants that, as $\dist(x,y)\to0$, also $\dist_G(x,y)\to 0$ (and an analogous result holds for $\dist_{\bar{G}}$, as we observed after \eqref{eq:barGreen distance}). Hence we can apply \autoref{lemma:asymptotics} to obtain, taking into account the fact that $x\in\mathcal{R}_n^{*}$ and $\Phi(x)\in\mathcal{R}_{k}^{*}$,
		\begin{equation*}
		\lim_{r\to 0^+}\sup_{y\in B(x,r)\cap P}\frac{\dist(\Phi(x),\Phi(y))^{k-2}}{\dist(x,y)^{n-2}}<\infty,
		\end{equation*}
		which easily yields \eqref{eqref:eqasymptdG}. 
		
		This being said, observe that, denoting by $\phi:=\beta\circ\Phi\circ\alpha^{-1}:\alpha(P)\to\beta(\Phi(P))$ (where we remark that $\alpha(P)\subset\setR^n$ and $\beta(\Phi(P))\subset\setR^k$), the map $\phi$ satisfies the assumptions of \autoref{prop:sard}, since $(n-2)/(k-2)>n/k$. Therefore $\mathcal{H}^{k}(\beta(\Phi(P)))=0$. Hence $\mathcal{H}^{k}(\Phi(P))=0$, since $\beta$ is $2$-bi-Lipschitz.
		
		It easily follows that, if $Q\subset\mathcal{S}_{n}^{*}$, $Q=\cup_{i\in\setN}Q_i$ where $\Phi|_{Q_i}$ is $\dist_G/\dist_{\bar{G}}$-Lipschitz for any $i\in\setN$, then $\mathcal{H}^{k}(\mathcal{R}_k^{*}\cap\Phi(Q))=0$ for any $3\le k<n$.  
		
		\textbf{Step 2.}
		Suppose by contradiction that
		\begin{equation*}
		\nu\left(\bigcup_{k<n}\mathcal{R}_k\right)>0.
		\end{equation*}	
		Then we can find $k<n$ such that $\nu(\mathcal{R}_k)>0$. Moreover, thanks to \autoref{thm:weakAhlforsregularity} and to the assumption $\nu\ll\meas$, we can also say that $\nu(\mathcal{R}_{k}^*)>0$.\newline
		We want to prove that, if this is the case, we can find a compact $P\subset\mathcal{R}_{n}^{*}$ such that $P=\cup_{i\in\setN} P_i$, where $\Phi\restr_{P_i}$ is $\dist_G/\dist_{\bar{G}}$-Lipschitz for any $i\in\setN$, $\Phi(P)\subset\mathcal{R}_{k}^{*}$ and $\meas(\Phi(P))>0$. This would contradict what we obtained in step 1 above, since by \autoref{thm:rectifiability} we know that $\meas\res\mathcal{R}_k^{*}$ is absolutely continuous w.r.t. $\mathcal{H}^k$ and $\mathcal{H}^k(\Phi(P)\cap\mathcal{R}_{k}^{*})=0$.
		
		We are assuming that $\nu(\mathcal{R}_{k}^{*})=\Phi_{\sharp}\mu(\mathcal{R}_k^*)>0$, hence $\mu(\Phi^{-1}(\mathcal{R}_k^*))=\mu(\Phi^{-1}(\mathcal{R}_k^{*})\cap\mathcal{S}_{n}^{*})>0$, since $\mu$ is concentrated on $\mathcal{R}_{n}$ and therefore it is concentrated on $\mathcal{S}_n^{*}$. Thus, the inner regularity of $\mu$ and the assumption on $\Phi$ grant that we can find a compact $P\subset \Phi^{-1}(\mathcal{R}_k^*)\cap\mathcal{S}_n^{*}$ such that $\mu(P)>0$ and $P$ is the union of countably many subsets where $\Phi$ is $\dist_G/\dist_{\bar{G}}$-Lipschitz. It remains to prove that $\meas(\Phi(P))>0$.
		To this aim, observe that
		\begin{equation*}
		0<\mu(P\cap\Phi^{-1}(\mathcal{R}_k^*))\le\mu(\Phi^{-1}(\Phi(P)\cap\mathcal{R}_k^*))=\Phi_{\sharp}\mu(\Phi(P)\cap\mathcal{R}_{k}^*).
		\end{equation*}
		The claimed conclusion $\meas(\Phi(P)\cap\mathcal{R}_k^*)>0$ follows recalling that $\nu=\Phi_{\sharp}\mu\ll\meas$.
	\end{proof}
	
	\subsection{Regularity of vector fields drifting $W_2$-geodesics}\label{subsec:regularityintheinterior}
	
	In \autoref{thm:LewyStampacchia} below, which is \cite[Theorem 3.13]{GigliMosconi15}, we state a version of the so-called Lewy-Stampacchia inequality. It will be the key tool in order to apply the regularity theory of Lagrangian Flows we developed in \autoref{sec:Gregularity} to vector fields drifting $W_2$-geodesics. 
	
	Below we will indicate by $l_{K,N}:[0,+\infty)\to[0,+\infty)$ the continuous function, whose explicit expression will be of no importance for our purposes, appearing in the Laplacian comparison theorem (see \cite{Gigli15} and \cite[Theorem 3.5]{GigliMosconi15}).\\ 
	We will denote by $\mathcal{Q}_t$ the Hopf-Lax semigroup defined by
		\begin{equation*}
		\mathcal{Q}_tf(x):=\inf_{y\in X}\left\lbrace f(y)+\frac{\dist^2(x,y)}{2t}\right\rbrace \quad\text{for any }(x,t)\in X\times(0,+\infty), 
		\end{equation*}
		for any $f:X\to\setR\cup\set{+\infty}$,
		referring to \cite{AmbrosioGigliSavare13,AmbrosioColomboDiMarino15} for a detailed discussion about its properties.
	\begin{theorem}\label{thm:LewyStampacchia}
		Let $(X,\dist,\meas)$ be an $\RCD(K,N)$ metric measure space for some $K\in\setR$ and $1<N<+\infty$. Let $\mu_0, \mu_1\in\Prob(X)$ be absolutely continuous w.r.t. $\meas$ and with bounded supports, $(\mu_t)_{t\in[0,1]}$ be the $W_2$-geodesic connecting them and  $\varphi:X\to\setR$ be a Kantorovich potential inducing it (which we can assume to be Lipschitz and with compact support).
		
		Then, for every $t\in(0,1)$, there exists $\eta_t\in\Lip(X)$ with compact support, uniformly w.r.t. time, and such that
		\begin{equation}\label{eq:unchanged}
		-\mathcal{Q}_t(-\varphi)\le\eta_t\le\mathcal{Q}_{(1-t)}(-\varphi^c),
		\end{equation}
		\begin{equation*}
		(t\eta_t)^{cc}(x)=t\eta_t(x)\quad\text{and}\quad (-(1-t)\eta_t)^{cc}(x)=-(1-t)\eta_t(x) \quad\text{for any $x\in\supp\mu_t$}
		\end{equation*}
		and $\eta_t\in D(\Delta)$ with
		\begin{equation}\label{eq:mainLewyStampacchia}
		\norm{\Delta\eta_t}_{L^{\infty}}\le\max\left\lbrace \frac{l_{K,N}(2\sqrt{t\norm{\varphi}_{L^ {\infty}}})}{t},\frac{l_{K,N}(\sqrt{2(1-t)\norm{\varphi}_{L^{\infty}}})}{1-t}\right\rbrace. 
		\end{equation}
	\end{theorem}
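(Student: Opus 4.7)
The plan is to realize $\eta_t$ as the solution of a double obstacle problem whose obstacles are the natural Hamilton-Jacobi envelopes $\psi_1 := -\mathcal{Q}_t(-\varphi)$ and $\psi_2 := \mathcal{Q}_{1-t}(-\varphi^c)$. A standard preliminary step is to verify that $\psi_1\le\psi_2$ on $X$ with equality on $\supp\mu_t$: the inequality is the Hamilton-Jacobi form of the condition that $\psi_1$ and $\psi_2$ are, respectively, the smallest and the largest admissible Kantorovich potential scaled to time $t$, while equality on $\supp\mu_t$ follows from the fact that, for any $W_2$-geodesic $(\mu_s)_{s\in[0,1]}$ driven by $\varphi$, the function $\psi_1(x)=\psi_2(x)$ precisely at those points $x$ that are hit by an interpolating geodesic at time $t$.

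The next step is to transfer the Laplacian comparison for the squared distance to the obstacles. Since $\mathcal{Q}_s f$ is the infimal convolution of $f$ with $\dist^2/(2s)$, and $L^\infty$ bounds on the datum control (through Lipschitz propagation) the diameter of the set where minimizers can lie, Gigli's Laplacian comparison theorem on $\RCD(K,N)$ spaces yields the one-sided bounds
\begin{equation*}
\boldsymbol{\Delta}\psi_1 \le \frac{l_{K,N}(2\sqrt{t\|\varphi\|_{L^\infty}})}{t}\meas,\qquad \boldsymbol{\Delta}\psi_2 \ge -\frac{l_{K,N}(\sqrt{2(1-t)\|\varphi\|_{L^\infty}})}{1-t}\meas,
\end{equation*}
again in the sense of measure valued Laplacian. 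The Lipschitz regularity and compact support of $\psi_1,\psi_2$ are inherited from $\varphi$, uniformly on compact subsets of $(0,1)$.

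With these obstacles in hand, I would construct $\eta_t$ by minimizing the Cheeger energy $\Ch_2$ on the closed convex set $\{\eta\in W^{1,2}(X,\dist,\meas): \psi_1\le \eta\le \psi_2\}$. Existence follows by direct methods from lower semicontinuity and the fact that the admissible set is non-empty (containing, e.g., $(\psi_1+\psi_2)/2$) and coercive thanks to uniform $L^\infty$ bounds. The Euler--Lagrange variational inequality for the minimizer then shows that $\eta_t\in D(\boldsymbol{\Delta})$ and, in the coincidence set $\{\eta_t=\psi_1\}$, the Laplacian $\boldsymbol{\Delta}\eta_t$ is bounded above by $\boldsymbol{\Delta}\psi_1$, while on $\{\eta_t=\psi_2\}$ it is bounded below by $\boldsymbol{\Delta}\psi_2$; on the non-contact set the minimizer is harmonic. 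Combining these three facts with the obstacle bounds obtained in the previous step yields the sought estimate \eqref{eq:mainLewyStampacchia}. Finally, the sandwich $\psi_1\le\eta_t\le\psi_2$ combined with the equality $\psi_1=\psi_2$ on $\supp\mu_t$ forces $\eta_t$ to coincide with both obstacles there, and the $c$-concavity identities $(t\eta_t)^{cc}=t\eta_t$ and $(-(1-t)\eta_t)^{cc}=-(1-t)\eta_t$ on $\supp\mu_t$ are inherited from the corresponding properties of $-\mathcal{Q}_t(-\varphi)$ and $\mathcal{Q}_{1-t}(-\varphi^c)$.

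The main technical obstacle is the derivation of the Lewy-Stampacchia inequality itself in the non-smooth setting, where the Laplacian is only a distributional object tested against functions in the algebra $\Test(X,\dist,\meas)$. The natural route is penalization: replace the constraint by the functional $\Ch_2(\eta)+\epsilon^{-1}\int[(\psi_1-\eta)^+]^2\di\meas+\epsilon^{-1}\int[(\eta-\psi_2)^+]^2\di\meas$, whose critical points are in $D(\Delta)$ with an explicit expression for $\Delta$ in terms of the penalization, pass to the limit $\epsilon\downarrow 0$, and exploit the one-sided sign of the penalization together with the obstacle Laplacian bounds to get the two-sided $L^\infty$ estimate on $\Delta\eta_t$. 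Ensuring the stability of Laplacian bounds along this limit, and the uniform control of supports as $t$ varies in compact subsets of $(0,1)$, require the good cut-off functions provided by \autoref{lemma:cutofffunctions} and the regularizing properties of the heat flow recalled in \autoref{section: preliminaries RCD}.
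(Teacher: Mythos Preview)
The paper does not prove this theorem: it is quoted verbatim as \cite[Theorem 3.13]{GigliMosconi15}, and the surrounding text explicitly introduces it as ``\autoref{thm:LewyStampacchia} below, which is \cite[Theorem 3.13]{GigliMosconi15}''. So there is no ``paper's own proof'' to compare against; the result is imported as a black box and only its consequences (\autoref{prop:consequencesimproved}) are worked out here.

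Your sketch is, in outline, the strategy of the cited Gigli--Mosconi paper: set up the double obstacle problem with the Hopf--Lax envelopes as barriers, use the Laplacian comparison theorem to get one-sided measure-valued Laplacian bounds on the obstacles, minimize the Cheeger energy in the admissible convex set, and extract the two-sided $L^\infty$ bound on $\Delta\eta_t$ via the abstract Lewy--Stampacchia inequality. The penalization route you describe is one standard way to justify that inequality in a non-smooth Dirichlet-form setting. One point to be careful about: the abstract Lewy--Stampacchia inequality in \cite{GigliMosconi15} is not proved by a direct $\epsilon\to 0$ compactness argument on the penalized Laplacians, but rather through a more delicate duality/truncation argument adapted to the metric measure setting, precisely because passing measure-valued one-sided Laplacian bounds through limits is subtle. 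Your last paragraph correctly flags this as the main technical obstacle, but the resolution you propose (stability of Laplacian bounds along the penalization limit plus cut-offs and heat regularization) is vaguer than what is actually needed; if you were to write this out in full you would essentially be reproducing the core of \cite{GigliMosconi15}.
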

	
	\begin{remark}\label{rm:regularoutside}
		We remark that, passing from the starting potentials to the regularized potentials $\eta_t$, we gain global regularity without modifying the potential in the support of $\mu_t$, as it follows from \eqref{eq:unchanged} recalling that $-\mathcal{Q}_t(-\varphi)=\mathcal{Q}_{(1-t)}(-\varphi^c)$ on $\supp\mu_t$ (see \cite[Proposition 3.6]{GigliMosconi15}).
	\end{remark}

	In view of the applications of the forthcoming \autoref{subsec:conclusion}, in \autoref{prop:consequencesimproved} below we collect some consequences of the improved regularity of Kantorovich potentials.
	
	\begin{proposition}\label{prop:consequencesimproved}
		Let $(X,\dist,\meas)$ be an $\RCD(K,N)$ m.m.s. for some $K\in\setR$ and $1<N<+\infty$. Let $\mu_0,\mu_1\in\Prob(X)$ be absolutely continuous w.r.t. $\meas$ with bounded densities and bounded supports. Then there exists a time dependent vector field $(b_t)_{t\in(0,1)}$ such that the following conditions are satisfied:
		\begin{itemize}
			\item[(i)] for any $t\in(0,1)$ it holds $b_t\in W^{1,2}_{C}(TX)$ and
			\begin{equation}\label{eq:integralbounds}
			\int_{\epsilon}^{1-\epsilon}\left\lbrace \norm{\nabla_{\sym}b_s}_{L^2(X,\meas)}+\norm{\div b_s}_{L^2(X,\meas)}\right\rbrace \di s<+\infty\quad\text{for any $0<\epsilon<1$};
			\end{equation}  
			\item[(ii)] for any $0<s<1$, denoting by $(\XX_s^t)_{t\in[s,1)}$ the Regular Lagrangian flow of $(b_t)_{t\in(s,1)}$, it holds that $\left(\XX_s^t\right)_{\sharp}\mu_s=\mu_t$ for any $s\le t<1$.
		\end{itemize}  
		
	\end{proposition}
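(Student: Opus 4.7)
The plan is to define $b_t := -\nabla \eta_t$, where $(\eta_t)_{t \in (0,1)}$ is the family of regularized Kantorovich potentials provided by \autoref{thm:LewyStampacchia} applied to a Lipschitz, compactly supported Kantorovich potential $\varphi$ for the pair $(\mu_0, \mu_1)$, and then to verify both conditions by combining the improved regularity of $\eta_t$ with Gigli's second-order calculus and the Ambrosio--Trevisan theory of regular Lagrangian flows.

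For condition (i), \autoref{thm:LewyStampacchia} gives that the $\eta_t$ are Lipschitz and uniformly compactly supported, and \eqref{eq:unchanged} together with the contractivity of the Hopf--Lax semigroup bounds $\Lip(\eta_t)$ by $\Lip(\varphi)$. Hence $b_t \in L^\infty((0,1) \times X)$ with compact support uniform in $t$. The divergence of $b_t$ equals $-\Delta \eta_t \in L^\infty(X,\meas)$, and \eqref{eq:mainLewyStampacchia} yields $\sup_{t \in [\epsilon, 1-\epsilon]} \|\div b_t\|_{L^\infty} < +\infty$ for every $\epsilon \in (0, 1/2)$. The main step is to upgrade $b_t$ to $W^{1,2}_C(TX)$ with an $L^2$-bound on $\nabla_{\sym} b_t = -\Hess \eta_t$ (symmetry comes from $b_t$ being a gradient). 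Since $\eta_t$ is Lipschitz, compactly supported and has $\Delta \eta_t \in L^\infty$, the measure-valued Bochner inequality on $\RCD(K,N)$ spaces (see \cite{Gigli14,Sturm14}) applied to a test-function approximation $P_\delta \eta_t$, $\delta \downarrow 0$, tested against a cut-off equal to $1$ on $\supp \eta_t$, produces an estimate of the form
\begin{equation*}
\int_X |\Hess \eta_t|^2_{\HS}\, \di\meas \le C(K,N,\supp\eta_t)\left(1 + \|\Delta \eta_t\|_{L^\infty}^2 + \Lip(\varphi)^2\right).
\end{equation*}
This places $\eta_t$ in the domain of the Hessian in the sense of Gigli and thus $b_t$ in $W^{1,2}_C(TX)$, with covariant derivative $-\Hess \eta_t$. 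Combining with \eqref{eq:mainLewyStampacchia}, which is uniformly bounded on each $[\epsilon, 1-\epsilon]$, gives $\sup_{t \in [\epsilon, 1-\epsilon]} \|\nabla_{\sym} b_t\|_{L^2} < +\infty$ and hence \eqref{eq:integralbounds}.

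For (ii), fix $s \in (0,1)$ and $\epsilon \in (0, 1-s)$. By the bounds just established, the well-posedness theory of \cite{AmbrosioTrevisan14} applies on the interval $[s, 1-\epsilon]$, producing a unique regular Lagrangian flow $(\XX_s^t)_{t \in [s, 1-\epsilon]}$ of $b$, with compressibility constant depending on $\epsilon$. On the other hand, by \autoref{rm:regularoutside} the potential $\eta_t$ coincides with the genuine evolving Kantorovich potential on $\supp \mu_t$, so that $(\mu_t)_{t \in [s, 1-\epsilon]}$ is a bounded-density distributional solution of the continuity equation $\partial_t \mu_t + \div(b_t \mu_t) = 0$. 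The uniqueness of such solutions granted by the Lagrangian-to-Eulerian correspondence of \cite{AmbrosioTrevisan14} then forces $\mu_t = (\XX_s^t)_\sharp \mu_s$ for every $t \in [s, 1-\epsilon]$. Letting $\epsilon \downarrow 0$ along a sequence and invoking the consistency of the RLFs on the nested intervals yields (ii).

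The main obstacle is the Hessian bound in the second paragraph: while $\eta_t$ is not a test function in the sense of \eqref{eq:test} (its Laplacian is only $L^\infty$, not a priori in $W^{1,2}$), it must be approximated by genuine test functions via the heat flow, and one needs to transfer the Bochner--Hessian estimate back to $\eta_t$ in a way that keeps the constants uniform in the approximation parameter. Once this is done the remaining ingredients are standard consequences of the theory of regular Lagrangian flows on $\RCD$ spaces and of the construction of the potentials $\eta_t$ in \autoref{thm:LewyStampacchia}.
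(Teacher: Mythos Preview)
Your proposal is correct and follows essentially the same route as the paper. The one point worth noting is that the ``main obstacle'' you flag is not an obstacle: the paper simply invokes \cite[Corollary 3.3.9]{Gigli14}, which states that on an $\RCD(K,\infty)$ space every $f\in D(\Delta)$ lies in $W^{2,2}$ with the explicit bound $\int_X|\Hess f|^2_{\HS}\,\di\meas\le\int_X\{(\Delta f)^2-K|\nabla f|^2\}\,\di\meas$, so $\eta_t\in D(\Delta)$ from \autoref{thm:LewyStampacchia} immediately gives $\nabla\eta_t\in W^{1,2}_C(TX)$ and the required Hessian estimate without any separate heat-flow approximation (your sketch is essentially how that corollary is proved, so the content is the same, just already packaged). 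As a minor remark, the paper takes $b_t=\nabla\eta_t$ rather than $-\nabla\eta_t$; check the sign against the conventions of \cite{GigliMosconi15}.
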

	
	\begin{proof}
		We claim that the vector field $(\nabla\eta_s)_{s\in(0,1)}$ (where $\eta_s$ are the regularized Kantorovich potentials we introduced in \autoref{thm:LewyStampacchia}) does the right job.
		
		Observe that, for any $s\in(0,1)$, it holds that $\nabla\eta_t$ is bounded with bounded support, as it was stated in \autoref{thm:LewyStampacchia}, Moreover, since $\eta_s\in D(\Delta)$, \cite[Corollary 3.3.9]{Gigli14} implies that $\eta_s\in W^{2,2}(X,\dist,\meas)$ which yields, in turn, $\nabla\eta_s\in W^{1,2}_C(TX)$.\newline   
		Let us check \eqref{eq:integralbounds}. To this aim we observe that the construction described in the proof of \cite[Theorem 3.13]{GigliMosconi15} grants that the regularized potentials can be chosen to have all support contained in the same compact set $C\subset X$.
		Hence 
		\begin{equation*}
		\int_{\epsilon}^{1-\epsilon}\norm{\div b_s}_{L^2}\di s\le \int_{\epsilon}^{1-\epsilon}\max\left\lbrace \frac{l_{K,N}(2\sqrt{s\norm{\varphi}_{L^ {\infty}}})}{s},\frac{l_{K,N}(\sqrt{2(1-s)\norm{\varphi}_{L^{\infty}}})}{1-s}\right\rbrace\meas(C)\di s<+\infty.
		\end{equation*}
		Dealing with the bound of the Sobolev norm we recall that \cite[Corollary 3.3.9]{Gigli14} provides the quantitative bound
		\begin{equation}\label{eq:laptosob}
		\int_X\abs{\Hess f}^2_{\HS}\di\meas\le\int_X\left\lbrace (\Delta f)^2-K\abs{\nabla f}^2\right\rbrace \di\meas
		\end{equation}
		for any $f\in D(\Delta)$. Recalling that the regularized potentials can also be chosen uniformly Lipschitz on $(0,1)$, the sought bound for $\int_{\epsilon}^{1-\epsilon}\norm{\nabla_{\sym}b_s}_{L^2}\di s$ follows applying \eqref{eq:laptosob} to the functions $\eta_s$, taking into account the $L^{\infty}$-bound for the laplacian \eqref{eq:mainLewyStampacchia} and the uniform boundedness of the supports.

		Passing to the proof of (ii), observe that the very construction of the regularized Kantorovich potentials (see \autoref{rm:regularoutside}) $\eta_s$ grants that $(\mu_s,b_s)_{s\in(0,1)}$ is a solution to the continuity equation with uniformly bounded density (the uniform bound for the densities is a consequence of \autoref{prop:finepropertiesgeodesics}).
		Moreover, \eqref{eq:integralbounds} grants, via the results of \cite{AmbrosioTrevisan14}, that, for any $0<s<t<1$, there exists a unique Regular Lagrangian flow $(\XX_s^r)_{r\in[s,t]}$ of $(b_r)_{r\in(s,t)}$. Observe that, by the very definition of RLF, also $r\mapsto (\XX_s^r)_{\sharp}\mu_s$ is a solution, with uniformly bounded density and initial datum $\mu_s$, to the continuity equation induced by $(b_r)_{r\in(s,t)}$. Hence $(\XX_s^t)_{\sharp}\mu_s=\mu_t$ for any $s\le t<1$, since the conclusion in (i) implies that the continuity equation induced by $(b_r)_{r\in(s,t)}$ has a unique solution with uniformly bounded density (again by the results of \cite{AmbrosioTrevisan14}). 
	\end{proof}
	
	\subsection{Conclusion}\label{subsec:conclusion}
	
	Below we state and prove our main theorem concerning the constancy of the dimension for finite dimensional $\RCD$ metric measure spaces.
	Its proof involves almost all the ingredients we developed so far in the note.
	
	\begin{theorem}[Constancy of the dimension]\label{thm:constancyofdimension}
		Let $(X,\dist,\meas)$ be an $\RCD(K,N)$ m.m.s. for some $K\in\setR$ and $1\le N<+\infty$. Then there is exactly one regular set $\mathcal{R}_n$ having positive $\meas$-measure in the Mondino-Naber decomposition of $(X,\dist,\meas)$.
	\end{theorem}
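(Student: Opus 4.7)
The plan is to argue by contradiction. The case $N=1$ is trivial since every tangent has dimension at most $1$, so assume $N>1$ and suppose there exist integers $1\le k<n\le [N]$ with $\meas(\mathcal{R}_k),\meas(\mathcal{R}_n)>0$. The assertion is invariant under tensorization with a Euclidean factor, because for the product $X\times\setR^3$ one has $\mathcal{R}^{X\times\setR^3}_{j+3}\supset\mathcal{R}^{X}_j\times\setR^3$ (by the behaviour of tangents under splitting) and so positive measure is inherited. I therefore replace $(X,\dist,\meas)$ by $(X\times\setR^3,\dist\otimes\dist_{\setR^3},\meas\otimes\leb^3)$, which is $\RCD(K,N+3)$ and satisfies Assumption \ref{assumptionnegative}, and in which two regular sets of dimensions $k+3<n+3$ have positive measure. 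This unlocks the regularity result Theorem \ref{thm: sobolev vectorfield, barG-regularity} and the rigidity result Theorem \ref{thm:conservationhausdim}.

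Choose compact sets $A\subset\mathcal{R}_{n+3}$ and $B\subset\mathcal{R}_{k+3}$ with finite positive measure, and set $\mu_0:=\meas(A)^{-1}\meas\res A$, $\mu_1:=\meas(B)^{-1}\meas\res B$. They are absolutely continuous with bounded densities and compact supports, so by Proposition \ref{prop:finepropertiesgeodesics} they are joined by a unique $W_2$-geodesic $(\mu_t)_{t\in[0,1]}$ with $\mu_t\le C\meas$ uniformly and with $\rho_t\to\rho_0$ and $\rho_t\to\rho_1$ $\meas$-a.e.\ along subsequences as $t\to0$ and $t\to1$ respectively. Proposition \ref{prop:consequencesimproved} furnishes a time-dependent vector field $(b_s)_{s\in(0,1)}$, coming from regularized Kantorovich potentials with uniform compact support and uniform $L^\infty$-Lipschitz bound, whose Regular Lagrangian flow $\XX_s^t$ satisfies $(\XX_s^t)_\sharp\mu_s=\mu_t$ for every $0<s\le t<1$. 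The Lewy--Stampacchia estimate \eqref{eq:mainLewyStampacchia} together with \eqref{eq:laptosob} ensure that, on any subinterval $[s,t]\Subset(0,1)$, the vector field meets the hypotheses of Theorem \ref{thm: sobolev vectorfield, barG-regularity}, namely bounded compressibility $L<\infty$, $|\nabla_{\sym}b_\tau|\in L^1_\tau L^2_x$ and $\div b_\tau\in L^1_\tau L^\infty_x$.

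Fix $\varepsilon\in(0,1/2)$. Using the a.e.\ convergence of densities from Proposition \ref{prop:finepropertiesgeodesics}(ii), together with dominated convergence controlled by the uniform bound $\rho_t\le C$, select times $0<s<t<1$ with $\mu_s(\mathcal{R}_{n+3})>1-\varepsilon$ and $\mu_t(\mathcal{R}_{k+3})>1-\varepsilon$. Let $\Phi:=\XX_s^t$, which by Theorem \ref{thm: sobolev vectorfield, barG-regularity} is $\dist_{\bar{G}}$-Lusin Lipschitz in the sense of Definition \ref{def:dGLusinLip}. Consider $\sigma:=\mu_s\res\mathcal{R}_{n+3}$, which is absolutely continuous, concentrated on $\mathcal{R}_{n+3}$, with $\Phi_\sharp\sigma\ll\meas$ by the compressibility bound. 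After normalization, Theorem \ref{thm:conservationhausdim} applied with ambient dimension $n+3\ge 4$ yields that $\Phi_\sharp\sigma$ is concentrated on $\bigcup_{\ell\ge n+3}\mathcal{R}_\ell$. Since $\Phi_\sharp\sigma\le\Phi_\sharp\mu_s=\mu_t$, one obtains
\begin{equation*}
\mu_t\Bigl(\bigcup_{\ell\ge n+3}\mathcal{R}_\ell\Bigr)\ge\Phi_\sharp\sigma(X)=\sigma(X)=\mu_s(\mathcal{R}_{n+3})>1-\varepsilon.
\end{equation*}
Combined with $\mu_t(\mathcal{R}_{k+3})>1-\varepsilon$ and the disjointness of $\mathcal{R}_{k+3}$ from every $\mathcal{R}_\ell$ with $\ell\ge n+3>k+3$, this forces $\mu_t(X)>2(1-\varepsilon)>1$, the desired contradiction.

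The main technical obstacle I anticipate is the bookkeeping in the third step: one must show that the intermediate measures $\mu_s,\mu_t$ can simultaneously be made to concentrate on $\mathcal{R}_{n+3}$ and $\mathcal{R}_{k+3}$ with arbitrarily small error. This is ultimately a consequence of Proposition \ref{prop:finepropertiesgeodesics}(ii) and the uniform density bound, but requires a careful subsequence extraction so that the concentration at both endpoints can be achieved simultaneously after restricting to $[s,t]\Subset(0,1)$, on which the vector field has all the regularity needed to invoke Theorem \ref{thm: sobolev vectorfield, barG-regularity}. The rest is either standard (the reduction by tensorization, the splitting of tangent spaces) or already contained in the machinery built in Sections \ref{sec:Gregularity} and \ref{sec:RLFpreservedimension}.
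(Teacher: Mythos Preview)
Your proof is correct and follows essentially the same strategy as the paper: tensorize with $\setR^3$ to place yourself under Assumption \ref{assumptionnegative}, choose endpoint measures concentrated on regular sets of different dimensions, pass to intermediate times $0<s<t<1$ along the $W_2$-geodesic via Proposition \ref{prop:finepropertiesgeodesics}(ii), and then combine Proposition \ref{prop:consequencesimproved}, Theorem \ref{thm: sobolev vectorfield, barG-regularity} and Theorem \ref{thm:conservationhausdim} to reach a contradiction.

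The one point of divergence is in how the final contradiction is extracted. The paper, after finding $s,t$ with $\eta_s(\mathcal{R}_n)>1/2$ and $\eta_t(\mathcal{R}_k)>1/2$, restricts the optimal geodesic plan $\Pi$ to the set $A=\{\gamma:\gamma(s)\in\mathcal{R}_n,\ \gamma(t)\in\mathcal{R}_k\}$ and renormalizes, obtaining a \emph{new} $W_2$-geodesic whose marginals at $s$ and $t$ are \emph{exactly} concentrated on $\mathcal{R}_n$ and $\mathcal{R}_k$; Theorem \ref{thm:conservationhausdim} then applies directly to $\mu_s$ itself. You instead keep the original geodesic, restrict only the measure $\mu_s$ to $\mathcal{R}_{n+3}$, push it forward, and close with the $\varepsilon$-counting argument $\mu_t(\mathcal{R}_{k+3})+\mu_t(\bigcup_{\ell\ge n+3}\mathcal{R}_\ell)>2(1-\varepsilon)>1$. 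Your variant has the mild advantage of not needing to verify that the restriction of an optimal geodesic plan is again an optimal geodesic plan between its marginals (a standard fact the paper uses without comment); the paper's variant has the advantage that once the restriction is made, the contradiction is immediate rather than quantitative.
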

	
	\begin{proof}
		As we already observed, the statement is not affected by tensorization with Euclidean factors. Thus we assume without loss of generality that $(X,\dist,\meas)$ satisfies either assumption \autoref{ass:doublingG} or assumption \autoref{assumptionnegative}.
		
		Suppose by contradiction that there exist $3\le k<n$ such that $\meas(\mathcal{R}_k),\meas(\mathcal{R}_n)>0$. Then we can find $\eta_0,\eta_1\in\Prob(X)$, absolutely continuous w.r.t. $\meas$, with bounded densities and bounded supports, such that $\eta_0(\mathcal{R}_n)=1$ and $\eta_1(\mathcal{R}_k)=1$.\newline
		Let $(\eta_r)_{r\in[0,1]}$ be the $W_2$-geodesic joining them and recall from \autoref{prop:finepropertiesgeodesics} that the measures $\eta_r$ are absolutely continuous w.r.t. $\meas$, with uniformly bounded densities and uniformly bounded supports. Applying the second conclusion in \autoref{prop:finepropertiesgeodesics}, we can also conclude that there exist $0<s<t<1$ such that $\eta_s(\mathcal{R}_n)>1/2$ and $\eta_t(\mathcal{R}_k)>1/2$. Calling $\Pi\in\Prob(\Geo(X))$ the unique geodesic plan lifting $(\eta_r)_{r\in[0,1]}$, it follows from what we just observed that
		\begin{equation*}
		\Pi(\set{\gamma\in\Geo(X): \gamma(s)\in\mathcal{R}_n\text{ and }\gamma(t)\in\mathcal{R}_k})>0.
		\end{equation*}
		Hence, setting
		\begin{equation*}
		A:=\set{\gamma\in\Geo(X): \gamma(s)\in\mathcal{R}_n\text{ and }\gamma(t)\in\mathcal{R}_k}, \quad \bar{\Pi}:=\frac{1}{\Pi(A)}\Pi\llcorner A\quad\text{and } \mu_r:=(e_r)_{\sharp}\bar{\Pi},
		\end{equation*}
		for any $r\in[0,1]$, we obtain a $W_2$-geodesic $(\mu_r)_{r\in[0,1]}$ which joins probabilities with bounded support and bounded densities w.r.t. $\meas$ and such that $\mu_s$ is concentrated on $\mathcal{R}_n$ and $\mu_t$ is concentrated on $\mathcal{R}_k$. \newline
		Next we apply \autoref{prop:consequencesimproved} to the $W_2$-geodesic $(\mu_r)_{r\in[0,1]}$ to obtain that, with the notation therein introduced, $\XX_t^s$ is the RLF of a Sobolev time dependent vector field satisfying the assumptions of \autoref{thm: sobolev vectorfield, G-regularity} (or \autoref{thm: sobolev vectorfield, barG-regularity}). Hence $\XX_t^s$ is a $\dist_{G}/\dist_{\bar{G}}$-Lusin Lipschitz map such that $\left(\XX_s^{t}\right)_{\sharp}\mu_s=\mu_t$ and, applying \autoref{thm:conservationhausdim}, we eventually reach a contradiction. 
	\end{proof}
	
	Let us conclude this section stating and proving some corollaries of \autoref{thm:constancyofdimension} that might be useful for the future investigation in this framework.
	
	As a consequence of \autoref{thm:constancyofdimension}, the quantitative $\dist_G/\dist_{\bar{G}}$-Lusin Lipschitz regularity results for Lagrangian flows \autoref{thm: sobolev vectorfield, G-regularity} and \autoref{thm: sobolev vectorfield, barG-regularity} can be turned into the following qualitative $\dist$-Lusin Lipschitz regularity statement.
	
	\begin{theorem}\label{thm:LusinLip}
		Let $(X,\dist,\meas)$ be an $\RCD(K,N)$ m.m.s for some $K\in\setR$ and $1<N<+\infty$. Let $b$ be a compactly supported bounded vector field such that $b_t\in W^{1,2}_{C,s}(TX)$ for a.e. $t\in(0,T)$ and that $|\nabla_{\sym} b_t|\in L^{1}((0,T);L^2(X,\meas))$ and $\div b_t\in L^1((0,T); L^{\infty}(X,\meas))$. Let $\XX_t$ be the regular Lagrangian flow associated to $b$. Then, for any $t\in[0,T]$ the map $\XX_t:X\to X$ is $\dist$-Lusin Lispchitz regular (compare with \autoref{def:dGLusinLip}).
	\end{theorem}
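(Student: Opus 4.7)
The plan is to combine the constancy of the dimension \autoref{thm:constancyofdimension} just established with the quantitative $\dist_G/\dist_{\bar G}$-Lusin Lipschitz regularity of \autoref{thm: sobolev vectorfield, G-regularity} and \autoref{thm: sobolev vectorfield, barG-regularity} and the sharp asymptotic of \autoref{lemma:asymptotics}, which identifies the Green function with a negative power of the distance on regular points. The key observation is that once the regular dimension is known to be constant, the quasi-metric $\dist_G$ (or $\dist_{\bar G}$) is locally comparable to $\dist^{m-2}$ in a uniform, quantitative way on sets of arbitrarily large measure.

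First I would reduce to a setting in which either \autoref{ass:doublingG} or \autoref{assumptionnegative} is satisfied. Consider the tensor product $(X', \dist', \meas') := (X \times \setR^3, \dist \otimes \dist_{\setR^3}, \meas \otimes \leb^3)$ and extend the vector field to $b'_t := b_t \oplus 0$; it inherits all the integrability properties required in the statement, and its unique RLF is the trivial extension $\mathbf{X}'_t(x, v) := (\XX_t(x), v)$, still with finite compressibility constant. If we can prove that $\mathbf{X}'_t$ is $\dist'$-Lusin Lipschitz, then by Fubini for $\leb^3$-a.e.\ $v \in \setR^3$ the $v$-slices of a Lusin decomposition for $\mathbf X'_t$ give a Lusin decomposition for $\XX_t$ on $X$ with the same Lipschitz constants, so the problem reduces to the case in which $X'$ falls under the scope of \autoref{sec:Gregularity}.

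Next, I would apply \autoref{thm:constancyofdimension} together with \autoref{thm:weakAhlforsregularity} to $X'$: there is a unique integer $m \ge 3$ such that $\meas'$ is concentrated on $\mathcal R_m^{*}$. By \autoref{prop:estimateG} (respectively \autoref{prop:estimatebarG}) combined with \autoref{lemma:asymptotics},
\begin{equation*}
\lim_{q \to p}\, \dist'(p,q)^{m-2}\, G(p,q) \in (0, +\infty) \qquad \text{for } \meas'\text{-a.e. } p \in X',
\end{equation*}
and an Egorov-type argument yields a countable Borel cover $\{A_j\}_{j \in \setN}$ of $X'$, up to a $\meas'$-null set, on each of which the two-sided asymptotic comparison
\begin{equation*}
\frac{1}{C_j}\, \dist'(p,q)^{m-2} \le \dist'_G(p,q) \le C_j\, \dist'(p,q)^{m-2}, \qquad \forall\, p, q \in A_j,
\end{equation*}
holds uniformly for some $C_j > 0$ (and analogously for $\dist'_{\bar G}$ in the negative-curvature case).

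Finally, by \autoref{thm: sobolev vectorfield, G-regularity} or \autoref{thm: sobolev vectorfield, barG-regularity}, $\mathbf{X}'_t$ admits a countable Borel decomposition $\{B_i\}_{i \in \setN}$ on which it is $L_i$-Lipschitz with respect to $\dist'_G$ (or $\dist'_{\bar G}$). The bounded compressibility of $\mathbf{X}'_t$ ensures $\meas'\bigl(X' \setminus \bigcup_{j'} (\mathbf{X}'_t)^{-1}(A_{j'})\bigr) = 0$, so the countable family $F_{i,j,j'} := B_i \cap A_j \cap (\mathbf{X}'_t)^{-1}(A_{j'})$ covers $X'$ up to a null set, and chaining the two comparisons on $F_{i,j,j'}$ gives
\begin{equation*}
\dist'\bigl(\mathbf{X}'_t(p), \mathbf{X}'_t(q)\bigr) \le \bigl(C_j\, C_{j'}\, L_i\bigr)^{1/(m-2)} \dist'(p, q), \qquad \forall\, p, q \in F_{i,j,j'}.
\end{equation*}
After reindexing this yields the $\dist'$-Lusin Lipschitz regularity of $\mathbf{X}'_t$, and the slicing argument above then concludes. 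The main obstacle is the passage from the pointwise asymptotic of \autoref{lemma:asymptotics} to a uniform two-sided comparison on a countable Borel cover of $X'$, since the limiting constant $\theta_m(p)$ is only Borel measurable in $p$; once this uniform comparison is secured, the remainder of the argument is essentially algebraic and relies only on the compressibility of $\mathbf{X}'_t$ to match the decomposition $\{A_j\}$ with its pull-back $\{(\mathbf{X}'_t)^{-1}(A_{j'})\}$.
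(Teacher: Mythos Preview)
Your proposal is correct and follows the same conceptual route as the paper: reduce to a product with $\setR^3$, use the constancy of the dimension together with \autoref{lemma:asymptotics} to identify $\dist_G$ (or $\dist_{\bar G}$) locally with $\dist^{m-2}$ at almost every point, and combine this with the $\dist_G/\dist_{\bar G}$-Lusin Lipschitz regularity of the flow.

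The execution of the final step differs. You build the $\dist$-Lusin decomposition by hand: an Egorov argument produces sets $A_j$ on which the comparison $\dist_G\asymp\dist^{m-2}$ is uniform, and you then intersect with the $\dist_G$-Lusin pieces $B_i$ and with their pull-backs under $\XX'_t$. The paper instead observes that the same ingredients yield, for $\meas'$-a.e.\ $p$,
\[
\limsup_{r\to 0}\ \esssup_{q\in B(p,r)}\frac{\dist'(\XX'_t(p),\XX'_t(q))}{\dist'(p,q)}<+\infty,
\]
and then invokes the classical Stepanov-type criterion \cite[Theorem~3.1.8]{Federer69}, using the rectifiability of $(X',\dist',\meas')$ to pass to bi-Lipschitz charts where Federer's statement applies directly. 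Your route is more self-contained but requires care in the Egorov step: the asymptotic of \autoref{lemma:asymptotics} is centred at $p$, so to get the two-sided bound for \emph{all} $p,q\in A_j$ you must either force $A_j$ to have small diameter (and stratify by the value of $\theta_m$) or handle far pairs separately via compactness and the continuity of $F$ in \autoref{rm:continuityFH}. The paper's route is shorter because it offloads exactly this bookkeeping to Federer's theorem, at the price of appealing to rectifiability.
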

	
	\begin{proof}
		To begin we remark that, arguing as in \cite[Subsection 3.4]{BrueSemola18}, we can reduce ourselves to the case of an $\RCD(K,N)$ space satisfying assumption \autoref{assumptionnegative}.
		
		Let us fix $t\in[0,T]$ and call $\Phi:=\XX_t$. Applying \autoref{thm: sobolev vectorfield, barG-regularity}, exploiting \autoref{lemma:asymptotics}, the bounded compression property of the flow and \autoref{thm:constancyofdimension} we get
		\begin{equation}
		\lim_{r\to 0}\esssup_{y\in B(x,r)}\frac{\dist(\Phi(x),\Phi(y))}{\dist(x,y)}<+\infty
		\end{equation}
		for $\meas$-a.e. $x\in X$. 
		
		The claimed conclusion follows applying \cite[Theorem 3.1.8]{Federer69}, taking into account the rectifiability of $(X,\dist,\meas)$ and the bounded compression property of $\Phi$ once more. 
	\end{proof}
	
	In view of the results of \cite{GigliPasqualetto16b}, the constancy of the dimension, that we stated and proved in \autoref{thm:constancyofdimension} at the level of the Mondino-Naber decomposition, can be equivalently rephrased at the level of the dimensional decomposition of the tangent module $L^2(TX)$. 
	We refer to \cite[Theorem 1.4.11]{Gigli14} for the basic terminology and results about this topic.
	
	\begin{corollary}\label{thm:notionsofdimension}
		Let $(X,\dist,\meas)$ be an $\RCD(K,N)$ m.m.s. for some $K\in\setR$ and $1<N<+\infty$. Let $n\in\setN$, $1\le n\le N$ be such that $\mathcal{R}_n$ is the unique regular set with positive measure in the Mondino-Naber decomposition of $(X,\dist,\meas)$. Then the tangent module $L^2(TX)$ has constant dimension equal to $n$.
	\end{corollary}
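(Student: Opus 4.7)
The plan is to reduce the corollary to a bookkeeping exercise combining three ingredients: (i) the abstract dimensional decomposition available for any $L^2$-normed module, (ii) the identification, established in \cite{GigliPasqualetto16b}, between this decomposition and the Mondino-Naber stratification $\{\mathcal{R}_k\}$, and (iii) \autoref{thm:constancyofdimension}. The actual analytic work has already been done in \autoref{thm:constancyofdimension}; what remains is essentially a translation between two different notions of ``dimension'' of an $\RCD(K,N)$ space.

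In more detail, I would first recall from \cite[Theorem 1.4.11]{Gigli14} that the tangent module $L^2(TX)$, being an $L^2$-normed $L^{\infty}$-module over $(X,\meas)$, admits a $\meas$-essentially unique Borel partition $\{D_k\}_{k\in\setN\cup\{\infty\}}$ of $X$ such that $L^2(TX)$ has local dimension exactly $k$ on $D_k$, in the sense that each $D_k$ carries a local orthonormal basis of cardinality $k$ and cannot be refined by a basis of lower cardinality on any subset of positive measure. Then I would invoke the main identification result of \cite{GigliPasqualetto16b}, which in the $\RCD(K,N)$ framework guarantees $\meas(D_k\triangle\mathcal{R}_k)=0$ for every $k\in\setN$, so that the dimensional decomposition of $L^2(TX)$ and the regular stratification agree up to $\meas$-negligible sets.

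Once this is in place, the conclusion follows at once: by \autoref{thm:constancyofdimension}, there is a unique $n\in\setN$, $1\le n\le N$, such that $\meas(\mathcal{R}_n)>0$, and hence $\meas(X\setminus D_n)=\meas(X\setminus\mathcal{R}_n)=0$ thanks to the Mondino-Naber decomposition \eqref{eq:MN}. This exactly says that $L^2(TX)$ has constant local dimension $n$.

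I do not anticipate any genuine obstacle in carrying out this plan beyond checking that the framework of \cite{GigliPasqualetto16b} applies under the present standing hypotheses (which it does, since the identification $D_k\equiv\mathcal{R}_k$ modulo $\meas$-null sets is proved in the generality of $\RCD(K,N)$ spaces with $K\in\setR$ and $1<N<+\infty$). In particular, no further use of the Green function quasi-metric or of the flow machinery developed in \autoref{sec:Gregularity} is needed here, since those tools have already been consumed in the proof of \autoref{thm:constancyofdimension}.
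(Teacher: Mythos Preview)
Your proposal is correct and follows essentially the same approach as the paper: the paper's proof is a one-line appeal to \cite[Theorem 3.3]{GigliPasqualetto16b} together with \autoref{thm:constancyofdimension}, and you have simply unpacked that citation by recalling the dimensional decomposition of $L^2(TX)$ from \cite{Gigli14} and the identification $D_k\equiv\mathcal{R}_k$ modulo $\meas$-null sets.
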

	
	\begin{proof}
		The result directly follows from \cite[Theorem 3.3]{GigliPasqualetto16b} and \autoref{thm:constancyofdimension}.
	\end{proof}
	
	\begin{remark}
		With the notation introduced in \autoref{thm:notionsofdimension}, one has that $n$ is the analytic dimension of $(X,\dist,\meas)$ (see \cite[Definition 2.10]{Han18}). 
		
		To let the picture about the different notions of dimension introduced in the literature so far be more complete, we also point out that $n$ is also the dimension of $(X,\dist,\meas)$ according to \cite[Definition 4.1]{Kitabeppu18}. Indeed, as it is observed in \cite[Remark 4.14]{Kitabeppu18}, if $\mathcal{R}_n$ is the unique regular set of positive measure, the results of \cite{Kitabeppu18} grant that it is also the non empty regular set of maximal dimension.  
		
		Up to our knowledge, the problem of whether $n$ is the Hausdorff dimension of $(X,\dist)$ or not is still open also in the case of collapsed Ricci limit spaces (see \cite[Remark 1.3]{ColdingNaber12}) essentially due to the lack of knowledge about the Hausdorff dimension of the singular set.
	\end{remark}
	
	Eventually we give a positive answer to a conjecture raised in \cite[Remark 1.13]{DePhilippisGigli17}. As it is therein observed, its validity follows from the fact that the tangent module has constant dimension exploiting the results of \cite{Han18}. We wish to thank Nicola Gigli for pointing this out to us.

	\begin{theorem}
		Let $(X,\dist,\meas)$ be an $\RCD(K,N)$ m.m.s. for some $K\in\setR$ and $1\le N<+\infty$. Assume that (referring to \cite{Gigli14} for the terminology) $H^{2,2}(X,\dist,\meas)=D(\Delta)$ and 
		\begin{equation}\label{eq:tracelapl}
		\tr\Hess f=\Delta f,\quad\text{for any $f\in H^{2,2}(X,\dist,\meas)$}.
		\end{equation} 
		Then, there exists $n\in\setN$, $1\le n\le N$ such that, with the terminology introduced in \cite[Definition 1.10]{DePhilippisGigli17}, $(X,\dist,\meas)$ is a weakly non collapsed $\RCD(K,n)$ m.m.s..
	\end{theorem}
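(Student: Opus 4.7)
The plan is to combine the constancy of dimension theorem just proved with the structural results of Han \cite{Han18}, applied via the constancy of the dimension of the tangent module, in order to upgrade the $\RCD(K,N)$ condition to a sharper $\RCD(K,n)$ condition, and then to invoke the Mondino–Naber decomposition together with weak Ahlfors regularity to conclude that $\meas\ll\mathcal{H}^n$.

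First, I would apply \autoref{thm:constancyofdimension} to obtain the unique integer $n\in\setN$, with $1\le n\le N$, for which $\mathcal{R}_n$ is the only regular set carrying positive $\meas$-measure, so that $\meas$-a.e. point of $X$ lies in $\mathcal{R}_n$. By \autoref{thm:notionsofdimension}, this is equivalent to saying that the tangent module $L^2(TX)$ has constant dimension $n$, and consequently the Hessian modulus $L^2(T^{\otimes 2}X)$ splits at $\meas$-a.e.\ point as an $n\times n$ symmetric tensor; in particular, for any $f\in H^{2,2}(X,\dist,\meas)$, the pointwise trace $\tr\Hess f$ is a sum of at most $n$ non-zero ``diagonal'' contributions.

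The main step, and the one I would expect to be the main obstacle, consists in promoting the curvature–dimension bound from $N$ to $n$. The idea is to exploit the hypothesis $\tr\Hess f=\Delta f$ on $H^{2,2}(X,\dist,\meas)=D(\Delta)$, which, combined with the constant-dimension property of the tangent module and the Bochner identity available on $\RCD(K,N)$ spaces (with the Ricci tensor defined and studied in \cite{Han18}), yields the improved Bochner inequality
\begin{equation*}
\tfrac12\boldsymbol{\Delta}|\nabla f|^2-\nabla f\cdot\nabla\Delta f\ge K|\nabla f|^2+\tfrac{1}{n}(\Delta f)^2
\end{equation*}
for every sufficiently regular test function $f$, via the pointwise Cauchy–Schwarz inequality $|\Hess f|^2_{\HS}\ge\tfrac{1}{n}(\tr\Hess f)^2$ applied to a rank-$n$ symmetric tensor. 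By the Bakry–\'Emery/Bochner characterisation of the $\RCD$ condition developed in \cite{AmbrosioMondinoSavare15,ErbarKuwadaSturm15}, this improved inequality is precisely the $\RCD(K,n)$ condition for $(X,\dist,\meas)$.

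Finally, to reach the weak non-collapsedness in the sense of \cite[Definition 1.10]{DePhilippisGigli17}, I would apply the Weak Ahlfors regularity (\autoref{thm:weakAhlforsregularity}) together with the constancy of dimension: on the $\meas$-full set $\mathcal{R}_n^*$, the measures $\meas\llcorner\mathcal{R}_n^*$ and $\mathcal{H}^n\llcorner\mathcal{R}_n^*$ are mutually absolutely continuous, hence globally $\meas\ll\mathcal{H}^n$. Coupled with the $\RCD(K,n)$ condition just established, this gives that $(X,\dist,\meas)$ is a weakly non-collapsed $\RCD(K,n)$ metric measure space, as desired.
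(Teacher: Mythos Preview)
Your proposal is correct and follows essentially the same route as the paper: pick $n$ via the constancy-of-dimension theorem, use the hypothesis $\tr\Hess f=\Delta f$ together with the constant dimension of the tangent module to upgrade the Bochner inequality from exponent $N$ to exponent $n$ (the paper packages this step as an appeal to \cite[Theorem 4.3]{Han18}, whereas you unpack it via the pointwise Cauchy--Schwarz $|\Hess f|_{\HS}^2\ge\tfrac1n(\tr\Hess f)^2$), and then read off $\meas\ll\mathcal{H}^n$ from the structure theory. The only cosmetic difference is that the paper leaves the verification of $\meas\ll\mathcal{H}^n$ implicit, while you spell it out via \autoref{thm:weakAhlforsregularity}.
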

	
	\begin{proof}
		We wish to prove that the statement holds true with $n$ equal to the dimension of the unique regular set with positive measure in the Mondino-Naber decomposition of $(X,\dist,\meas)$.
		
		By the very definition of weakly non collapsed $\RCD(K,n)$ m.m.s., we just need to prove that $(X,\dist,\meas)$ is an $\RCD(K,n)$ m.m.s.. The sought conclusion will follow after proving that we are under the assumptions of \cite[Theorem 4.3]{Han18}. To this aim, observe that the first and the second assumption in the statement of \cite[Theorem 4.3]{Han18} are fulfilled thanks to our choice of $n$ and the validity of \eqref{eq:tracelapl}. To see that also the third one is satisfied, it suffices to observe that
		\begin{equation*}
		\boldmath{\Ric}_n(\nabla f,\nabla f)=\boldsymbol{\Gamma}_2(\nabla f,\nabla f)-\abs{\Hess f}_{\HS}^2\meas\ge K\abs{\nabla f}^2\meas, 
		\end{equation*}  
		for any $f\in H^{2,2}(X,\dist,\meas)$, where the equality above follows from \eqref{eq:tracelapl} and the inequality from the assumption that $(X,\dist,\meas)$ is an $\RCD(K,N)$ m.m.s..
		
	\end{proof}
	
	\appendix
	\section{Appendix}
	This appendix is dedicated to the proof of a general result about the structure of regular Lagrangian flows associated to vector fields with product structure over product spaces. As a corollary we will obtain that, for $\meas\times\meas$-a.e. $(x,y)\in X\times X$, the map $t\mapsto G(\XX_t(x),\XX_t(y))$ is differentiable $\leb^1$-a.e., with the explicit and expected formula for the derivative we used in the proof of \autoref{th 1}.
	
	Let $(X,\dist_X,\meas_X)$ and $(Y,\dist_Y,\meas_Y)$ be $\RCD(K,\infty)$ m.m. spaces. Let $Z:=X\times Y$ be endowed with the product m.m.s. structure, namely
	\begin{equation*}
	\dist_Z^2((x,y),(x',y')):=\dist_X^2(x,x')+\dist_Y^2(y,y')\quad\text{and}\quad \meas_Z:=\meas_X\times\meas_Y
	\end{equation*}
	and recall from \cite{AmbrosioGigliSavare14,AmbGigliMondRaj12} that $(X,\dist_Z,\meas_Z)$ is an $\RCD(K,\infty)$ m.m.s itself.\newline
	We will denote by $\pi_X$ and $\pi_Y$ the canonical projections from $Z$ onto $X$ and $Y$ respectively. This being said we introduce the so-called algebra of tensor products by
	\begin{equation*}
	\mathcal{A}:=\left\lbrace \sum_{j=1}^ng_j\circ\pi_Xh_j\circ\pi_Y:\ g_j\in W^{1,2}_{\loc}\cap L^{\infty}_{\loc}(X)\text{ and }h_j\in W^{1,2}_{\loc}\cap L^{\infty}_{\loc}(Y)\ \forall j=1,\dots,n\right\rbrace. 
	\end{equation*}
	
	Below we state and prove a useful density result concerning the density of the algebra of tensors.
	
	\begin{theorem}\label{thm:densityofproductalgebra}
		Let $X,Y$ and $Z$ be as above. Then, for any $f\in W^{1,2}_{\loc}(Z,\dist_Z,\meas_Z)\cap L^{\infty}_{\loc}(Z,\meas)$ and for any compact $P\subset Z$, there exists a sequence $(f)_{n\in\setN}$ in $\mathcal{A}$ with $\norm{f_n}_{L^{\infty}(P)}$ uniformly bounded and such that $\norm{f_n-f}_{L^2(P,\meas_Z)}+\norm{\abs{\nabla (f_n-f)}}_{L^2(P,\meas_Z)}\to 0$ as $n\to\infty$.
	\end{theorem}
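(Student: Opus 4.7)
The plan is to combine a heat-semigroup regularization with a simple-function approximation, exploiting both the product structure of $(Z,\dist_Z,\meas_Z)$ and the tensorization of its heat kernel. First I would multiply $f$ by a Lipschitz cutoff $\chi$ with compact support in a rectangular neighbourhood $Q=Q_X\times Q_Y$ of $P$ and $\chi\equiv 1$ on $P$: this reduces the problem to $f\in W^{1,2}(Z,\dist_Z,\meas_Z)\cap L^{\infty}(Z,\meas_Z)$ with compact support, and it is enough to approximate $\chi f$ globally in $W^{1,2}(Z)$ by $\mathcal{A}$-elements uniformly bounded in $L^{\infty}$.

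The key structural input is the tensorization of the $\RCD(K,\infty)$ condition proved in \cite{AmbrosioGigliSavare14,AmbGigliMondRaj12}, which ensures that $p^Z_t((x,y),(x',y'))=p^X_t(x,x')\,p^Y_t(y,y')$; consequently
\begin{equation*}
P^Z_t[\mathbf{1}_A\otimes\mathbf{1}_B](x,y)=(P^X_t\mathbf{1}_A)(x)\,(P^Y_t\mathbf{1}_B)(y),
\end{equation*}
and therefore $P^Z_t$ sends finite linear combinations of characteristic functions of measurable product rectangles into $\mathcal{A}$, since each factor lies in $W^{1,2}\cap L^{\infty}$ on the corresponding factor, hence in $W^{1,2}_{\loc}\cap L^{\infty}_{\loc}$. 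By a standard dyadic level-set construction followed by an approximation of each level set by a disjoint finite union of product rectangles, I would produce a sequence
\begin{equation*}
\tilde f_n=\sum_{i,j}c^n_{ij}\,\mathbf{1}_{A^n_i}\otimes\mathbf{1}_{B^n_j},\qquad A^n_i\subset Q_X,\quad B^n_j\subset Q_Y,
\end{equation*}
with $\norm{\tilde f_n}_{L^{\infty}}\le\norm{f}_{L^{\infty}}$ and $\tilde f_n\to f$ in $L^2(Z,\meas_Z)$.

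To merge these two ingredients I would invoke the energy-dissipation identity combined with the Bakry-\'Emery contraction \eqref{eq:BakryEmery} on the $\RCD(K,\infty)$ space $Z$, which yields a regularization bound of the form $\norm{\nabla P^Z_t g}_{L^2}\le C(K,t)\norm{g}_{L^2}$ for every $g\in L^2(Z,\meas_Z)$. Hence $P^Z_t\tilde f_n\to P^Z_t f$ in $W^{1,2}(Z)$ for each fixed $t>0$, while $P^Z_t f\to f$ in $W^{1,2}(Z)$ as $t\downarrow 0$ by the standard properties of the heat flow on $\RCD(K,\infty)$ spaces; a diagonal extraction then produces $f_k:=P^Z_{t_k}\tilde f_{n_k}\in\mathcal{A}$ with $f_k\to f$ in $W^{1,2}(Z)$, and the $L^{\infty}$-contractivity of $P^Z_{t}$ preserves the uniform bound $\norm{f_k}_{L^{\infty}}\le\norm{f}_{L^{\infty}}$. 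I expect the main technical obstacle to be the simple-function construction: the rectangular building blocks have to be chosen pairwise disjoint across the level-set decomposition, so that reassembling them into a linear combination does not blow up the $L^{\infty}$ norm, and this forces one to work with a sufficiently fine common refinement of all rectangle partitions rather than with a naive inclusion-exclusion, while carefully tracking the error in terms of the product measure of the symmetric differences.
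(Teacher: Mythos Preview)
Your approach is correct and takes a genuinely different route from the paper's. The paper proceeds algebraically: it invokes the density in $W^{1,2}(Z)$ of the algebra generated by truncated distance functions (from \cite{AmbrosioStraTrevisan17}), then exploits the product identity $\dist_Z^2((x,y),\cdot)=\dist_X^2(x,\cdot)+\dist_Y^2(y,\cdot)$, which already places $\dist_Z^2$ in $\mathcal{A}$, and approximates $t\mapsto\sqrt{1/n+t}$ and $t\mapsto|t|$ by polynomials in $C^1_{\loc}$ to show that truncated distances and lattice operations remain in the $W^{1,2}_{\loc}\cap L^\infty_{\loc}$-closure of $\mathcal{A}$. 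Your route is analytic: you rely on the tensorization of the heat semigroup on the product (a consequence of the tensorization of the Cheeger energy) together with the $L^2\to W^{1,2}$ smoothing estimate $\Ch(P^Z_t g)\le \tfrac{1}{2t}\norm{g}_{L^2}^2$, so that it suffices to approximate the cut-off function in $L^2$ by simple tensor functions with controlled $L^\infty$ norm; the conditional-expectation construction on a refining sequence of finite product partitions of $Q_X\times Q_Y$ (which works since the Borel $\sigma$-algebras of $X,Y$ are countably generated) handles this cleanly and automatically keeps $\norm{\tilde f_n}_{L^\infty}\le\norm{f}_{L^\infty}$. Your argument is more self-contained in that it does not appeal to the external density theorem of \cite{AmbrosioStraTrevisan17}, and it isolates a single structural input---heat-kernel tensorization---as the mechanism; the paper's argument is more elementary in that it avoids the heat semigroup altogether and uses only polynomial approximation and the explicit Pythagorean form of $\dist_Z$.
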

	
	\begin{proof}
		Let us denote by $\bar{\mathcal{A}}$ the set of functions $f\in W^{1,2}_{\loc}(Z,\dist_Z,\meas_Z)\cap L^{\infty}_{\loc}(Z,\meas)$ for which the statement of the theorem holds true.	
		Let $\mathcal{A}_{\dist}$ be the smallest subset of $\Lip_b(X)$ containing truncated distances from points of $Z$ and closed with respect to sum, product and lattice operations, let $\mathcal{A}_{\dist\bs}\subset W^{1,2}(Z,\dist_Z,\meas_Z)\cap L^{\infty}(Z,\meas_Z)$ be the subalgebra of $\mathcal{A}_{\dist}$ made by functions with bounded support.
		In \cite[Theorem B.1]{AmbrosioStraTrevisan17} it is proved that $\mathcal{A}_{\dist\bs}$ is dense in $W^{1,2}(Z,\dist_Z,\meas_Z)$ and it is straightforward to check that one can approximate any bounded function in $W^{1,2}(Z,\dist_Z,\meas_Z)$ with a sequence of uniformly bounded functions in $\mathcal{A}_{\dist\bs}$. 
		Hence, to get the stated conclusion, it is sufficient to
		prove that $\dist_Z(z,\cdot)\wedge k\in \bar{\mathcal{A}}$ for any $z\in Z$, for any $k\ge 0$, and the implication $f,g\in \bar{\mathcal{A}}\implies f\wedge g\in \bar{\mathcal{A}}$.
		
			Let us first prove that $\dist_Z(z,\cdot)\in \bar{\mathcal{A}}$ for any $z\in Z$.
		For any natural $n\ge 1$ let $(h_n^k)_{k\in\setN}$ be a sequence of polynomials converging to $t\mapsto\sqrt{1/n+t}$ in $C^{1}_{\loc}([0,+\infty))$ as $k\to\infty$. Let us fix $z\in Z$, it is simple to see that $h_n^k(\dist_Z(z,\cdot)^2)$ converges in $W^{1,2}_{\loc}(Z,\dist_Z,\meas_Z)\cap L^{\infty}_{\loc}(Z,\meas_Z)$ to $\sqrt{1/n+\dist_Z^2(z,\cdot)}$ when $k\to \infty$ and that $\sqrt{1/n+\dist_Z^2(z,\cdot)}\to \dist_Z(z,\cdot)$, in the same topology, when $n\to\infty$.
		Observe that the very definition of $\dist_Z$ yields $\dist_Z(z,w)^2=\dist_X(\pi_X(z),\pi_X(w))^2+\dist_Y(\pi_Y(z),\pi_Y(w))^2$ for any $w\in Z$, therefore $h_n^k(\dist_Z(z,\cdot)^2)\in \mathcal{A}$.
		
		Let us now prove the implication $g\in \bar{\mathcal{A}}\implies |g|\in \bar{\mathcal{A}}$.
		With this aim, let us fix $g\in\bar{\mathcal{A}}$ and a sequence $g_m\in \mathcal{A}$ converging to $g$ in $W^{1,2}_{\loc}(Z,\dist_Z,\meas_Z)\cap L^{\infty}_{\loc}(Z,\meas_Z)$ when $m\to\infty$.
		Set $g^k_{n,m}:=h_n^k\circ g_m^2$, we have $g^k_{n,m}\in \mathcal{A}$ and it is easy to check that it converges to $\sqrt{1/n + g_m^2}$ in $W^{1,2}_{\loc}(Z,\dist_Z,\meas_Z)\cap L^{\infty}_{\loc}(Z,\meas_Z)$ as $k\to\infty$. Moreover $\sqrt{1/n + g_m^2}\to |g_m|$ in $W^{1,2}_{\loc}(Z,\dist_Z,\meas_Z)\cap L^{\infty}_{\loc}(Z,\meas_Z)$ when $n\to\infty$ and eventually $|g_m|\to |g|$, in the same topology, when $m\to \infty$. By a diagonal argument, we recover the sought approximating sequence. 
		
			Finally we exploit the identity
			\begin{equation*}
			a\wedge b=\frac{|a+b|-|a-b|}{2},\qquad\forall   a,b\in[0,\infty),
			\end{equation*} 
			to deduce that $\dist_Z(z,\cdot)\wedge k\in \bar{\mathcal{A}}$ for any $z\in Z$, for any $k\ge 0$ and the implication $f,g\in \bar{\mathcal{A}}\implies f\wedge g\in \bar{\mathcal{A}}$.
	\end{proof}
	
	Let us consider now $b^X_t\in L^1((0,T);L^2(TX))$ and $b^Y_t\in L^1((0,T);L^2(TY))$. We introduce the ``product'' vector field $b^Z_t$ by saying that, for every $f\in W^{1,2}(Z,\dist_Z,\meas_Z)$,
	\begin{equation}\label{z11}
	b^Z_t\cdot\nabla f(x,y):= b_t^X\cdot \nabla f_y(x)+b_t^Y\cdot \nabla f_x(y),
	\end{equation}
	for $\meas_Z$-a.e. $(x,y)\in Z$, where $f_x(y):=f(x,y)$, $f_y(x):=f(x,y)$ and we are implicitly exploiting the tensorization of the Cheeger energy (see \cite{AmbrosioGigliSavare14,AmbGigliMondRaj12}). It is simple to check that $b^Z_t \in L^1((0,T);L^2_{\loc}(Z,\meas_Z))$ and 
	\begin{equation*}
	|b_t^Z|^2(x,y)\leq |b_t^X|^2(x)+|b_t^Y|^2(y),\qquad \text{for}\ \meas\times\meas\text{-a.e.}\ (x,y)\in X\times Y.
	\end{equation*}
	
	\begin{proposition}\label{prop: product of vector fields}
		Let $b^X_t$ and $b^Y_t$ be as above and $\XX^X_t$ and $\XX^Y_t$ be regular Lagrangian flows associated to $b_t^X$ and $b_t^Y$, respectively. Then 
		\begin{equation*}
		\XX^Z_t(x,y):=(\XX^X_t(x),\XX^Y_t(y))
		\end{equation*}
		is a regular Lagrangian flow associated to $b^{Z}_t$.
	\end{proposition}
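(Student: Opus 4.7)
The plan is to check the three defining properties of a Regular Lagrangian Flow (\autoref{def:Regularlagrangianflow}) for the candidate $\XX_t^Z(x,y):=(\XX_t^X(x),\XX_t^Y(y))$. The first property is immediate: $\XX_0^Z(x,y)=(x,y)$ and the continuity of $t\mapsto \XX_t^Z(x,y)$ in $(Z,\dist_Z)$ follows from the continuity of the factors. For the bounded compression bound, if $L_X,L_Y$ are compressibility constants of $\XX^X,\XX^Y$, then Fubini together with the product structure of $\meas_Z$ gives $(\XX_t^Z)_\sharp\meas_Z\le L_XL_Y\,\meas_Z$. The main content is thus the verification of the derivative formula
\begin{equation}\label{eq:prodderiv}
\frac{\di}{\di t}f(\XX_t^Z(x,y))=b_t^Z\cdot\nabla f(\XX_t^Z(x,y))\quad\text{for }\leb^1\text{-a.e.}\ t\in(0,T),
\end{equation}
for $\meas_Z$-a.e. $(x,y)\in Z$ and for $f$ in a sufficiently rich class.

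I would first check \eqref{eq:prodderiv} on the algebra $\mathcal A$ of tensor products. For a function of the form $f(x,y)=g(x)h(y)$ with $g\in W^{1,2}_{\loc}\cap L^{\infty}_{\loc}(X)$ and $h\in W^{1,2}_{\loc}\cap L^{\infty}_{\loc}(Y)$, the RLF property for $\XX^X$ (invoked for $g$, which is an admissible test via the localization discussed after \autoref{def:Regularlagrangianflow} and \autoref{remark: dense class definition RLF}) gives that $t\mapsto g(\XX_t^X(x))$ is absolutely continuous for $\meas_X$-a.e.\ $x$, with derivative $b_t^X\cdot\nabla g(\XX_t^X(x))$; symmetrically for $h$. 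The classical Leibniz rule for a product of two absolutely continuous functions then yields, for $\meas_Z$-a.e.\ $(x,y)$,
\begin{equation*}
\frac{\di}{\di t}\bigl[g(\XX_t^X(x))h(\XX_t^Y(y))\bigr]=(b_t^X\cdot\nabla g)(\XX_t^X(x))\,h(\XX_t^Y(y))+g(\XX_t^X(x))\,(b_t^Y\cdot\nabla h)(\XX_t^Y(y)),
\end{equation*}
and this coincides with $b_t^Z\cdot\nabla f(\XX_t^Z(x,y))$ by the very definition \eqref{z11} of $b_t^Z$, since for $f=g\circ\pi_X\cdot h\circ\pi_Y$ one has $\nabla f_y(x)=h(y)\nabla g(x)$ and $\nabla f_x(y)=g(x)\nabla h(y)$. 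By linearity \eqref{eq:prodderiv} extends to every $f\in\mathcal A$.

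Finally I would promote \eqref{eq:prodderiv} from $\mathcal A$ to $\Test(Z,\dist_Z,\meas_Z)$ by the approximation argument of \autoref{remark: dense class definition RLF}. The key point is that $\mathcal A$ is dense in $W^{1,2}(Z,\dist_Z,\meas_Z)$ (restricted to suitable bounded sets) in view of \autoref{thm:densityofproductalgebra}. Since $b_t^Z\in L^1((0,T);L^2_{\loc}(TZ))$, thanks to the pointwise bound $\abs{b_t^Z}^2(x,y)\le\abs{b_t^X}^2(x)+\abs{b_t^Y}^2(y)$, we can reproduce the computation of \autoref{remark: dense class definition RLF} localized to any compact $P\subset Z$: given $f\in\Test(Z,\dist_Z,\meas_Z)$ and $\eps>0$ pick $f_n\in\mathcal A$ with $\norm{f_n}_{L^\infty(P)}$ uniformly bounded and $f_n\to f$ strongly in $W^{1,2}(P,\meas_Z)$. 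The integral identity $f(\XX_t^Z)-f=\int_0^t b_s^Z\cdot\nabla f(\XX_s^Z)\,\di s$ obtained for each $f_n$ can be passed to the limit in $L^2_{\loc}$ using the compressibility bound of $\XX^Z$ and H\"older's inequality against $|b_s^Z|$.

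The main obstacle I anticipate is the last step: since $b^Z$ is only locally in $L^2$ and the test functions in $\Test(Z)$ are not necessarily compactly supported, some care is needed to localize both sides of \eqref{eq:prodderiv} before invoking the density of $\mathcal A$. Once localization is in place, though, the Leibniz identity at the level of the tensor-product algebra is the essential mechanism, and the rest is a routine density argument patterned on \autoref{remark: dense class definition RLF}.
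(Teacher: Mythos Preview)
Your proposal is correct and follows essentially the same route as the paper: verify conditions 1) and 2) of \autoref{def:Regularlagrangianflow} directly, then check \eqref{eq: RLF condition 3} on tensor products $f=g\circ\pi_X\cdot h\circ\pi_Y$ via the Leibniz rule for the product of two absolutely continuous curves, and finally invoke the density of $\mathcal A$ (\autoref{thm:densityofproductalgebra}) together with \autoref{remark: dense class definition RLF} to extend to general tests. The localization concern you flag is real but minor, and the paper handles it in the same implicit way you suggest.
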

	
	\begin{proof}
		We need to check the validity of the three defining conditions in \autoref{def:Regularlagrangianflow}.\newline 
		The first one is trivial and the bounded compressibility property of $\XX^Z_t$ is a direct consequence of the bounded compressibility property of $\XX^X_t$ and $\XX^Y_t$.\newline
		Dealing with the third one, we observe that, thanks to \autoref{thm:densityofproductalgebra} and \autoref{remark: dense class definition RLF}, it is sufficient to check its validity testing it for any $f\in\mathcal{A}$. 
		Moreover, by the linearity of \eqref{eq: RLF condition 3} w.r.t. the test function, we can assume without loss of generality that $f=g\circ\pi_Xh\circ\pi_Y$, with $g\in W^{1,2}_{\loc}(X,\dist_X,\meas_X)\cap L^{\infty}_{\loc}(X,\meas_X)$ and $h\in W^{1,2}_{\loc}(Y,\dist_Y,\meas_Y)\cap L^{\infty}_{\loc}(Y,\meas_Y)$. 
		We need to prove that for $\meas_Z$-a.e. $(x,y)\in X\times Y$ the map $z\mapsto f(\XX^Z_t(z))$ belongs to $W^{1,1}((0,T))$ and has derivative given by
		\begin{equation}\label{eq:flowproduct}
		\frac{\di}{\di t}f(\XX_t^Z(z))=b^Z_t\cdot\nabla^Zf(\XX_t(z))\quad \text{for $\leb^1$-a.e. $t\in(0,T)$.}
		\end{equation}	
		To this aim we observe that, since $\XX^X_t$ and $\XX^Y_t$ are regular Lagrangian flows of $b^X_t$ and $b_t^Y$ respectively, it holds that the maps $t\mapsto g(\XX_t^X(x))$ and $t\mapsto h(\XX^Y_t(y))$ are bounded and belong to $W^{1,1}((0,T))$ for $\meas_X$-a.e. $x\in X$ and $\meas_Y$-a.e. $y\in Y$ respectively. Moreover
		\begin{equation*}
		\frac{\di}{\di t}g(\XX^X_t(x))=b^X_t\cdot\nabla g(\XX_t^X(x))\quad\text{and }\ \frac{\di}{\di t}h(\XX^Y_t(y))=b^Y_t\cdot\nabla h(\XX_t^Y(y))\quad\text{for $\leb^1$-a.e. $t\in(0,T)$,}
		\end{equation*}
		for $\meas_X$-a.e. $x\in X$ and $\meas_Y$-a.e. $y\in Y$, respectively.
		Applying Fubini's theorem and the Leibniz rule we obtain that, for $\meas_X\times\meas_Y$-a.e. $(x,y)\in X\times Y$, the map $t\mapsto g(\XX_t^X(x))h(\XX^Y_t(y))$ belongs to $W^{1,1}((0,T))$, moreover
		\begin{align*}
		\frac{\di}{\di t}\left(g(\XX^X_t(x))h(\XX^Y_t(y))\right) = & \left(\frac{\di}{\di t}g(\XX^X_t(x))\right) h(\XX^Y_t(y)) +g(\XX^X_t(x))\left(\frac{\di}{\di t}  h(\XX^Y_t(y))\right)\\
		= &   h(\XX^Y_t(y)) b^X_t\cdot \nabla g(\XX^X_t(x))+ g(\XX^X_t(x))b^Y_t\cdot \nabla h(\XX^Y_t(y))\\
		= & b_t^Z\cdot\nabla f(\XX^Z_t(x,y)),
		\end{align*}
		for $\leb^1$-a.e. $t\in(0,T)$, which implies \eqref{eq:flowproduct}.
	\end{proof}
	
	The following corollary of \autoref{prop: product of vector fields} plays an important role in the proof of \autoref{th 1}.

	\begin{corollary}\label{cor: coupled derivative with G} Let $(X,\dist,\meas)$ be an $\RCD(0,N)$ m.m.s. satisfying assumption \autoref{assumption: good definition of G}. Let moreover $b\in L^{1}((0,T);L^{2}(TX))$ and $\XX_t$ be a regular Lagrangian flow associated to $b$. Then, the map
		\begin{equation*}
		t\mapsto G(\XX_t(x),\XX_t(y))
		\end{equation*}
		belongs to $W^{1,1}((0,T))$ for $\meas\times \meas$-a.e. $(x,y)\in X\times X$ and its derivative is given by
		\begin{equation*}
		\frac{\di}{\di t}  G(\XX_t(x),\XX_t(y))= b_t\cdot \nabla G_{\XX_t(x)}(\XX_t(y))+b_t\cdot \nabla G_{\XX_t(y)}(\XX_t(x)),
		\end{equation*}
		for $\leb^1$-a.e. $t\in (0,T)$.
	\end{corollary}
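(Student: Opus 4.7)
The plan is to invoke \autoref{prop: product of vector fields} applied with $Y = X$ and $b^X_t = b^Y_t = b_t$, so that $\XX_t^Z(x,y) := (\XX_t(x),\XX_t(y))$ is the Regular Lagrangian Flow on the product space $Z = X \times X$ of the product vector field $b_t^Z$ defined via \eqref{z11}. Once the Green function, or rather its regularization $G^\epsilon$, is recognized as an admissible test function in condition $(3)$ of \autoref{def:Regularlagrangianflow} applied on $Z$, the chain rule formula follows by applying the defining identity for $b_t^Z$ and then passing to the limit $\epsilon \to 0$.

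First I would verify that, for every $\epsilon > 0$, the function $G^\epsilon : Z \to \setR$ belongs to $W^{1,2}_{\loc}(Z,\dist_Z,\meas_Z) \cap L^\infty_{\loc}(Z)$. Local boundedness is immediate from the proof of \autoref{lemma:G eps regularity} and the pointwise estimate $p_t(x,y) \le C_1/\meas(B(x,\sqrt{t}))$. For the local $W^{1,2}$ bound I would use the tensorization of the Cheeger energy on product spaces, the symmetry $G^\epsilon(x,y) = G^\epsilon(y,x)$, the pointwise bound $|\nabla G^\epsilon_x|(y) \le C_2 H_x(\dist(x,y))$ from \autoref{remark:Geps estimate}, and the local integrability of $(x,y) \mapsto H(x,\dist(x,y))$ provided by \autoref{lemma: integrabilityFxHx}. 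With this regularity at disposal, \autoref{remark: dense class definition RLF} permits the use of $G^\epsilon$ as a test function in condition $(3)$ of \autoref{def:Regularlagrangianflow} for $\XX_t^Z$; combining with \eqref{z11} one deduces that, for $\meas\times\meas$-a.e.\ $(x,y)\in X\times X$, the map $t \mapsto G^\epsilon(\XX_t(x),\XX_t(y))$ belongs to $W^{1,1}(0,T)$ with
\begin{equation*}
\frac{\di}{\di t}\, G^\epsilon(\XX_t(x),\XX_t(y)) = b_t \cdot \nabla G^\epsilon_{\XX_t(y)}(\XX_t(x)) + b_t \cdot \nabla G^\epsilon_{\XX_t(x)}(\XX_t(y))
\end{equation*}
for $\leb^1$-a.e.\ $t \in (0,T)$.

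The remaining step is the passage to the limit $\epsilon \to 0$ in this identity in its integrated form. I would test against $\chi \otimes \chi$ with $\chi \in \Lipbs(X)$, use the bounded compressibility $(\XX_t)_\sharp \meas \le L \meas$ to transfer the convergences $G^\epsilon \to G$ and $\nabla G^\epsilon \to \nabla G$ in $W^{1,1}_{\loc}$ from \autoref{lemma:G eps regularity} to integrals evaluated along the flow, and conclude via dominated convergence combined with Fubini's theorem. The dominating function is supplied once more by the uniform bound $|\nabla G^\epsilon_x|(y) \le C_2 H_x(\dist(x,y))$ together with the local integrability of $H$ from \autoref{lemma: integrabilityFxHx}, which provides an $L^1$ majorant of the integrand on any compact piece of $(0,T)\times X \times X$.

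The main technical difficulty I anticipate is the singularity of $G$ along the diagonal $\set{x=y}\subset Z$: $G$ itself fails to lie in $W^{1,2}_{\loc}(Z)$, which is exactly why one cannot dispense with the regularization and why \eqref{z11} must be applied at the level of $G^\epsilon$. Fortunately, the compactness of $\supp b$ combined with the bounded compressibility confines all relevant integrations to a fixed compact subset of $X \times X$, so that the singularity is smoothed out by the integral nature of the expressions and effectively controlled by the Fubini-type estimates in \autoref{lemma: integrabilityFxHx}.
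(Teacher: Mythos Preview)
Your proposal is correct and follows essentially the same approach as the paper: apply \autoref{prop: product of vector fields} with $Y=X$, $b^X=b^Y=b$, test the resulting product flow against $G^\epsilon\in W^{1,2}_{\loc}(X\times X)$ via \autoref{remark: dense class definition RLF}, and pass to the limit $\epsilon\to 0$ in $L^1_{\loc}(X\times X)$ using bounded compressibility and the majorant $H_x(\dist(x,\cdot))$ from \autoref{lemma: integrabilityFxHx}. The only cosmetic difference is that the paper localizes by fixing a ball $B(z,R)$ and tracking its enlargement under the flow, while you localize by pairing with $\chi\otimes\chi$; both serve the same purpose of confining the dominated convergence argument to a compact set.
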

	
	\begin{proof}
		Let us start observing that $G^{\eps}\in W^{1,2}_{\loc}(X\times X)$ for any $\epsilon>0$ (actually it has locally bounded weak upper gradient as one can prove with the same techniques introduced in the proof of \autoref{prop:estimateG}, taking into account also \autoref{rm:continuityFH}).\newline
		It follows from \autoref{prop: product of vector fields}, applied with $X=Y$ and $b^X=b^Y=:b$, that
		\begin{equation}\label{eq:z15}
		G^{\eps}(\XX_t(x),\XX_t(y))-G^{\eps}(x,y)= \int_0^t \left\lbrace b_s\cdot \nabla G^{\eps}_{\XX_s(x)}(\XX_s)(y)+b_s\cdot \nabla G^{\eps}_{\XX_s(y)}(\XX_s(x))\right\rbrace  \di s,
		\end{equation}
		for $\meas\times\meas$-a.e. $(x,y)\in X\times X$ for every $t\in[0,T]$.
		We wish to pass to the limit as $\epsilon\downarrow 0$ in \eqref{eq:z15} to obtain that for any $t\in[0,T]$ it holds
		\begin{equation}\label{eq:z16}
		G(\XX_t(x),\XX_t(y))-G(x,y)= \int_0^t\left\lbrace  b_s\cdot \nabla G_{\XX_s(x)}(\XX_s(y))+b_s\cdot \nabla G_{\XX_s(y)}(\XX_s(x))\right\rbrace  \di s,
		\end{equation}	
		for $\meas\times\meas$-a.e. $(x,y)\in X\times X$. The sought conclusion would easily follow.
		To this aim let us observe that the left hand side in \eqref{eq:z15} converges to $ G(\XX_t(y),\XX_t(x))-G(x,y)$ in $L^{1}_{\loc}(X\times X,\meas\times\meas)$. 
		Thus, it suffices to prove that the right hand side in \eqref{eq:z15} converges to
		\begin{equation*}
		\int_0^t \left\lbrace b_s\cdot\nabla G_{\XX_s(x)}(\XX_s(y))+b_s\cdot \nabla G_{\XX_s(y)}(\XX_s(x))\right\rbrace \di s \qquad \text{in}\ L^1_{\loc}(X\times X,\meas\times\meas).
		\end{equation*}
		To this aim we fix $z\in X$ such that  $\dist(\XX_s(z),z)\leq \norm{b}_{L^{\infty}}t$ for every $s\in [0,t]$ (observe that this property holds true for $\meas$-a.e. point). The triangle inequality yields
		\begin{equation}\label{z5}
		\dist(\XX_s(z),\XX_s(y))\leq 2t\norm{b}_{L^{\infty}}+\dist(z,y), \qquad \text{for $\meas$-a.e.}\ y\in X.
		\end{equation}
		Thus, setting $B:=B(z,R)$, for some $R>0$, and $\bar{B}:= B(z,R+2t\norm{b}_{L^{\infty}})$, we have
		\begin{equation}
		(\XX_s)_{\#} (\mathds{1}_B\meas)\leq L \mathds{1}_{\bar{B}}\meas.
		\end{equation}
		The bounded compressibility property of the RLF allows us to estimate
		\begin{align*}
		\int_{B\times B} & \abs{\int_0^t b_s\cdot \nabla G^{\eps}_{\XX_s(x)}(\XX_s(y))\di s-\int_0^t b_s\cdot \nabla G_{\XX_s(x)}(\XX_s(y))\di s}  \di \meas(x)\di \meas(y)\\
		\leq & \int_0^t\int_{B} \int_{B} |b_s|(\XX_s(y))\cdot |\nabla ( G^{\eps}_{\XX_s(x)}- G_{\XX_s(x)})|(\XX_s(y)) \di \meas(y)\di \meas(x)\di s\\
		\leq & L^2t \norm{b}_{L^{\infty}} \int_{\bar{B}}\norm{\nabla (G^{\eps}_x-G_x)}_{L^1(\bar{B})} \di \meas(x).
		\end{align*}
		The last term goes to zero, as a simple application of dominated convergence theorem
		shows (for more details about this step we refer to the proof of \autoref{prop: G-maximal estimate}, where we dealt with a similar term). Arguing similarly for the term $\int_0^t b_s\cdot \nabla G^{\eps}_{\XX_s(y)}(\XX_s(x)) \di s$ we obtain the thesis.
	\end{proof}
	
		\begin{remark}\label{remark: corollaryA3}
			A conclusion analogous to the one stated in \autoref{cor: coupled derivative with G} holds true with $\bar{G}$ in place of $G$ assuming that $(X,\dist,\meas)$ is an $\RCD(K,N)$ m.m.s. satisfying assumption \autoref{assumptionnegative}. To get this result it suffices to argue as in the proof of \autoref{cor: coupled derivative with G} using \autoref{prop:estimatebarG} instead of \autoref{prop:estimateG}.
		\end{remark}

\end{document}